\newtheorem{theorem}{Theorem}[section]
\newtheorem{corollary}[theorem]{Corollary}
\newtheorem{definition}[theorem]{Definition}
\newtheorem{lemma}[theorem]{Lemma}
\newtheorem{proposition}[theorem]{Proposition}
\newtheorem{remark}[theorem]{Remark}
\newcommand{\ovl}[1]{\overline{#1}}
\newcommand{\W}{\ensuremath{\mathbb W}}
\renewcommand{\d}{\ensuremath{\partial}}
\newcommand{\scrU}{\ensuremath{\mathscr U}}
\newcommand{\scrV}{\ensuremath{\mathscr V}}
\DeclareMathOperator{\loc}{loc}
\DeclareMathOperator{\comp}{comp}
\newcommand{\nwc}{\newcommand}
\nwc{\nwt}{\newtheorem}
\nwc{\mf}{\mathbf} 
\nwc{\blds}{\boldsymbol} 
\nwc{\ml}{\mathcal} 
\nwc{\lam}{\lambda}
\nwc{\del}{\delta}
\nwc{\Del}{\Delta}
\nwc{\Lam}{\Lambda}
\nwc{\elll}{\ell}
\nwc{\IA}{\mathbb{A}} 
\nwc{\IB}{\mathbb{B}} 
\nwc{\IC}{\mathbb{C}} 
\nwc{\ID}{\mathbb{D}} 
\nwc{\IE}{\mathbb{E}} 
\nwc{\IF}{\mathbb{F}} 
\nwc{\IG}{\mathbb{G}} 
\nwc{\IH}{\mathbb{H}} 
\nwc{\IN}{\mathbb{N}} 
\nwc{\IP}{\mathbb{P}} 
\nwc{\IQ}{\mathbb{Q}} 
\nwc{\IR}{\mathbb{R}} 
\nwc{\IS}{\mathbb{S}} 
\nwc{\IT}{\mathbb{T}} 
\nwc{\IZ}{\mathbb{Z}} 
\def\bbleft{{\mathchoice {[\mskip-3mu {[}} {[\mskip-3mu {[}}{[\mskip-4mu {[}}{[\mskip-5mu {[}}}}
\def\bbright{{\mathchoice {]\mskip-3mu {]}} {]\mskip-3mu {]}}{]\mskip-4mu {]}}{]\mskip-5mu {]}}}}
\nwc{\setK}{\bbleft 1,K \bbright}
\nwc{\setN}{\bbleft 1,\cN \bbright}
 \newcommand{\Lim}{\mathop{\longrightarrow}\limits}
\nwc{\va}{{\bf a}}
\nwc{\vb}{{\bf b}}
\nwc{\vc}{{\bf c}}
\nwc{\vd}{{\bf d}}
\nwc{\ve}{{\bf e}}
\nwc{\vf}{{\bf f}}
\nwc{\vg}{{\bf g}}
\nwc{\vh}{{\bf h}}
\nwc{\vi}{{\bf i}}
\nwc{\vI}{{\bf I}}
\nwc{\vj}{{\bf j}}
\nwc{\vk}{{\bf k}}
\nwc{\vl}{{\bf l}}
\nwc{\vm}{{\bf m}}
\nwc{\vM}{{\bf M}}
\nwc{\vn}{{\bf n}}
\nwc{\vo}{{\it o}}
\nwc{\vp}{{\bf p}}
\nwc{\vq}{{\bf q}}
\nwc{\vr}{{\bf r}}
\nwc{\vs}{{\bf s}}
\nwc{\vt}{{\bf t}}
\nwc{\vu}{{\bf u}}
\nwc{\vv}{{\bf v}}
\nwc{\vw}{{\bf w}}
\nwc{\vx}{{\bf x}}
\nwc{\vy}{{\bf y}}
\nwc{\vz}{{\bf z}}
\nwc{\bal}{\blds{\alpha}}
\nwc{\bep}{\blds{\epsilon}}
\nwc{\barbep}{\overline{\blds{\epsilon}}}
\nwc{\bnu}{\blds{\nu}}
\nwc{\bmu}{\blds{\mu}}
\nwc{\bet}{\blds{\eta}}
\nwc{\bk}{\blds{k}}
\nwc{\bm}{\blds{m}}
\nwc{\bM}{\blds{M}}
\nwc{\bp}{\blds{p}}
\nwc{\bq}{\blds{q}}
\nwc{\bn}{\blds{n}}
\nwc{\bv}{\blds{v}}
\nwc{\bw}{\blds{w}}
\nwc{\bx}{\blds{x}}
\nwc{\bxi}{\blds{\xi}}
\nwc{\by}{\blds{y}}
\nwc{\bz}{\blds{z}}
\nwc{\cA}{\ml{A}}
\nwc{\cB}{\ml{B}}
\nwc{\cC}{\ml{C}}
\nwc{\cD}{\ml{D}}
\nwc{\cE}{\ml{E}}
\nwc{\cF}{\ml{F}}
\nwc{\cG}{\ml{G}}
\nwc{\cH}{\ml{H}}
\nwc{\cI}{\ml{I}}
\nwc{\cJ}{\ml{J}}
\nwc{\cK}{\ml{K}}
\nwc{\cL}{\ml{L}}
\nwc{\cM}{\ml{M}}
\nwc{\cN}{\ml{N}}
\nwc{\cO}{\ml{O}}
\nwc{\cP}{\ml{P}}
\nwc{\cQ}{\ml{Q}}
\nwc{\cR}{\ml{R}}
\nwc{\cS}{\ml{S}}
\nwc{\cT}{\ml{T}}
\nwc{\cU}{\ml{U}}
\nwc{\cV}{\ml{V}}
\nwc{\cW}{\ml{W}}
\nwc{\cX}{\ml{X}}
\nwc{\cY}{\ml{Y}}
\nwc{\cZ}{\ml{Z}}
\nwc{\fA}{\mathfrak{a}}
\nwc{\fB}{\mathfrak{b}}
\nwc{\fC}{\mathfrak{c}}
\nwc{\fD}{\mathfrak{d}}
\nwc{\fE}{\mathfrak{e}}
\nwc{\fF}{\mathfrak{f}}
\nwc{\fG}{\mathfrak{g}}
\nwc{\fH}{\mathfrak{h}}
\nwc{\fI}{\mathfrak{i}}
\nwc{\fJ}{\mathfrak{j}}
\nwc{\fK}{\mathfrak{k}}
\nwc{\fL}{\mathfrak{l}}
\nwc{\fM}{\mathfrak{m}}
\nwc{\fN}{\mathfrak{n}}
\nwc{\fO}{\mathfrak{o}}
\nwc{\fP}{\mathfrak{p}}
\nwc{\fQ}{\mathfrak{q}}
\nwc{\fR}{\mathfrak{r}}
\nwc{\fS}{\mathfrak{s}}
\nwc{\fT}{\mathfrak{t}}
\nwc{\fU}{\mathfrak{u}}
\nwc{\fV}{\mathfrak{v}}
\nwc{\fW}{\mathfrak{w}}
\nwc{\fX}{\mathfrak{x}}
\nwc{\fY}{\mathfrak{y}}
\nwc{\fZ}{\mathfrak{z}}
\nwc{\tA}{\widetilde{A}}
\nwc{\tB}{\widetilde{B}}
\nwc{\tE}{E^{\vareps}}
\nwc{\tk}{\tilde k}
\nwc{\tN}{\tilde N}
\nwc{\tP}{\widetilde{P}}
\nwc{\tQ}{\widetilde{Q}}
\nwc{\tR}{\widetilde{R}}
\nwc{\tV}{\widetilde{V}}
\nwc{\tW}{\widetilde{W}}
\nwc{\ty}{\tilde y}
\nwc{\teta}{\tilde \eta}
\nwc{\tdelta}{\tilde \delta}
\nwc{\tlambda}{\tilde \lambda}
\nwc{\ttheta}{\tilde \theta}
\nwc{\tvartheta}{\tilde \vartheta}
\nwc{\tPhi}{\widetilde \Phi}
\nwc{\tpsi}{\tilde \psi}
\nwc{\tmu}{\tilde \mu}
\nwc{\To}{\longrightarrow} 
\nwc{\ad}{\rm ad}
\nwc{\eps}{\epsilon}
\nwc{\ep}{\epsilon}
\nwc{\vareps}{\varepsilon}
\def\ep{\epsilon}
\def\Tr{{\rm Tr}}
\def\e{{\rm e}}
\def\sq2{\sqrt{2}}
\def\t2{{\mathbb T}^2}
\def\s2{{\mathbb S}^2}
\def\N{\mathbb{N}}
\def\T{\mathbb{T}}
\def\R{\mathbb{R}}
\def\Z{\mathbb{Z}}
\nwc{\lap}{\bigtriangleup}
\nwc{\rest}{\restriction}
\nwc{\Diff}{\operatorname{Diff}}
\nwc{\diam}{\operatorname{diam}}
\nwc{\Res}{\operatorname{Res}}
\nwc{\Spec}{\operatorname{Spec}}
\nwc{\Vol}{\operatorname{Vol}}
\nwc{\Op}{\operatorname{Op}}
\nwc{\supp}{\operatorname{supp}}
\nwc{\Span}{\operatorname{span}}
\nwc{\dia}{\varepsilon}
\nwc{\cut}{f}
\nwc{\qm}{u_\hbar}
\def\hto0{\xrightarrow{\hbar\to 0}}
\def\rto0{\xrightarrow{r\to 0}}
\providecommand{\norm}[1]{\lVert#1\rVert}
\nwc{\la}{\langle}
\nwc{\ra}{\rangle}
\nwc{\lp}{\left(}
\nwc{\rp}{\right)}
\nwc{\bequ}{\begin{equation}}
\nwc{\be}{\begin{equation}}
\nwc{\ben}{\begin{equation*}}
\nwc{\bea}{\begin{eqnarray}}
\nwc{\bean}{\begin{eqnarray*}}
\nwc{\bit}{\begin{itemize}}
\nwc{\bver}{\begin{verbatim}}
\nwc{\eequ}{\end{equation}}
\nwc{\ee}{\end{equation}}
\nwc{\een}{\end{equation*}}
\nwc{\eea}{\end{eqnarray}}
\nwc{\eean}{\end{eqnarray*}}
\nwc{\eit}{\end{itemize}}
\nwc{\ever}{\end{verbatim}}
\numberwithin{equation}{section}
\begin{document}

\title[Wigner measures on the disk]{Wigner measures and observability for the Schr\"odinger equation on the disk
}
\author{Nalini Anantharaman}
\address{Universit\'{e} Paris-Sud 11, Math\'{e}matiques, B\^{a}t. 425, 91405 Orsay
Cedex, France} \email{Nalini.Anantharaman@math.u-psud.fr}
\author{Matthieu L\'eautaud}
\address{Universit\'e Paris Diderot, Institut de Math\'ematiques de Jussieu-Paris Rive Gauche, UMR 7586, B\^atiment Sophie Germain, 75205 Paris Cedex 13 France}
\email{leautaud@math.univ-paris-diderot.fr}
\author{Fabricio Maci\`a}
\address{Universidad Polit\'{e}cnica de Madrid. DCAIN, ETSI Navales. Avda. Arco de la
Victoria s/n. 28040 MADRID, SPAIN} \email{Fabricio.Macia@upm.es}

\begin{abstract}

We analyse the structure of semiclassical and microlocal Wigner measures for solutions to the linear
Schr\"{o}dinger equation on the disk, with Dirichlet boundary
conditions.

Our approach links the propagation of singularities beyond
geometric optics with the completely integrable nature of the
billiard in the disk. 
    We prove a ``structure theorem'', expressing the restriction of the Wigner measures on each invariant torus in terms of {\em second-microlocal measures}. They are obtained by performing a finer localization in phase space around each of these tori, at the limit of the uncertainty principle, and are shown to propagate according to Heisenberg equations on the circle.

Our construction yields as corollaries (a) that the disintegration
of the Wigner measures is absolutely continuous in the angular
variable, which is an expression of the dispersive properties of
the equation; (b) an observability inequality, saying that the
$L^2$-norm of a solution on any open subset intersecting the
boundary (resp. the $L^2$-norm of the Neumann trace on any
nonempty open set of the boundary) controls its full $L^2$-norm
(resp. $H^1$-norm).
These results show in particular that the energy of solutions
cannot concentrate on periodic trajectories of the billiard flow
other than the boundary.

%
%
\end{abstract}

\keywords{Semiclassical measures, microlocal defect measures, Schr\"odinger equation, disk, observability, completely integrable dynamics}

\maketitle

\setcounter{tocdepth}{2} \tableofcontents

\section{Introduction}
\label{sec:intro}
\subsection{Motivation}
We consider the unit disk
$$
\ID = \{z=(x,y) \in \IR^2 , |z|^2 = x^2 + y^2 <1\} \subset \IR^2
$$
and denote by $\Delta$ the
euclidean Laplacian. We are interested in understanding dynamical
properties of the
(time-dependent) linear Schr\"{o}dinger equation
\begin{eqnarray}
\label{e:S} {\frac{1}{i}}\frac{\partial u}{\partial
t}(z,t)=\left( -\frac{1}{2}\Delta+V(t, z)\right) u(z,t), & t\in\R,
\quad z=(x, y)\in \ID,
\\
\label{e:Dirichlet} u\rceil_{t=0} = u^0 \in L^{2}(\ID)&&
\end{eqnarray}
with Dirichlet boundary condition $u\rceil_{\d \ID} = 0$ (we shall write $\Delta=\Delta_{D}$ when we want
to stress that we are using the Laplacian with that boundary
condition). We assume that $V$ is a smooth real-valued potential, say
$V\in C^\infty\left(  \mathbb{R\times \ovl{\ID}} ; \R \right)$. 
We shall denote by $U_{V}(t)$ the (unitary) propagator starting at time $0$, such that $u(\cdot, t)=U_{V}(t)u^0$ is the unique
solution of \eqref{e:S}-\eqref{e:Dirichlet}.

This equation is aimed at describing the evolution of a quantum particle trapped in a disk-shaped cavity, $u(\cdot, t)$ being the wave-function at time $t$. The total $L^2$-mass of the solution is preserved: $\|u(\cdot, t)\|_{L^2(\ID)} = \|u^0\|_{L^2(\ID)}$ for all time $t \in \R$. Thus, if the initial datum is normalized, $\|u^0\|_{L^2(\ID)}=1$, the quantity $|u(z,t)|^2 dz$ is, for every fixed $t$, a probability density on $\ID$; given $\Omega \subset \ID$, the expression:
$$
\int_{\Omega}|u(z,t)|^2 dz
$$
is the probability of finding the particle in the set $\Omega$ at time $t$. Having 
$\int_0^T \int_{\Omega}|u(z,t)|^2 dx dt \geq c_0 >0$ for all solutions of~\eqref{e:S} means that every quantum particle spends a positive fraction of time of the interval $(0,T)$ in the set $\Omega$. A major issue in mathematical quantum mechanics is to describe the possible localization -- or delocalization -- properties of solutions to the Schr\"odinger equation~\eqref{e:S}, by which we mean the description of the distribution of the probability densities  $|u(z,t)|^2 dz$ for all solutions $u$. 
A more tractable problem consists in considering instead of single, fixed solutions, sequences $(u_n)_{n \in \N}$ of solutions to~\eqref{e:S} and describe the asymptotic properties of the associated probability densities $|u_n(z,t)|^2 dz$ or $|u_n(z,t)|^2 dz dt$. This point of view still allows to deduce properties of single solutions $u$ and their distributions $|u(z,t)|^2 dz$, as we shall see in the sequel.

It is always possible to extract a subsequence that converges weakly: $$\int_{\ID\times \IR}\phi(z,t)|u_n(z,t)|^2 dz dt \longrightarrow \int_{\overline{\ID}\times \IR}\phi(z,t)\nu(dz, dt),\quad \text{ for every } \phi \in C_c(\overline{\ID}\times \IR),$$  where $\nu$ is a nonnegative Radon measure on $\overline{\ID}\times\IR$ that describes the asymptotic mass distribution of the sequence of solutions $(u_n)$. One of the goals of this paper is to understand how the fact that $(u_n)$ solves~\eqref{e:S} influences the structure of the associated measure $\nu$.

As an application, we aim at understanding the observability problem for the Schr\"odinger equation: given an open set $\Omega \subset \ID$ and a time $T>0$, does there exist a constant $C =C(\Omega,T)>0$ such that we have:
\begin{equation}
\label{eq:IOintro}
\int_0^T\int_{\Omega}|u(z,t)|^2 dz dt \geq C\|u^0\|_{L^2(\ID)}^2, \text{ for all } u^0 \in L^2(\ID)\text{ and } u \text{ associated solution of~\eqref{e:S}} ?
\end{equation}
If such an estimate holds, then every quantum particle must leave a trace on the set $\Omega$ during the time interval $(0,T)$; in other words: it is observable from $\Omega\times (0,T)$. This question is linked to that of understanding the structure of the limiting measures $\nu$. Estimate \eqref{eq:IOintro} is {\em not} satisfied if and only if there exists a sequence of data $(u_n^0)$ such that $\|u_n^0\|_{L^2(\ID)}=1$ and $\int_0^T\int_{\Omega}|u_n(z,t)|^2 dz dt \to 0$, where $u_n$ is the solution of~\eqref{e:S} issued from $u^0_n$. After the extraction of a subsequence, this holds if and only if the associated limit measure $\nu$ satisfies
$$
\int_0^T \int_{\overline{\ID}} \nu(dz, dt) = T  ,\quad \int_0^T \int_\Omega \nu(dz, dt) = 0 .
$$
The question of observability from $\Omega\times (0,T)$ may hence be reformulated as: can sequences of solution of~\eqref{e:S} concentrate on sets which do not intersect $\Omega\times (0,T)$? From the point of view of applications, it is of primary interest to understand which sets $\Omega$ do observe all quantum particles trapped in a disk. Moreover, the observability of~\eqref{e:S} is equivalent to the controllability of the Schr\"odinger equation (see e.g.~\cite{Leb:92}), which means that it is possible to drive any initial condition to any final condition at time $T$, with a control (a forcing term in the right-hand side of~\eqref{e:S}) located within $\Omega$.

\bigskip
It is well-known that the space of position variables $(z,t)$ does not suffice to describe the propagation properties of solutions to Schr\"odinger equations (or more generally wave equations) in the high frequency r\'egime. To take the latter into account, one has to add the associated dual variables, $(\xi,H)\in\R^2\times \R$ (momentum and energy) and lift the measure $\nu$ to the {\em phase space}, associated to the variables $(z,t,\xi,H)$: this gives rise to the so-called \emph{Wigner measures}~\cite{Wigner32}. We shall hence investigate the regularity and localization properties in position and momentum variables of
the  Wigner measures  associated with sequences of normalized solutions of~\eqref{e:S}. They describe how the solutions are
distributed over phase space. We shall develop both the
\emph{microlocal} and \emph{semiclassical} points of view. These
are two slightly different, but closely related, approaches to the
problem~: the semiclassical approach is more suitable when our initial data possess a well-defined oscillation rate, whereas the microlocal approach describes the singularities of solutions, independently of the choice of a scale of oscillation, at the price of giving slightly less precise results. 

Our study fits in the regime of the ``quantum-classical correspondence principle'' , which asserts that the high-frequency dynamics of the  solutions to~\eqref{e:S} are described in terms of the corresponding classical dynamics; in our context the underlying classical system is the billiard flow on $\ID$. Wigner measures carry this information, for they are known to be invariant by this flow.

Of course, one may consider similar questions for any bounded domain of $\R^d$ or any Riemannian manifold, and not only the  disk $\ID$. As a matter of fact, the answer to these questions depends strongly on the dynamics of the billiard flow (resp. the geodesic flow on a Riemannian manifold), and, to our knowledge, it is known only in few cases (see Section~\ref{s:comments}).  For instance, on negatively curved manifolds, the celebrated Quantum Unique Ergodicity conjecture remains to this day open. Two geometries for which the observation problem is well-understood, and the Wigner measures are rather well-described, are the torus $\T^d$ (see~\cite{JaffardPlaques,Kom:92,MaciaDispersion,BZ:12} and~\cite{JakobsonTori97, BourgainLatt, MaciaTorus, AnantharamanMaciaTore}) and the sphere $\mathbb{S}^d$, or more generally, manifolds all of whose geodesics are closed (see~\cite{JZ:99, MaciaZoll, MaciaAv, AM:10}), on which the classical dynamics is completely integrable. We shall later on compare these two situations with our results on the disk $\ID$. 
We refer to the article~\cite{AMSurvey} for a survey of recent results concerning Wigner measures associated to sequences of solutions to the time-dependent Schr\"odinger equation in various geometries and to the review article~\cite{Laurent14} on the observability question.

%

\subsection{Some consequences of our structure theorem}
Our central results are Theorems \ref{t:precise} and
\ref{t:preciseml} below, which provide a detailed structure of the
Wigner measures associated to sequences of solutions to the
Schr\"{o}dinger equation, using notions of second-microlocal
calculus. 
As corollaries of these structure Theorems, we obtain:
\begin{itemize}
\item  Corollary \ref{t:example} (see also Theorem \ref{t:main}), which
reflects the dispersive character of the Schr\"{o}dinger equation
\eqref{e:S}; \item Theorem \ref{t:obs-i} (resp.~Theorem
\ref{t:obs-b}), which states the observability/controllability of
the equation from any nonempty open set touching the boundary of
the disk (resp. from any nonempty open set of the boundary).
\end{itemize}
Let us first state these corollaries in order to motivate the more
technical results of this paper.

\begin{corollary}
\label{t:example}Let $(u^0_{n})$ be a sequence in $L^{2}(\ID)$,
such that $\norm{u^0_n}_{L^{2}(\ID)}=1$ for all $n$. Consider the
sequence of nonnegative Radon measures $\nu_{n}$ on $\ovl{\ID}\times
\IR$, defined by
\begin{equation}
\nu_{n}(dz, dt)=|U_{V}(t)u^0_{n}(z)|^{2}dz dt .
\label{probmes}%
\end{equation}
Let $\nu$ be any weak-$\ast$ limit of the sequence $(\nu_{n})$:
then $\nu(dz, dt)= \nu_t(dz) dt$ where, for almost every $t$,
$\nu_t$ is a probability measure on $\ovl{\ID}$, and
$\nu_t\rceil_{\ID}$ is absolutely continuous.
\end{corollary}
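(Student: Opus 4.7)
The plan is to first establish the time disintegration $\nu(dz,dt) = \nu_t(dz)\,dt$ from conservation of $L^2$-mass, and then to obtain the absolute continuity of $\nu_t\rceil_{\ID}$ by lifting the sequence to phase space and applying the structure Theorems \ref{t:precise} and \ref{t:preciseml}. For the disintegration, unitarity of $U_V(t)$ on $L^2(\ID)$ gives $\|u_n(\cdot,t)\|_{L^2(\ID)} = 1$ for every $t$, so the time marginal of each $\nu_n$ equals Lebesgue measure on $\R$. Any weak-$*$ limit $\nu$ inherits this property, and the standard disintegration theorem produces a measurable family $(\nu_t)_{t\in\R}$ of probability measures on $\ovl{\ID}$ with $\nu(dz,dt) = \nu_t(dz)\,dt$.

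For the absolute continuity on the interior, I would lift $(u_n)$ to a microlocal Wigner measure $\mu$ on $T^*(\ID\times \R)$ (together with a component capturing frequencies going to infinity) whose projection onto position-time equals $\nu$. Since no oscillation scale is assumed on $(u_n^0)$, the microlocal framework of Theorem \ref{t:preciseml} is directly applicable; an alternative route is to decompose $(u_n)$ dyadically in frequency and apply the semiclassical Theorem \ref{t:precise} at each scale $h_k\to 0$ before reassembling, while the bounded-frequency component gives an absolutely continuous contribution automatically by elliptic regularity. The essential geometric input is that the billiard in $\ID$ is completely integrable, with orbits lying on invariant tori labeled by energy and angular momentum. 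The structure theorem expresses the restriction of $\mu$ to each such torus via second-microlocal measures that propagate according to a Heisenberg equation on a circle; this propagation is dispersive in the angular variable, and pushing forward to position space should yield an absolutely continuous density on $\ID$.

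The hard part will be converting the dispersion of the second-microlocal (circle-valued) propagation into a genuine Lebesgue absolute-continuity statement in two-dimensional position space, while controlling two delicate families of tori: (i) the countable collection of rational (periodic) tori, whose contribution must be shown to give no singular part inside $\ID$, and (ii) the tori accumulating at the glancing set $\{|J| = |\xi|\}$ of trajectories tangent to $\partial\ID$. This is precisely what the second-microlocal machinery and the explicit form of its propagation equations on the circle, furnished by Theorem \ref{t:preciseml}, are designed to handle. The outcome will be that whatever singular mass $\nu_t$ may carry is necessarily concentrated on $\partial \ID$, so that $\nu_t\rceil_{\ID}$ is absolutely continuous for almost every $t$.
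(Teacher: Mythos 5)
Your overall route is the paper's: the corollary is deduced from the microlocal structure theorem (Theorem~\ref{t:preciseml}, together with Theorem~\ref{t:muc} for the low-frequency part), and your time-disintegration step via unitarity of $U_V(t)$ is fine. But the decisive step --- passing from the structure theorem to absolute continuity of $\nu_t\rceil_{\ID}$ --- is exactly what you defer as ``the hard part'', and the mechanism you invoke for it is not the one that closes the argument. Absolute continuity of the projection of $\mu_{\alpha_0}$, $\alpha_0\in\pi\IQ\cap(-\pi/2,\pi/2)$, does not come from ``dispersion of the Heisenberg propagation pushed forward to position space''; the propagation law~\eqref{e:propagsigmaell} is what drives the observability results, not this corollary. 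What yields absolute continuity is the \emph{form} of the trace identity in Theorem~\ref{t:preciseml}~(iii): $\mu_{\alpha_0}$ tested against a symbol $a$ equals $\int \Tr\big(m^{\alpha_0}_{\la a\ra_{\alpha_0}}\,\sigma_{\alpha_0}\big)\,d\ell_{\alpha_0}$, where $m^{\alpha_0}_{\la a\ra_{\alpha_0}}$ is a multiplication operator on $L^2(0,2\pi)$ and $\sigma_{\alpha_0}$ is trace class. This identity extends from continuous to bounded measurable $a$; if $a$ vanishes Lebesgue-a.e.\ on $\ID$, then $\la a\ra_{\alpha_0}$ vanishes for a.e.\ $\theta$ on $\cI_{\alpha_0}$, the multiplication operator is the zero operator, and the trace vanishes. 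That single observation (carried out at the end of Section~\ref{s:sy}, i.e.\ Theorem~\ref{Thm Properties}~(ii)) is the whole of ``the hard part''; without it your proposal has not actually derived the conclusion.

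You should also treat the remaining pieces explicitly rather than by allusion. The term $\mu_{Leb}$ projects absolutely continuously because each $\lambda_{E,J}$ does (its projection onto $\ID$ is the explicit density $c\,(|z|^2-(J/E)^2)^{-1/2}\mathds{1}_{\{|z|>|J|/E\}}$ recalled in Section~\ref{s:eigenfcts}), and an integral of a.c.\ measures against $\mu'$ is a.c. The terms $\mu_{\pm\pi/2}$ are carried by $T^*\partial\ID$ and hence do not contribute to $\nu_t\rceil_{\ID}$ at all, which disposes of your worry about the glancing set; likewise, a countable sum of nonnegative measures each a.c.\ on $\ID$ is a.c., so the countability of the rational angles is harmless. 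Finally, the projection of $\mu_c$ is a.c.\ because by Theorem~\ref{t:muc} its density is $\sum_k\lambda_k|U_V(t)\varphi_k|^2\,dz$ with $\rho_0=\sum_k\lambda_k|\varphi_k\ra\la\varphi_k|$ trace class --- not because of elliptic regularity.
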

This result shows that the weak-$\ast$ accumulation points of the
densities (\ref{probmes}) possess some regularity in the interior
of the disk. This result cannot be extended to $\ovl{\ID}$, since
it is easy to exhibit sequences of solutions that
concentrate singularly on the boundary (the so-called
whispering-gallery modes, see Section~\ref{s:eigenfcts}). In
Theorem~\ref{t:main} below, we present a stronger version of
Corollary~\ref{t:example} describing (in phase space) the
regularity of microlocal lifts of such limit measures $\nu$. This
precise description (as well as all results of this paper) relies
on the complete integrability of the billiard flow on the disk.
Its statement needs the introduction of action angle coordinates
and associated invariant tori, and is postponed to
Section~\ref{s:appregularity}.

\bigskip
The second class of results mentioned above is related to unique
continuation-type properties of the Schr\"{o}dinger equation
\eqref{e:S}. We consider the following condition on an open set
$\Omega\subset \ovl{\ID}$, a time $T>0$ and a potential~$V$:
\begin{equation}
\label{UCP} \left( \tag{UCP$_{V ,\Omega,T}$} u^0 \in L^2(\ID), \quad
U_V(t)u^0 \rceil_{(0,T)\times \Omega} = 0 \right)
\quad  \Longrightarrow \quad 
u^0 =0 .
 \end{equation}
As a consequence of Theorem \ref{t:preciseml}, we shall also prove
the following quantitative version of~\eqref{UCP}.

\begin{theorem}
\label{t:obs-i} Let $\Omega\subset \ovl{\ID}$ be an open set such
that $\Omega\cap \partial \ID \not= \emptyset$ and $T>0$.
Assume one of the following statements holds:
\begin{itemize}
\item the potential $V\in C^\infty([0,T] \times \ovl{\ID}; \R)$, the time $T$,
and the open set $\Omega$ satisfy \eqref{UCP}, \item the potential
$V\in C^\infty(\ovl{\ID}; \R)$ does not depend on $t$.
\end{itemize}
Then there exists $C=C(V,\Omega,T)>0$ such that:
\begin{equation}
\left\Vert u^0\right\Vert _{L^{2}\left(  \ID\right)  }^{2}\leq
C\int_{0}^{T}\left\Vert U_{V}(t)  u^0\right\Vert _{L^{2}\left(
\Omega\right)  }^{2}dt, \label{e:oi}
\end{equation}
for every initial datum $u^0\in L^{2}\left(  \ID\right)  $.
\end{theorem}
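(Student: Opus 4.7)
The plan is to argue by the classical compactness--uniqueness scheme, reducing the observability inequality \eqref{e:oi} to the vanishing of the limit measures associated with a suitable sequence. I assume by contradiction that \eqref{e:oi} fails and extract $(u^0_n)\subset L^2(\ID)$ with $\norm{u^0_n}_{L^2(\ID)}=1$ and $\int_0^T\norm{U_V(t)u^0_n}_{L^2(\Omega)}^2 dt\to 0$. After extraction, $u^0_n\rightharpoonup v^0$ weakly in $L^2$; lower semicontinuity implies $U_V(t)v^0$ vanishes on $(0,T)\times\Omega$, and the first step is to establish $v^0=0$. Under the first hypothesis this is exactly \eqref{UCP}. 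Under the time-independent hypothesis, I would expand $v^0=\sum_k c_k\varphi_k$ on an orthonormal basis of eigenfunctions of $-\tfrac{1}{2}\Delta_D+V$ with eigenvalues $\lambda_k$: analyticity of $t\mapsto U_V(t)v^0=\sum_k c_k e^{-i\lambda_k t}\varphi_k$ extends the vanishing to all $t\in\R$, and separating the Fourier modes in $t$ forces each spectral projection of $v^0$ onto an eigenspace to vanish on the nonempty open set $\Omega\cap\ID$. Aronszajn unique continuation applied to $-\tfrac{1}{2}\Delta+V-\lambda$ on $\Omega\cap\ID$ then yields $v^0=0$.

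Having reduced to $u^0_n\rightharpoonup 0$, I form the position-time densities $\nu_n(dz,dt)=|U_V(t)u^0_n(z)|^2 dz\,dt$ on $\ovl{\ID}\times[0,T]$ and extract a weak-$*$ limit $\nu$ of total mass $T$ satisfying $\nu(\Omega\times(0,T))=0$. Simultaneously I extract a Wigner (microlocal defect) measure $\mu$ on $T^*\ID\times\R$ associated with the sequence of solutions, whose projection on position-time coincides with $\nu\rceil_{\ID\times(0,T)}$. The plan is then to show $\mu\equiv 0$ in the interior \emph{and} $\nu\rceil_{\partial\ID\times(0,T)}=0$, contradicting $\nu(\ovl{\ID}\times(0,T))=T$.

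This is where Theorem \ref{t:preciseml} enters crucially. The billiard flow on $\ID$ is completely integrable, and phase space is foliated by invariant tori indexed by the action variables $(|\xi|,J)$, with $J$ the angular momentum. Theorem \ref{t:preciseml} expresses the restriction of $\mu$ to each torus through a second-microlocal measure propagating by a Heisenberg equation on the circle. The configuration-space projection of every torus is an annulus $\{|J|/|\xi|\le|z|\le 1\}$ that touches $\partial\ID$; since $\Omega$ is relatively open in $\ovl{\ID}$ and meets $\partial\ID$, it intersects each such annulus in an open arc of positive angular measure. Vanishing of $\nu$ on $\Omega\times(0,T)$ thus forces the associated second-microlocal measure on each torus to vanish on a nonempty open arc of its angle circle; the propagation equation then propagates this vanishing to the whole torus, yielding $\mu\equiv 0$.

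The principal obstacle is the remaining possible concentration of $\nu$ on $\partial\ID\times(0,T)$ (whispering-gallery modes), which lies outside the scope of Corollary \ref{t:example} and is precisely what makes the hypothesis $\Omega\cap\partial\ID\neq\emptyset$ indispensable. I plan to rule it out by invoking Theorem \ref{t:preciseml} at the limiting torus $\{|J|=|\xi|\}$: after the appropriate second-microlocal rescaling near this torus, such concentrations descend to measures on the circle $\partial\ID$ inheriting invariance under the boundary rotation flow, hence are multiples of arclength; their vanishing on the nonempty open arc $\Omega\cap\partial\ID$ then forces them to be zero. This boundary step, which genuinely requires the full second-microlocal structure near the glancing torus rather than the softer interior regularity of Corollary \ref{t:example}, is the most delicate part of the argument.
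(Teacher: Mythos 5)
Your overall strategy is the one the paper follows: contradiction, extraction of a microlocal (Wigner) measure, decomposition over the invariant sets $\cI_{\alpha_0}$ via Theorem~\ref{t:preciseml}, and separate treatment of the glancing set $\{|J|=E\}$ by rotation invariance. The low-frequency part is handled differently but equivalently: the paper kills $\mu_c$ via Lemma~\ref{l:muc=0} (under \eqref{UCP}) or, for time-independent $V$, via a weakened inequality with an $H^{-1}$ remainder followed by the classical compactness--uniqueness argument (finite-dimensional invisible subspace, stable under $-\Delta_D+V$, elliptic unique continuation). Your alternative for the time-independent case --- expand $v^0$ in eigenfunctions, extend the vanishing from $(0,T)$ to all of $\R$, separate the modes, and apply Aronszajn --- is workable, but the word ``analyticity'' is wrong as stated: $t\mapsto U_V(t)v^0$ is not analytic for $v^0\in L^2$. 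What saves the argument is that $t\mapsto\la U_V(t)v^0,w\ra_{L^2(\Omega)}=\sum_k d_k e^{-i\lambda_k t}$ with $(d_k)\in\ell^1$ and $\lambda_k$ bounded below extends holomorphically to a half-plane $\{\Im t<0\}$, continuously up to $\R$; Schwarz reflection across $(0,T)$ then forces it to vanish identically, and Bohr means separate the spectral components. If you phrase it that way, this step is a genuinely more direct route than the paper's, at the price of using the orthonormal eigenbasis explicitly.

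The one genuine gap is in the sentence ``the propagation equation then propagates this vanishing to the whole torus.'' For a rational angle $\alpha_0$, what you obtain from $\nu(\Omega\times(0,T))=0$ and the structure formula is that, for each eigenfunction $\varphi_k$ of $\sigma_{\alpha_0}(\cdot,0)$, the solution $U_{\alpha_0,\omega}(t)\varphi_k$ of a one-dimensional Schr\"odinger equation on the circle with potential $\cos^2\alpha_0\la V\ra_{\alpha_0}$ vanishes on a nonempty open subset of $(0,T)\times(0,2\pi)$. Concluding $\varphi_k=0$ from this is \emph{not} a consequence of the propagation law $\sigma_{\alpha_0}(t)=U_{\alpha_0,\omega}(t)\sigma_{\alpha_0}(0)U_{\alpha_0,\omega}^*(t)$ alone: vanishing on an open space-time set does not propagate formally under a Schr\"odinger flow. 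It requires a unique continuation theorem for the 1D Schr\"odinger equation with an $L^\infty$ potential on the circle (the paper invokes \cite[Appendix~B]{Laurent}); for $V=0$ this is the classical lacunary-series/Jaffard-type result, but with a potential it is a real input that must be cited. Two smaller points: you should treat the $\mu_{Leb}$ component (irrational angles and the non-concentrating part) separately --- there is no second-microlocal measure there, but the conclusion is immediate because the Lebesgue measure $\lambda_{E,J}$ on each torus charges the open intersection of $\Omega$ with the annulus $\{|J|/E\le|z|\le1\}$; and the glancing-set step you call ``the most delicate'' is in fact the easiest, being a direct consequence of the rotation invariance in Theorem~\ref{t:preciseml}~(iv).
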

Roughly speaking, this means that any set $\Omega$ touching $\d \ID$ observes all quantum particles trapped in the disk. As we shall see, these are the only sets satisfying this property (see Section~\ref{s:eigenfcts} and Remark~\ref{rem:nonobsnonbord}).

\bigskip
We are also interested in the boundary analogue of \eqref{UCP} for
a given potential $V$, a time $T>0$ and an open set~$\Gamma \subset
\partial \ID$:
\begin{equation}
\label{UCPbord} 
\tag{UCP$_{V ,\Gamma,T}$} 
\left( u^0 \in H^1_0(\ID), \quad
\partial_n(U_V(t)u^0) \rceil_{(0,T)\times \Gamma} = 0
\right)
\quad \Longrightarrow 
\quad u^0 =0,
\end{equation}
where $\partial_n = \frac{\partial}{\partial n}$ denotes the
exterior normal derivative to $\d \ID$. As a consequence of Theorem~\ref{t:preciseml},
we shall also prove the following quantitative
version of~\eqref{UCPbord}.

\begin{theorem}\label{t:obs-b}
Let $\Gamma$ be any nonempty subset of $\partial \ID$ and $T>0$.
Suppose one of the following holds:
\begin{itemize}
\item the potential $V\in C^\infty([0,T] \times \ovl{\ID})$, the
time $T$ and the set $\Gamma$ satisfy \eqref{UCPbord},

\item $V\in C^\infty(\ovl{\ID})$ does not depend on $t$.
\end{itemize}
Then there exists $C=C(V,\Gamma, T)>0$ such that:%
\begin{equation}\label{e:ob}
\left\Vert u^{0}\right\Vert _{H^1\left(  \ID\right)   }^{2}\leq
C\int_{0}^{T}\left\Vert \partial_n(U_{V}(t)  u^{0})\right\Vert
_{L^{2}\left( \Gamma\right)  }^{2}dt,
\end{equation}
for every initial datum $u^{0}\in H^1_0(\ID)$.
\end{theorem}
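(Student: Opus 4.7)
The plan is to argue by contradiction and reduce the statement to the structure theorem, Theorem~\ref{t:preciseml}. Suppose \eqref{e:ob} fails; then there is a sequence $(u_n^0)\subset H^1_0(\ID)$ with $\|u_n^0\|_{H^1}=1$ and
$$
\int_0^T \|\d_n(U_V(t)u_n^0)\|_{L^2(\Gamma)}^2\,dt \longrightarrow 0.
$$
I first extract a weak limit $u_n^0 \rightharpoonup u^\infty$ in $H^1_0(\ID)$; passing to the limit in the trace inequality yields $\d_n(U_V(t)u^\infty)|_{(0,T)\times\Gamma}=0$, and then \eqref{UCPbord} forces $u^\infty=0$. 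When $V$ is time-independent, \eqref{UCPbord} is classical on the disk: it follows from the explicit eigenbasis of $\Delta_D$ in Bessel functions and angular exponentials, together with the analyticity in $t$ of each spectral mode. After this reduction $u_n^0\to 0$ strongly in $L^2(\ID)$, so all of the $H^1$-mass is carried at high frequency.

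The next step is to perform a semiclassical reduction. Decomposing $u_n^0$ dyadically in frequency, I pick a scale $h_n\to 0$ along which a positive fraction of $\|u_n^0\|_{H^1}^2$ is supported in the spectral shell $\{|\xi|\sim h_n^{-1}\}$, and work with the rescaled sequence $v_n^0 := h_n u_n^0$ and its propagated version $v_n(t) := U_V(t)v_n^0$. Up to extraction this produces a nontrivial positive Wigner measure $\mu$ on phase space over $\ovl{\ID}\times(0,T)$, whose total mass is bounded from below uniformly in $n$. Standard boundary microlocal analysis (hidden regularity for the Dirichlet Laplacian) identifies the leading asymptotics of $h_n^2\int_0^T\|\d_n v_n(t)\|_{L^2(\Gamma)}^2\,dt$ with the integral of $\mu$ against a positive symbol supported over the hyperbolic (non-glancing) part of $(0,T)\times\Gamma$. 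The hypothesis therefore forces $\mu$ to vanish along every billiard trajectory that strikes $\Gamma$ transversally.

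Now I invoke Theorem~\ref{t:preciseml}: the billiard-invariant measure $\mu$ disintegrates, over the family of invariant tori of the disk billiard parametrized by the angular momentum, as an integral of second-microlocal measures propagating according to Heisenberg equations on the circle of angles. Any nonglancing invariant torus is a single orbit of the rotation in the boundary angle and meets $\d\ID$ transversally, so the vanishing of its trace on an open subarc $\Gamma\subset\d\ID$ propagates through this circle action to vanishing on the whole torus. The main obstacle is the limiting whispering-gallery stratum, where trajectories become tangent to $\d\ID$; here the second-microlocal picture supplied by the same theorem is essential: it shows that the Neumann trace still detects this stratum (tangential whispering modes carry a nontrivial normal derivative on $\d\ID$ at the semiclassical scale), and its vanishing on $\Gamma$, combined with the circle-invariance of the corresponding second-microlocal measure, eliminates this contribution as well. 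Together with $u_n^0\to 0$ in $L^2(\ID)$, which rules out a residual low-frequency defect, this forces $\mu\equiv 0$, contradicting the normalization $\|u_n^0\|_{H^1}=1$.
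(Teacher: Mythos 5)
Your overall architecture (argue by contradiction, pass to Wigner measures, use the structure theorem together with the boundary--interior relations, and finish with a unique continuation input) is the paper's, but three of your steps have genuine gaps. First, you run the compactness--uniqueness step \emph{before} the high-frequency analysis and, for time-independent $V$, justify \eqref{UCPbord} by the Bessel eigenbasis. This does not work: for $V\neq 0$ the eigenfunctions of $-\Delta_D+2V$ are not Bessel modes, and even for $V=0$ you cannot separate infinitely many spectral modes from the vanishing of $\partial_n(U_V(t)u^0)$ on $(0,T)\times\Gamma$ for an arbitrary $T>0$ without quantitative control. The paper's order is the reverse: it first proves the relaxed inequality \eqref{e:oi:relaxb} (Lemma \ref{l:obs-brelax}), which makes the space of unobserved data finite-dimensional, reduces uniqueness to a single eigenfunction of $-\Delta_D+V$, and only then invokes elliptic unique continuation from the boundary. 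Second, your semiclassical reduction ``pick a scale $h_n$ carrying a positive fraction of the $H^1$ mass'' is not available: the mass may spread over many dyadic shells each carrying a vanishing fraction. One must either sum a uniform-in-$h$ estimate over all scales or, as the paper does, work microlocally with homogeneous symbols after renormalizing by $B(D_t)=\Op_1(\psi(H)\sqrt{2H})$, so that the new sequence $w_n=B(D_t)\chi_T u_n$ is $L^2$-bounded, has no compact part ($\mu_c=0$), and has mass bounded below via a sharp G{\aa}rding argument (Lemma \ref{l:Dtun}).

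Third, and most substantively, on a rational level set $\mathcal{I}_{\alpha_0}$ the second-microlocal object is \emph{not} rotation invariant, so ``propagating the vanishing through the circle action'' does not close the argument there. The paper's Lemma \ref{l:muinfty=0} uses the propagation law $\sigma_{\alpha_0}(t)=U_{\alpha_0,\omega}(t)\,\sigma_{\alpha_0}(0)\,U_{\alpha_0,\omega}(t)^*$ and then a one-dimensional unique continuation theorem for the Schr\"odinger equation on the circle (observability of $U_{\alpha_0,\omega}$ from an open arc over a time interval) to kill each eigenfunction of $\sigma_{\alpha_0}(0)$; this $1$D input is indispensable and is absent from your sketch. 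By contrast, your treatment of the glancing stratum is correct in substance: the identity \eqref{e:mudmuint} shows the Neumann boundary measure controls $\mu^\infty$ on $\{|j|=\sqrt{2H}\}$, and rotation invariance of $\mu_{\pm\pi/2}$ then eliminates the whispering-gallery contribution.
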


Note that the unique continuation properties \eqref{UCP} and \eqref{UCPbord} are known to hold, for instance, when $V$ is analytic in $(t,z)$, as a consequence of the Holmgren uniqueness theorem as stated by H\" ormander (see e.g.~\cite[Theorem~5.3.1]{Hormander:LPDO}). 

These three results express a delocalization property of the
energy of solutions to~\eqref{e:S}. The observation of the
$L^2$-norm restricted to {\em any} open set of the disk touching
the boundary is sufficient to recover linearly the norm of the
data. In particular, the $L^2$-mass of solutions cannot
concentrate on periodic trajectories of the billiard.
The observability inequalities~\eqref{e:oi} and~\eqref{e:ob} are especially relevant in control
theory (see \cite{LionsSurvey88, BLR:92, Leb:92}): in turn, they imply a controllability result from the set $\Omega$
or $\Gamma$.

\bigskip

As a consequence of the observability inequality~\ref{e:oi}, we have the following result (where we use the notation of Corollary~\ref{t:example}).

\begin{corollary} \label{c:positiveopen}For every open set $\Omega\subset\ovl{\ID}$ touching the boundary, for every $T>0$, there exists a constant $C(T,\Omega)>0$ such that for any initial data $(u_n^0)$ and any weak-$\ast$ limit $\nu$ of the sequence $(\nu_{n})$ as in Corollary~\ref{t:example}, we have
$$\int_0^T \nu_t( \Omega)dt\geq \frac{1}{C(T, \Omega)}.$$
\end{corollary}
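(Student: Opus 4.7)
The plan is to deduce Corollary~\ref{c:positiveopen} directly from the observability inequality of Theorem~\ref{t:obs-i} by passing to the weak-$\ast$ limit. The key subtlety is that weak-$\ast$ convergence of Radon measures on open sets goes in the ``wrong'' direction ($\liminf_n\nu_n(U)\geq\nu(U)$), so the uniform lower bound on $\nu_n(\Omega\times(0,T))$ coming from observability cannot be transferred directly to $\nu(\Omega\times(0,T))$. The fix is to shrink $\Omega$ slightly to an open set whose closure still lies in $\Omega$, apply observability there, and transfer the bound to the limit via the Portmanteau inequality on compact subsets, which does go the right way.

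Concretely, since $\Omega$ is open in $\ovl{\ID}$ and meets $\d\ID$, I pick $z_0\in\Omega\cap\d\ID$ and $\vareps>0$ with $B(z_0,\vareps)\cap\ovl{\ID}\subset\Omega$, and set $\tilde\Omega:=B(z_0,\vareps/2)\cap\ovl{\ID}$. This is an open subset of $\ovl{\ID}$ with $z_0\in\tilde\Omega\cap\d\ID$ whose closure in $\ovl{\ID}$ is still contained in $\Omega$. Theorem~\ref{t:obs-i} then furnishes a constant $C_0=C_0(V,\tilde\Omega,T)>0$ such that
\[
\|u^0\|_{L^2(\ID)}^2\leq C_0\int_0^T\|U_V(t)u^0\|_{L^2(\tilde\Omega)}^2\,dt,\qquad u^0\in L^2(\ID),
\]
and evaluating at the normalized initial data $u_n^0$ gives $\nu_n(\tilde\Omega\times(0,T))\geq 1/C_0$ for every $n$.

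To conclude, set $K:=\ovl{\tilde\Omega}\times[0,T]$, a compact subset of $\ovl{\ID}\times\R$. Since the total masses $\nu_n(\ovl{\ID}\times[0,T])=T$ are uniformly bounded, the standard approximation of $\mathbf 1_K$ from above by continuous compactly supported functions (Urysohn plus outer regularity of $\nu$) yields $\limsup_n\nu_n(K)\leq\nu(K)$; combined with $\nu_n(\tilde\Omega\times(0,T))\leq\nu_n(K)$ this gives $\nu(K)\geq 1/C_0$. Finally, Corollary~\ref{t:example} provides the disintegration $\nu=\nu_t(dz)\,dt$, so
\[
\nu(K)=\int_0^T\nu_t(\ovl{\tilde\Omega})\,dt\leq\int_0^T\nu_t(\Omega)\,dt
\]
because $\ovl{\tilde\Omega}\subset\Omega$, and $C(T,\Omega):=C_0$ does the job. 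I do not expect any genuine obstacle here: Theorem~\ref{t:obs-i} carries the substance, and the only care needed is the preliminary shrinking to a set strictly interior to $\Omega$, so as to avoid testing weak-$\ast$ convergence against the indicator of the original open set.
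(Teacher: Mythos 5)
Your proof is correct and follows the route the paper intends: the paper offers no separate argument for Corollary~\ref{c:positiveopen} beyond asserting it is a consequence of the observability inequality~\eqref{e:oi}, and your shrinking-plus-Portmanteau argument supplies exactly the missing limit passage, correctly handling the direction of weak-$\ast$ convergence by testing against a compact subset $\ovl{\tilde\Omega}\times[0,T]$ contained in $\Omega\times[0,T]$ rather than against the open set itself. The only caveat is one inherited from the paper's own imprecision about hypotheses on $V$: Theorem~\ref{t:obs-i} must be applicable to the shrunken set $\tilde\Omega$, which is automatic when $V$ is time-independent but, under the \eqref{UCP} alternative, requires the unique continuation property for $\tilde\Omega$ (a stronger assumption than for $\Omega$).
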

This translates the fact that any solution has to leave positive mass on any set $\Omega$ touching the boundary $\d \ID$ during the time interval $(0,T)$. This may be rephrased by saying that any such set observes all quantum particles trapped in the disk.


\subsection{Stationary solutions to~\eqref{e:S}: eigenfunctions on the disk}
\label{s:eigenfcts}
If the potential $V(t,z)$ does not depend on the time variable $t$, we have as particular solutions of the Schr\"odinger
equation the ``stationary solutions'', those with initial data given by
eigenfunctions of the elliptic operator $-\Delta_D + 2 V(z)$ involved. 

In the
absence of potential, i.e. if $V=0$,  these solutions are well understood: the eigenfunctions of $-\Delta_D$ on $\ID$ are the functions
whose (non-normalized) expression in polar coordinates $(x=-r\sin u, y=r\cos u)$ is
\begin{equation}
\label{e:psidisk} \psi^\pm_{n, k}(r e^{iu})=J_n(\alpha_{n, k}r)
e^{\pm i nu} ,
\end{equation}
where $n, k$ are non-negative integers, $J_n$ is the $n$-th Bessel
function, and the $\alpha_{n, k}$ are its positive zeros ordered
increasingly with respect to $k$. The corresponding eigenvalue is
$\alpha_{n, k}^2$. Putting then $u^0 = \psi^\pm_{n, k}$ gives a time-periodic solution $u (\cdot, t) = e^{-it\frac{\alpha_{n, k}^2}{2}}\psi^\pm_{n, k}$ to~\eqref{e:S}-\eqref{e:Dirichlet}.
Moreover, the eigenvalues of $- \Delta_D$ have multiplicity two. This is a consequence of a celebrated result by Siegel~\cite{Siegel29}, showing that $J_n, J_m$ have no common zeroes
for $n \neq m$.
In particular, the limit measures associated to sequences of eigenfunctions are explicitly computable in terms of the limits of the stationary distributions: 
\[
\frac{|\psi_{n,k}^\pm(z)|^2}{\|\psi_{n,k}^\pm \|_{L^2(\ID)}^2}dz=\frac{|J_n(\alpha_{n, k}r)|^2}{\|\psi_{n,k}^\pm \|_{L^2(\ID)}^2}rdrdu,
\] 
as the frequency $\alpha_{n,k}$ tends to infinity (this expression has to be slightly modified when considering linear combinations of the two eigenfunctions $\psi_{n,k}^+$ and $\psi_{n,k}^-$, corresponding to the same eigenvalue, with $n$ fixed and $k$ tending to infinity) . 
Let us recall some particular cases of this construction. For fixed $k$ and for $n \to \infty$, it is classical ~\cite[Lemma~3.1]{Lagnese} that
$$
\frac{|\psi_{n,k}^\pm(z)|^2}{\|\psi_{n,k}^\pm \|_{L^2(\ID)}^2}dz\rightharpoonup (2\pi)^{-1} \delta_{\partial\ID},
$$
which corresponds to the so-called whispering gallery modes. On the other hand, letting $k, n \to \infty$ with $n/k$ being constant, one may obtain for any $\gamma \in [0,1)$ depending on the ratio $n/k$  ~\cite[Section~4.1]{PTZ:14}
$$
\frac{|\psi_{n,k}^\pm(z)|^2}{\|\psi_{n,k}^\pm \|_{L^2(\ID)}^2}dz\rightharpoonup \frac{1}{2\pi (1 -\gamma^2)^{1/2}} \frac{1}{(|z|^2- \gamma^2)^{1/2}} \mathds{1}_{(\gamma,1)}(|z|) dz .
$$
Except the Dirac measure on the boundary, these measures all belong to $L^{p}(\ID)$ for any $p<2$ (hence satisfying Corollary~\ref{t:example}) and are invariant by rotation and positive on the boundary (hence satisfying Corollary~\ref{c:positiveopen}). These measures in fact enjoy more regularity and symmetry than those asserted by Corollaries~\ref{t:example} and~\ref{c:positiveopen}.

The observability question for eigenfunctions can also be simply handled in account of the bounded multiplicity of the spectrum. For any non-empty open set
$\Omega_{I_1, I_2}=\{ re^{iu}, r\in I_1, u\in I_2\}\subset \ID$
(where $I_1$ is an open subset of $[0,1]$, $I_2$ an open interval of $\IS^1$), for any
eigenfunction $\psi$ of $-\Delta_D$, one has:
$$ \norm{\psi}_{L^2(\Omega_{I_1, \IS^1})}\leq C(|I_2|)  \norm{\psi}_{L^2(\Omega_{I_1, I_2})}$$
where $C(|I_2|)$ is a positive constant depending only on the size
of $I_2$. On the other hand, if $\Omega_{I_1, I_2}$ touches the
boundary ($1\in I_1$) it automatically satisfies the geometric control condition as defined in~\cite{BLR:92, Leb:92}. The results on those references imply that:
$$ \norm{\psi}_{L^2(\ID)}\leq C'(I_1)  \norm{\psi}_{L^2(\Omega_{I_1, \IS^1})}.$$
Therefore, for such $\Omega_{I_1, I_2}$, we have
$$ \norm{\psi}_{L^2(\ID)}\leq C(|I_2|)  C'(I_1)  \norm{\psi}_{L^2(\Omega_{I_1, I_2})}.$$

It is  not known to the authors whether or not any of the results of the present article could be deduced directly from the result for eigenfunctions, even when the potential vanishes identically. This does not seem to appear in the literature. On flat tori, proving observability or regularity of Wigner measures associated to the Schr\"odinger equation from the explicit expression of the solutions in terms of Fourier series requires a careful analysis of the distribution of lattice points on paraboloids \cite{JakobsonTori97, BourgainLatt} or sophisticated arguments on lacunary Fourier series \cite{JaffardPlaques, Kom:92}.
On the disk, and in absence of a potential, one could try to expand the kernel of
$e^{it\Delta_D/2}$ in terms of Bessel functions:
$$e^{it\Delta_D/2}=\sum_{n, k, \pm} e^{-it\alpha_{n, k}^2/2} |\psi_{n, k}^\pm \ra\la \psi_{n, k}^\pm|$$
and to use some of their known properties. Such an approach would anyway require some very technical work on the
spacings between the $\alpha_{n, k}$.

Here, instead, we establish directly the links between the completely integrable nature of the dynamics of the billiard flow
and the delocalization and dispersion properties of the solutions to the Schr\"odinger equation. 
Note that all results of this paper also hold for eigenfunctions of the operator $-\Delta_D + 2V(z)$ (as stationary solutions to~\eqref{e:S}). As a matter of fact, our approach is more general for it applies as well to quasimodes and clusters of eigenfunctions of the operator $-\Delta_D + 2V(z)$. The reader is referred to~\cite{ALMcras} and Remark~\ref{r:solo1} for more details on this matter.

\subsection{The semiclassical viewpoint}
\label{s:semiclass view}

In spite of the fact that our statements and proofs are formulated exclusively in terms of the non-semiclassical Schr\"odinger equation \eqref{e:S}, our results do have an interpretation in the light of the semiclassical limit for the Sch\"odinger equation. Suppose that $v_h$ solves the semiclassical Schr\"odinger equation:
\begin{equation}\label{e:Ssc} \frac{h}{i}\frac{\partial
v_h}{\partial t}(z,t)=\left( -\frac{h^2}{2}\Delta+h^2V(ht, z)\right)
v_h (z,t),  \quad v_h \rceil_{t=0} = u^0 .
\end{equation}
It turns out that $ u(\cdot , t):=v_h(\cdot,t/h)$ is in fact the solution to the (non-semiclassical) Schr\"odinger equation~\eqref{e:S} with initial datum $u^0$.  As a consequence, describing properties of solutions to~\eqref{e:S} on time intervals of size of order $1$ amounts to describing properties of solutions to the semiclassical Schr\"odinger equation~\eqref{e:Ssc} up to times of order $1/h$. Our results show that the semiclassical approximation (meaning that the solution to~\eqref{e:Ssc} should be well-approximated by its initial datum propagated through the billiard flow) breaks down in time $1/h$. For instance, if we take as initial datum in~\eqref{e:Ssc} a coherent state localized at $(z_0 , \xi_0) \in \ID \times \R^2$, 
$$u_n^0 = \frac{1}{h_n^{\alpha}}\rho\left(\frac{z-z_0}{h_n^{\alpha}}\right) e^{\frac{i}{h_n}z \cdot \xi_0}, \quad \rho \in C^\infty_c(\ID),\quad \rho(0)=1 ,\quad \alpha \in (0,1), \quad h_n \to 0 ,$$ our results imply that the associated solution of~\eqref{e:Ssc} on the time interval $(0,1/h_n)$ is no longer concentrated on the billiard trajectory issued from $(z_0 , \xi_0)$. Instead, we show that it spreads on the disk $\ID$ (the associated measure is absolutely continuous) and it leaves a positive mass on any set touching the boundary (even if the trajectory of the billiard issued from $(z_0,\xi_0)$ avoids this set).

Hence, our analysis goes far beyond the well-understood  semiclassical limit for times of order $1$, or even of order $\log(1/h)$ (known as  the Ehrenfest time, see~\cite{BouzouinaRobert}). Such a long time analysis is possible thanks to the complete integrability of the system. In fact, in the paper~\cite{AnantharamanKMacia}, which deals with the Schr\"odinger equation (and more general completely integrable systems) on the flat torus, it is shown that the time scale 
$1/h$ is exactly the one at which the delocalization of solutions takes place; for chaotic systems on the contrary, the semiclassical approximation is expected to break down at the Ehrenfest time~\cite{NAQUE,AnRiv, AMSurvey}.

\subsection{The structure theorem}
We would like to stress the fact that all these results are obtained as consequences of our main theorem,
Theorem \ref{t:precise} or its variant Theorem \ref{t:preciseml}, that gives a precise description of the structure of Wigner measures arising from solutions to \eqref{e:S}.
It provides a unified
framework from which to derive simultaneously the absolute
continuity of projections of semiclassical measures (a fact that
is related to dispersive effects) on the one hand, and, on the other
hand, the observability estimates \eqref{e:oi} and \eqref{e:ob},
which are quantitative unique continuation-type properties. Since a precise statement requires the introduction of many other objects, we postpone it to Sections \ref{s:mainthsc} and \ref{s:mlst} (semiclassical and microlocal formulations respectively), and only give a rough idea of the method for the moment.

The standard construction of the Wigner measures, outlined in Section \ref{s:mlst}, allows to lift a measure $\nu$ 
to a measure $\mu^\infty$ on phase space (or $\mu_{sc}$ in the semiclassical setting): this is the associated microlocal defect measure~\cite{GerardMDM91}. The law of propagation of singularities for equation \eqref{e:S} implies that $\mu^\infty$ is invariant by the billiard flow in the disk, and we want to exploit the complete integrability of this flow. 

For this, we use action-angle coordinates to integrate the dynamics of the billiard flow and describe associated invariant tori (Section \ref{s:coord}). The angular momentum $J$ of a point $(z,\xi)$ in phase space is preserved by the flow, and so is the Hamiltonian $E = |\xi|$. The actions $J$ and $E$ are in involution and independent, except at the points of $\d \ID$ with tangent momentum. The angle $\alpha$ that a trajectory makes when bouncing on the boundary is a also preserved quantity (in fact a function of $J/E$).  
 The key point of our proof is to analyze in detail the possible concentration of sequences on the 
 sets $\cI_{\alpha_0} = \{\alpha = \alpha_0\}$ of all points of phase space sharing a common incidence/reflection angle $\alpha_0$. To this aim, we perform a second microlocalization on this set, in the spirit of \cite{MaciaTorus,AnantharamanMaciaTore,AnantharamanKMacia}.
We decompose a Wigner measure as a sum of measures supported on these invariant sets. The case $\alpha_0 \not\in \pi \IQ$ corresponds to trajectories hitting the boundary in a dense set, and is trivial for us since it supports only one invariant measure. We focus on those $\cI_{\alpha_0}$ for which $\alpha_0 \in \pi \IQ$. Any trajectory of the billiard having this angle is periodic. We wish to ``zoom'' on this torus to describe the concentration of the associated measure. Assuming that the initial sequence has a typical oscillation scale of order $1/h$, we perform a second microlocalization at scale $1$, which is the limit of the Heisenberg uncertainty principle. Roughly speaking, the idea is to relocalize in the action variable $J$ at scale $1$ (i.e. $h$ times $1/h$), so that the Heisenberg uncertainty principle implies delocalization in the conjugated angle variable.  We obtain two limit objects, interpreted as {\em second-microlocal measures}. The first one captures the part of our sequence of solutions whose derivatives in directions ``transverse to the flow'' remain bounded; the second one captures the part of the solution rapidly oscillating in these directions.
Understanding the notion of transversality adapted to this problem is achieved  by constructing a flow that interpolates between the billiard flow (generated by the Hamiltonian $E$) and the rotation flow (generated by the Hamiltonian $J$).
The second measure is a usual microlocal/semiclassical measure whereas the first one is a less usual operator-valued measure taking into account non-oscillatory phenomena.
We prove that both second-microlocal measures enjoy additional invariance properties: the first one is invariant by the rotation flow, whereas the second one propagates through a Heisenberg equation on the circle. This translates, respectively, into Theorem \ref{t:precise} (ii) and (iii).
 
This program was already completed in \cite{MaciaTorus,AnantharamanMaciaTore,AnantharamanKMacia} for the Schr\"odinger equation on flat tori, but carrying it out in the disk induces considerable additional difficulties. Our phase space does not directly come equipped with its action-angle coordinates, so that we need first to change variables. This requires in particular to build a Fourier Integral Operator to switch from $(z,\xi)$-variables to action-angle coordinates. These coordinates are very nice to understand the dynamics and are necessary to perform the second microlocalization, but they are extremely nasty to treat
the boundary condition, for which the use of polar coordinates is more suitable. It seems that we cannot avoid having to go back and forth between the two sets of coordinates. 
Our approach to that particular technical aspect is inspired by \cite{GerLeich93}; however, the second-microlocal nature of the problem requires to perform the asymptotic expansions of \cite{GerLeich93} one step further.




\subsection{Relations to other works}
\label{s:comments}
\subsubsection{Regularity of semiclassical measures}
{
This work pertains to the longstanding study of the so-called
``quantum-classical correspondence'', which aims at understanding
 the links between high frequency solutions of the
Schr\"odinger equation and the dynamics of the underlying billiard flow (see for instance the survey article
\cite{AMSurvey}).

More precisely, it is concerned with a case of completely
integrable billiard flow. This particular dynamical situation has
already been addressed in \cite{MaciaTorus}
and \cite{AnantharamanMaciaTore} in the case of flat tori, and in
\cite{AnantharamanKMacia} for more general integrable systems (without boundary). 
These three papers use in a central way a ``second microlocalization'' to understand the concentration of measures on invariant tori. The main tools are second-microlocal semiclassical measures, introduced in the local Euclidean setting in~\cite{NierScat, Fermanian2micro, FermanianShocks, FermanianGerardCroisements,MillerThesis,Miller:97}, and defined in~\cite{MaciaTorus,AnantharamanMaciaTore,AnantharamanKMacia} as global objects.

On the sphere $\mathbb{S}^d$, or more generally, on a manifold with periodic geodesic flow, the situation is radically different. The geodesic flow for this type of geometries is still completely integrable, but it is known~\cite{MaciaAv} (see also~\cite{JZ:99,MaciaZoll,AM:10} for the special case of eigenfunctions) that every invariant measure is a Wigner measure; those are not necessarily absolutely continuous when projected in the position space. The difference with the previous situation is that the underlying dynamical system, though completely integrable, is {\em degenerate}.
What was evidenced in \cite{AnantharamanKMacia} is that a sufficient and necessary for the absolute continuity of Wigner measures, is that the hamiltonian be a {\em strictly convex/concave} function of the action variables -- a condition that is even stronger than non-degeneracy.
In the case of the disk, the complete integrability of the billiard flow on $\ID$ degenerates on the boundary. There, both actions coincide, which allows for the concentration of solutions on the invariant torus at the boundary (as was the case with the aforementioned whispering gallery modes).

Note that on the torus and on the disk, it remains an open question to fully characterize the set of Wigner measures associated to sequences of solutions to the time-dependent Schr\"odinger equation. In the case of flat tori, the papers \cite{JakobsonTori97, AJM} provide additional information about the regularity of the measures. 


\subsubsection{Observability of the Schr\"odinger equation}\label{s:ointroobsschrod}
Since the pioneering work of Lebeau~\cite{Leb:92}, it is known
that observability inequalities like \eqref{e:oi}-\eqref{e:ob}
always hold if all trajectories of the billiard enter the
observation region $\Omega$ or $\Gamma$ in finite time. However,
since~\cite{Har:89plaque, JaffardPlaques}, we know that this
strong geometric control condition is not necessary:~\eqref{e:oi}
holds on the two-torus as soon as $\Omega \neq \emptyset$; for different proofs and extensions of this result see \cite{Kom:92,BurqZworski04,MaciaDispersion,BZ:12,BBZ14,AnantharamanMaciaTore}. These
properties seem to deeply depend on the global dynamics of the
billiard flow.

On manifolds with periodic geodesic flow, it is {\em necessary} that $\overline{\Omega}$ meets all geodesics for an observation inequality as \eqref{e:oi}
to hold \cite{MaciaDispersion}. This is due to the strong stability properties of the geodesic flow.

To our knowledge, apart from the case of flat tori, few results are known concerning the
observability of the Schr\"odinger equation in situations where
the geometric control condition fails. The paper \cite{AnantharamanKMacia} extends \cite{AnantharamanMaciaTore} to general completely integrable
systems under a convexity assumption for the hamiltonian. Note also that the boundary observability
\eqref{e:ob} holds in the square if (and only if) the observation
region $\Gamma$ contains both a horizontal and a vertical nonempty
segments~\cite{RTTT:05}. Finally, for chaotic systems, the observability
inequality~\eqref{e:oi} is also valid on manifolds with negative
curvature if the set of uncontrolled
trajectories is sufficiently small \cite{NAQUE,AnRiv}.

Our Theorems~\ref{t:obs-i} and \ref{t:obs-b}
provide necessary and sufficient conditions for the observability
of the Schr\"odinger group on the disk. This is clear in the case
of boundary observability, and in the case of internal
observability, if $\Omega \subset \ID$ is such that $\Omega \cap
\d \ID=\emptyset$, the observability inequality~\eqref{e:oi}
fails. When $V=0$ this comes from the existence of 
whispering-gallery modes, see Section~\ref{s:eigenfcts}, and this remains true for any $V$, as proved in Remark~\ref{rem:nonobsnonbord}.

\bigskip
Let us conclude this introduction with a few more remarks.
\begin{remark}
In this article, we only treat the case of Dirichlet boundary conditions. The extension of our
method to the Neumann or mixed boundary condition deserves further
investigation.
\end{remark}

\begin{remark}
Let us comment on the regularity required on the potential $V$. 
Arguments developed in \cite{AnantharamanMaciaTore} show that
all the results of this paper could actually be weakened to
$V\in C^0\left(  \mathbb{R\times \ovl{\ID}}; \R \right)$ or even
to the case where $V$ is continuous outside a set of zero
measure. Corollary~\ref{t:example} in fact also holds for any $V \in L^2_{\loc}(\R ; \mathcal{L}(L^2(\ID)))$, and in particular for any bounded potentials. See also Remark~\ref{r:solo1} below.
\end{remark}

\begin{remark}
Our results directly yield a polynomial decay rate for
the energy of the damped wave equation $(\d_t^2 -\Delta
+b(z)\d_t)u=0$ with Dirichlet Boundary conditions on the disk.
More precisely, \cite[Theorem~2.3]{AnL} and Theorem~\ref{t:obs-i}
imply that if $b\geq0$ is positive on an open set $\Omega$
such that $\Omega\cap \partial \ID \not= \emptyset$, then the
$H^1_0 \times L^2$ norm of solutions decays at rate
$1/\sqrt{t}$ for data in $(H^2 \cap H^1_0)\times H^1_0$. This rate
is better than the {\em a priori} logarithmic decay rate given by the
Lebeau theorem~\cite{Leb:96}. The latter is however optimal if $\supp(b) \cap \d \ID =¬†\emptyset$ as a consequence of the whispering gallery phenomenon (see e.g.~\cite{LR:97}).
\end{remark}

\bigskip
\noindent\textbf{Acknowledgement.} We thank Patrick G\'erard for
his continued encouragement and very helpful discussions.

NA and ML are supported by the Agence Nationale de la Recherche under grant GERASIC ANR-13-BS01-0007-01.

NA acknowledges support by the National Science Foundation under
agreement no. DMS 1128155, by the Fernholz foundation, by Agence
Nationale de la Recherche under the grant ANR-09-JCJC-0099-01, and
by Institut Universitaire de France. Any opinions, findings and
conclusions or recommendations expressed in this material are
those of the authors and do not necessarily reflect the views of
the NSF.

FM  takes part into the visiting faculty program of ICMAT
and is partially supported by grants MTM2013-41780-P (MEC) and ERC Starting Grant 277778.

\section{The structure Theorem\label{s:preliminaries}}
In this section, we give the main definitions used in the article and state our main structure theorems.
We first define microlocal and semiclassical Wigner measures (which are the main objects discussed in the paper) in Section~\ref{s:versus}.
We then briefly describe the billiard flow and introduce adapted action-angle coordinates in Section~\ref{s:billiard}. This allows us to formulate our main results (Sections~\ref{s:mainthsc} and~\ref{s:mlst}), both in the semiclassical (Theorem~\ref{t:precise}) and in the microlocal (Theorem~\ref{t:preciseml}) framework. Next, in Section~\ref{s:appregularity}, we define various measures at
the boundary of the disk, that will be useful in the proofs, and explain their links with the Wigner measures in the interior.

\subsection{Wigner measures: microlocal versus semiclassical point of view\label{s:versus}}
Let $T^* \R^2=\R^2\times \R^2$ be the cotangent bundle over
$\R^2$, and $T^* \R=\R\times \R$ be the cotangent bundle over
$\R$. We shall denote by $z\in \R^2$ (resp. $t \in \R$) the space
(resp. time) variable and $\xi \in \R^2$ (resp. $H\in \R$) the
associated frequency.

Our main results can be formulated in two different and
complementary settings. We first introduce the symbol class needed
to formulate their microlocal version, allowing to define {\em
microlocal} Wigner distributions. We then define {\em
semiclassical} Wigner distributions and briefly compare these two
objects.

\begin{definition}
 Let us call $\cS_0$ the space of functions $a\in C^\infty (T^*\R^2\times T^* \R)$, $a(z, \xi, t, H)$ such that
\begin{enumerate}
\item[(a)] $a$ is compactly supported in the variables $z, t$.
\item[(b)] $a$ is homogeneous at infinity in $(\xi, H)$ in the
following sense: there exists $R_{0}>0$ such that
\begin{equation}\label{e:homogene}
a(z, \xi, t, H)   =a(z, \lambda \xi, t, \lambda^2 H) \text{,\quad for } |\xi|^2 +| H| >R_{0} \text{ and } \lambda\geq 1.%
\end{equation}
Equivalently, there is $a_{{\rm{hom}}}\in C^{\infty}\left(T^*\R^2\times T^* \R \setminus\{ (\xi, H)=(0, 0)\}  \right)$ satisfying \eqref{e:homogene} for all $\lambda >0$, such that%
\[
a(z, \xi, t, H)   =a_{{\rm{hom}}}\left(z, \xi, t, H   \right)  \text{,\quad for } |\xi|^2 +| H| >R_{0} .%
\]
Such a homogeneous function $a_{{\rm{hom}}}$ is entirely
determined by its restriction to the set $\{|\xi|^2+
2|H|=2\}\subset \R^2\times \R$, which is homeomorphic to a
$2$-dimensional sphere $\IS^2$. Thus we may (and will, when
convenient) identify $a_{{\rm{hom}}}$ with a function in the space
$C^{\infty}\left( \R^2_{z}\times \R_t\times \IS^2_{\xi, H}
\right)$.
\end{enumerate}
\end{definition}
Note that the different homogeneity with respect to the $H$ and
$\xi$ variables is adapted to the scaling of the Schr\"odinger
operator.

Let $(u^0_{n})$ be a sequence in $L^{2}(\ID)$, such that
$\norm{u^0_n}_{L^{2}(\ID)}=1$ for all $n$. For $z\in\ID$ and
$t\in\R$ we denote $u_n(z,t)= U_V(t)u_n^0(z)$. In what follows
(e.g. in formula \eqref{e:Wml} below), we shall systematically extend
the functions $u_n$, a priori defined on $\ID$, by the value $0$
outside $\ID$ as done in~\cite{GerLeich93}, where
semiclassical Wigner measures for boundary value problems were
first considered. The extended sequence now satisfies the equation
\begin{equation*}\left(-\frac{1}2\Delta
  +V- D_t\right)u_n= \frac{1}2 \frac{\partial u_n}{\partial n}\otimes \delta_{\partial \ID}   ,
 \quad (z,t) \in \R^2 \times \R,
  \end{equation*}
where $\Delta$ denotes the Laplacian on $\R^2$. Remark that the term $\frac{\partial u_n}{\partial n}\rceil_{\d\ID}$ has no straightforward meaning at this level of regularity. We shall see below how to give a signification to this equation, both in the semiclassical (see Remark~\ref{r:H1}) and in the microlocal (see Section~\ref{s:micromesboundary}) settings).

\medskip
The {\em microlocal Wigner distributions} associated to $(u_n)$
act on symbols $a \in \cS_0$ by
\begin{equation}\label{e:Wml}
W_{u_n}(a):=\la u_n, \Op_1(a)u_n\ra_{L^2(\R^2_z\times\R_t)},
\end{equation}
where $\Op_{1}(a) = a (z, D_z, t, D_t)$ (with the standard notation $D = - i \d$) is a pseudodifferential
operator defined by the standard quantization procedure. In what
follows, $\Op_{\eps}(a)= a(z, \eps D_z, t, \eps D_t)$ will stand
for the operator acting on $L^{2}(\IR^2\times \R)$ by:
\begin{equation}\label{e:defop}
\big(\Op_{\epsilon}(a) u \big)(z, t) = 
 \frac{1}{(2\pi \eps)^3} \int_{\R^2\times \R} \int_{\R^2\times \R}
 e^{\frac{i\xi \cdot (z-z')+iH(t-t')}\eps} a \left( z , \xi, t, H \right) u(z',t') \,  dz'dt' \, d\xi dH.
\end{equation}
Usual estimates on pseudodifferential operators imply that
$W_{u_n}$ is well defined, and forms a bounded sequence in
$\cS_0'$. The main goal of this article is to understand
properties of weak limits of $(W_{u_n})$ that are valid for any
sequence of initial conditions $(u_n^0)$.

\medskip
The problem also has a  semiclassical variant. In this version,
one considers $a\in C_c^\infty (T^*\R^2\times T^*\R)$, a real
parameter $h>0$, and one defines the {\em semiclassical Wigner
distributions} at scale $h$ by
\begin{equation}
W^h_{u_n}(a):=\la u_n, \Op_1(a(z, h\xi, t,
h^2H))u_n\ra_{L^2(\R^2_z\times \R_t)},\label{e:Wsc}
\end{equation}
where $\Op_1(a(z, h\xi, t, h^2H)) = a(z, hD_z, t, h^2
D_t)=\Op_h(a(z, \xi, t, hH))$, see \eqref{e:defop}. Note that this
scaling relation is the natural one for
solutions of \eqref{e:S}, and its interest will be made clear
below. Again $W^h_{u_n}$ is well defined, and forms a bounded
sequence in $\cD'(T^*\R^2\times T^*\R)$ if $h$ stays bounded. This
formulation is most meaningful if the parameter $h=h_n$ is chosen
in relation with the typical scale of oscillation of our sequence
of initial conditions $(u_n^0)$.

\begin{definition}
Given a bounded sequence $\left(w_{n}\right)$ in $L^2(\ID)$, we shall say that it is $(h_n)$-oscillating from above (resp. $(h_n)$-oscillating from below) if the sequence $(w_n)$ extended by zero outside of $\ID$ satisfies:
\[
\lim_{R\rightarrow\infty}\limsup_{n\rightarrow\infty}\int_{\left\vert
\xi\right\vert \geq R/h_{n}}\left\vert \widehat{w_{n}}\left(
\xi\right)\right\vert ^{2}  d\xi=0,
\]
(resp. \[ \lim_{\eps\rightarrow
0}\limsup_{n\rightarrow\infty}\int_{\left\vert \xi\right\vert \leq
\eps/h_{n}}\left\vert \widehat{w_{n}}\left( \xi\right)\right\vert
^{2}  d\xi=0 \quad)
\]
where $\widehat{w_{n}}$ is the Fourier transform of $w_n$ on
$\R^2$.
\end{definition}

The property of being $(h_n)$-oscillating from above is only
relevant if $h_n\To 0$; if $u_n^0$ is $(h_n)$-oscillating for
$(h_n)$ bounded away from $0$, the (extended) sequence $(u_n^0)$
is compact in $L^2$ and the structure of the accumulation points
of $(W^{h_n}_{u_n})$ is trivial. Therefore, we shall always assume
that $h_n\To 0$. Note that one can always find $(h_n)$ tending to
zero such that $(u_n^0)$ is $h_n$-oscillating from above (to see
that, note that for fixed $n$ one may choose $h_n$ such that
$\int_{\left\vert \xi\right\vert \geq 1/h_{n}}\left\vert
\widehat{u_{n}^0}\left( \xi\right)\right\vert ^{2}  d\xi\leq
n^{-1}$). However, the choice of the sequence $h_n$ is by no means
unique ($h_n$-oscillating sequences are also $h'_n$-oscillating as
soon as $h'_n\leq h_n$), although in many cases there is a natural
scale $h_n$ given by the problem under consideration.

One can find $(h'_n)$ such that
$(u_n^0)$ is $h'_n$-oscillating from below if and only if the
extended $(u_n^0)$ converges to $0$ weakly in $L^2(\R^2)$. It is
not always possible to find a common $(h_n)$ such that $(u_n^0)$
is $h_n$-oscillating both from above and below (see
\cite{GerardSobolev} for an example of a sequence with this
behavior). However, when it is the case, the semiclassical Wigner
distributions contain more information that the microlocal ones
(see Section~\ref{s:link}). On the other hand, if no $h_n$ exists such that
$(u_n^0)$ is $h_n$-oscillating from above and below, the
accumulation points of $W^{h_n}_{u_n}$ may fail to describe
completely the asymptotic phase-space distribution of the sequence
$(u_n)$, either because some mass will escape to $|\xi|=\infty$ or
because the fraction of the mass going to infinity at a rate
slower that $h_n^{-1}$ will give a contribution concentrated on
$\xi=0$. In those cases, the microlocal formulation is still able
to describe the asymptotic distribution of the sequence on the
reduced phase-space $\R^2_{z}\times \R_t\times \IS^2_{\xi, H}$.

This is one of the motivations that has lead us to study both
points of view, semiclassical and microlocal.

\subsection{The billiard flow\label{s:billiard}}

Microlocal or semiclassical analysis provide a connection between
the Schr\"odinger equation and the billiard on the underlying phase space. 
 In this section we first clarify what we mean by ``billiard flow'' in the disk.
 The phase space associated with the billiard flow on the disk can be defined as a quotient of $\ovl{\ID}\times \R^2$ (position $\times$ frequency). We first define the symmetry with respect to the line tangent to the circle $\partial \ID$ at $z \in \partial \ID$ by
$$
\sigma_z(\xi) = \xi - 2(z\cdot \xi)z, \quad \sigma(z,\xi) =(z,
\sigma_z(\xi)) ,\quad z \in \partial \ID .
$$
Then, we work on the quotient space
$$
\W = \ovl{\ID}\times \R^2 / \sim \quad \text{where } (z, \xi) \sim
\sigma(z,\xi) \text{ for } |z| = 1 .
$$
We denote by $\pi$ the canonical projection $\ovl{\ID}\times \R^2
\to \W$ which maps a point $(z, \xi)$ to its equivalence class
modulo $\sim$. Note that $\pi$ is one-one on $\ID\times \R^2$, so
that $\ID\times \R^2$ may be seen as a subset of $\W$.

A function $a \in C^0(\W)$ can be identified with the function
$\tilde{a} = a \circ \pi \in C^0(\ovl{\ID}\times \R^2)$ satisfying
$\tilde{a} (z, \xi) = \tilde{a}\circ \sigma (z, \xi)$ for $(z,
\xi) \in \d \ID\times \R^2$.

The billiard flow $(\phi^\tau)_{\tau\in \R}$ on $\W$ is the
(uniquely defined) action of $\R$ on $\W$ such that the map
$(\tau, z, \xi)\mapsto \phi^\tau(z, \xi)$ is continuous on
$\IR\times \W$, satisfies
$\phi^{\tau+\tau'}=\phi^\tau\circ\phi^{\tau'}$, and such that
$$\phi^\tau(z, \xi)=(z+\tau\xi, \xi)$$
whenever $z\in\ID$ and $z+\tau\xi\in\ID$.

\medskip
In order to understand how the completely integrable dynamics of
the flow $\phi^\tau$ influences the structure of Wigner measures,
we need to introduce coordinates adapted to this dynamics. We
denote by
\begin{equation}
\label{e:def Phi}
\Phi~:(s,\theta,E,J)\mapsto(x,y,\xi_{x},\xi_{y}),
\end{equation}
the set of ``action-angle'' coordinates for the billiard flow (see
also Section \ref{s:coord}), defined by:

\begin{equation*}
\begin{cases}x=\frac{J}{E}\cos\theta-s\sin\theta,\\
y=\frac{J}{E}\sin\theta+s\cos\theta,\\
\xi_x=-E\sin\theta,\\
\xi_y=E\cos\theta.
 \end{cases}
\end{equation*}
These coordinates are illustrated in
Figure~\ref{fig:aacoordinates}. 
The inverse map is given by the formulas
\[%
\begin{cases}
E=\sqrt{\xi_{x}^{2}+\xi_{y}^{2}},\mbox{ (velocity)}\\
J=x\xi_{y}-y\xi_{x},\mbox{ (angular momentum)}\\
\theta=-\arctan\left(  \frac{\xi_{x}}{\xi_{y}}\right),
\mbox{ (angle of $\xi$ with the vertical)}\\
s=-x\sin\theta+y\cos\theta,\mbox{ (abscissa of }(x,y)\mbox{ along
the line $\left(\frac{J}{E}\cos\theta,
\frac{J}{E}\sin\theta\right) + \R\xi$ ).}
\end{cases}
\]
In other words, we have:
\begin{equation*}
\begin{cases}E = |\xi| , \\ J = z \cdot \xi^\perp ,\\
s=z\cdot\frac{\xi}{|\xi|},
\end{cases}
\end{equation*}
where $\xi^{\perp}:= ( \xi_{y},-\xi_{x} )$, and
\begin{equation*}
\begin{cases}
\xi = (\xi_x ,\xi_y) = E(-\sin(\theta) , \cos(\theta)), \smallskip\\

z   = (x,y)          = s(-\sin(\theta) , \cos(\theta)) +
\frac{J}{E}(\cos(\theta), \sin(\theta))   =
\left(z\cdot\frac{\xi}{|\xi|}\right)\frac{\xi}{|\xi|} +
   \left(z\cdot\frac{\xi^\perp}{|\xi|}\right)\frac{\xi^\perp}{|\xi|}.
\end{cases}
\end{equation*}

\begin{figure}[h!]
  \begin{center}
    \input{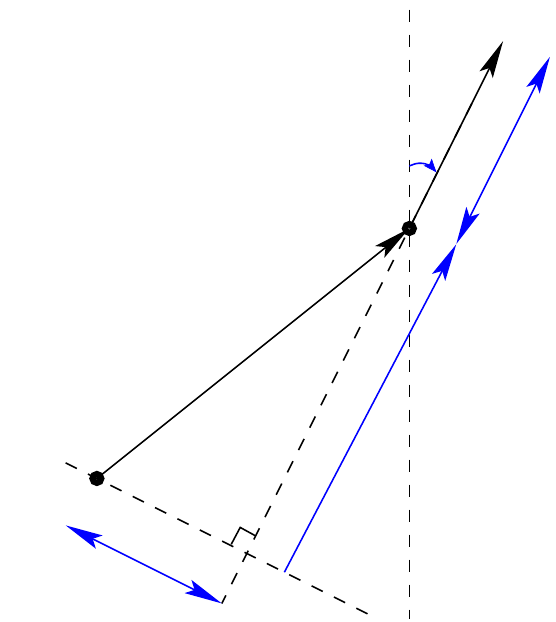_t}
    \caption{Action-angle coordinates for the billiard flow on the disk.}
    \label{fig:aacoordinates}
  \end{center}
\end{figure}
Note that the velocity $E$  and the angular
momentum $J$ are preserved along the free transport flow in $\R^2
\times \R^2$, but also along $\phi^\tau$; the variables $s$ and
$\theta$ play the role of ``angle'' coordinates. We call
$\alpha=-\arcsin\left(\frac{J}E\right)$ the angle that a billiard
trajectory makes with the normal to the circle, when it hits the
boundary (see Figure~\ref{fig:angle-disk}). The quantity $\alpha$
is preserved by the billiard flow.

\begin{figure}[h!]
  \begin{center}
    \input{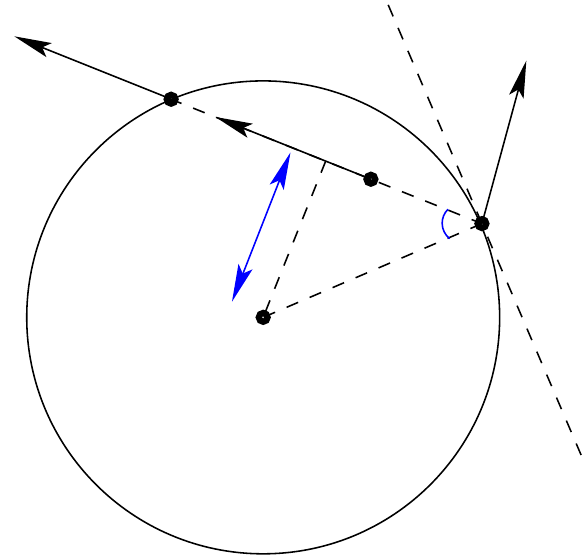_t}
    \caption{Angle $\alpha$.}
    \label{fig:angle-disk}
  \end{center}
\end{figure}

Let us denote $T_{(E, J)}$ the level sets of the pair $(E, J)$,
namely
\begin{equation}
T_{(E, J)} = \{(z,\xi) \in \ovl{\ID} \times \R^2 \;:\; (|\xi|, z
\cdot \xi^\perp) =  (E,J) \}.
\end{equation}
For $E\not=0$ let us denote $\lambda_{E, J}$ the probability
measure on $T_{(E, J)}$ that is both invariant under the billiard
flow and invariant under rotations. In the coordinates $(s,
\theta, E, J)$, we have
$$
\lambda_{E, J}(ds, d\theta)  = c(E,J)ds  d\theta , \quad c(E,J) =
\left(\int_{T(E,J)}ds d\theta \right)^{-1}>0 .
$$
Note that for $E\not= 0$ and $\alpha\in \pi \IQ$ the billiard flow
is periodic on $T_{(E, J)}$ whereas $\alpha\not\in \pi \IQ$
corresponds to trajectories that hit the boundary on a dense set.
More precisely, if $\alpha\not\in \pi \IQ$ then the billiard flow
restricted to $T_{(E, J)}$ has a unique invariant probability
measure, namely $\lambda_{E, J}$.

\subsection{Standard facts about Wigner measures\label{s:prelim-sc}}
  We start formulating the question and results in a semiclassical framework: we have a parameter $h_n$
going to $0$, meant to represent the typical scale of oscillation
of our sequence of initial conditions $(u_n^0)$.

 We simplify the notation by writing $h=h_n$, $u_h^0=u_n^0$.
We will always assume that the functions $u_{h}^0$ are normalized
in $L^{2}(\ID)$. We define $u_h(z, t)=U_V(t) u_h^0 (z)$  (the reader should be aware that $u_h$ satisfies the classical Schr\"odinger Equation~\eqref{e:S}; the index $h$ only reminds its oscillation scale). Since
this is a function on $\ID\times \R$ it is natural to do a
frequency analysis both in $z$ and $t$. Recall that we keep the
notation $u_h$ after the extension by zero outside $\ID$. Recall
that the \emph{semiclassical Wigner distribution} associated to
$u_{h}$ (at scale $h$) is a distribution on the cotangent bundle
$T^{\ast}\IR^2\times T^{\ast}\IR=\IR^2_z\times \IR^2_\xi\times
\R_t\times \R_H$, defined by
\begin{equation}
W_{u_h}^h : a\mapsto
 \left\langle
u_{h},\Op_{1}(a(z, h\xi, t, h^2 H))u_{h}\right\rangle
_{L^{2}(\R^2\times \R)},\qquad \mbox{ for all }a\in
C_{c}^{\infty}(T^{\ast}\IR^2\times T^{\ast}\IR).\label{e:defW}
\end{equation}

The scaling $\Op_{1}(a(z, h\xi, t, h^2 H))$ is performed in order
to capture all the information whenever $u_h$ is $h$-oscillating from above
(if $u_h$ is not $h$-oscillating from above, the discussion below remains
entirely valid but part of the information about $u_h(z,t)$ is
lost when studying $W^h_{u_h}(a)$). Under this assumption,  if $a$ is a function on $T^{\ast}\IR^2\times T^{\ast}\IR $ that depends only on $(z, t)$, we have
\begin{equation}
W^h_{u_h}(a)=\int_{\ID%
}a(z, t)|u_{h}(z, t)|^{2}dzdt . \label{e:proj}%
\end{equation}
When no confusion arises, we shall denote $W_{h}$ for $W^h_{u_h}$.

By standard estimates on the norm of $\Op_{1}(a)$, it follows that $W_h$ belongs to $\cD^{\prime}\left(  T^{\ast}\IR^2\times T^{\ast}\IR\right)  $, and is uniformly bounded in that space as
$h\To0^{+}$. Thus, one can extract subsequences that converge in
the weak-$\ast$ topology of $\cD^{\prime}\left(
T^{\ast}\IR^2\times T^{\ast}\IR\right)$. In other words, after
possibly extracting a subsequence, we have
\begin{equation}\label{e:wlimite}
W_h(a)\Lim_{h\To0} \mu_{sc}(a)\end{equation} for all $a\in
C_c^\infty\left(  T^{\ast}\IR^2\times T^*\R\right)$.

In this paper such a measure $\mu_{sc}$ will be called a {\em
semiclassical Wigner measure}, or in short {\em semiclassical
measure}, associated with the initial conditions $(u_h^0)$ and the
scale $h$.

\begin{remark} \label{r:H1}Fix $R>0$; Remark \ref{r:osc} below tells us that in order to compute the restriction of $\mu_{sc}$ to the set $\{|H|<R\}$ we may, without loss of generality, assume that $u_h^0\in H_0^1(\ID)$ and $\norm{\nabla u_h^0}_{L^2(\ID)}=O_R(h^{-1})$. In that case Proposition \ref{p:regboundary} says that the boundary data
$h\partial_{n}\left( U_V(t){u}_{h}^0\right) $ form a bounded
sequence in $L_{\loc}^{2}\left(\mathbb{R}\times \partial\ID
 \right)  $. We can work under these assumptions when necessary. This determines $\mu_{sc}$ completely as $R$ is arbitrary.
\end{remark}

It follows from standard properties of pseudodifferential
operators that the limit $\mu_{sc}$ in \eqref{e:wlimite} has the
following properties:

\begin{itemize}
\item $\mu_{sc}$ is a nonnegative measure, of the form $\mu_{sc}(dz,
d\xi, dt, dH)=\mu_{sc}(dz, d\xi, t, dH)dt$ where $t\mapsto \mu_{sc}(t) \in
L^{\infty}(\R_t;\cM_{+}(T^* \IR^2\times \R_H))$. Moreover, for
a.e. $t\in \R$, $\mu_{sc}(t)$ is supported in $\{|\xi|^2=2H\} \cap
\left(\ovl{\ID} \times \IR^2\times \R_H\right)$. See \cite{GMMP,
LionsPaul} for a proof of nonnegativity; the time regularity and the localization of the support are shown
in Proposition \ref{p:regt}.

\item From the normalization of $u_{h}^0$ in $L^2$, we have for
a.e. $t$:
$$\int_{\ovl{\ID}\times \R^2 \times \R}\mu_{sc}(dz,d\xi, t,
dH)\leq1,$$ the inequality coming from the fact that
$\ovl{\ID}\times \R^2\times\R$ is not compact, and that there may
be an escape of mass to infinity (however, if $u_h^0$ is
$h$-oscillating from above, escape of mass does not occur and we
have $\int_{\ovl{\ID}\times \R^2 \times \R}\mu_{sc}(dz,d\xi, t,
dH)=1$).

\item The standard quantization enjoys the following property:
\begin{equation}
\left[-\frac{ih}2\Delta,
\Op_{h}(a)\right]=\Op_{h}\left(\xi\cdot\partial_z
a-\frac{ih}2\Delta_z a\right) ,
\label{e:weylcomm}
\end{equation}
where $\Delta$ is the Laplacian on $\R^2$. From this identity, one
can show that
\begin{equation}\label{e:transport}\int_{\ovl{\ID}\times
\R^2\times\R}\xi\cdot\partial_z a\,\, \mu_{sc}(dz,d\xi, t,
dH)=0\end{equation} for a.e. $t$ and for every smooth $a$ such
that $a(z, \xi, t, H)=a(z, \sigma_z(\xi), t, H)$ for $|z|=1$.
Equivalently,
$$\int_{\ovl{\ID}\times \R^2\times\R}  a\circ \phi^\tau \circ \pi (z, \xi,t, H)
\mu_{sc}(dz,d\xi, t, dH)=\int_{\ovl{\ID}\times \R^2\times\R}  a \circ
\pi (z, \xi, t, H) \mu_{sc}(dz,d\xi, t, dH)$$ for every $a\in C^0(\W)$,
$\tau\in \R$ -- where $ \phi^\tau$ is the billiard flow in the
disk and $\pi :  \ovl{\ID}\times \R^2 \to \W$ the canonical
projection, defined in Section~\ref{s:billiard}. In other words, $\pi_*
\mu$ is an invariant measure of the billiard flow.

We refer to Section~\ref{s:mesboun} for a more general version
of~\eqref{e:weylcomm} (as formulated in~\cite{GerLeich93}, see also~\cite{RZ:09})
involving a measure associated to boundary traces.

 \end{itemize}

\subsection{The structure theorem: semiclassical formulation\label{s:mainthsc}}

Now comes our central result, giving the structure of
semiclassical measures arising as weak-$\ast$ limits of sequences
$\left(  W_h\right)  $ associated to solutions of \eqref{e:S}. As
a by-product it clarifies the dependence of $\mu_{sc}(t,\cdot)$ on
the time parameter $t$. It was already noted in \cite{MaciaAv}
that the dependence of $\mu_{sc}(t,\cdot)$ on the sequence of
initial conditions is a subtle issue.

The statement of Theorem \ref{t:precise} is technical and needs
introducing some notation. We first restrict our attention to the
case where the initial conditions $(u_h^0)$ are $h$-oscillating
from below, or equivalently $\mu_{sc}$ does not charge $\{\xi=0\}$
(otherwise, the restriction of $\mu_{sc}$ to $\{\xi=0\}$ will be
better understood at the end of Section~\ref{s:link}).

The notation $(s, \theta, E, J)$, $\alpha$ is as in Section~\ref{s:billiard}. 
Here we restrict our discussion to $E\not=0$.
For each $\alpha_0\in \pi \IQ\cap (-\pi/2, \pi/2)$ we will
introduce a flow $(\phi^\tau_{\alpha_0})$ on the billiard phase
space $\W$, all of whose orbits are periodic (Lemma
\ref{l:modes}). It coincides with the billiard flow on the set
$$
\cI_{\alpha_0}=\{(s, \theta, E, J) \in \Phi^{-1}(\ovl{\ID} \times \R^2) , J=-\sin\alpha_0 E\} = \{\alpha = \alpha_0\},
$$ which is the union of all the lagrangian manifolds $T_{(E, J)}$ with $J=-\sin\alpha_0 E$.
If $a$ is a function on $\W$, we shall denote by $\la
a\ra_{\alpha_0}$ its average along the orbits of
$\phi^\tau_{\alpha_0}$ (actually, $\la a\ra_{\alpha_0}$ is well
defined even if $a$ is not symmetric with respect to the boundary,
since the set of hitting times of the boundary has measure $0$).
In the coordinates $(s, \theta, E, J)$, this is a function whose
restriction to $\cI_{\alpha_0}$ does not depend on $s$.

We will denote by
$$
m_{ a }^{\alpha_0}(s, E, t, H)
$$
the operator on $L^2_{\loc}( \R_{\theta})$ acting by
multiplication by the function
$$
a \left( \Phi(s, \theta,  E, - E \sin\alpha_0), t,H \right).
$$
If $a$ is a symmetric function (i.e. a function on $\W$), remark
that $m_{\left\langle a\right\rangle _{\alpha_0}}^{\alpha_0}$ does
not depend on the variable $s$. For our potential $V$, the
function $\left\langle V\right\rangle _{\alpha_0}\circ\Phi$
depends only on $\theta$ (and $t$ if $V$ is time-dependent).

Given $\omega\in \R/2\pi\Z$, we denote by $U_{\alpha_0,
\omega}(t)$ the unitary propagator of the equation
\[
-\cos^2\alpha_0 D_t v(t,\theta)+\left(-\frac{1}{2}\partial_\theta^2+
\cos^2\alpha_0\la
V\ra_{\alpha_0}\circ\Phi\right) v(t,
\theta)=0
\]
acting on the Hilbert space
$$ \cH_\omega= \{v \in L^2_{\loc}(\IR) : v(\theta + 2\pi) =
v(\theta)e^{i\omega}, \text{ for a.e. } \theta \in \R \} ,
$$
i.e. with Floquet-periodic condition. In the statements below, each
$\cH_\omega$ is identified with $L^2(0, 2\pi)$ by taking
restriction of functions to $(0, 2\pi)$.

\begin{theorem}
\label{t:precise} Let $(u^0_{h})$ be a family of initial data,
assumed to be $h$-oscillating from below. There exists a 
subsequence of $W_h $ converging weakly-$\ast$ to a
positive measure $\mu_{sc}$ that can be decomposed into a countable sum of non-negative measures: 
$$\mu_{sc}=\nu_{Leb}+\sum_{\alpha_0\in \pi\IQ\cap[-\pi/2, \pi/2]}\nu_{ \alpha_0},$$
satisfying:

\begin{itemize}
\item[(i)] Each of the measures in the decomposition above is carried by the
set $\{H=\frac{E^2}2\}$ and is invariant under the billiard flow.

\item[(ii)] The measure $\nu_{Leb}$ is constant in $t$;
$\nu_{Leb}$ is of the form $\int_{E>0, |J|\leq E} \lambda_{E, J}
d\nu'(E, J) dt$ for some nonnegative measure $\nu'$ on $\R^2$. In
other words $\nu_{Leb}$ is a combination of Lebesgue measures on
the invariant ``tori'' $T_{(E, J)}$.

\item[(iii)] For every $\alpha_0\in \pi\IQ\cap(-\pi/2, \pi/2)$, the measure $\nu_{\alpha_0}$ is carried by the set
$\cI_{\alpha_0}\cap\{H=E^2/2\}$ and is characterized by
\begin{align}
\label{e:structnualpha}
\int_{\cI_{\alpha_0}} a  \, d\nu_{\alpha_0} = \int_{\cI_{\alpha_0}} \Tr_{L ^{2}(0, 2\pi)}\left(
m_{\la a\ra_{\alpha_0}  }^{\alpha_0}\,\sigma_{\alpha_0 }\,\right)
d \ell_{\alpha_0}, \quad \text{for all } a\in C^\infty_c(T^*\R^2 \times T^*\R), 
\end{align}
where 
$\ell_{\alpha_0}(d\omega, dE,  dH, dt)$ is a nonnegative measure 
on $\R/2\pi\Z\times \R_E\times \R_H
\times\R_t$, and 
$$
\sigma_{\alpha_0}  : (\R/2\pi\Z)_\omega \times  \R_E\times \R_H
\times\R_t \to \mathcal{L}^1_+ \big(L^2(0,2\pi)\big) ,
$$
is integrable with respect to $\ell_{\alpha_0}$, continuous with respect to $t$ and takes values in the
set of nonnegative trace-class operators on $L^2(0,2\pi)$. In addition,  $\sigma_{\alpha_0}$ satisfies, for $\ell_{\alpha_0}$-almost every $(\omega, E,H)$:
\begin{equation}
\label{e:propagsigmaell}
\sigma_{\alpha_0}(\omega, E,H , t) = U_{\alpha_0 , \omega}(t)
\sigma_{\alpha_0}(\omega, E,H , 0)U_{\alpha_0 , \omega}^*(t) .
\end{equation}
\noindent Finally, $\ell_{\alpha_0}$ and $\sigma_{\alpha_0}(\cdot , 0)$ only depend on the sequence of initial conditions $(u_h^0)$. 

\item[(iv)] For $\alpha_0=\pm\frac\pi{2}$,  $\nu_{\alpha_0}$ is a
measure that does not depend on $t$, carried by $\{H=|\xi |^2/2\}\cap T^*\d\ID$ and
 is invariant under rotations
around the origin.

\end{itemize}
\end{theorem}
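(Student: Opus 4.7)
The first step is to disintegrate $\mu_{sc}$ along the conserved quantities of the billiard flow. Since $E$ and $\alpha = -\arcsin(J/E)$ are first integrals and $\pi_*\mu_{sc}$ is billiard-invariant, one can write $\mu_{sc}$ as an integral of invariant probability measures on the tori $T_{(E,J)}$. For irrational $\alpha \notin \pi\mathbb{Q}$ unique ergodicity forces the conditional measure to be $\lambda_{E,J}$; this accounts for the irrational part of $\nu_{Leb}$. The separation of the remaining mass into measures supported on the sets $\mathcal{I}_{\alpha_0}$ with $\alpha_0 \in \pi\mathbb{Q}\cap[-\pi/2,\pi/2]$, together with a possible Lebesgue contribution from rational tori, is obtained by a classical disintegration argument in the variable $\alpha$. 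The case $\alpha_0 = \pm\pi/2$ (item (iv)) is essentially the whispering-gallery situation at the boundary: here both actions $E$ and $J$ collapse ($|J|=E$) and an averaging argument using rotation invariance (which should be derivable from the commutation $[U_V(t),-\Delta_D]$-type relations after reduction to the boundary) gives rotational symmetry; time-independence follows from the fact that the orbit through any such point is confined to $\partial\ID$ and carries a unique invariant measure up to rotation.

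The heart of the proof is the analysis of $\nu_{\alpha_0}$ for $\alpha_0\in \pi\mathbb{Q}\cap(-\pi/2,\pi/2)$, where the billiard flow restricted to $\mathcal{I}_{\alpha_0}$ is periodic and carries many invariant measures. The plan is to pass to action-angle coordinates via the map $\Phi$ of \eqref{e:def Phi}, using a Fourier integral operator as in the strategy sketched at the end of Section~1.4, and to perform a second microlocalization of $W_h$ at scale $1$ around $\{J = -E\sin\alpha_0\}$, in the spirit of \cite{MaciaTorus,AnantharamanMaciaTore,AnantharamanKMacia}. Concretely, one splits test symbols $a$ supported near $\mathcal{I}_{\alpha_0}$ into the region $|J+E\sin\alpha_0|\gg h$, which feeds the averaged part $\langle a\rangle_{\alpha_0}$ and ultimately contributes only to the Lebesgue piece $\nu_{Leb}$, and the region $|J+E\sin\alpha_0|=O(h)$, where one rescales $J$ at scale $h$ and extracts an operator-valued limit $\sigma_{\alpha_0}$ acting on $L^2_{\mathrm{loc}}(\R_\theta)$. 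The Floquet parameter $\omega$ arises because the rescaled $J$-variable is the dual to $\theta$ on $\R/2\pi\Z$, so the limit naturally decomposes as a direct integral over $\omega\in\R/2\pi\Z$, and the positivity/trace-class property comes from standard operator-valued semiclassical measure theory.

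To derive the propagation law \eqref{e:propagsigmaell}, the key step is to compute the commutator of the semiclassical Schrödinger operator $h^2 D_t + \tfrac{h^2}{2}\Delta - h^2 V$ with a suitably second-microlocalized quantization. Writing the equation in the $(s,\theta,E,J)$ variables and expanding to second order in $(J+E\sin\alpha_0)/h$, the leading terms that survive after $s$-averaging are precisely $-\cos^2\alpha_0\, D_t$ (from the $H$-scaling), $-\tfrac12\partial_\theta^2$ (from the kinetic energy restricted to the torus, where the radial motion is fast-averaged and only the angular Laplacian remains, weighted by $\cos^2\alpha_0$ coming from the Jacobian of $\Phi$), and the averaged potential $\cos^2\alpha_0\langle V\rangle_{\alpha_0}$. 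The vanishing of the first-order term along the billiard flow, already encoded in \eqref{e:transport}, ensures that only the second-order correction enters. Passing to the Floquet decomposition then yields the Heisenberg equation on each fiber $\cH_\omega$, whose unitary propagator is $U_{\alpha_0,\omega}(t)$; this gives \eqref{e:propagsigmaell}. The time-independence of $\nu_{Leb}$ follows because it corresponds to symbols averaging to zero transversely (hence insensitive to the second-microlocal dynamics) and the billiard-invariance already implies no $t$-evolution at this scale.

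The main obstacle I expect is the careful bookkeeping at the boundary. Action-angle coordinates are singular on $\partial\ID$ (where $s$ is not a good coordinate and the map $\Phi$ degenerates along $J=\pm E$), while polar coordinates are adapted to the Dirichlet condition but conceal the integrability. Following the approach of \cite{GerLeich93}, one will need to push the asymptotic expansions of boundary traces one order further than usual to capture the second-microlocal correction, and to verify that the boundary contributions in the analogue of \eqref{e:weylcomm} are compatible with the rescaling in $J$. Controlling these boundary remainders, and matching them consistently through the Fourier integral conjugation between the two coordinate systems, is the technically demanding part of the argument; once it is done, the structural statements (i)--(iv) follow from the ergodic decomposition above and from the operator-valued second-microlocal calculus.
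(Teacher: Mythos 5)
Your plan follows essentially the same route as the paper: decomposition of $\mu_{sc}$ according to the reflection angle $\alpha$, unique ergodicity on irrational tori, a second microlocalization at scale $h$ in the variable $J+E\sin\alpha_0$ carried out in action--angle coordinates via a Fourier integral operator, a Floquet-decomposed operator-valued limit for the part concentrated at scale $O(h)$, a second-order commutator expansion for the propagation law, and the back-and-forth between action--angle and polar coordinates to handle the Dirichlet condition. This is the paper's architecture almost point for point.

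One justification in your plan is, however, wrong as stated and hides a genuinely needed lemma. You claim that the time-independence of $\nu_{Leb}$ follows because ``billiard-invariance already implies no $t$-evolution at this scale.'' Billiard invariance alone implies nothing about the $t$-dependence: the measures $\nu_{\alpha_0}$ are also invariant under the billiard flow yet evolve nontrivially in $t$ through the Heisenberg equation. What actually pins down $\nu_{Leb}$ is the combination of two separate facts. First, the far part $\mu^{\alpha_0}$ (the region $|J+E\sin\alpha_0|\gg h$) must be shown to be invariant not only under $E\,\partial_s$ but also under $\partial_\theta$; this is the content of Theorem~\ref{Thm Properties}~(i), and it rests on the identity $\partial_\theta b=-\tfrac{\cos\alpha}{\alpha_0-\alpha}\,\partial_s b$ for $\phi^\tau_{\alpha_0}$-invariant symbols together with the bound $\bigl|\tfrac{\cos\alpha}{\alpha_0-\alpha}\bigr|\le C(hR)^{-1}$ on the support of the cutoff, plus the boundary cancellation; it is not automatic from averaging. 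The joint $(s,\theta)$-invariance is what forces $\mu^{\alpha_0}$ to be a superposition $\int c(t,E,J)\,\lambda_{E,J}\,d\nu'(E,J)$. Second, to kill the $t$-dependence of the coefficient $c(t,E,J)$ one needs the separate statement that the pushforward of $\mu_{sc}$ under the moment map $(z,\xi)\mapsto(E,J)$ is independent of $t$ (Proposition~\ref{p:barmu}), proved by commuting with functions of $(2hD_t,hD_u)$, which are tangential and hence commute with $\Delta_D$. Your plan should make both of these steps explicit; as written, item~(ii) does not follow from what you assert.
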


\begin{remark}
\label{r:solo1}
The conclusion of the above results (as well as their counterparts in the next section) also holds for semiclassical
measures associated to sequences of approximate solutions of the
Schr\"odinger equation, i.e. satisfying
\begin{equation*}
\left( D_t +\frac{1}{2}\Delta-V(t,z)\right) u_h (z,t) =
o_{L^2_{\loc}(\ID\times\R)}(1) .
\end{equation*}
Note that, as in the torus
case~\cite{AnantharamanMaciaTore,AnantharamanKMacia},
Corollary~\ref{t:example} also holds for solutions of
\begin{equation*}
\left( D_t +\frac{1}{2}\Delta\right) u_h (z,t) =
O_{L^2_{\loc}(\ID\times\R)}(1) ,
\end{equation*}
which includes for instance the case of potentials $V \in L^\infty(\R ;\mathcal{L}(L^2(\ID)))$ (see also~\cite{Burq:13} for related results).
\end{remark}

\subsection{The structure theorem: microlocal formulation\label{s:mlst}}

We now give the microlocal version of Theorem \ref{t:precise}. The
main difference is that we now use the class of test functions
$\cS_0$ defined in Section~\ref{s:versus}.

Let $(u_n^0)$ be a sequence of initial conditions, normalized in
$L^2(\ID)$. Denote $u_n(\cdot, t):=U_V(t)u_n^0$ and recall that $u_n$ also denotes the extended function by zero to whole $\R^2$. All over the paper
we let $\chi\in C_{c}^{\infty}\left(  \mathbb{R}\right)  $ be a
nonnegative cut-off function that is identically equal to one near
the origin. Let $R>0$. For
$a\in\mathcal{S}_0 $, we define%
\[
\left\langle W_{n, R}^{\infty}   ,a\right\rangle :=
 \left\la u_n,  \Op_1\left( \left(  1-\chi\left(  \frac{|\xi|^2+ |H|}{R^2 }\right)\right)  a(z, \xi, t, H )\right) u_n\right\ra_{L^2(\IR^2 \times \IR)}  ,
\]
and%
\begin{equation}\left\langle W_{c, n, R}   ,a\right\rangle :=
 \left\la u_n,  \Op_1\left( \chi\left(  \frac{|\xi|^2+ |H|}{R^2}\right)  a( z, \xi, t, H  )\right)u_n\right\ra_{L^2(\IR^2 \times \IR)}.
\end{equation}
The Calder\'{o}n-Vaillancourt theorem~\cite{CV:71} ensures that both
$W_{n,R}^{\infty}$ and $W_{c,n, R}$ are bounded in $
\mathcal{S}_0^{\prime}   $. After possibly extracting
subsequences, we
have the existence of a limit: for every $a\in \mathcal{S}_0 $,%
\begin{equation}\label{e:convmuinfty}
  \left\langle {\mu}^{\infty
}   ,a\right\rangle :=\lim_{R\rightarrow\infty}%
\lim_{n\rightarrow +\infty}   \left\langle W_{n,R}^{\infty}
,a\right\rangle ,
\end{equation}
and%
\begin{equation}
   \left\langle {\mu}_{c
}   ,a\right\rangle  :=\lim_{R\rightarrow\infty}%
\lim_{n\rightarrow +\infty}   \left\langle
W_{c,n, R}   ,a\right\rangle .  %
\end{equation}

As a consequence, after extraction, the subsequence $W_n$ converges weakly-$\ast$ to a
limit $\mu_{ml}\in \cS_0'$, which can be
decomposed into $$\mu_{ml}=\mu^\infty+\mu_c. $$

The two limit objects $\mu_c$ and $\mu^\infty$ enjoy the following first properties:
\begin{itemize}
\item  The distribution $\mu_c$ vanishes if and only if the family $(u^0_n)$ converges weakly to $0$ in $L^2(\ID)$; in other words $\mu_c$ reflects the ``compact part'' of the sequence $(u^0_n)$, hence the subscript $_c$ in the notation.
\item The distribution ${\mu}^{\infty}
$ is nonnegative, $0$-homogeneous and supported at infinity in the
variable $(\xi, H)$ ($i.e.$, it vanishes when paired with a
compactly supported function). 
As a consequence, ${\mu}^{\infty}
$ may be identified
 with a nonnegative Radon measure on $\IR^2_z \times\R_t\times \IS^2_{\xi, H}$. 
Actually, $\mu^\infty$ is the microlocal defect measure of~\cite{GerardMDM91} (with the appropriate class of symbols $\cS_0$). 
 \item In addition, $\mu^\infty$ is of the form $\mu^\infty(dz,
d\xi, dt, dH)=\mu^\infty(dz, d\xi, t, dH)dt$ where $t\mapsto \mu^\infty(t) \in
L^{\infty}(\R_t;\cM_{+}(\IR^2_z\times \IS^2_{\xi, H} ))$. Moreover, for
a.e. $t\in \R$, $\mu^\infty(t)$ is supported in $\{|\xi|^2=2H\} \cap
(\ovl{\ID} \times \IS^2_{\xi, H} )$.
\item The projection of the distribution $\mu_{ml} = \mu_c + \mu^\infty$ on the $(z,t)$-variables is the Radon measure $\nu$ defined in the introduction (Section~\ref{sec:intro}). From the normalization of $u_{n}^0$ in $L^2$, we have for
a.e. $t$: 
$$\int_{\ovl{\ID}\times  \IS^2_{\xi, H} }\mu^\infty(dz,d\xi, t,
dH)\leq1 ;$$
if $u_n^0 \rightharpoonup 0$ in $L^2(\ID)$, then we have $\int_{\ovl{\ID}\times  \IS^2_{\xi, H} }\mu^\infty (dz,d\xi, t,
dH)=1$.
\item  The measure $\mu^{\infty}  $ satisfies the invariance property:%
\begin{equation}\label{e:transport2}
\left\la {\mu}^{\infty}  ,  \frac{\xi}{\sqrt{2H}}.\partial_z a  \right\ra =0 %
\end{equation}
for $a$ satisfying the symmetry condition $a(z, \xi,t,  H)=a(z,
\sigma_z(\xi), t, H)$ for $|z|=1$. In other words, $\pi_* \mu^\infty$ is invariant by the billiard flow.

\end{itemize}
These properties are well-known and won't be proven
in detail here (the fact that it is carried on
$\{H=\frac{|\xi|^2}2\}$ follows from Appendix~\ref{s:osc} and the proof
of invariance is essentially contained in \cite{GerLeich93} or~\cite{RZ:09}). 

\bigskip
Let us now discuss separately the finer properties of $\mu^\infty$ (high frequencies) and of $\mu_c$ (low frequencies).

%
%
%
%
%
%
%
%

We first describe $\mu^\infty$ and state the analogue of Theorem~\ref{t:precise} in the microlocal setting.
%
As previously we call $T_{(E, J)}$
the level sets of $(E, J)$ and $\cI_{\alpha_0}=\{J=-\sin\alpha_0
E\}$. The only difference with the semiclassical formalism is that
the test functions are homogeneous and thus the measure
$\mu^\infty$ is naturally defined on $\IR^2_z \times\R_t\times
\IS^2_{\xi, H} $ supported by $\ovl{\ID}\times\R_t\times
\IS^2_{\xi, H} $. The microlocal version of Theorem
\ref{t:precise} reads as follows:

\begin{theorem}
\label{t:preciseml} Let
$(u_{n}^0)$ be normalized in $L^2(\ID)$, and such that \eqref{e:convmuinfty} holds. Then the measure
$\mu^\infty$ can be decomposed as a countable sum of nonnegative
measures on $\IR^2 \times\R_t\times \IS^2 $:
$$\mu^\infty =\mu_{Leb}+\sum_{\alpha_0\in \pi\IQ\cap[-\pi/2, \pi/2]}\mu_{ \alpha_0} ,$$
satisfying:
\begin{itemize}
\item[(i)] Each of the measures in the above decomposition is carried by
$\ovl{\ID}\times\R_t\times \IS^2 \cap \{|\xi|^2=2H\}$ and by the cone $\{|J|\leq E\}$, and is invariant
under the billiard flow.

\item[(ii)] The measure $\mu_{Leb}$ does not depend on $t$;
$\mu_{Leb}$ is of the form $\int_{E>0, |J|\leq E} \lambda_{E, J}
d\mu'(E, J) dt$ for some nonnegative measure $\mu'$ on $\IS^1$ (i.e.
the set of pairs $(E, J)$ modulo homotheties).

\item[(iii)]For every $\alpha_0\in \pi\IQ\cap(-\pi/2, \pi/2)$, the measure $\mu_{\alpha_0}$ is carried by the set
$\cI_{\alpha_0}\cap\{H=E^2/2\}$ and is defined by:
\begin{align*}
\int_{\cI_{\alpha_0}} a   d\mu_{\alpha_0} = \int_{\cI_{\alpha_0}} \Tr_{L ^{2}(0, 2\pi)}\left( m_{
\langle a\rangle_{\alpha_0}  }^{\alpha_0}\,\sigma_{\alpha_0 }\,\right) d \ell_{\alpha_0},\quad \text{for all } a\in\cS_0,
\end{align*}
where $\ell_{\alpha_0}(d\omega, dE,  dH, dt)$
is a non-negative measure on
$\R/2\pi\Z\times\{E^2+2|H|=2\}\times\R_t$ carried by $\{H=E^2/2\}$ and 
$$ \sigma_{\alpha_0}  : (\R/2\pi\Z)_\omega \times
\{E^2+2|H|=2\}\times\R_t \to \mathcal{L}^1_+ \big(L^2(0,2\pi)\big)
,
$$
is integrable with respect to $\ell_{\alpha_0}$, continuous in $t$ and takes values in the
set of nonnegative trace-class operators on $L^2(0,2\pi)$.

\noindent Moreover, for $\ell_{\alpha_0}$-almost every $(\omega, E,H)$, we
have
$$
\sigma_{\alpha_0}(\omega, E,H , t) = U_{\alpha_0 , \omega}(t)
\sigma_{\alpha_0}(\omega, E,H , 0)U_{\alpha_0 , \omega}^*(t) .
$$
Finally, $\ell_{\alpha_0}$ and $\sigma_{\alpha_0}(\cdot , 0)$ only depend on the sequence of initial conditions $(u_n^0)$.

\item[(iv)] For $\alpha_0=\pm\frac\pi{2}$,  $\mu_{\alpha_0}$ does
not depend on $t$, it is a measure carried by the set
$\cI_{\pm\frac\pi{2}}\cap\{H=E^2/2\}$ (which consists of vectors tangent
to $\partial\ID$) and is invariant under rotations around the
origin.
\end{itemize}

\end{theorem}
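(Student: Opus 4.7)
My plan is to combine three ingredients: a Fourier integral operator conjugating the problem to action-angle coordinates, an ergodic decomposition of billiard-invariant measures on the irrational tori, and a second microlocalization at scale $1$ on each resonant set $\cI_{\alpha_0}$ with $\alpha_0 \in \pi\IQ$. The guiding principle is that the various parts of $\mu^\infty$ are distinguished by the arithmetic nature of $\alpha$: on irrational tori the billiard is uniquely ergodic, so the first-order billiard invariance of $\mu^\infty$ alone determines the measure; on rational tori the dynamics is periodic and many invariant measures coexist, so one must zoom in at the uncertainty-principle scale to see the finer structure that is invisible at scale $h$.

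First I would adapt the FIO construction of \cite{GerLeich93} to intertwine the Dirichlet Laplacian, written in polar coordinates, with an operator whose principal symbol is diagonal in $(s,\theta,E,J)$, treating the boundary reflection $\sigma_z$ as a Floquet-type identification in $\theta$. With this conjugation in hand, the known invariance of $\mu^\infty$ under the billiard flow, together with the conservation of $\alpha = -\arcsin(J/E)$, yields a disintegration of $\mu^\infty$ along the foliation by values of $\alpha$. On $\{\alpha \notin \pi\IQ,\, |\alpha| < \pi/2\}$ unique ergodicity of the billiard on each torus $T_{(E,J)}$ forces the restriction of $\mu^\infty$ there to be a convex combination of the measures $\lambda_{E,J}$, which after integration in $(E,J)$ gives the Lebesgue component $\mu_{Leb}$ of item (ii). The remaining mass concentrates on the resonant leaves $\cI_{\alpha_0}$ and, at $\alpha_0 = \pm\pi/2$, on the tangent set.

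On each leaf $\cI_{\alpha_0}$ with $\alpha_0 \in \pi\IQ \cap (-\pi/2,\pi/2)$ I would perform a second microlocalization in the spirit of \cite{MaciaTorus,AnantharamanMaciaTore,AnantharamanKMacia}. Concretely, I would test $\mu^\infty$ against symbols of the form $a(z,\xi,t,H)\,\chi\bigl(\epsilon^{-1}(J + E\sin\alpha_0)\bigr)$, pass to the limit $n \to \infty$ first and then $\epsilon \to 0$, and average $a$ along the auxiliary periodic flow $\phi^\tau_{\alpha_0}$ that coincides with the billiard on $\cI_{\alpha_0}$; the part of the symbol not invariant under $\phi^\tau_{\alpha_0}$ disappears in the limit thanks to the first-order invariance. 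This extracts the operator-valued measure $\sigma_{\alpha_0}(\omega,E,H,t)$, in which the Floquet parameter $\omega$ records the scale-$1$ localization in the angle $\theta$ conjugate to $J$, acting on $L^2(0,2\pi)$ by the multiplication $m^{\alpha_0}_{\langle a \rangle_{\alpha_0}}$; this is exactly formula \eqref{e:structnualpha}. The propagation law \eqref{e:propagsigmaell} then follows by commuting $D_t + \frac{1}{2}\Delta - V$ against the second-microlocal test operator: at the subprincipal level the transverse ($J$) direction contributes $-\frac{1}{2}\partial_\theta^2$ after rescaling time by the billiard period factor $\cos^2\alpha_0$, and the potential contributes through its orbit average $\langle V \rangle_{\alpha_0}$, which together define the unitary evolution $U_{\alpha_0,\omega}(t)$.

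The tangent case $\alpha_0 = \pm \pi/2$ must be handled separately because the action-angle chart degenerates there; any billiard-invariant measure supported on tangent vectors is automatically rotation invariant (the billiard reducing to rotation along $\partial \ID$), and time independence can be obtained via a boundary-layer argument using the conservation of $E$ and $H$. The main obstacle, in my view, is making the second microlocalization rigorous up to the boundary: the FIO of the first step is symplectic only away from glancing directions and the reflection introduces a fold that must be absorbed into the Floquet parameter, which forces pushing the symbolic expansions of \cite{GerLeich93} one order beyond what is needed for first-order invariance so as to capture the $\partial_\theta^2$ term responsible for the effective Schr\"odinger evolution on the circle. A related delicate point is the commutativity of the limits $n \to \infty$ and $\epsilon \to 0$ and the identification of the operator-valued object $\sigma_{\alpha_0}$ as trace class, which requires uniform estimates on the second-microlocal operators compatible with the reflection and with the going back and forth between $(z,\xi)$ and $(s,\theta,E,J)$ coordinates.
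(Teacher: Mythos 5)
Your proposal follows essentially the same route as the paper: conjugation to action-angle variables by the FIO built from the generating function of $\Phi$, unique ergodicity on the irrational tori yielding $\mu_{Leb}$, a scale-$1$ second microlocalization in $J+E\sin\alpha_0$ on each rational leaf producing the operator-valued measure $\sigma_{\alpha_0}$ with its Heisenberg propagation (subprincipal $-\tfrac12\partial_\theta^2$, time factor $\cos^2\alpha_0$, averaged potential), and a separate rotation-invariance argument at $\alpha_0=\pm\pi/2$; you also correctly single out the need to push the expansions of \cite{GerLeich93} one order further to handle the boundary. The only descriptive imprecision is that in the paper the reflection is not absorbed into the Floquet parameter (which arises purely from the $2\pi$-periodicity in $\theta$) but is handled by the symmetry conditions (B)--(C) on symbols after returning to polar coordinates.
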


To conclude the description of $\mu_{ml}$, it now remains to describe more precisely $\mu_c$.
 \begin{theorem}\label{t:muc}
There exists a nonnegative trace class operator $\rho_0$ on the
Hilbert space $L^2(\ID)$ such that
\begin{equation}\label{e:defmuc}\la{\mu}_{c
}, a\ra= \int \Tr_{L^2(\ID)} \left\{ U_V(t)^{-1}\mathds{1}_{\ID}\Op_1 (a(x, \xi, t,
H)) \mathds{1}_{\ID} U_V(t)\rho_0 \right\} dt\end{equation}
(the meaning of this
expression is clarified in Section~\ref{s:muc}).

As a consequence, the projection of ${\mu}_{c }$ on $\ID\times
\R_t$ is a nonnegative Radon measure, which is absolutely continuous,
and continuous with respect to $t$.

\end{theorem}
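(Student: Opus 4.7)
The plan is to express $\mu_c$ via a density operator $\rho_0$ obtained as a weak-$*$ limit of the density matrices $|u_n^0\rangle\langle u_n^0|$ in the trace class, evolved by $U_V(t)$ and paired with a low-frequency pseudodifferential operator. The essential point is that the cut-off $\chi((|\xi|^2+|H|)/R^2)$ makes the relevant operators compact, so that the weak-$*$ convergence in the trace class suffices to pass to the limit in $n$; one then sends $R\to\infty$ by means of uniform bounds.

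\textbf{Step 1 (Extraction of $\rho_0$).} Since $\|u_n^0\|_{L^2(\ID)}=1$, the density operators $\rho_n:=|u_n^0\rangle\langle u_n^0|$ are rank-one projectors with $\Tr(\rho_n)=1$. As $\mathcal{L}^1(L^2(\ID))$ is the dual of the space of compact operators, Banach--Alaoglu provides a subsequence along which $\rho_n\rightharpoonup \rho_0$ in the weak-$*$ sense, i.e. $\Tr(K\rho_n)\to\Tr(K\rho_0)$ for every compact $K$ on $L^2(\ID)$. The limit $\rho_0$ is nonnegative and trace class with $\Tr(\rho_0)\le 1$. We refine the subsequence so that the convergences defining $\mu_c$ hold simultaneously.

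\textbf{Step 2 (Trace form with a compact operator).} Fix $a\in\cS_0$ with $t$-support in $[-T,T]$ and set $b_R:=\chi((|\xi|^2+|H|)/R^2)a$. Because $b_R$ is compactly supported in $(z,\xi,t,H)$, $\Op_1(b_R)$ has a smooth, compactly supported Schwartz kernel, hence is Hilbert--Schmidt, in particular compact, on $L^2(\R^2\times\R)$. Introduce the bounded ``lift'' $\mathcal{U}_T:L^2(\ID)\to L^2(\R^2\times[-T,T])$ given by $(\mathcal{U}_T\phi)(z,t)=\mathds{1}_\ID(z)(U_V(t)\phi)(z)$. Then
$$\langle W_{c,n,R},a\rangle=\langle \mathcal{U}_T u_n^0,\Op_1(b_R)\mathcal{U}_T u_n^0\rangle=\Tr_{L^2(\ID)}\bigl(B_R(a)\,\rho_n\bigr),$$
where $B_R(a):=\mathcal{U}_T^*\Op_1(b_R)\mathcal{U}_T$ is compact on $L^2(\ID)$. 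Unfolding the composition and using the semigroup structure of $U_V$ yields
$$B_R(a)=\int_{-T}^{T}U_V(t)^{-1}\,\mathds{1}_\ID\,\Op_1(b_R)_{(t)}\,\mathds{1}_\ID\,U_V(t)\,dt,$$
which gives precise meaning to the right-hand side of \eqref{e:defmuc} at the cut-off level.

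\textbf{Step 3 (Double limit).} By Step 1 applied with $K=B_R(a)$, $\Tr(B_R(a)\rho_n)\to\Tr(B_R(a)\rho_0)$ as $n\to\infty$. To send $R\to\infty$, the Calder\'on--Vaillancourt bound yields $\|\Op_1(b_R)\|_{L^2\to L^2}\le C$ uniformly in $R$, hence $\|B_R(a)\|\le C(a)$ uniformly. Combined with the strong convergence $B_R(a)\to B_\infty(a):=\mathcal{U}_T^*\Op_1(a)\mathcal{U}_T$ and the spectral decomposition $\rho_0=\sum_j\lambda_j|\phi_j\rangle\langle\phi_j|$ (with $\sum_j\lambda_j<\infty$), dominated convergence gives $\Tr(B_R(a)\rho_0)\to\Tr(B_\infty(a)\rho_0)$, establishing \eqref{e:defmuc}. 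For absolute continuity, if $a=a(z,t)$ then $\Op_1(a)$ is multiplication by $a(\cdot,t)$ and cyclicity of the trace yields
$$\langle\mu_c,a\rangle=\int_{\ID\times\R}a(z,t)\,\kappa(z,t)\,dz\,dt,\qquad \kappa(z,t):=\sum_j\lambda_j\,|(U_V(t)\phi_j)(z)|^2.$$
Since $\int_\ID\kappa(z,t)\,dz=\Tr(\rho_0)$ uniformly in $t$, the projection of $\mu_c$ onto $\ID\times\R_t$ is absolutely continuous. Continuity in $t$ follows from the strong continuity of $U_V(\cdot)$ on $L^2(\ID)$ together with uniform convergence of the series, since $\sum_j\lambda_j<\infty$.

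\textbf{Expected main obstacle.} The delicate step is Step~2: one must rigorously identify $B_R(a)$ as a compact operator on $L^2(\ID)$ of the stated integral form, and interpret the ``$t$-slice'' $\Op_1(b_R)_{(t)}$ of a pseudodifferential operator whose symbol depends simultaneously on $t$ and its Fourier-dual variable $H$. This requires careful manipulation of Schwartz kernels and of partial Fourier transforms in time. A related subtlety is giving intrinsic meaning to formula \eqref{e:defmuc} for the uncut symbol $a\in\cS_0$ rather than as a limit of cut-offs, which presumably motivates the deferral to Section~\ref{s:muc}.
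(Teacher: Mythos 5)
Your overall strategy coincides with the paper's: extract $\rho_0$ as a weak-$\ast$ limit of the density matrices $|u_n^0\rangle\langle u_n^0|$ in $\mathcal L^1=\mathcal K^*$, rewrite $\langle W_{c,n,R},a\rangle$ as $\langle u_n^0, B_R u_n^0\rangle$ for an operator $B_R$ on $L^2(\ID)$, pass to the limit in $n$ by compactness, then remove the cut-off using uniform Calder\'on--Vaillancourt bounds. Where you genuinely diverge is the compactness step. The paper does \emph{not} claim that $B_R$ itself is compact; instead it inserts the operator $P_R u=g(D_t/R^2)U_V(t)u\rceil_{t=0}$, shows $\langle W_{c,n,R},a\rangle$ equals $\langle u_n^0,B_RP_Ru_n^0\rangle$ up to errors controlled by Remark~\ref{r:osc}, and gets compactness of $P_R$ (hence of $B_RP_R$) from the hidden $H^1$ bound $\|\nabla P_R u\|\le CR\|u\|$ of Appendix~\ref{s:osc} together with Rellich. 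You instead observe that the cut symbol $b_R$ is compactly supported in \emph{all} variables, so $\Op_1(b_R)$ is Hilbert--Schmidt on $L^2(\R^2\times\R)$, and conjugate by the bounded lift $\mathcal U_T$. This is more self-contained (it avoids the boundary-regularity machinery), and it is correct in substance, but it carries the technical wrinkle you yourself flagged: the Schwartz kernel of $\Op_1(b_R)$ is compactly supported in $t$ but only rapidly decaying in $t-t'$, so the identity $\langle W_{c,n,R},a\rangle=\langle \mathcal U_T u_n^0,\Op_1(b_R)\mathcal U_T u_n^0\rangle$ is not exact; you must either enlarge the truncation to $T'\gg T$ and control the $O_N((T'-T)^{-N})$ tails uniformly in $n$ (showing $B_R$ is a norm-limit of the compact operators $\mathcal U_{T'}^*\Op_1(b_R)\mathcal U_{T'}$), or work with the explicit kernel as in the paper's formula~\eqref{e:BR}. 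The remaining steps (strong convergence $\Op_1(b_R)\to\Op_1(a)$, which for the standard quantization is exact composition with the Fourier multiplier $\chi((|D_z|^2+|D_t|)/R^2)$, dominated convergence over the spectral decomposition of $\rho_0$, and the explicit density $\sum_j\lambda_j|U_V(t)\phi_j|^2$ for the absolute continuity and $t$-continuity) are correct and slightly more detailed than the paper's terse treatment.
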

Note that the ambiguity in the meaning of formula \eqref{e:defmuc} arises when $a$ depends
on $H$. If $a$ is independent of $H$, \eqref{e:defmuc} is the
well-defined expression
$$ \la{\mu}_{c
}, a\ra= \int \Tr_{{L^2(\ID)}} \left\{ U_V(t)^{-1} \mathds{1}_{\ID}\Op_1 (a(x, \xi,
t)) \mathds{1}_{\ID} U_V(t)\rho_0 \right\} dt .
$$

\subsection{Link between microlocal and semiclassical Wigner measures\label{s:link}}Let us clarify the link between the two approaches{ in the context of the present article (see also~\cite{GerardMesuresSemi91, GerLeich93} for a related discussion).}

As was said, if $(u^0_n)$ is $h_n$-oscillating from above and
below, the semiclassical Wigner measures convey more information
than the microlocal ones.
 In fact, if $(u_n^0)$ is $h_n$-oscillating from above and below (with $h_n\rightarrow 0$), we have for $a\in \cS_0$
\begin{multline}W_{u_n}(a)=\la u_n, \Op_1\left(a(z, \xi, t, H)\left(\chi\left(\frac{h_n^2(|\xi|^2+|H|)}{R}\right)-\chi\left(\frac{h_n^2(|\xi|^2+|H|)}{\eps}\right)\right)\right)u_n\ra_{L^2(\R^2\times \R)}\\+o(1)_{\eps\To 0, R\To +\infty}\\
=\la u_n, \Op_1\left(a_{{\rm{hom}}}(z, h_n\xi, t,h_n^2
H)\left(\chi\left(\frac{h_n^2(|\xi|^2+|H|)}{R}\right)-\chi\left(\frac{h_n^2(|\xi|^2+|H|)}{\eps}\right)\right)\right)u_n\ra_{L^2(\R^2\times
\R)}\\+o(1)_{\eps\To 0, R\To +\infty}
\\= W^{h_n}_{u_n}\left(a_{{\rm{hom}}}\left(\chi\left(\frac{(|\xi|^2+|H|)}{R}\right)-\chi\left(\frac{(|\xi|^2+|H|)}{\eps}\right)\right)\right)+o(1)_{\eps\To 0, R\To +\infty}\label{e:scml}
\end{multline}

From \eqref{e:scml}, one sees that if $W_{u_n}$ converges weakly to $\mu_{ml}$ and $W^{h_n}_{u_n}$ converges weakly to $\mu_{sc}$, and if $(u^0_n)$ is $h_n$-oscillating from above and below, we have
$$\mu_{ml}(a)=\mu_{sc}(a_{{\rm{hom}}}).$$
The right-hand side is well-defined since $\mu_{sc}$ is a nonnegative
measure which is bounded on sets of the form $\ID\times \R^2\times
[-T, T] \times \R$ (for any $T$).

\bigskip

On the other hand, if in Theorem~\ref{t:precise} the sequence $(u^0_n)$ is not $h_n$-oscillating from
below, then $\mu_{sc}$ does charge the set $\{\xi= 0\}$, and we
have for any compactly supported function $a$ :
 \begin{eqnarray}\label{e:mlsc}
 \mu_{sc}\rceil_{(\xi,H)=0}(a)
 &=& \lim_{\eps \To 0} \lim_{n\To +\infty}
W_{\chi\left(\frac{h_n^2(|D_z|^2+|D_t|)}{\eps}\right)u_n} (a(z, 0, t, 0) )\nonumber \\
&=& \lim_{\eps \To 0} \lim_{n\To +\infty}
W_{\chi\left(\frac{3h_n^2|D_t|}{\eps}\right)u_n} (a(z, 0, t, 0) )\nonumber \\
&=&\lim_{\eps \To 0} \lim_{n\To +\infty}
W_{U_V(t) v^0_{n,\eps}} (a(z, 0, t, 0) )
\end{eqnarray}
where $v^0_{n,\eps} =
\chi\left(\frac{3h_n^2|D_t|}{\eps}\right)u_n\rceil_{t=0}$.
Equality of the first two lines comes from the fact that the
measures asymptotically concentrate on $\{|\xi|^2=2H\}$, and
equality of the last two lines is proven in Appendix~\ref{s:osc}.
We see that the microlocal Wigner measures associated with $U_V(t)
v^0_{n,\eps} $ encompass the description of $\mu_{sc}\rceil_{(\xi,
H)=0}${
: we
have
$$\mu_{sc}\rceil_{(\xi,H)=0}(a(z, \xi, t, H))=\mu_{ml, 0}(a(z, 0, t, 0)), $$
where $\mu_{ml, 0}$ 
possesses the structure described in Theorem
\ref{t:preciseml}.

\bigskip
Finally, if $(u^0_n)$ is not $h_n$-oscillating from above, we see
that
$$\lim_{R \To +\infty} \lim_{n\To +\infty}
W^{h_n}_{(1-\chi)\left(\frac{h_n^2(|D_z|^2+|D_t|)}{R}\right)u_n}(a)=0$$ for
compactly supported $a$, whereas the limit
$$\lim_{R \To +\infty} \lim_{n\To +\infty}
W_{(1-\chi)\left(\frac{h_n^2(|D_z|^2+|D_t|)}{R}\right)u_n}(a)$$ does not
necessarily vanish for homogeneous $a$. This last limit coincides with
$$\lim_{R \To +\infty} \lim_{n\To +\infty}
W_{(1-\chi)\left(\frac{3h_n^2|D_t|}{R}\right)u_n}(a)= \lim_{R \To +\infty} \lim_{n\To +\infty}
W_{U_V(t) w_{n, R}^0}(a)$$
where $w_{n, R}^0=(1-\chi)\left(\frac{3h_n^2|D_t|}{R}\right)u_n\rceil_{t=0}$, and equality of the limits is proven in Appendix~\ref{s:osc}.
Thus, the frequencies of
$u_n^0$ that are of order $\gg h_n^{-1}$ or $\ll  h_n^{-1}$ are
better captured by the microlocal approach.


 \subsection{Application to the regularity of limit measures}
\label{s:appregularity} Theorem \ref{t:preciseml}, applied to test
functions $a\in\cS_0$ that do not depend on $t$ and $H$, implies
Corollary \ref{t:example}. To state a precise version of this
result (say, in the semiclassical setting), we first need the following proposition.

\begin{proposition}\label{p:barmu}
Suppose that $\mu_{sc}$ is a semiclassical measure associated to
$(u_h)$ solution of \eqref{e:S}-\eqref{e:Dirichlet}. Denote by
$\bar{\mu}_{sc}(dE, dJ, t)$ the image of the measure $
{\mu}_{sc}(dz,d\xi,t,dH)$ under the moment map $$M:(z=(x, y),\xi,
H)\mapsto (E, J)=(|\xi|, x \xi_y -y\xi_x)$$ (velocity and angular
momentum). Then $\bar{\mu}_{sc}$ does not depend on $t$.
\end{proposition}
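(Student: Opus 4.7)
The plan is to deduce this proposition as a direct consequence of the structure Theorem~\ref{t:precise}, which provides the decomposition $\mu_{sc}=\nu_{Leb}+\sum_{\alpha_0\in \pi\IQ\cap[-\pi/2,\pi/2]}\nu_{\alpha_0}$ of the semiclassical measure. Since the moment map $M$ does not involve the time variable, pushing forward by $M$ commutes with the $t$-disintegration $\mu_{sc}(dz,d\xi,t,dH)\,dt$, so it is enough to check the $t$-independence of $M_*\nu$ for each summand $\nu$ separately.

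For the Lebesgue-type piece, item (ii) gives $\nu_{Leb}=\bigl(\int_{E>0,\ |J|\leq E}\lambda_{E,J}\,d\nu'(E,J)\bigr)\otimes dt$; since the measure $\lambda_{E,J}$ is carried by the level set $T_{(E,J)}=M^{-1}(E,J)$, its image under $M$ is the Dirac mass at $(E,J)$, giving $M_*\nu_{Leb}=\nu'(dE,dJ)\otimes dt$. The $t$-independence of $M_*\nu_{\pm\pi/2}$ follows immediately from item~(iv).

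The substantive case is $\alpha_0\in \pi\IQ\cap(-\pi/2,\pi/2)$, which I would handle via the trace-class representation~\eqref{e:structnualpha}. Test $\nu_{\alpha_0}$ against a symbol $a(z,\xi,t,H)=\phi(t)b(|\xi|,z\cdot\xi^\perp)$ with $\phi\in C_c^\infty(\IR)$ and $b\in C_c^\infty(\IR^2)$. Because $E=|\xi|$ and $J=z\cdot\xi^\perp$ are preserved along the billiard flow, hence along the orbits of $\phi^\tau_{\alpha_0}$ inside $\cI_{\alpha_0}$, the average $\la a\ra_{\alpha_0}$ agrees with $a$ on $\cI_{\alpha_0}$; on that invariant set one has $J=-E\sin\alpha_0$, so $b(E,J)$ collapses to the scalar function $\widetilde{b}(E):=b(E,-E\sin\alpha_0)$, independent of the angular coordinate $\theta$. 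Therefore the operator $m^{\alpha_0}_{\la a\ra_{\alpha_0}}$ acts on $L^2(0,2\pi)$ as multiplication by the scalar $\phi(t)\widetilde{b}(E)$, and
\[
\Tr_{L^2(0,2\pi)}\!\left(m^{\alpha_0}_{\la a\ra_{\alpha_0}}\sigma_{\alpha_0}(\omega,E,H,t)\right)=\phi(t)\,\widetilde{b}(E)\,\Tr_{L^2(0,2\pi)}\!\left(\sigma_{\alpha_0}(\omega,E,H,t)\right).
\]
Applying the propagation identity~\eqref{e:propagsigmaell} together with the cyclicity of the trace yields $\Tr(\sigma_{\alpha_0}(\omega,E,H,t))=\Tr(\sigma_{\alpha_0}(\omega,E,H,0))$, a quantity independent of $t$. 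Using that $\ell_{\alpha_0}$ depends only on the initial data, and hence factors as $d\ell'_{\alpha_0}(\omega,E,H)\otimes dt$, formula~\eqref{e:structnualpha} becomes
\[
\int a\,d\nu_{\alpha_0}=\bigg(\int_\IR\phi(t)\,dt\bigg)\int\widetilde{b}(E)\,\Tr_{L^2(0,2\pi)}\!\left(\sigma_{\alpha_0}(\omega,E,H,0)\right)d\ell'_{\alpha_0},
\]
showing that $M_*\nu_{\alpha_0}$ is a product of a $t$-independent measure on the $(E,J)$-plane with Lebesgue measure $dt$. Summing over $\alpha_0$ completes the argument. The main point deserving careful justification is the implicit factorization $\ell_{\alpha_0}=\ell'_{\alpha_0}\otimes dt$, which should follow from the statement that $\ell_{\alpha_0}$ and $\sigma_{\alpha_0}(\cdot,0)$ depend only on the sequence of initial conditions, together with the $t$-continuity of $\sigma_{\alpha_0}$.
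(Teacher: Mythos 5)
Your reduction to Theorem~\ref{t:precise} is circular for the piece that actually matters. In the paper the logical order is the reverse of the one you assume: Proposition~\ref{p:barmu} is proved first, by a direct argument (Section~\ref{s:proofbarmu}), and is then \emph{used} in Section~\ref{s:pause} to establish Theorem~\ref{t:precise}~(ii). Concretely, the invariance properties of $\mu_{sc}\rceil_{\alpha\notin\pi\IQ}$ and of $m^{\alpha_0}$ only yield a representation of the form $\int c(t,E,J)\,\lambda_{E,J}\,d\nu(E,J)$ with an a priori $t$-dependent density $c$, and it is precisely Proposition~\ref{p:barmu} (together with its second-microlocal variant stated in Section~\ref{s:pause}) that forces $c$ to be independent of $t$. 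So when you invoke item~(ii) to conclude $M_*\nu_{Leb}=\nu'\otimes dt$, you are assuming exactly the statement you are trying to prove. Your treatment of the pieces $\nu_{\alpha_0}$ for $\alpha_0\in\pi\IQ\cap(-\pi/2,\pi/2)$ — collapse of $\la a\ra_{\alpha_0}$ to a scalar multiplier on $\cI_{\alpha_0}$, cyclicity of the trace against the unitary propagation of $\sigma_{\alpha_0}$, and the factorization $\ell_{\alpha_0}=\ell'_{\alpha_0}\otimes dt$ from Remark~\ref{rhobar} — is sound, but it covers only part of $\mu_{sc}$ and cannot be completed without an independent argument for $\nu_{Leb}$.

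The paper's proof is a short, self-contained commutator computation that bypasses the structure theorem entirely: one tests against $\partial_t a(|\xi|^2, x\xi_y-y\xi_x, t, hH)$, replaces $|\xi|^2$ by $2H$ using the concentration of $\mu_{sc}$ on $\{|\xi|^2=2H\}$, and observes that the resulting operator $a(2hD_t, hD_u, t, h^2D_t)$ involves only $D_t$, $D_u$ and $t$; being tangential to $\partial\ID$ it defines an operator on $L^2(\ID)$ that commutes with $\Delta_D$, while its commutator with $V$ is $O(h)$. Hence the commutator with $-i\Delta_D/2+iV$ vanishes in the limit, which is the claim. If you want to retain a structure-theorem-based route, you would still have to supply some such direct argument for the $t$-independence of the $\nu_{Leb}$ component first, at which point the detour buys nothing.
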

This proposition is proved in Section~\ref{s:proofbarmu}. Arguments developed in \cite{AnantharamanMaciaTore} (and that we
do not reproduce here) show that Corollary~\ref{t:example} can be refined as follows.

\begin{theorem}
\label{t:main}Define by $\mu_{E, J }(t,\cdot)$ is the
disintegration of $\mu_{sc}(t,\cdot)$ with respect to the
variables $(E, J)$, carried on the $2$-dimensional (lagrangian)
manifold $T_{(E, J)}=\{(z, \xi), (|\xi|, x \xi_y -y\xi_x) =(E,
J)\}$, i.e.
\begin{multline*}
\int_{ \R_H}\int_{\ovl{\ID}\times\mathbb{R}^{2}}f(z,\xi, t, H)\mu_{sc}(dz,d\xi, t, dH) \\
= \int_{\mathbb{R}^{2}}\left( \int_{T_{(E, J)}  }f\left(z,\xi, t,
\frac{E^2}2\right)\mu_{E, J }(t, dz, d\xi)\right)
\bar{\mu}_{sc}(dE, dJ) ,
\end{multline*}
for every bounded measurable function $f$, for $t\in\R$.

Then for $\bar{\mu}_{sc}$-almost every $(E, J)$ with $|J|\not= E$, the measure $\mu_{E, J}%
(t,\cdot)$ is absolutely continuous on $T_{(E, J)} $.
\end{theorem}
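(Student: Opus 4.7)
\medskip

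\noindent\textbf{Proof plan.} The plan is to derive Theorem~\ref{t:main} as a consequence of the structure theorem (Theorem~\ref{t:precise}), by separately analyzing the disintegration of each component in the decomposition $\mu_{sc}=\nu_{Leb}+\sum_{\alpha_0\in\pi\IQ\cap[-\pi/2,\pi/2]}\nu_{\alpha_0}$ and then pushing it forward through the moment map $M$. Proposition~\ref{p:barmu} guarantees that $\bar{\mu}_{sc}$ is $t$-independent, so it is enough to understand the disintegration at each fixed $t$. Note that each $\nu_{\alpha_0}$ with $\alpha_0\in(-\pi/2,\pi/2)\cap\pi\IQ$ is carried by the line $\cL_{\alpha_0}=\{J=-\sin\alpha_0\,E\}$ in $(E,J)$-space, while the degenerate contributions $\nu_{\pm\pi/2}$ lie entirely on $\{|J|=E\}$ and are therefore excluded by the hypothesis $|J|\neq E$ of the theorem.

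\medskip

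Fix $(E_0,J_0)$ with $|J_0|\neq E_0$. The disintegration $\mu_{E_0,J_0}(t,\cdot)$ is supported on the invariant torus $T_{(E_0,J_0)}$ and receives at most two kinds of contributions: (a) the contribution from $\nu_{Leb}$, which by item (ii) of Theorem~\ref{t:precise} is a scalar multiple of the invariant Liouville measure $\lambda_{E_0,J_0}$ and is therefore absolutely continuous on $T_{(E_0,J_0)}$; (b) the contribution from a single $\nu_{\alpha_0}$, which occurs only when $\alpha_0=-\arcsin(J_0/E_0)$ happens to lie in $\pi\IQ$, forcing $(E_0,J_0)\in\cL_{\alpha_0}$. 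For such $(E_0,J_0)$, disintegrating the measure $\ell_{\alpha_0}(d\omega,dE,dH,dt)$ with respect to $E$ and using the trace formula \eqref{e:structnualpha}, the contribution of $\nu_{\alpha_0}$ to the disintegration of $\mu_{sc}$ on $T_{(E_0,J_0)}$ takes the form
\begin{equation*}
a\longmapsto \int \Tr_{L^{2}(0,2\pi)}\!\left(m_{\langle a\rangle_{\alpha_0}}^{\alpha_0}\,\sigma_{\alpha_0}(\omega,E_0,E_0^2/2,t)\right)\,d\ell_{\alpha_0,E_0}(\omega),
\end{equation*}
for some conditional measure $\ell_{\alpha_0,E_0}$ arising in the disintegration of $\ell_{\alpha_0}$.

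\medskip

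The crucial analytical input is the following elementary fact: for any nonnegative trace-class operator $\sigma$ on $L^2(0,2\pi)$ with spectral decomposition $\sigma=\sum_k\lambda_k\,|e_k\rangle\langle e_k|$, the linear functional $f\mapsto\Tr(M_f\sigma)=\int_0^{2\pi}f(\theta)\rho_\sigma(\theta)\,d\theta$, with density $\rho_\sigma=\sum_k\lambda_k|e_k|^2\in L^1(0,2\pi)$, is absolutely continuous with respect to Lebesgue measure in $\theta$. Since on $T_{(E_0,J_0)}\subset\cI_{\alpha_0}$ the averaged function $\langle a\rangle_{\alpha_0}\circ\Phi$ is independent of $s$ and depends only on $\theta$ (with the parameters $E_0,J_0,t,H$ fixed), this yields absolute continuity of the contribution of $\nu_{\alpha_0}$ in the $\theta$-variable along $T_{(E_0,J_0)}$.

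\medskip

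The last step is to upgrade absolute continuity in $\theta$ to absolute continuity on the full two-dimensional torus $T_{(E_0,J_0)}$, with respect to $\lambda_{E_0,J_0}$. This is where the main difficulty lies, precisely because for $\alpha_0\in\pi\IQ$ every orbit of the billiard on $T_{(E_0,J_0)}$ is periodic, so the cone of flow-invariant measures is huge and includes Dirac masses on individual periodic orbits. The key point is that the trace formula already forces the measure to be a $\theta$-absolutely-continuous combination; combining this with the billiard invariance from item~(i) of Theorem~\ref{t:precise} (the flow acts by translation in $s$ between reflections, with the identification by $\sigma$ at the boundary) shows that the disintegration is a density times $ds\,d\theta=c(E_0,J_0)^{-1}\lambda_{E_0,J_0}$. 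This phase-space averaging argument, relating one-dimensional absolute continuity plus flow invariance to two-dimensional absolute continuity on the torus, is carried out in detail in~\cite{AnantharamanMaciaTore} in the torus setting and adapts directly to $T_{(E_0,J_0)}$ because of its product structure in the $(s,\theta)$ coordinates; this adaptation is the step I would expect to occupy most of the technical work.
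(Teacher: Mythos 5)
Your proposal follows essentially the same route as the paper: decompose $\mu_{sc}$ via Theorem~\ref{t:precise}, note that $\nu_{Leb}$ contributes a multiple of $\lambda_{E,J}$ and that $\nu_{\pm\pi/2}$ is excluded by the hypothesis $|J|\neq E$, and handle $\nu_{\alpha_0}$ through the trace formula \eqref{e:structnualpha} together with the $L^1(0,2\pi)$ density $\sum_k\lambda_k|e_k|^2$ of the nonnegative trace-class operator $\sigma_{\alpha_0}$. The one point where you misjudge the structure of the argument is the final step: no separate ``phase-space averaging argument'' imported from \cite{AnantharamanMaciaTore} is needed to pass from absolute continuity in $\theta$ to absolute continuity on the two-dimensional torus, and this step is not where the technical work lies. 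The paper's argument (end of Section~\ref{s:sy}) is a one-line Fubini observation built into the orbit averaging: for a Borel set $N\subset T_{(E_0,J_0)}$ with $ds\,d\theta$-measure zero, the orbit average $\langle\mathds{1}_N\rangle_{\alpha_0}$ is a function of $\theta$ alone on $\cI_{\alpha_0}$ and satisfies $\int\langle\mathds{1}_N\rangle_{\alpha_0}\,ds\,d\theta=\int\mathds{1}_N\,ds\,d\theta=0$, hence vanishes for a.e.\ $\theta$; therefore $m_{\langle\mathds{1}_N\rangle_{\alpha_0}}=0$ as an operator on $L^2(0,2\pi)$ and the trace formula gives $\nu_{\alpha_0}(N)=0$ outright, which is two-dimensional absolute continuity directly. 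The only auxiliary fact needed to run this for arbitrary bounded $b$ (rather than symmetric symbols) is that $\mu_{\alpha_0}$ does not charge the tangent set $\{s=\pm\cos\alpha_0\}$, which the paper gets from Section~\ref{s:trace}; the genuinely hard work in this theorem is the construction and propagation of $\rho_{\alpha_0}$ itself, not the final passage to absolute continuity.
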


Note that $|J|=E$, with $E\not =0$, means that $T_{(E,
J)}\cap\left(\ovl{\ID}\times \IR^2\right)$ is contained in the set
$\{(z, \xi), |z|=1, z\perp\xi\}$ of tangent rays to the boundary.
The restriction of $\mu_{sc}(t)$ to that set may be considered
trivial, since \eqref{e:transport} implies that it is invariant
under rotation.

Finally, for $J=E=0$, we can use the last lines of Section~\ref{s:link}, combined with Theorem \ref{t:preciseml}: the
measure $\mu_{sc}$ restricted to $\{\xi=0\}= \ovl{\ID}\times
\{0\}$ is the sum of an absolutely continuous measure carried by
the interior $\ID$ and a multiple of the Lebesgue measure on $\d
\ID$.

{
\begin{remark}
\label{r:barmicro aussi}
The analogues of Proposition~\ref{p:barmu} and Theorem~\ref{t:main} hold as well in the microlocal setting. In particular, if $\bar\mu^\infty$ is the image of $\mu^\infty(t)$ under the map
$(z, \xi, t, H)\mapsto (E, J)$, this measure is independent of $t$. 
\end{remark}
}

\begin{remark}
\label{rem:nonobsnonbord}
Proposition~\ref{p:barmu} (and Remark~\ref{r:barmicro aussi}) allows us to complete the proof of the necessity of the assumption $\Omega \cap \d \ID \neq \emptyset$ in Theorem \ref{t:obs-i} when $V$ does not identically vanish (see the discussion in Section~\ref{s:ointroobsschrod}). Taking for
instance as initial data $u^0_n := \psi_{n,0}^\pm/\|\psi_{n,0}^\pm\|_{L^2(\ID)}$
(see~\eqref{e:psidisk}) with and $n \to \infty$, then one has $
|u^0_n|^2 dx \rightharpoonup (2\pi)^{-1} \delta_{\partial\ID}$ (see Section~\ref{s:eigenfcts});
more precisely, the Wigner measures associated with the initial
data $u^0_n := \psi_{n,0}^\pm/\|\psi_{n,0}^\pm\|_{L^2(\ID)}$ concentrate on the set $\{|J|=E\}$.
Combined with Proposition \ref{p:barmu} (and Remark~\ref{r:barmicro aussi}), this shows
that $\bar\mu^{ml}$ is entirely carried by the set $\{|J|=E\}$,
and thus $\mu^{ml}$ itself does not charge the interior of the
disk, where $|J|<E$. This shows that \eqref{e:oi} cannot hold if
$\Omega$ does not touch the boundary.
\end{remark}

\subsection{Measures at the boundary}
\label{s:mesboun} In this section, we define and compare different
measures on $\d \ID$. Given an invariant measure for the billiard
flow, we first define the associated ``projected measure'' on the
boundary. Second, we define semiclassical and microlocal measures
associated with the Neumann trace at the boundary of sequences of
solutions of~\eqref{e:S}. We finally explain the links between
these three objects.

\subsubsection{Projection on the boundary of an invariant measure}
\label{s:projbou} We observe the following standard construction
from the theory of Poincar\'e sections in dynamical systems. Let
$S=\{(z, \xi), |z|=1, \xi\cdot z \not = 0\}$, union of $S^+=\{(z,
\xi), |z|=1, \xi\cdot z > 0\}$ (vectors pointing outwards) and of
$S^-=\{(z, \xi), |z|=1, \xi\cdot z < 0\}$ (vectors pointing
inwards). When $(z, \xi)\in S^+$, we denote as above by $\alpha(z,
\xi)=-\arcsin\left(\frac{J(z, \xi)}{|\xi|}\right)$ the angle of
the vector $\xi$ with the normal at $z$ to the disk. The map
\begin{eqnarray*} P :\{(z, \xi, \tau) \in S^+\times \R,  \tau\in [0, 2\cos\alpha(z, \xi)] \}
&\To & \ovl{\ID}\times \R^2\\
(z, \xi, \tau)&\mapsto & \left(z+ \frac{\tau}{|\xi|}
\sigma_z(\xi), \sigma_z(\xi)\right)
\end{eqnarray*}
is a measurable bijection onto its image $S \cup ({\ID}\times
\R^2)$, and $\pi\circ P$ is a measurable bijection onto its image
(recall that $\pi$ is the projection from $\ovl{\ID}¬†\times \R^2$ to
$\W$). If $\mu$ is a nonnegative measure on $S \cup
({\ID}\times \R^2)$ which does not charge $S$, and such that
$\pi_* \mu$ is invariant under the billiard flow, then $P^{-1}_*
\mu$ must be of the form
$$P^{-1}_* \mu =\mu^S \otimes d\tau$$
where $\mu^S$ is a measure on $ S^+$ which is invariant under the
first return map
$$
(z, \xi)\mapsto \left(z+ \frac{2\cos\alpha(z, \xi) }{|\xi|}
\sigma_z(\xi), \sigma_z(\xi)\right).
$$
This implies that
\begin{eqnarray}\label{e:sec}\int_{\overline{\ID}\times \R^2} \xi.\partial_z a\,\, d\mu
&=& \int |\xi| \partial_\tau (a\circ P) d\mu^S \otimes d\tau \nonumber \\
& =& \int_{S^+} |\xi |\left(a\left(z+ \frac{2\cos\alpha }{|\xi|} \sigma_z(\xi), \sigma_z(\xi)\right) -a(z , \sigma_z(\xi))\right) \mu^S(dz, d\xi) \nonumber \\
&=& \int_{S^+}|\xi | \left(a(z, \xi) -a(z , \sigma_z(\xi))\right)
\mu^S(dz, d\xi).
\end{eqnarray}

Note that the total mass of $\mu$ is $\int d\mu =
\int_{S^+}2\cos\alpha(z, \xi) d\mu^S(z, \xi).$

\subsubsection{Semiclassical measure associated to Neumann trace\label{s:trace}}
Let $(u_h^0)$ be a family of initial conditions, normalized in
$L^2(\ID)$. When we look at the semiclassical Wigner distributions
\eqref{e:Wsc}, where we use {\em compactly supported} symbols,
Remarks~\ref{r:H1} and \ref{r:osc} show that we may truncate
$(u_h^0)$ in frequency and assume, without changing the limit as
$h\To 0$, that $u_h^0\in H_0^1(\ID)$, $\norm{\nabla
u_h^0}_{L^2(\ID)}=O(h^{-1})$. Proposition \ref{p:regboundary} then
entails that the boundary data $h\partial_{n}\left(
U_V(t){u}_{h}^0\right) $ form a bounded sequence in
$L_{\loc}^{2}\left(  \mathbb{R}\times
\partial\ID\right)  $.

Now, let $\mu_{sc}^\d \in \cM_+(T^* \partial \ID\times T^*\R)$ be
a semiclassical measure associated with the boundary data
$h\partial_n u_h(t)$ {defined by quantizing test functions on $T^* \partial \ID\times T^*\R$ with the same scaling $(hj,h^2 H)$ in the cotangent variables as in~\eqref{e:defW}}. Then $\mu_{sc}^\d$ is carried by the set
$\{(u, j, t, H)\in T^* \partial \ID\times  T^*\R, |j|\leq
\sqrt{2H}\}$. If $\mu_{sc}$ and $\mu_{sc}^\d$ are obtained through
the same sequence of initial data, then we have the relation
(see~\cite{GerLeich93})
\begin{equation}\label{e:section}\int_{\ovl{\ID}\times \R^2\times\R\times \R}\xi\cdot\partial_z a\,\,
\mu_{sc}(dz,d\xi, t, dH)dt=\int \frac{a(u, \xi^+(j, H)) -a(u,
\xi^-(j, H)) }{2\sqrt{2H-j^2}} \mu_{sc}^\d (du, dj, dt,
dH),\end{equation} valid for any smooth function $a$. For $(u,
j)\in T^*\partial \ID$ with $|j|\leq \sqrt{2H}$, the vectors
$\xi^{\pm}(j, H)$ are the two vectors (pointing outwards and
inwards) in $T^*_u\R^2$ of norm $=\sqrt{2H}$, whose projection to
$T^*_u\partial \ID$ is $j$. Note that the expression under the
integral on the right hand side of \eqref{e:section} has a
well-defined finite limit as $|j|\To \sqrt{2H}$. Identity
\eqref{e:section} has three consequences:
\begin{itemize}
\item First, the measure
$\mu_{sc}$ does not charge the set $S$ defined in Section~\ref{s:billiard} (otherwise the left-hand side of
\eqref{e:section} would define a distribution of order $1$ which
is not a measure). Note that \eqref{e:section} is stronger
than~\eqref{e:transport}.
\item Second, let $\mu^S_{sc}(t)$ be the
measure associated to $\mu_{sc}(t)$ as in Section~\ref{s:projbou}.
Comparing~\eqref{e:section} with \eqref{e:sec}, we see that for any $a$ defined on
$S^+$,
\begin{multline*}
\int_{(u, j)\in T^*\partial \ID, |j|<\sqrt{2H}} a(u, \xi^+(j, H), t, H)  \mu_{sc}^\d (du, dj, dt, dH)\\
=\int_{S^+} 2|\xi|^2 \cos\alpha(z, \xi) a(z, \xi) \mu^S_{sc}(dz,
d\xi, t, dH)dt.
\end{multline*}
\item Third, \eqref{e:section} implies
\begin{equation*} \int_{T^*\d\ID \times\R \times\R} |\xi|^2 a(z, \xi, t, H)
\mu_{sc}(dz,d\xi, t, dH) dt=\int_{|j|=\sqrt{2H}}  a(u, j, t,
H)\mu_{sc}^\d (du, dj, dt, dH).
\end{equation*}
In particular, note that $\mu_{sc}^\d\rceil_{H=0}$ vanishes, since
$H=0$ corresponds to $\xi=0$ on the left-hand side.
\end{itemize}

Identities \eqref{e:transport} and \eqref{e:section} are
essentially proven in \cite{GerLeich93} (see also~\cite{RZ:09}) for general domains (for
time-independent solutions of \eqref{e:S}); we do not reproduce
the proofs here.

\subsubsection{Microlocal measure associated to Neumann trace}
\label{s:micromesboundary} The sequences considered here $u_n =
U_V(t)u_n^0$ are bounded in $L^\infty(\IR; L^2(\ID))$. Since
normal traces are not convenient to work with at this level of
regularity, the definition of associated microlocal measures needs
a little care.

For this, let us first define $\psi \in C^\infty(\R)$, such that
$\psi = 0$ on $(-\infty ,1]$ and $\psi = 1$ on $[2,+\infty)$ and
the operator $A(D_t)=
\Op_1\left(\frac{\psi(H)}{\sqrt{2H}}\right)$. We have the
following regularity result.
\begin{lemma}
\label{l:regutemps} For all $\varphi \in C^\infty(\R_t \times
\ovl{\ID}_z)$ with compact support in the first variable $t \in
\R$, there exists a constant $C=C(\varphi, \psi)>0$ such that for
any $u^0 \in L^2(\ID)$, the associated solution $u(t) = U_V(t)u^0$
satisfies
\begin{equation*}
\|A(D_t)\varphi u\|_{L^2(\R ; H^1(\ID))} \leq C \|u^0\|_{L^2(\ID)}.
\end{equation*}
\end{lemma}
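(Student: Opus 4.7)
The heuristic behind the estimate is that $A(D_t)$ is a Fourier multiplier of order $-\tfrac12$ in time, and since $D_t u = (-\tfrac12\Delta+V)u$ on a solution to \eqref{e:S}, the multiplier $A(D_t)$ effectively acts as $(-\Delta)^{-1/2}$ on $u$, yielding a gain of one full derivative in space. The cutoff $\varphi$ is needed to produce an $L^2$-function in time.

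The plan is first to reduce to $V=0$ by writing $U_V(t)u^0 = U_0(t)u^0 - i\int_0^t U_0(t-s) V(s,\cdot) U_V(s) u^0\,ds$. Because $\varphi$ has compact $t$-support in some $[-T,T]$ and $\|U_V(s)u^0\|_{L^2}=\|u^0\|_{L^2}$, a Gronwall/iteration argument reduces the estimate to its $V=0$ version (with $\varphi$ replaced by a modified smooth cutoff that absorbs $V$ and its spatial derivatives, using smoothness of $V$). Thereafter, let $(e_k)_{k\ge1}$ be an $L^2(\ID)$-orthonormal basis of Dirichlet eigenfunctions of $-\Delta$ with eigenvalues $\mu_k^2>0$. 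Writing $u^0=\sum_k c_k e_k$ and $v=\varphi u$, the time-Fourier transform reads
\[
\hat v(H,z)=\sum_{k} c_k\, \widehat{\varphi}\!\Big(H+\tfrac{\mu_k^2}{2},z\Big)\, e_k(z),
\]
where $\widehat{\varphi}(\omega,z)=\int e^{-it\omega}\varphi(t,z)\,dt$ is Schwartz in $\omega$, uniformly in $z\in\overline{\ID}$. Plancherel then gives
\[
\|A(D_t)v\|_{L^2(\R;H^1(\ID))}^2 \;=\; \int_\R \frac{|\psi(H)|^2}{2|H|}\,\|\hat v(H,\cdot)\|_{H^1(\ID)}^2\,dH.
\]

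The second step is to bound $\|\hat v(H,\cdot)\|_{H^1(\ID)}$ pointwise in $H$. Using $\|f e_k\|_{H^1(\ID)}\leq C(1+\mu_k)\|f\|_{C^1(\overline{\ID})}$ (since $\|e_k\|_{H^1}\leq C\mu_k$) together with the rapid decay $\|\widehat{\varphi}(\omega,\cdot)\|_{C^1(\overline{\ID})}\leq C_N(1+|\omega|)^{-N}$ valid for every $N$, the triangle inequality and Cauchy--Schwarz yield
\[
\|\hat v(H,\cdot)\|_{H^1(\ID)}^2 \;\leq\; C\,\|u^0\|_{L^2}^2\,\sum_k (1+\mu_k)^2\,\big(1+|H+\tfrac{\mu_k^2}{2}|\big)^{-2N}.
\]
In each term of the sum, changing variables $H'=H+\mu_k^2/2$ and splitting the integral into $\{|H'|\leq \mu_k^2/4\}$ and $\{|H'|>\mu_k^2/4\}$ (using that the support of $\psi$ forces $|H'-\mu_k^2/2|\geq 1$, so no small-denominator issue arises in $|\psi(H)|^2/(2|H|)$) gives an $H$-integral bounded by $C_N(1+\mu_k)^{-2N+2}$. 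Thanks to Weyl's law $\mu_k^2\sim k$ in dimension two, choosing $N$ large enough renders the sum $\sum_k(1+\mu_k)^{-2N+2}$ convergent, so that one obtains $\|A(D_t)v\|_{L^2(\R;H^1(\ID))}^2\leq C\|u^0\|_{L^2(\ID)}^2$, with $C$ depending only on $\varphi$ and $\psi$.

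The only real obstacle is balancing the $H^1$-growth factor $(1+\mu_k)^2$ (reflecting that $\varphi e_k$ is rough in space) against the decay of $\widehat{\varphi}(H+\mu_k^2/2,\cdot)$, which is sharply concentrated near $H\sim -\mu_k^2/2$; since $\psi$ eliminates the neighborhood of $H=0$, there is no singularity from the factor $1/\sqrt{2|H|}$, and the Schwartz decay of $\widehat{\varphi}$ more than compensates for the polynomial growth. The rest is routine bookkeeping, and the time-dependent $V$ introduces no further difficulty once the Duhamel reduction is carried out.
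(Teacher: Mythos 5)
Your strategy (eigenfunction expansion plus time Plancherel) is a legitimate alternative to the paper's argument, but as written it contains a quantitative error at its very heart. With the sign conventions of \eqref{e:S}, a free solution is $u(t)=\sum_k c_k e^{it\mu_k^2/2}e_k$, so $\hat v(H,\cdot)$ concentrates at $H=+\mu_k^2/2$, i.e.\ the correct shift is $\widehat{\varphi}\big(H-\tfrac{\mu_k^2}{2},\cdot\big)$, a region where $\psi\equiv 1$. There the weight $|\psi(H)|^2/(2|H|)$ is of size $\mu_k^{-2}$, and the $H$-integral of $\frac{|\psi(H)|^2}{2|H|}\big(1+|H-\mu_k^2/2|\big)^{-2N}$ is comparable to $\mu_k^{-2}$ (the resonant window $|H-\mu_k^2/2|\leq 1$ alone contributes that much), \emph{not} $C_N(1+\mu_k)^{-2N+2}$. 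Multiplying by the $(1+\mu_k)^2$ coming from $\|e_k\|_{H^1}$ gives $O(1)$ per mode, so after your Cauchy--Schwarz in $k$ (which discards the $|c_k|^2$ weights in favour of $\|u^0\|^2$ times a $k$-sum) the series diverges. The favourable decay you obtain comes entirely from the sign error: with $H'=H+\mu_k^2/2$ the support of $\psi$ excludes the resonant region, which would make $A(D_t)\varphi u$ spuriously small in $H^1$. The mechanism here is an exact cancellation — the gain $1/\sqrt{2H}\sim 1/\mu_k$ precisely offsets the loss $\|e_k\|_{H^1}\sim\mu_k$, with nothing left over to sum in $k$ — so you must keep the $\ell^2$ structure of the $c_k$ and exploit almost-orthogonality in $H$ of the profiles $\widehat\varphi(H-\mu_k^2/2)$ (Schur or Cotlar--Stein, plus a unit-window spectral count), rather than a crude triangle/Cauchy--Schwarz inequality in $k$.

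A second, lesser gap is the reduction to $V=0$: the Duhamel term $\int_0^t U_0(t-s)V(s)u(s)\,ds$ is not a free evolution of a fixed datum, so your eigenfunction computation does not apply to it, and "Gronwall/iteration" does not by itself produce the required inhomogeneous smoothing estimate (one would need a Christ--Kiselev-type argument or a direct treatment of the source term). For comparison, the paper's proof avoids all of this: it writes $\|\nabla A(D_t)\varphi u\|_{L^2}^2=\langle A(D_t)\varphi(-\Delta)u,A(D_t)\varphi u\rangle+\langle[-\Delta,A(D_t)\varphi]u,A(D_t)\varphi u\rangle$, substitutes $-\Delta u=2D_tu-2Vu$ from the equation, and uses only that $A(D_t)^2D_t=\tfrac12\Op_1(\psi^2(H))$ is bounded, absorbing the commutator term by an $\varepsilon$-Young argument; the time-dependent potential then costs nothing. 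Your approach can be repaired along the lines above, but in its present form both the convergence of the $k$-sum and the handling of $V$ are genuine gaps.
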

This Lemma is proved at the end of Appendix~\ref{s:osc}.
We now define, for any $g\in C_c^\infty(\R)$, the
sequence $\tilde{u}_n = A(D_t) g(t) u_n$, solution of
\begin{equation}
\label{eqtildeu} \left( D_t + \frac{1}{2}\Delta \right)
\tilde{u}_n =     A(D_t) \left(g(t) V(t,z)  + ig'(t) \right) u_n .
\end{equation}
As a consequence of Lemma~\ref{l:regutemps}, we have
$\|\tilde{u}_n\|_{L^2(\R ; H^1(\ID))} \leq C \|u_n^0\|_{L^2(\ID)}$
together with $$\|A(D_t) \left(g(t) V(t,z) u_n + ig'(t) \right)
u_n \|_{L^2(\R ; H^1(\ID))} \leq C \|u_n^0\|_{L^2(\ID)}.$$
Equation~\eqref{eqtildeu} then implies that
$\|\tilde{u}_n\|_{L^\infty (\R ; H^1(\ID))} \leq C
\|u_n^0\|_{L^2(\ID)}$ and that $A(D_t) g(t) \d_n u_n = \d_n
\tilde{u}_n$ is bounded in $L^2(\IR \times \d \ID)$ by
$\|u^0_n\|_{L^2(\ID)}$, according to the hidden regularity of
Proposition~\ref{p:regboundary}. Hence, if we take $g$ to be
constant equal to $1$ on the support of $a$, after extraction of
subsequences, the following limit exists
$$
\la \mu_{ml}^\d , a\ra = \lim_{R\rightarrow\infty}
\lim_{n\rightarrow +\infty}
 \left\la \partial_n \tilde{u}_n,  \Op_1\left(  \left(  1-\chi\left(  \frac{|H|}{R^2 }\right)\right) a(u,j, t, H )\right) \partial_n \tilde{u}_n \right\ra_{L^2(\d \ID\times \IR)}  ,
$$
for symbols  $a\in C^\infty (T^*(\d \ID \times \R))$, compactly
supported in the variables $z, t$, such that
\begin{equation*}
a(u, j, t, H)   =a(u , \lambda j,  t, \lambda^2 H) \text{,\quad for } | H| >R_{0} \text{ and } \lambda\geq 1.%
\end{equation*}
Then $\mu_{ml}^\d$ is carried by the set $\{(u, j, t,
H)\in T^* \partial \ID\times  T^*\R, |j|\leq \sqrt{2H}\}$. If
moreover $\mu_{ml}$ and $\mu_{ml}^\d$ are obtained through the
same sequence of initial data, then we have the relation (see
again~\cite{GerLeich93})
\begin{multline}
\label{e:sectionml} \int_{\ovl{\ID}\times \R_t \times
\IS^2_{\xi,H}}\frac{\xi}{\sqrt{2H}}\cdot\partial_z a\,\,
\mu^\infty (dz,d\xi, t, dH)dt   \\
=\int_{ |j|\leq \sqrt{2H}} \frac{1}{2} \left(\frac{2H}{2H-j^2}
\right)^\frac12 \left( a(u, \xi^+(j, H)) -a(u, \xi^-(j, H))
\right)\mu_{ml}^\d (du, dj, dt, dH)
\end{multline}
valid for any $a \in \mathcal{S}_0$. The vectors $\xi^{\pm}(j, H)$ are the two vectors (pointing outwards and inwards) in $T^*_u\R^2$ of norm $=\sqrt{2H}$, whose projection to $T^*_u\partial \ID$ is $j$. As above, this implies that $\mu^\infty$ does not charge the set $S$; we then denote by $\mu^S_{ml}(t)$ the measure associated to $\mu^\infty(t)$ as in Section~\ref{s:projbou}. Comparing with \eqref{e:sec}, we see that for any $a \in \mathcal{S}_0$, we have 
\begin{multline}
\label{e:mudmuS}
\int_{(u, j)\in T^*\partial \ID, |j|<\sqrt{2H}} a(u, \xi^+(j, H), t, H)  \mu_{ml}^\d (du, dj, dt, dH)\\
=\int_{S^+}2 \cos\alpha(z, \xi) a(z, \xi) \mu_{ml}^S (dz, d\xi, t,
dH)dt.
\end{multline}
Moreover, \eqref{e:sectionml} implies
\begin{multline}\label{e:mudmuint}
 \int_{(u, \xi)\in T^*\partial \ID, |\xi|=\sqrt{2H}} 
a(z, \xi, t, H)\mu^\infty (dz,d\xi, t, dH) dt
\\
=\int_{|j|=\sqrt{2H}}  a(u, j, t, H)\mu_{ml}^\d (du, dj, dt, dH) .
\end{multline}

These links between the different measures shall be in particular
useful when proving the boundary observability result of
Theorem~\ref{t:obs-b}.

\bigskip

\subsection{Plan of the proofs} 
Section~\ref{s:coorddecomp} first deals with the understanding of action-angle coordinates and the appropriate decomposition of measures that are invariant by the billiard flow.
Section \ref{s:coord} discusses in more detail the coordinates described in the
introduction, in which the dynamics of the billiard can be
integrated and introduces the Fourier Integral Operator
corresponding to this change of coordinates.
Section \ref{s:decompo} reduces the study of invariant measures
on the disk to their restriction to all invariant tori of the dynamics
(more precisely, their restriction to the level sets $\cI_\alpha$, which are unions of invariant tori)

Sections \ref{s:second} and \ref{s:pause} are devoted to the proof of
Theorem~\ref{t:precise} (semiclassical version of the result). In Section~\ref{s:second}, we perform the second microlocalization on a level set $\cI_\alpha$: we start by introducing the adapted class of symbols in Section~\ref{s:symbols} and the appropriate coordinates (which are a modification of the action-angle coordinates) in Section~\ref{s:coordIla0}. This allows us to construct the two different second-microlocal measures in Section~\ref{s:secondmicro}. We then prove their structure properties in Sections~\ref{s:strucprop} and~\ref{s:sy}. To complete the analysis, we prove that they obey invariance laws in Section~\ref{s:strucprop} and~\ref{s:propagation} respectively. 
 Section \ref{s:pause} then concludes the proof of Theorem~\ref{t:precise}.

In Section \ref{s:H0} we explain how to adapt the proof to obtain the
microlocal version, Theorem~\ref{t:preciseml}.

The observability inequalities of Theorems~\ref{t:obs-i} and~\ref{t:obs-b} are then derived in Section~\ref{s:obs}.

Appendices \ref{coord-polar} and \ref{s:commutator} are devoted to
the technical calculations needed to change coordinates from polar to action-angle ones.
Appendix \ref{s:osc} is a technical elaboration on the ``hidden
regularities'' of solutions of Schr\"odinger equations.
Finally, Appendix~\ref{s:timeregwign} states and proves the $L^\infty$ regularity in time of Wigner measures associated to solutions of the Schr\"odinger equation.

\section{Action-angle coordinates and decomposition of invariant measures\label{s:coorddecomp}}

\subsection{Action-angle coordinates and their quantization\label{s:coord}}

Recall that the change of coordinates $\Phi$, mapping action-angle coordinates to cartesian ones, is introduced in Section~\ref{s:billiard} (see~\eqref{e:def Phi}).
%
The map
\[
\Phi:\{(s,\theta,E,J)\;:\;E>0,\theta\in\R/2\pi\Z,s\in\R,J\in\R\}\longrightarrow
\{(z,\xi)\in\IR^{2}\times\IR^{2}\;:\;\xi\neq0\}
\]
is a diffeomorphism satisfying, in particular,
\[
\Phi^{-1}\big(\ID\times(\R^{2}\setminus\{0\})\big)=\left\{
(s,\theta ,E,J)\;:\;(\theta,E)\in\R/2\pi\Z\times
(0,\infty),(J/E)^{2}+s^{2}<1\right\} .
\]
Note that the hamiltonian flow of the energy $\frac{E^{2}}{2}$
(the geodesic flow) reads
\[
G^{\tau}:(s,\theta,E,J)\mapsto(s+\tau
E,\theta,J,E),\qquad\tau\in\R,
\]
and the hamiltonian flow of $J$ (unit speed rotation) is given by:
\[
R^{\tau}:(s,\theta,E,J)\mapsto(s,\theta+\tau,J,E),\qquad\tau\in\R.
\]
Write for $\theta \in \R/2\pi\Z$,
\[
\omega\left(  \theta\right)  :=\left(
-\sin\theta,\cos\theta\right)  ;
\]
the transformation $\Phi$ admits the generating function
\[
S(z,\theta,s,E)=E\omega\left(  \theta\right)  \cdot z-Es,
\]
meaning that
\begin{align*}
\mathrm{Graph}\,\Phi & =\left\{  (s,\theta,E,J,z,\xi)\;:\;(z,\xi
)=\Phi(s,\theta,E,J)\right\}  \\
& =\left\{  (s,\theta,E,J,z,\xi)\;:\;\frac{\partial S}{\partial
E}=0,\xi
=\frac{\partial S}{\partial z},J=-\frac{\partial S}{\partial\theta}%
,E=-\frac{\partial S}{\partial s}\right\}.
\end{align*}
The existence of such a generating function implies that the
diffeomorphism $\Phi$ preserves the symplectic form (see for instance~\cite[Theorem~2.7]{Zwobook}), i.e.
\[
d\xi_{x}\wedge dx+d\xi_{y}\wedge dy=dE\wedge ds+dJ\wedge d\theta.
\]

Using this generating function we define a unitary operator that
quantises the canonical transformation $\Phi$. The operator
\begin{equation}
\mathscr Uf(s,\theta)=(2\pi h)^{-3/2}\int_{0}^{\infty}\int_{\mathbb{R}^{2}%
}e^{-i\frac{S(z,\theta,s,E)}{h}}f(z)\sqrt{E}dzdE,\label{e:U}%
\end{equation}
mapping functions on $\R_{z}^{2}$ to functions on
$\R_{s}\times\R_{\theta }$ is in fact a semiclassical Fourier
Integral Operator associated with $\Phi
$ (the choice of the term $\sqrt{E}$ in this expression is explain by Lemma~\ref{p:FIO} below). Note that $\mathscr Uf$ can be also written independently of $h$ as:%
\[
\mathscr Uf(s,\theta)=\int_{0}^{\infty}e^{iEs}\widehat{f}\left(
E\omega\left(  \theta\right)  \right)  \sqrt{E}\frac{dE}{\left(
2\pi\right) ^{3/2}},
\]
where $\widehat{f}$ stands for the Fourier transform of $f$.
Therefore, the
Fourier transform with respect to $s$ of $\mathscr Uf(s,\theta)$ is merely:%
\[
\left(  2\pi\right)  ^{-1/2}\widehat{f}\left(  E\omega\left(
\theta\right) \right)   \mathds{1}_{\left[  0,\infty\right)
}\left(  E\right)  \sqrt{E}.
\]
From this it is clear that for any symbol
$\phi:\mathbb{R\rightarrow R}$ one
has:%
\[
\phi\left(  D_{s}\right)  \mathscr Uf=\mathscr U\phi\left(
\left\vert D_{z}\right\vert \right)  f,
\]
and, by Placherel's theorem,%
\begin{align*}
\int_0^{2\pi}\int_0^{\infty}\mathscr
Uf(s,\theta)\overline{\mathscr
Ug\left(  s,\theta\right)  }dsd\theta & =\int_0^{2\pi}\int_{0}%
^{\infty}\widehat{f}\left(  E\omega\left(  \theta\right)  \right)
\overline{\widehat{g}\left(  E\omega\left(  \theta\right)  \right)  }%
E\frac{dsd\theta}{\left(  2\pi\right)  ^{2}}\\
& =\left\langle g,f\right\rangle _{L^{2}\left(
\mathbb{R}^{2}\right)  }.
\end{align*}
In particular, the following Lemma has been proved:

\begin{lemma}
\label{p:FIO} (i) The operator $\mathscr U$ is unitary from $L^{2}%
(\mathbb{R}^{2})$ to $L^{2}\left(  \R\times\R/2\pi\Z\right) $:
$\mathscr U^{\ast}\mathscr U=I$ .

(ii) For $f\in C_{c}^{\infty}(\R^{2})$, we have $\partial^{2}_{s}
\mathscr U f=\mathscr U \Delta f$.

As a consequence,
\[
-h^{2}\mathscr U\Delta\mathscr U^{\ast}=-h^{2}\partial _{s}^{2}.
\]
\end{lemma}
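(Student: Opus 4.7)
The plan is to first eliminate the parameter $h$ from $\mathscr U$ and then reduce everything to Plancherel's theorem plus a polar-coordinates change of variables. Concretely, I would begin by performing the $z$-integration in \eqref{e:U}, which (with the convention $\widehat f(\xi)=\int e^{-i\xi\cdot z}f(z)\,dz$) turns the Gaussian into a Fourier transform; then the change of variables $E\mapsto E/h$ cancels all factors of $h$ and gives the $h$-free representation
\[
\mathscr U f(s,\theta)=\frac{1}{(2\pi)^{3/2}}\int_{0}^{\infty}\sqrt{E}\,e^{iEs}\,\widehat f\bigl(E\omega(\theta)\bigr)\,dE,
\]
as already stated in the excerpt. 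This formula is the workhorse for both parts.

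For part (i), I would take the Fourier transform of $\mathscr Uf$ in the $s$ variable: the $E$-integral becomes a $\delta$-function in $E$, yielding
\[
\mathcal F_s[\mathscr Uf](E,\theta)=(2\pi)^{-1/2}\sqrt{E}\,\widehat f\bigl(E\omega(\theta)\bigr)\mathds 1_{[0,\infty)}(E).
\]
Then Plancherel in $s$ gives
\[
\|\mathscr Uf\|_{L^2(\R\times\R/2\pi\Z)}^2=\frac{1}{(2\pi)^{2}}\int_0^{2\pi}\!\!\int_0^{\infty}E\,|\widehat f(E\omega(\theta))|^{2}\,dE\,d\theta,
\]
and recognizing $E\,dE\,d\theta$ as the Lebesgue measure on $\R^2\setminus\{0\}$ in polar coordinates $\xi=E\omega(\theta)$ (here the factor $\sqrt{E}$ placed in the definition of $\mathscr U$ is precisely what balances the Jacobian) collapses this to $(2\pi)^{-2}\|\widehat f\|_{L^2(\R^2)}^2=\|f\|_{L^2(\R^2)}^2$. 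Polarizing this equality gives $\mathscr U^{\ast}\mathscr U=I$.

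For part (ii), I would differentiate the integral representation of $\mathscr Uf$ twice in $s$ under the integral sign; each $\partial_s$ brings down a factor $iE$, hence $\partial_s^2\mathscr Uf$ inserts $-E^{2}$ in the integrand. On the other hand, $\widehat{\Delta f}(\xi)=-|\xi|^{2}\widehat f(\xi)$, and since $|\omega(\theta)|=1$ we have $|E\omega(\theta)|^{2}=E^{2}$, so the same factor $-E^{2}$ appears when one writes $\mathscr U(\Delta f)$ from the integral formula. Hence $\partial_s^{2}\mathscr Uf=\mathscr U\Delta f$ term by term. (Equivalently, this is the specialisation $\phi(x)=x^{2}$ of the intertwining $\phi(D_s)\mathscr U=\mathscr U\phi(|D_z|)$ noted in the excerpt.)

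Finally, the consequence $-h^{2}\mathscr U\Delta\mathscr U^{\ast}=-h^{2}\partial_s^{2}$ follows formally by applying $\mathscr U^{\ast}$ on the right of (ii), with $f$ replaced by $\mathscr U^{\ast}g$, and using $\mathscr U\mathscr U^{\ast}=I$ on the range of $\mathscr U$ (the image consists of functions whose $s$-Fourier transform is supported in $[0,\infty)$, and one uses $\mathscr U^{\ast}\mathscr U=I$ to check $\mathscr U\mathscr U^{\ast}g=g$ there). The only mild subtlety -- more a matter of bookkeeping than a real obstacle -- is this last point: $\mathscr U$ is an isometry but not surjective onto $L^{2}(\R\times\R/2\pi\Z)$, so the identity for the conjugated Laplacian must be read on the image of $\mathscr U$; this is harmless for the subsequent use in the paper, since all functions of interest will have been produced by applying $\mathscr U$ to something.
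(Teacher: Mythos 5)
Your proof is correct and follows essentially the same route as the paper: the $h$-free representation of $\mathscr U$, the computation of its $s$-Fourier transform, Plancherel combined with the polar-coordinates Jacobian $E\,dE\,d\theta$ (which the $\sqrt E$ factor is designed to absorb), and the intertwining $\phi(D_s)\mathscr U=\mathscr U\phi(|D_z|)$ for part (ii). Your closing remark that $\mathscr U$ is an isometry but not surjective, so that $\mathscr U\mathscr U^{\ast}=I$ only on the range (the subspace of functions whose $s$-Fourier transform is supported in $E\geq 0$), is a correct refinement that the paper glosses over and is indeed harmless for the subsequent applications.
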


\noindent\textbf{Notation}. We denote by $P_{0}(z,
\xi)=\frac{|\xi|^{2}}{2}$ the hamiltonian generating the geodesic
flow in $\R^{2}\times\R^{2}$; and $P_{1}(z, \xi)= x\xi_{y} - y
\xi_{x}$ the hamiltonian generating the (unit speed) rotation. We
denote by $X_{P_{0}} = \xi\cdot\partial_{z}$ and $X_{P_{1}} =
z^{\perp}\cdot\partial_{z} + \xi^{\perp}\cdot\partial_{\xi}$ the
corresponding hamiltonian vector fields on $T^{*}\IR^{2}$. We
denote by $G^{\tau}(z, \xi)=(z+\tau\xi, \xi)$ the geodesic flow
(generated by $P_{0}$) and $R^{\tau}$ the flow generated by
$P_{1}$ (rotation of angle $\tau$ of both $z$ and $\xi$).

\medskip
In the new coordinates, these hamiltonians and vector fields are
slightly simpler since $P_{0}\circ\Phi=\frac{E^{2}}{2}$,
$P_{1}\circ\Phi= J$ together with $X_{P_{0} \circ\Phi} = E
\partial_{s}$ and $X_{P_{1}\circ\Phi} =
\partial_{\theta}$. Very often, we shall (with a slight abuse of notation) use
the letter $J$ to mean the function $P_{1}$, and $E$ for the
function $\sqrt{2P_{0}}$.

\subsection{Decomposition of an invariant measure of the billiard \label{s:decompo}}
This section aims at describing properties shared by all measures
$\mu$ invariant by the billiard flow (even if they are not
necessarily linked with solutions to a partial differential
equation). It essentially collects a few simple facts that will be
useful in the next sections when studying measures arising from
solutions of the Schr\"odinger equation~\eqref{e:S}.

\medskip
Let $(z, \xi)\in \ovl{\ID} \times \R^2$. There exist $t_1\leq 0,
t_2\geq 0$ such that $|z+t_1\xi|= |z+t_2\xi|=1$. Note that if $(z,
\xi)\in \ID \times \R^2$, then $t_1$ and $t_2$ are unique and
$t_1>0$, $t_2<0$.

Recall that $\alpha\in [-\pi/2, \pi/2]$ (defined in Section~\ref{s:billiard}) is the oriented
angle between $-(z+t_1\xi)$ and $\xi$ (that is, the angle between
the velocity $\xi$ and the inner normal to the disk, at the point
where the oriented straight line $\{z+t\xi, t\in \R\}$ first hits
the disk). See Figure~\ref{fig:angle-disk}. One has the expression
$$\alpha = -\arcsin \left(\frac{x\xi_y - y \xi_x}{|\xi|}\right).$$

Our work is based on the following partition of phase space:
$$
\ovl{\ID}\times (\R^2\setminus\{0\})  =\alpha^{-1}\left(\pi\IQ\cap[-\pi/2, \pi/2]\right) \sqcup \alpha^{-1}\left(\R\setminus\pi\IQ\right),
$$
from which the following lemma follows.
\begin{lemma}
\label{Lemma decomposition}Let $\mu$ be any finite, nonnegative Radon
measure\footnote{We denote by $\mathcal{M}_{+}\left( \ovl{\ID}
\times \R^2 \right)$ the set of all such measures.} on
$\ovl{\ID}\times \R^2$.
Then $\mu$ decomposes as a sum of nonnegative measures:%
\begin{equation}
\mu=\mu\rceil_{ \alpha\not\in \pi\IQ}+\sum_{r\in \IQ\cap[-1/2,
1/2]}\mu\rceil_{ \alpha=r\pi}+\mu\rceil_{\xi=0} .
\label{dec}%
\end{equation}
\end{lemma}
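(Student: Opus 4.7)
The statement is essentially a measure-theoretic tautology once we observe that $\ovl{\ID}\times\R^2$ is partitioned by the sets appearing in the right-hand side. My plan is simply to verify this partition and the measurability of its pieces, then invoke the countable additivity of $\mu$.

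First, I would note that the function $\alpha:\ovl{\ID}\times(\R^2\setminus\{0\})\to[-\pi/2,\pi/2]$ given by $\alpha(z,\xi)=-\arcsin((x\xi_y-y\xi_x)/|\xi|)$ is continuous, hence Borel measurable. Consequently, the sets $\{\xi=0\}$, $\{\alpha\notin\pi\IQ\}$, and $\{\alpha=r\pi\}$ for each $r\in\IQ\cap[-1/2,1/2]$ are Borel subsets of $\ovl{\ID}\times\R^2$. Moreover, they are pairwise disjoint and their union is all of $\ovl{\ID}\times\R^2$:
\[
\ovl{\ID}\times\R^2 \;=\; \{\xi=0\}\ \sqcup\ \bigsqcup_{r\in\IQ\cap[-1/2,1/2]}\{\alpha=r\pi\}\ \sqcup\ \{\alpha\notin\pi\IQ\}.
\]

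Next, for any Borel set $B\subset\ovl{\ID}\times\R^2$, the restriction $\mu\rceil_B$ defined by $\mu\rceil_B(A):=\mu(A\cap B)$ is a nonnegative Radon measure on $\ovl{\ID}\times\R^2$. Since the partition above is countable and $\mu$ is $\sigma$-additive, for every Borel set $A$ we have
\[
\mu(A)\;=\;\mu(A\cap\{\xi=0\})+\sum_{r\in\IQ\cap[-1/2,1/2]}\mu(A\cap\{\alpha=r\pi\})+\mu(A\cap\{\alpha\notin\pi\IQ\}),
\]
which is precisely the decomposition~\eqref{dec} evaluated on $A$. The convergence of the sum on the right is automatic from the finiteness of $\mu$: the partial sums form an increasing sequence bounded by $\mu(\ovl{\ID}\times\R^2)<\infty$.

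Since there is no analytic or dynamical obstacle (the lemma uses no invariance of $\mu$ under the billiard flow, only its being a finite Radon measure), the only minor point to verify carefully is the measurability of $\alpha$, which follows immediately from its explicit continuous formula on $\ovl{\ID}\times(\R^2\setminus\{0\})$. I do not expect any real difficulty here; the lemma simply records the partition of phase space that will be used in later sections to decompose measures invariant under the billiard flow into pieces supported on the invariant sets $\cI_{\alpha_0}$.
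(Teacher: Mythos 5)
Your proof is correct and follows exactly the route the paper intends: the paper states the partition $\ovl{\ID}\times(\R^2\setminus\{0\})=\alpha^{-1}(\pi\IQ\cap[-\pi/2,\pi/2])\sqcup\alpha^{-1}(\R\setminus\pi\IQ)$ immediately before the lemma and simply remarks that the decomposition follows, which is precisely your argument via measurability of $\alpha$ and countable additivity of the finite measure. Nothing is missing.
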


 Note that the functions $P_0$, $P_1$, and thus also $\alpha$, are well-defined on the billiard phase space $\W$. Thus the previous lemma applies as well to measures on $\W$.

In what follows, we shall call {\em nonnegative invariant measure} a
nonnegative Radon measure on $\W$ which is invariant under the
billiard flow. We shall extend this terminology to measures $\mu$
defined a priori on $\ovl{\ID}\times \R^2$, to mean that $\pi_*
\mu$ (the image of $\mu$ under the projection $\pi$) is invariant under the
billiard flow $\phi^\tau$ on $\W$.
\begin{lemma}
\label{LemmaFourierInvariant}Let $\mu$ be a nonnegative invariant
measure on $\W$. Then every term in the decomposition (\ref{dec})
is a nonnegative invariant measure, and $\mu\rceil_{ \alpha\not\in
\pi\IQ}$ is invariant under the rotation flow $(R^\tau)$, as well
as $\mu\rceil_{ \alpha=\pm \pi/2}$.

\end{lemma}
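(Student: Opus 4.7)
The plan is to treat the three claims essentially independently, using that both $P_{1} = J$ (and hence $\alpha$) and the zero section $\{\xi=0\}$ are preserved by the billiard flow $\phi^{\tau}$.

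\textbf{Invariance of each term.} I would first observe that the function $\alpha : \overline{\ID}\times(\R^{2}\setminus\{0\}) \to [-\pi/2,\pi/2]$ is continuous and, as noted after Figure~\ref{fig:angle-disk}, is preserved by $\phi^{\tau}$. Moreover $\{\xi=0\}$ is pointwise fixed by the flow. Hence each of the sets $\{\alpha \notin \pi\Q\}$, $\{\alpha = r\pi\}$ for $r \in \Q\cap[-1/2,1/2]$, and $\{\xi=0\}$ is a $\phi^{\tau}$-invariant Borel subset of $\W$, and the decomposition \eqref{dec} is a decomposition into the restrictions of $\mu$ to these invariant Borel sets. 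Since the restriction of an invariant nonnegative Radon measure to an invariant Borel set is again an invariant nonnegative Radon measure, every term in \eqref{dec} is invariant.

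\textbf{Rotation invariance on $\{\alpha\notin \pi\Q\}$.} Here I would disintegrate $\mu\rceil_{\alpha\notin\pi\Q}$ along the fibres of the moment map $M : (z,\xi)\mapsto (E,J)=(|\xi|,x\xi_{y}-y\xi_{x})$. Since $E$ and $J$ are first integrals of $\phi^{\tau}$, the invariance of $\mu$ implies that almost every conditional measure on the fibre $T_{(E,J)}$ is itself $\phi^{\tau}$-invariant. On fibres with $\alpha(E,J)\notin \pi\Q$, the billiard flow is uniquely ergodic on $T_{(E,J)}$ with $\lambda_{E,J}$ as its unique invariant probability measure (this is the statement recalled at the end of Section~\ref{s:billiard}). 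Hence each conditional measure is proportional to $\lambda_{E,J}$. Since $\lambda_{E,J}$ is itself rotation invariant by its definition in Section~\ref{s:billiard}, and the decomposition is over a base parametrised by rotation invariants $(E,J)$, the reconstructed measure $\mu\rceil_{\alpha\notin\pi\Q}$ is invariant under $R^{\tau}$.

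\textbf{Rotation invariance on $\{\alpha=\pm\pi/2\}$.} The set $\cI_{\pm\pi/2} = \{|J|=E,\ E>0\}$ consists of the tangent vectors to $\partial\ID$; in action-angle coordinates it is the locus $\{s=0,\ J=\mp E\}$, so for fixed $E$ the fibre $T_{(E,\pm E)}$ is a single circle parametrised by the angle $\theta$. The main point is to show that the restriction of the billiard flow to this circle is precisely the rotation flow $R^{E\tau}$. I would obtain this by approximation: taking a sequence of interior points whose angles $\alpha_{n}$ tend to $\pm\pi/2$ and invoking the continuity of $(\tau,z,\xi)\mapsto \phi^{\tau}(z,\xi)$ stated in Section~\ref{s:billiard}, the bouncing polygonal trajectories converge, uniformly on compact time intervals, to the uniform rotation along $\partial\ID$ at angular speed $E$, which coincides with $R^{E\tau}$. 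The same disintegration argument as above, applied over the level sets $\{E = E_{0}\}$, then shows that for almost every $E_{0}>0$ the conditional measure on the circle $T_{(E_{0},\pm E_{0})}$ is invariant under $R^{E_{0}\tau}$ for every $\tau\in\R$, hence under every rotation. Summing in $E_{0}$ yields the rotation invariance of $\mu\rceil_{\alpha=\pm\pi/2}$.

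The only delicate step is the identification of the billiard flow on $\cI_{\pm\pi/2}$ with $R^{E\tau}$; this is the obstacle since the free transport flow in action-angle coordinates literally leaves $\Phi^{-1}(\overline{\ID}\times\R^{2})$ as soon as $s\neq 0$, so the argument must go through the continuous extension of $\phi^{\tau}$ on the quotient $\W$ rather than through the formulas for $G^{\tau}$.
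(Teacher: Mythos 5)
Your proof is correct and follows the same outline as the paper's (which is only a few sentences long): invariance of each term in \eqref{dec} follows from the flow-invariance of the level sets of $\alpha$ and of $\{\xi=0\}$, and the $\alpha=\pm\pi/2$ case rests, exactly as in the paper, on the fact that the billiard flow coincides with the rotation flow up to a time change on the grazing set --- a fact the paper asserts without proof, and for which your limiting argument via the continuity of $\phi^\tau$ on $\W$ is a legitimate justification (the angular advance $\pi-2\alpha$ per chord of length $2\cos\alpha$ traversed in time $2\cos\alpha/E$ gives angular speed $E(\pi-2\alpha)/(2\cos\alpha)\to E$ as $\alpha\to\pm\pi/2$). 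The one place where you take a genuinely different route is the case $\alpha\notin\pi\IQ$: the paper observes that for each such $\alpha_0$ there is $T(\alpha_0)$ such that $\phi^{T(\alpha_0)}$ acts as an irrational rotation on $\{\alpha=\alpha_0\}$, so invariance under this single map forces invariance under the closure of the cyclic group it generates, i.e.\ under all of $(R^\tau)$; you instead disintegrate over the moment map and invoke the unique ergodicity of the billiard flow on each torus $T_{(E,J)}$ (recalled at the end of Section~\ref{s:billiard}) to identify the conditional measures with $\lambda_{E,J}$. Both arguments rest on the same underlying irrationality; yours yields the slightly stronger conclusion that $\mu\rceil_{\alpha\notin\pi\IQ}$ is a superposition of the measures $\lambda_{E,J}$, which is precisely the form the paper needs later (Section~\ref{s:pause}) when describing $\nu_{Leb}$, while the paper's version avoids the disintegration and the ``for a.e.\ fibre, for all $\tau$'' bookkeeping that your route requires (handled, as you implicitly do, by a countable dense set of times and continuity).
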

The rotation flow $(R^\tau)$ is well defined on $\W$, so the last
sentence makes sense. The assertion for $\alpha=\pm \pi/2$ comes
from the fact that the rotation flow coincides with the billiard
flow (up to time change) on the set $\{\alpha=\pm \pi/2\}$. The
assertion for $\alpha\not\in \pi\IQ$ is a standard fact. It comes
from the remark that, for any given value $\alpha_0$ (such that
$\alpha_0\not\in\pi\IQ$) we can find $T=T(\alpha_0)>0$ such that
$\phi^{T}$ coincides with an irrational rotation on the set
$\{\alpha=\alpha_0\}$.

Thus, for $\alpha\not\in \pi\IQ$ or $\alpha=\pm \pi/2$, there is
nothing to prove to get Theorem \ref{t:main}.

\bigskip
Now consider a term $\mu\rceil_{  \alpha=r_0\pi}$, where $r_0\in
\IQ\cap (-1/2, 1/2)$ is fixed. Let us denote $\alpha_0=\pi r_0$.
Introduce the vector field on $T^*\R^2$:
 $$ (\alpha_0-\alpha)X_{P_1}+\frac{\cos\alpha}E X_{P_0} $$
On the set $\cI_{\alpha_0}=\{J=-\sin\alpha_0 E\}$ it coincides
with $X_{P_0}$ up to a constant factor. Denote by
$\phi^\tau_{\alpha_0}$ the flow on $\W$ generated by $
(\alpha_0-\alpha)X_{P_1}+\frac{\cos\alpha}E X_{P_0}$ with
reflection on the boundary of the disk. More precisely, it is the
unique continuous flow defined on $\W$ such that
$$\phi^\tau_{\alpha_0}(z, \xi)= R^{(\alpha_0-\alpha)\tau} \left(z+\tau\frac{\cos\alpha}{|\xi|} \xi, \xi\right)$$
whenever $z\in \ID$ and $z+\tau\frac{\cos\alpha}{|\xi|} \xi\in\ID$
(with $\alpha=-\arcsin \frac{P_1(z, \xi)}{|\xi|}$ as previously).

In the coordinates $(s, \theta, E,J)$, this flow simply reads
$$\phi^\tau_{\alpha_0}\circ \Phi (s, \theta, E,J)
= \Phi (s+  \tau\cos\alpha , \theta +(\alpha_0-\alpha)\tau, E,J),
\quad \alpha = -\arcsin(J/E),
$$
with reflection on the boundary of the disk.

All its orbits are periodic: actually, we determined the
coefficients  $(\alpha_0-\alpha)$ and $\frac{\cos\alpha}E $
precisely for that purpose, see
Figure~\ref{fig:phi-alpha-0-calcul}. Some trajectories of the flow
are represented on Figure~\ref{fig:phi-alpha-0-trajectories}.

\begin{figure}[h!]
  \begin{center}
    \input{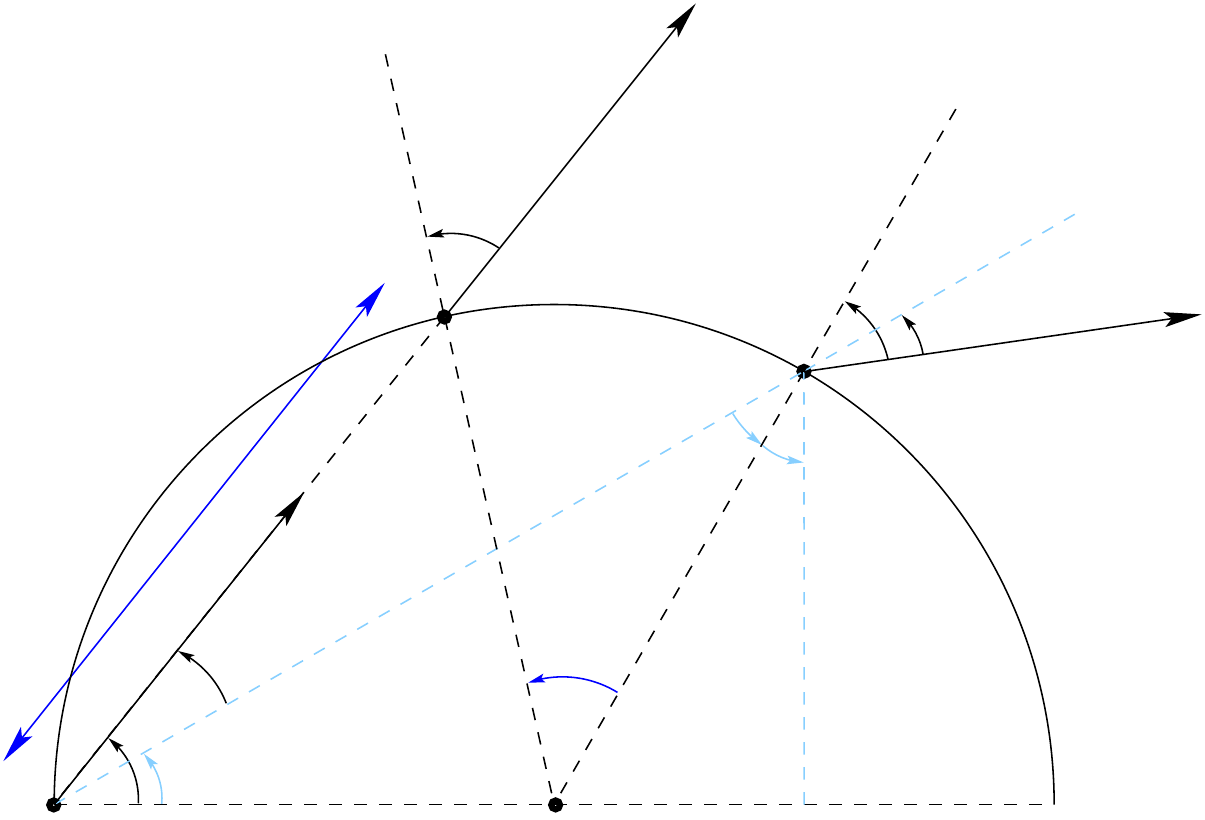_t}
    \caption{Construction of the flow $\phi^\tau_{\alpha_0}$ with $\alpha_0 = \pi/6$. }
    \label{fig:phi-alpha-0-calcul}
    On the figure,  $(z',\xi') = (z+2 \frac{\cos \alpha}{|\xi|}\xi , \xi)$ and $(z'' , \xi'') = R^{2(\alpha_0 - \alpha)}(z',\xi') = \phi^2_{\alpha_0}(z,\xi)$.
  \end{center}
\end{figure}

\begin{figure}[h!]
  \begin{center}
    \input{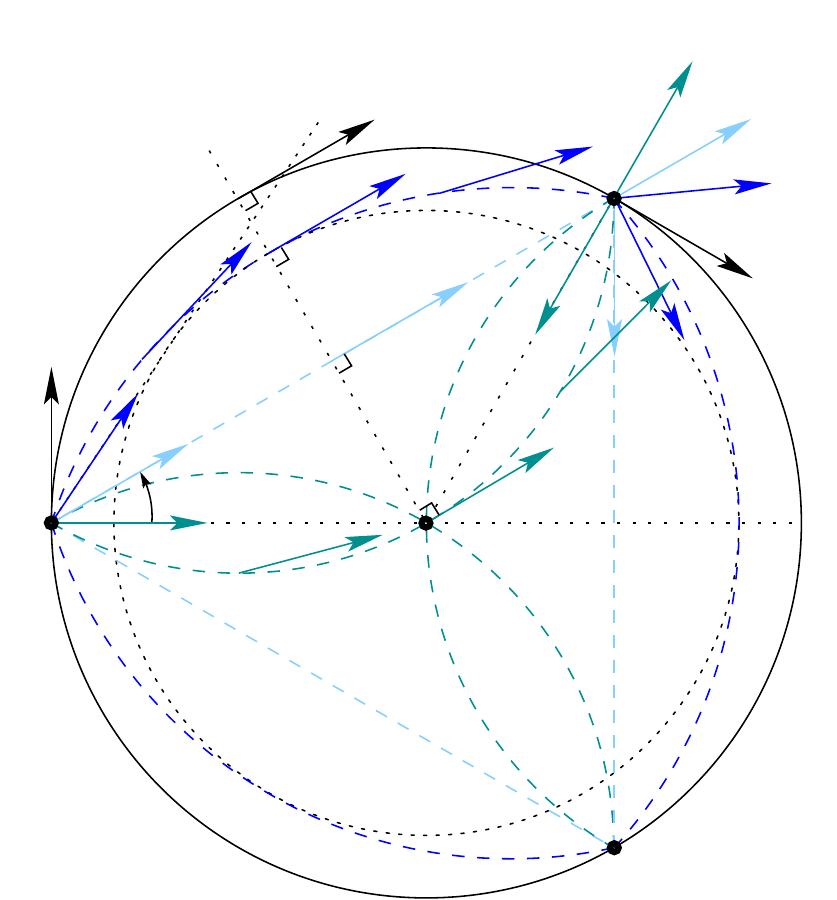_t}
    \caption{Approximate representation of some trajectories of the flow $\phi^\tau_{\alpha_0}$ with $\alpha_0 = \pi/6$ issued from $(z, \xi_j)$ with $z = (-1,0)$ and $\xi_j, j \in \{1,2,3,4\}$ such that $\alpha(z , \xi_1) = 0$,  $\alpha(z , \xi_2) = \alpha_0$, $\alpha(z , \xi_3) \in (\alpha_0 , \pi/2)$ and $\alpha(z , \xi_4) = \pi/2$.}
    \label{fig:phi-alpha-0-trajectories}
  \end{center}
\end{figure}

 The following lemma is now obvious.
\begin{lemma} \label{l:modes}
Let $\mu$ be a nonnegative invariant measure on $\W$.

Let $a\in C^0(\W)$. Then
$$\int  a\circ \phi^\tau_{\alpha_0}d\mu\rceil_{  \alpha=\alpha_0}=\int  ad\mu\rceil_{ \alpha=\alpha_0}$$
for every $t\in \IR$.

Equivalently, we have
$$\int  ad\mu\rceil_{  \alpha=\alpha_0}= \int  \la a\ra_{\alpha_0}d\mu\rceil_{  \alpha=\alpha_0}$$
where $$\la a\ra_{\alpha_0}=\lim_{T\To +\infty} \frac1T \int_0^T
a\circ  \phi^\tau_{\alpha_0} dt.$$
\end{lemma}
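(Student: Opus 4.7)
The plan is to prove the $\phi^\tau_{\alpha_0}$-invariance of $\mu\rceil_{\alpha=\alpha_0}$ and then to deduce the averaging identity from the periodicity of the orbits of $\phi^\tau_{\alpha_0}$.

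First I would observe that $\alpha = -\arcsin(J/E)$ is a first integral of the billiard flow: both $E = |\xi|$ and $J = z\cdot\xi^\perp$ are preserved by free transport, and by the boundary reflection $\sigma_z$ (a direct computation using $z\cdot z^\perp = 0$ shows that $z\cdot \sigma_z(\xi)^\perp = z\cdot\xi^\perp$, so $J$ is unchanged on $\partial\ID$). Hence $\cI_{\alpha_0}$ is $\phi^\tau$-invariant and $\mu\rceil_{\alpha=\alpha_0}$ is itself a nonnegative $\phi^\tau$-invariant measure. By construction, the vector field generating $\phi^\tau_{\alpha_0}$ is $(\alpha_0-\alpha)X_{P_1} + \frac{\cos\alpha}{E}X_{P_0}$, which on $\cI_{\alpha_0}$ reduces to $\frac{\cos\alpha_0}{E}X_{P_0}$. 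Consequently, on $\cI_{\alpha_0}$ one has
$$\phi^\tau_{\alpha_0}(x) = \phi^{\tau g(x)}(x), \qquad g(x) := \frac{\cos\alpha_0}{E(x)},$$
and $g$ is itself $\phi^\tau$-invariant.

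I would then obtain the $\phi^\tau_{\alpha_0}$-invariance of $\mu\rceil_{\alpha=\alpha_0}$ by disintegrating over the level sets $\{E=c\}$: each such fiber is $\phi^\tau$-invariant, so the conditional measures are $\phi^\tau$-invariant, and in particular invariant under the constant time reparametrization $\phi^{\tau\cos\alpha_0/c}$, which on the fiber coincides with $\phi^\tau_{\alpha_0}$. Integrating back over $c$ proves the first assertion.

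For the equivalent formulation, I would invoke the fact (stated just above the lemma and illustrated in Figure~\ref{fig:phi-alpha-0-trajectories}) that every orbit of $\phi^\tau_{\alpha_0}$ is periodic, with period bounded on compact subsets of $\W$. Hence for any $a \in C^0(\W)$ the averages $\frac{1}{T}\int_0^T a\circ\phi^\tau_{\alpha_0}\,d\tau$ converge pointwise and locally uniformly to $\la a\ra_{\alpha_0}$. Averaging the invariance identity $\int a\circ\phi^\tau_{\alpha_0}\,d\mu\rceil_{\alpha=\alpha_0} = \int a\,d\mu\rceil_{\alpha=\alpha_0}$ over $\tau\in[0,T]$, applying Fubini, and passing to the limit by dominated convergence yields the second identity. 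The only mild subtlety is the correct treatment of boundary reflections when equating $\phi^\tau_{\alpha_0}$ with a time reparametrization of $\phi^\tau$ on $\cI_{\alpha_0}$, but since both flows obey the same reflection law and the rescaling $\cos\alpha_0/E$ remains valid across bounces (as confirmed by the explicit action-angle description given above), this causes no real difficulty.
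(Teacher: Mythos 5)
Your proof is correct and fills in, along exactly the intended lines, what the paper dismisses with ``the following lemma is now obvious'': on $\cI_{\alpha_0}$ the flow $\phi^\tau_{\alpha_0}$ is the billiard flow reparametrized by the first integral $\cos\alpha_0/E$, so invariance follows by disintegrating over energy levels, and the averaging identity then follows from periodicity of the orbits plus Fubini and dominated convergence. No gaps; the boundary-reflection point you flag is handled correctly by the uniqueness of the continuous extension of both flows to $\W$.
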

Remark that $\la a\ra_{\alpha_0}$ is well defined even if $a$ is a
bounded measurable function on $\ID$ (the times $\tau$ where the
trajectories of $\phi^\tau_{\alpha_0}$ hit the boundary form a set
of measure $0$ in $\IR$).

\section{Second microlocalization on a rational angle}
\label{s:second}This section and the next one are devoted to
proving the semiclassical version of our result, Theorem
\ref{t:precise}.

Let $\left(  u^0_{h}\right)$ be a bounded family in $L^{2}\left(
\ID\right)$. Denote by $u_h(z, t)=U_V(t) u_h^0(z)$ the corresponding solutions to \eqref{e:S}. After
extracting a subsequence, we suppose that its Wigner distributions
$W_h  $ (defined by~\eqref{e:Wsc}) converge to a
semiclassical measure $\mu_{sc}$
in the weak-$\ast$ topology of $\mathcal{D}^{\prime}\left( \R^2
\times \R^2\times\mathbb{R}_t\times \mathbb{R}_H\right)  $. The
measure $\mu_{sc} \in L^{\infty}\left(  \mathbb{R}_t;\mathcal{M}%
_{+}\left(  \R^2 \times \R^2\times \mathbb{R}_H\right)  \right)$ is for a.e. $t\in \R$ supported by $\ovl{\ID} \times
\R^2\times \mathbb{R}_H \cap \{H=\frac{E^2}{2}\}$.

\medskip
From now on, we skip the index $sc$ since there is no possible confusion here (only semiclassical measures are considered until Section~\ref{s:H0}) to lighten the notation.

The aim of this section is to understand the term $\mu\rceil_{
\cI_{\alpha_0}}$, where $\cI_{\alpha_0}=\{\alpha=\alpha_0\}$ and
$\alpha_0\in \pi\IQ$. In view of Lemma \ref{l:modes}, it suffices
to characterize the action of $\mu\rceil_{ \cI_{\alpha_0}}$ on
test functions that are $(\phi_{\alpha_0}^\tau)$-invariant.

 \subsection{Classes of test functions\label{s:symbols}}
Here is a list of properties that we may want to impose on our
symbols in the course of our proof. We express these properties
both in the ``old'' coordinates $(x, y, \xi_x, \xi_y)$ and in the
``new'' ones $(s, \theta, E, J)$.

Let $a$ be a smooth function of $(x, y, \xi_x, \xi_y, t, H)$,
supported away from $\{\xi=0\}$. Then $a\circ\Phi$ is a smooth
function of $(s, \theta, E, J, t, H)$ supported away from
$\{E=0\}$.
The properties we shall use are the following:

\begin{enumerate}
\item[(A)] The symbol $a$ is compactly supported w.r.t. $\xi$, $t$
and $H$. This is equivalent to $a\circ\Phi$ being compactly
supported w.r.t. $E, J$, $t$ and $H$. Note also that $a\circ\Phi$
is $2\pi$-periodic w.r.t. $\theta$.

\item[(B)] For $|z|=1$, we have $a\left(  z,\xi\right)
=a\circ\sigma\left(  z,\xi\right)  $ where $\sigma$ is the
orthogonal symmetry with respect to the boundary of the disk at
$z$. In the coordinates of Section \ref{s:coord}, this reads
(forgetting to write the $(t,H)$-dependence of $a$)
$$a\circ\Phi \left(\cos\alpha, \theta, E, J\right)=
a\circ\Phi \left(-\cos\alpha,\theta + \pi+2\alpha , E, J\right)$$
for all $\theta, E, J$ and for
$\alpha=-\arcsin\left(\frac{J}{E}\right)$.\smallskip

{\noindent\bf Terminology.} We shall say that $a$ is a smooth
function on $\W$ if $a$ is a smooth functon on $\ovl{\ID}\times
\R^2$ that satisfies (B).

\item[(C)] If $a$ satisfies (B), we want in addition that
$a\circ\pi\circ\phi^\tau$ defines a smooth function on $\W$ for
all $\tau$. This is equivalent to requiring that
$$\partial_s^k(a\circ\Phi)\left(\cos\alpha, \theta, E, J\right)=
\partial_s^k(a\circ\Phi) \left(-\cos\alpha,\theta + \pi+2\alpha , E, J\right)$$
for all $k$, for all $\theta, E, J$ and for
$\alpha=-\arcsin\left(\frac{J}{E}\right)$. In other words, all the
derivatives of $a\circ\Phi$ w.r.t. $s$ satisfy the symmetry
condition (B).

\item[(D)] The function $a$ satisfies (C), and in addition $a$ is
$\phi_{\alpha_0}^\tau$-invariant, which reads
$$\left((\alpha_0-\alpha)X_{P_1}+\frac{\cos\alpha}E X_{P_0}\right)
a =0,$$ or, in the new coordinates,
$$[(\alpha_0-\alpha)\partial_\theta+\cos\alpha\partial_s]a\circ\Phi=0.$$

\end{enumerate}

Furthermore, to fix ideas, let us assume that the support of $a$
with respect to $t$ is contained in $(-1, 1)$. This implies that
\begin{eqnarray}\label{e:localt}
  W_h(a) &=&\la g (t) u_h, \Op_1(a(z, h\xi, t, h^2 H) g (t) u_h\ra_{L^2(\R^2\times \R)} +O(h^\infty)\\
  &=& \la g (t) u_h, \Op_h(a(z, \xi, t, h H) g (t) u_h\ra_{L^2(\R^2\times \R)} +O(h^\infty)\nonumber ,
 \end{eqnarray}
 for any smooth cut-off function $g$ supported in $(-2, 2)$ and taking the value $1$ on $(-1, 1)$. In other words, we need only consider the restriction of $u_h(z, t)$ to $t\in (-2, 2)$.

\subsection{Coordinates adapted to the second microlocalization on $\cI_{\alpha_0}$\label{s:coordIla0}}

 We wish to study the concentration of $W_h$ around the set $\{J=-E\sin\alpha_0\}$. If the limit measure $(\Phi^{-1})_* \mu$ is supported on the set $\{E = \sqrt{2H}\}$ this is equivalent to studying the concentration of $W_h$ around $\{J=-\sqrt{2H}\sin\alpha_0\}$. Since this assumption is satisfied for sequences $u_h$ satisfying~\eqref{e:Ssc}, we shall study the concentration of $W_h$ around this set.

 Thus we make the (symplectic) change of variables
 $$\left(s, \theta, E, J'-\sqrt{2H}\sin\alpha_0, t'+\frac{\theta\sin\alpha_0}{\sqrt{2H}}, H\right)= (s, \theta, E, J, t, H)$$
 which sends $\{J'=0\}$ to $\{J=-\sqrt{2H}\sin\alpha_0\}$ and leaves untouched the variables $(s, E)$.

 Consider the following corresponding Fourier Integral Operator (which leaves untouched the variables $(s, E)$, omitted here from the notation):
 $$\scrV  f(\theta, H)= (2\pi )^{-1/2}e^{i\sqrt{2H}\sin\alpha_0\theta /h} \int f(\theta, ht)e^{-iHt/h}   dt.$$

 \begin{lemma} If $b\in C_c^\infty(\R_\theta\times \R_J\times \R_t\times\R_H)$, we have
 \begin{equation}
 \label{conj V}
 \scrV \Op_1(b(\theta, hJ, t, h^2 H))\scrV^*=\Op_h(\tilde b(\theta, J', H, ht))+O(h),
 \end{equation}
 where
  $\tilde b(\theta, J', H, ht)=b\left(\theta, J'-\sqrt{2H}\sin\alpha_0,  -ht , H\right),$
 and
 \begin{equation}
 \label{VV*=I}
 \scrV\scrV^*=I .
 \end{equation}

 \end{lemma}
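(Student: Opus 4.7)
The plan is to factor $\scrV$ into two simple unitary pieces. The change of variable $s=ht$ in the integral defining $\scrV f$ gives
\[
\scrV f(\theta,H) = e^{i\sqrt{2H}\sin\alpha_0 \theta/h}\,\cF_{h^2}[f(\theta,\cdot)](H),
\]
where $\cF_{h^2}$ denotes the unitary $h^2$-semiclassical Fourier transform in the second argument. Writing $M$ for the (unitary) operator of multiplication by the unimodular phase $e^{i\sqrt{2H}\sin\alpha_0 \theta/h}$, this identifies $\scrV=M\circ\cF_{h^2}$. Identity \eqref{VV*=I} then follows at once from $\scrV\scrV^* = M\cF_{h^2}\cF_{h^2}^{-1}M^* = MM^* = I$.

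To establish \eqref{conj V}, I would conjugate $A:=\Op_1(b(\theta,hJ,t,h^2H))$ first by $\cF_{h^2}$ and then by $M$. The Fourier conjugation is ruled by the one-line identities $\cF_{h^2}\,t\,\cF_{h^2}^{-1}=-h^2 D_H$ and $\cF_{h^2}\,(h^2 D_t)\,\cF_{h^2}^{-1}=H$; combined with the mixed semiclassical symbol calculus (scale $h$ in $(\theta,J)$ and scale $h^2$ in $(t,H)$) these yield, modulo an $O(h^2)$ operator-norm error,
\[
\cF_{h^2}\,A\,\cF_{h^2}^{-1} = \Op_1\bigl(b(\theta,hJ,-h^2D_H,H)\bigr).
\]
The subsequent conjugation by $M=e^{i\phi/h}$, with $\phi(\theta,H)=\sqrt{2H}\sin\alpha_0 \cdot \theta$, is then handled by a direct Leibniz-rule computation on the generators:
\[
M\,(hD_\theta)\,M^* = hD_\theta - \partial_\theta\phi = hD_\theta-\sqrt{2H}\sin\alpha_0,
\]
\[
M\,(h^2 D_H)\,M^* = h^2D_H - h\,\partial_H\phi.
\]
The first identity produces precisely the shift $J'\mapsto J'-\sqrt{2H}\sin\alpha_0$ appearing in $\tilde b$; the second shift equals $-h\sin\alpha_0\,\theta/\sqrt{2H}$, which is $O(h)$ uniformly on the support of $b$ (which we may assume is bounded away from $\{H=0\}$, this being the regime relevant for Wigner measures of solutions to \eqref{e:S}). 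Assembling the two steps and reading the resulting symbol as $\tilde b(\theta,J',H,ht)$---with $J'\leftrightarrow hD_\theta$ and $ht\leftrightarrow h^2D_H$, in accordance with the notation of the lemma---yields \eqref{conj V}.

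The only genuine technical obstacle is the bookkeeping of remainder terms across the two distinct semiclassical scales: one has to verify that the $O(h^2)$ remainder from the Fourier conjugation and the $O(h)$ remainder from the multiplication conjugation compose to an $O(h)$ operator-norm error on symbols in $C_c^\infty$. Once the mixed quantisation and the associated continuity estimates are set up along the lines of \cite{AnantharamanKMacia,AnantharamanMaciaTore}, the argument reduces to the elementary identities above.
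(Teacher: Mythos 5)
Your proof is correct, but it follows a genuinely different route from the paper's. The paper computes the exact action of $\scrV \Op_1(b)\scrV^*$ on the exponentials $e^{it_0H/h}e^{iJ_0\theta/h}$ (which characterizes the standard-quantization symbol) and then extracts the leading term by stationary phase in $(t,H)$; identity \eqref{VV*=I} falls out by setting $b=1$ in the exact pre-asymptotic formula. You instead factor $\scrV = M\circ \cF_{h^2}$ and conjugate in two steps, which makes \eqref{VV*=I} completely transparent (unitarity of the Fourier transform) and replaces stationary phase by exact conjugation identities on the generators. Your factorization is correct, including the normalization: $(2\pi)^{-1/2}h^{-1}=(2\pi h^2)^{-1/2}$, so the change of variables $s=ht$ really does produce the unitary $h^2$-scaled Fourier transform. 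The shifts you compute are the right ones, and the key structural facts that make the error analysis work are worth making explicit: the phase $\phi(\theta,H)=\sqrt{2H}\sin\alpha_0\,\theta$ is \emph{linear} in $\theta$, so the shift $J'\mapsto J'-\sqrt{2H}\sin\alpha_0$ is exact with no higher-order corrections in that variable; and the shift in the variable dual to $H$, namely $h\,\partial_H\phi=h\sin\alpha_0\,\theta/\sqrt{2H}$, is $O(h)$ only when $H$ is bounded away from $0$ \emph{and} $\theta$ stays in a fixed compact set --- the latter caveat is present in the paper's proof as well ("$O(h)$ is uniform if $\theta_0$ stays in a fixed compact set"), so you should state it alongside the support condition in $H$. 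What your approach buys is conceptual clarity and a trivial proof of unitarity; what it costs is the need to set up the mixed two-scale calculus ($h$ in $(\theta,J')$, $h^2$ in $(H,\tau)$) to justify that the anti-standard/standard reordering after Fourier conjugation is $O(h^2)$ and that the $M$-conjugation of a general compactly supported symbol (not just the generators) incurs only an $O(h)$ error; the paper's single stationary-phase computation is self-contained and avoids this machinery.
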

 \begin{proof}
 First notice that we have
 $$
 \scrV^* g (\theta, t)= (2\pi )^{-1/2} h^{-1} \int g(\theta, H)e^{iHt/h^2} e^{-i\sqrt{2H}\sin\alpha_0\theta /h}dH.
 $$
 Second, we may now compute $A := \scrV \Op_1(b(\theta, hJ, t, h^2 H))\scrV^* e^{it_0 H/h}e^{iJ_0\theta/h}\rceil_{H=H_0, \theta=\theta_0}$. We have the exact formula
 \begin{multline*}
A
 = {(2\pi h)^{-1}} e^{i\sqrt{2H_0}\sin\alpha_0\theta_0 /h}\int  b(\theta_0, J_0-\sqrt{2H}\sin\alpha_0,  ht, H) \\
e^{iHt/h} e^{iHt_0/h}e^{iJ_0\theta_0/h}
e^{-i\sqrt{2H}\sin\alpha_0\theta_0 /h} e^{-iH_0t/h}dH dt .
 \end{multline*}
Note that this expression is exact and thus does not involve the
derivatives of $b$ w.r.t. $\theta$ or $J$. Taking $b = 1$ in this
expression gives the exact formula $A = e^{it_0
H_0/h}e^{iJ_0\theta_0/h}$, which proves \eqref{VV*=I}.

We carry on the computations with a general $b$. After a change of
variables, $A$ is now equal to
 \begin{multline*}
A = {(2\pi h)^{-1}} e^{i\sqrt{2H_0}\sin\alpha_0\theta_0 /h}e^{iJ_0\theta_0/h}\int  b(\theta_0, J_0-\sqrt{2H}\sin\alpha_0,  ht, H)\\
  e^{iH(t+t_0)/h} e^{-i\sqrt{2H}\sin\alpha_0\theta_0 /h} e^{-iH_0t/h}dHdt \\
= {(2\pi h)^{-1}}e^{iH_0t_0/h}e^{i\sqrt{2H_0}\sin\alpha_0\theta_0 /h}e^{iJ_0\theta_0/h}\int  b(\theta_0, J_0-\sqrt{2{(H+H_0)}}\sin\alpha_0,  h(t-t_0), H_0+H)\\
  e^{iHt/h} e^{-i\sqrt{2(H+H_0)}\sin\alpha_0\theta_0 /h}  dHdt  .
 \end{multline*}
 Standard application of the method of stationary phase shows that this expression is of order $O(h^\infty)$ if $H_0$ is away from the support of $b$. Besides, the phase has a single nondegenerate critical point at $(t,H) = ( \frac{\sin\alpha_0\theta_0}{\sqrt{2H_0}}, 0)$, so that {\em uniformly in $t_0\in\R$} the method of stationary phase yields
\begin{multline*}
A = {(2\pi h)^{-1}}
e^{iH_0t_0/h}e^{i\sqrt{2H_0}\sin\alpha_0\theta_0 /h}e^{iJ_0\theta_0/h} \\
\left( {(2\pi h) e^{- i\sqrt{2H_0}\sin\alpha_0\theta_0 /h}}
b\left(\theta_0, J_0- {\sqrt{2H_0}} \sin\alpha_0,
h(-t_0+\frac{\sin\alpha_0\theta_0}{\sqrt{2H}}), H_0\right) \right)
  +O(h) .
\end{multline*}
  This is
  $$ {
  A = e^{iH_0t_0/h}e^{iJ_0\theta_0/h} b\left(\theta_0, J_0-\sqrt{2H_0}\sin\alpha_0,  -ht_0 , H_0\right)
  +O(h)},
  $$
  where $O(h)$ is uniform if $\theta_0$ stays in a fixed compact set. This concludes the proof of~\eqref{conj V}.
 \end{proof}

Recalling the definition of $W_h(a)$ in~\eqref{e:Wsc}, we thus
have
 \begin{eqnarray*}
 W_h(a)&=&\la \scrV\scrU u_h, (\scrV\scrU\Op_h(a(z, \xi, t, hH)\scrU^*\scrV^*) \scrV\scrU u_h\ra_{L^2(\R_s\times \T_\theta\times \R_H)}\\
 &=&\la \scrV\scrU u_h, \Op_h( \tilde b(s, \theta, J', E, H, ht)) \scrV\scrU u_h\ra_{L^2(\R_s\times \T_\theta\times \R_H)}+O(h)
 \end{eqnarray*}
 where
\begin{equation}
\label{e:btilde}
\tilde b(s, \theta,J', E , H, ht))=a\circ\Phi (s, \theta, E,
J'-\sin\alpha_0\sqrt{2H}, -ht, H),
\end{equation}
 and $\T_\theta=\R/2\pi\Z$ is
the circle in which the variable $\theta$ takes values. By
\eqref{e:localt}, $u_h$ may actually be restricted to $|t|<2$ in
this formula, so that it is safe to apply $\scrV$ to $\scrU  u_h$.

To work in our new coordinates, we now define
\begin{equation}\label{e:braper}\la w_h, b\ra=\la \scrV\scrU u_h, \Op_h(  b(s, \theta,E, J', H, ht)) \scrV\scrU u_h\ra_{L^2(\R_s\times \T_\theta\times \R_H)}\end{equation}
for symbols $b$ that satisfies
\begin{itemize}
\item[(A)] the symbol $b$ is compactly supported w.r.t. $E, J'$,
$t$ and $H$, and $2\pi$-periodic w.r.t. $\theta$.
\end{itemize}

We then recover $W_h(a) = \la w_h, \tilde{b}\ra$ with $\tilde{b}$ and $a$ linked by~\eqref{e:btilde}.

\begin{remark}\label{r:trunc}
Note that the bracket \eqref{e:braper} can also be written as
follows
\begin{equation}\label{e:braper1}\la w_h, b\ra=\la \scrV\scrU u_h, \Op_h(  \chi_0(\theta)b(s, \theta,E, J', H, ht)) \scrV\scrU u_h\ra_{L^2(\R_s\times \R_\theta\times \R_H)}\end{equation}
for any $\chi_0 \in C^\infty_c(\R)$ satisfying
$\sum_{k\in\Z}\chi_0(\theta+2\pi k)\equiv 1$ on $\R$. Indeed, we
have \begin{equation}
\label{e:thetasort}
\Op_h(  \chi_0(\theta)b) =  \chi_0(\theta)\Op_h( b)
\end{equation} (because $\Op$ denotes the standard quantization) and we
write for any $2\pi$-periodic function $f \in L^1_{\loc}(\R)$,
$\int_\R \chi_0(\theta) f(\theta) d \theta=   \int_\T f(\theta) d
\theta$.
Because of~\eqref{e:thetasort}, we may also take $\chi_0=\mathds{1}_{(0, 2\pi)}$ when needed.
\end{remark}

 \subsection{Second microlocalization \label{s:secondmicro}}
 We now introduce two auxiliary distributions which describe more precisely how $w_h  $ concentrates on the set
 $$  \left\{(s,\theta, E,J, H, t) \in \Phi^{-1}(\ovl{\ID}\times (\R^2\setminus\{0\})))\times \R^2, \text{ such that }  -\frac{J}{\sqrt{2H}} = \sin \alpha_0\right\}
 $$
(which intersection with $\{E=\sqrt{2H}\}$ is equal to $\cI_{\alpha_0} \cap \{E=\sqrt{2H}\}$).

For this, we define an appropriate class of symbols depending on
an additional variable $\eta$, which later in the calculations
will be identified with $\frac{J'}h = \frac{J+\sqrt{2H}\sin\alpha_0}h$.

\begin{definition}
\begin{itemize}
\item We denote by $\mathcal{S}$ the class of smooth functions $b(s,
\theta, E, J',\eta, H, t)  $ on $\R^7$, supported away from
$\{E=0\}$ and that satisfy condition {\rm (A)} in the variables $(
s, \theta, E, J',H,t)$, and, in addition,

\item[(E)] $b$ is homogeneous of degree zero at infinity in
$\eta\in \R$. That is, there exist $R_{0}>0$ and
$b_{{\rm{hom}}}\in C^{\infty}\left( \R^4 \times \{-1, +1\} \times \R^2 \right)$ with%
\[
b(s, \theta, E, J',\eta,H, t)  =b_{{\rm{hom}}}\left(   s, \theta,
E, J',\frac{\eta }{\left\vert \eta\right\vert }, H, t\right)
\text{,\quad for }\left\vert
\eta\right\vert >R_{0}\text{ and }(s, \theta, E, J',   H, t)  \in   \R^6 .%
\]

\item We denote by $\cS^\sigma$ those symbols $b \in \cS$ that
satisfy conditions {\rm (B)} and {\rm (C)}  (for all $H, t$):

\item[(B)] $ b \left(\cos\alpha,\theta, E,  J'\right)=
b \left(-\cos\alpha,\theta + \pi+2\alpha , E, J'\right)$\\
for all $\theta, E, J'$, and for
$\alpha=-\arcsin\left(\frac{J'-\sqrt{2H}\sin\alpha_0}E\right)$.

\item[(C)] $\partial_s^k b \left(\cos\alpha,\theta, E,  J'\right)=
\partial_s^kb \left(-\cos\alpha,\theta + \pi+2\alpha , E, J'\right)$\\
for all $k$, for all $\theta, E, J'$, and for
$\alpha=-\arcsin\left(\frac{J'-\sqrt{2H}\sin\alpha_0}E\right)$.

\item We denote by $\cS_{\alpha_0}^\sigma$ those symbols $b \in
\cS^\sigma$ satisfying the invariance condition {\rm (D)}:
\item[(D)]
$[(\alpha_0-\alpha)\partial_\theta+\cos\alpha\partial_s]b(s, \theta, E, J')=0$\\
for all $s, \theta, E, J'$, and for
$\alpha=-\arcsin\left(\frac{J'-\sqrt{2H}\sin\alpha_0}E\right)$.

\end{itemize}
\end{definition}

Let $\chi\in C_{c}^{\infty}\left(  \mathbb{R}\right)  $ be a
nonnegative cut-off function that is identically equal to one near
the origin and let $R>0$. For
$b\in\mathcal{S} $, we define%
\begin{multline*}
\left\langle w_{h,R}^{\alpha_0}   ,b\right\rangle := \\
 \left\la \scrV\scrU u_h,  \Op_h\left( \left(  1-\chi\left(  \frac{ J'}{Rh}\right)\right) \chi_0(\theta)b(s, \theta,E, J', \frac{ J'}{h}, H, ht)\right)\scrV\scrU u_h\right\ra_{L^2(\R_s\times \R_\theta\times \R_H)}  ,
\end{multline*}
and%
\begin{equation}\left\langle w_{\alpha_0, h,R}    ,b\right\rangle :=
 \left\la \scrV\scrU u_h,  \Op_h\left(  \chi\left(  \frac{ J'}{Rh} \right) \chi_0(\theta)b(s, \theta,E, J', \frac{ J'}{h}, H, ht)\right)\scrV\scrU u_h\right\ra_{L^2(\R_s\times \R_\theta\times \R_H)}    \label{subL},
\end{equation}

The Calder\'{o}n-Vaillancourt theorem~\cite{CV:71} ensures that both $
w_{h,R}^{\alpha_0}$ and $  w_{\alpha_0,h,R}$ are bounded in
$\mathcal{S}^{\prime}$.
After possibly extracting subsequences, we
have the existence of a limit: for every $b\in \mathcal{S} $,%
\[
  \left\langle {\mu}^{\alpha_0
}   ,b\right\rangle :=\lim_{R\rightarrow\infty}%
\lim_{h\rightarrow0^{+}}   \left\langle w_{h,R}^{\alpha_0}
,b\right\rangle ,
\]
and%
\begin{equation}
   \left\langle {\mu}_{\alpha_0
}   ,b\right\rangle  :=\lim_{R\rightarrow\infty}%
\lim_{h\rightarrow0^{+}}   \left\langle
 w_{\alpha_0,h,R}   ,b\right\rangle . \label{doublelim}%
\end{equation}

Positivity properties are described in the next proposition.

\begin{proposition}
\label{thm 1st2micro}
\begin{itemize}
\item[(i)] The distribution ${\mu}^{\alpha_0}$ is a nonnegative
Radon measure. In addition, ${\mu}^{\alpha_0} $ is nonnegative,
$0$-homogeneous and supported at infinity in the variable $\eta$
($i.e.$, it vanishes when paired with a compactly supported
function). As a consequence, ${\mu}^{\alpha_0 }  $ may be
identified
 with a nonnegative measure on
$\R^4\times\{-1, +1\}\times\R_t\times \R_H $.

\item[(ii)]
The projection of ${\mu}_{\alpha_0 } $ on $\R^4_{s, \theta, E,
J'}\times\R^2_{H, t} $ (that is, $\int_\R {\mu}_{\alpha_0
}(d\eta)$) is a nonnegative measure, carried on $\{J'=0\}$.
\end{itemize}
 \end{proposition}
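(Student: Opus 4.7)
My plan is to establish (i) and (ii) by applying sharp G\aa rding to the $h$-pseudodifferential operators entering $w_{h,R}^{\alpha_0}$ and $w_{\alpha_0,h,R}$, together with an elementary analysis of the supports of the two cut-offs $\chi(J'/(Rh))$ and $1-\chi(J'/(Rh))$. For the nonnegativity in (i), fix $b \in \cS$ with $b \geq 0$ and assume $\chi_0 \geq 0$ and $0 \leq \chi \leq 1$ (which can be arranged without loss of generality). Then
$$c_{h,R}(s,\theta,E,J',H,t) := \left(1-\chi\Bigl(\frac{J'}{Rh}\Bigr)\right) \chi_0(\theta)\, b\Bigl(s,\theta,E,J',\frac{J'}{h},H,ht\Bigr) \geq 0.$$
Although $c_{h,R}$ depends on $h$ (derivatives in $J'$ cost at most $(Rh)^{-1}$), it belongs uniformly in $h$ to an exotic symbol class adapted to second-microlocalization, in the spirit of \cite{MaciaTorus,AnantharamanMaciaTore,AnantharamanKMacia}, for which sharp G\aa rding applies at semiclassical scale $h$. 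This gives
$$\la \scrV\scrU u_h,\, \Op_h(c_{h,R})\, \scrV\scrU u_h\ra \geq -C(R)\,h\,\|\scrV\scrU u_h\|^2_{L^2};$$
passing to the limit $h \to 0$ then $R \to \infty$ yields $\la \mu^{\alpha_0}, b\ra \geq 0$, and combined with the Calder\'on--Vaillancourt bound on $w_{h,R}^{\alpha_0}$ in $\cS'$ this identifies $\mu^{\alpha_0}$ with a nonnegative Radon measure.

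For the support at infinity in $\eta$, let $c_0 > 0$ be such that $1-\chi(x) = 0$ for $|x| \leq c_0$. On the support of $1-\chi(J'/(Rh))$ one has $|J'/h| \geq c_0 R$, so for $R > R_0/c_0$ (with $R_0$ as in property (E)) one has $|J'/h| > R_0$ on the whole support of $c_{h,R}$. Property (E) then implies
$$b\Bigl(s,\theta,E,J',\frac{J'}{h},H,ht\Bigr) = b_{\mathrm{hom}}\bigl(s,\theta,E,J',\mathrm{sign}(J'),H,ht\bigr)$$
there. Two consequences follow at once: first, if $b$ has compact support in $\eta$ then $c_{h,R} \equiv 0$ for $R$ sufficiently large, so $\mu^{\alpha_0}(b) = 0$, proving the vanishing on compactly supported symbols; second, for any $b \in \cS$ the value $\mu^{\alpha_0}(b)$ depends only on $b_{\mathrm{hom}}$ restricted to $\mathrm{sign}(\eta) \in \{-1,+1\}$, giving the claimed identification of $\mu^{\alpha_0}$ as a nonnegative measure on $\R^4 \times \{-1,+1\} \times \R_t \times \R_H$.

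For (ii), the nonnegativity of the projection of $\mu_{\alpha_0}$ follows by the very same sharp G\aa rding argument applied to the nonnegative symbol $\chi(J'/(Rh))\,\chi_0(\theta)\, b(s,\theta,E,J',H,t)$ (with $\eta$-independent $b$). For the localization on $\{J'=0\}$, assume $b$ is supported in $\{|J'| \geq \delta\}$ for some $\delta > 0$. Since $\chi$ has compact support there exists $C > 0$ with $\chi(J'/(Rh)) = 0$ for $|J'| \geq CRh$, so for $h < \delta/(CR)$ the full symbol vanishes identically and $\la w_{\alpha_0,h,R},\, b\ra = 0$. Taking $h \to 0$ first (with $R$ fixed) and then $R \to \infty$ gives $\la \mu_{\alpha_0}, b\ra = 0$, as required. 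The principal technical point is verifying that $c_{h,R}$ and its counterpart in (ii) lie uniformly in $h$ in a symbol class to which sharp G\aa rding genuinely applies; this amounts to working in $S(1,g_R)$ for a semiclassically admissible metric that dilates by $(Rh)^{-1}$ in the $J'$-direction, which is by now a standard construction in the second-microlocal literature and hence not the main conceptual obstacle.
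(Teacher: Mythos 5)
Your treatment of part (i) follows essentially the same route as the paper: the positivity of $\mu^{\alpha_0}$ is there reduced to the standard G\aa rding/factorization argument for second-microlocal symbols (the paper simply cites \cite{GerardMDM91}, \cite{FermanianGerardCroisements} and Corollary~27 of \cite{AnantharamanMaciaTore} instead of writing it out), and the identification with a nonnegative measure on $\R^4\times\{-1,+1\}\times\R_t\times\R_H$ rests on exactly your support observation that on $\supp\big(1-\chi(J'/(Rh))\big)$ one has $|\eta|=|J'/h|\ge c_0R>R_0$, so that only $b_{\rm hom}(\cdot,\pm 1,\cdot)$ contributes and compactly supported symbols are annihilated. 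For part (ii), however, you take a genuinely different and more elementary route. The paper obtains (ii) as a by-product of the operator-valued measure $\rho_{\alpha_0}$ constructed in Section~\ref{s:sy}: nonnegativity of the projection comes from the identity $\int b\,d\mu_{\alpha_0}=\Tr\int m_b\,\rho_{\alpha_0}$ with $\rho_{\alpha_0}\ge 0$ and $m_b\ge 0$, and the localization on $\{J'=0\}$ from the fact that $K_{b,h,R}=b(s,\theta,E,hD_\theta,D_\theta,\dots)\chi(D_\theta/R)$ freezes its fourth argument at $0$ in the limit, because $hD_\theta=O(Rh)$ on the frequency range selected by $\chi(D_\theta/R)$. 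Your direct argument---the full symbol vanishes identically once $h<\delta/(CR)$---is precisely this observation read off on the symbol side, and it gives a self-contained two-line proof of (ii); the paper's detour through $\rho_{\alpha_0}$ is longer but is needed anyway to establish the finer structure \eqref{e:structnualpha} in Theorem~\ref{t:precise}~(iii). (Minor housekeeping: for the G\aa rding step you should take $\chi_0$ to be a smooth nonnegative partition of unity rather than $\mathds{1}_{(0,2\pi)}$, as permitted by Remark~\ref{r:trunc}, so that the quantized symbol is genuinely in the class.)

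One quantitative correction to your G\aa rding estimate: for fixed $R$ the symbol $c_{h,R}$ lies in $S(1,g_R)$ where $g_R$ dilates by $(Rh)^{-1}$ in the $J'$ direction while the conjugate variable $\theta$ keeps scale $1$, so the effective Planck constant of that calculus is $h\cdot (Rh)^{-1}=1/R$, which does \emph{not} tend to $0$ as $h\to 0$. Sharp G\aa rding therefore yields a remainder $O(1/R)$ uniform in $h$, not $-C(R)h$. This does not damage your conclusion, since with the stated order of limits the remainder still disappears when $R\to\infty$, but the bound as you wrote it is not what the calculus provides.
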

Moreover, both ${\mu}_{\alpha_0}  $ and ${\mu}^{\alpha_0}$ are carried by
the set $\{E=\sqrt{2H}\}$, as can be seen from \eqref{e:final} and
\eqref{e:final2}.{ Note also that the argument of Proposition~\ref{p:regt} proves that ${\mu}^{\alpha_0}$ enjoys $L^\infty$ regularity in the time variable.}

Proposition~\ref{thm 1st2micro} (i) is proved at the beginning of Section~\ref{s:strucprop}, whereas (ii) shall be a consequence of Section~\ref{s:sy}.

\begin{remark}\label{r:mms}If $a=a(z, \xi, t, H)$ is a function on $\ovl{\ID}\times \R^2\times \R^2$, let us define
\begin{eqnarray*}
m^{\alpha_0}\left(  a \right)  &:=&
{\mu}^{\alpha_0}\left(b \mathds{1}_{J'=0})\right)   , \\
 m_{\alpha_0}\left(   a \right)
&:=&{\mu}_{\alpha_0}\left(   b \right)   .
\end{eqnarray*}
where $b(s, \theta, E, J', H, t)=a\circ\Phi(s, \theta, E,
J'-\sin\alpha_0\sqrt{2H}, -t, H)$ (a function that does not depend
on the additional variable $\eta$). Then we have
\begin{equation}
\mu   \rceil_{\cI_{\alpha_0}}%
=m^{\alpha_0}  +m_{\alpha_0}   .
\label{decomposition}%
\end{equation}
Thus, understanding $\mu   \rceil_{\cI_{\alpha_0}}$ amounts to
understanding both $m^{\alpha_0}$ and $m_{\alpha_0}$, which we
shall do by understanding the structure of $\mu^{\alpha_0}$ and
$\mu_{\alpha_0}$.
\end{remark}

The following proposition states that both distributions ${\mu}_{\alpha_0}$ and ${\mu}^{\alpha_0}$ are invariant under the billiard flow, as $\mu$.

\begin{proposition}
\label{Lemma Inv}The distributions ${\mu}_{\alpha_0}  $ and ${\mu}^{\alpha_0}$ enjoy the following property:%
\[
 \la{\mu}_{\alpha_0}  ,   E\,\partial_s b \ra=0  ,\quad \la{\mu}^{\alpha_0} ,  E\,\partial_s b\ra
= 0%
\]
for every $b\in\mathcal{S}^\sigma$.
\end{proposition}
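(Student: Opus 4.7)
The plan is to mimic the proof of the standard billiard-invariance \eqref{e:transport} for $\mu$, adding the second-microlocal cutoff in the action variable $J'$ and pulling everything back through the unitary $\scrV\scrU$.

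First, I would derive an equation for $v_h := \scrV\scrU u_h$. By Lemma~\ref{p:FIO}\,(ii), $\scrU$ intertwines $-\tfrac{h^2}{2}\Delta$ with $-\tfrac{h^2}{2}\partial_s^2$, and a direct Fourier-type computation from the definition \eqref{e:U} of $\scrV$ shows that it turns $h^2 D_t$ into multiplication by $H$. Combined with the extended Schr\"odinger equation
\[
\big(D_t + \tfrac{1}{2}\Delta - V\big) u_h = \tfrac{1}{2}(\partial_n u_h)\otimes \delta_{\partial \ID}\qquad\text{on } \R^2\times\R,
\]
and with the time cutoff provided by \eqref{e:localt} (so that $\scrV$ may be safely applied), this yields
\[
\Big(-\tfrac{h^2}{2}\partial_s^2 - H\Big) v_h = -h^2 \scrV\scrU(V u_h) - \tfrac{h^2}{2}\,\scrV\scrU\!\big((\partial_n u_h)\otimes \delta_{\partial\ID}\big).
\]

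Next, for $b \in \cS^\sigma$ and $\rho \in \{\chi,\,1-\chi\}$, set
\[
\tilde b_h(s,\theta,E,J',H,t) := \rho\!\left(\tfrac{J'}{Rh}\right)\chi_0(\theta)\, b\!\left(s,\theta,E,J',\tfrac{J'}{h},H,ht\right).
\]
Since neither $\rho(J'/Rh)$ nor $\chi_0(\theta)$ depends on $s$, the standard semiclassical calculus yields
\[
\tfrac{i}{h}\big[-\tfrac{h^2}{2}\partial_s^2,\Op_h(\tilde b_h)\big] = \Op_h\!\left(\rho\!\left(\tfrac{J'}{Rh}\right)\chi_0(\theta)\,E\,\partial_s b\!\left(\cdots,\tfrac{J'}{h},\cdots\right)\right) + O_{\cL(L^2)}(h),
\]
with a remainder uniform in $R$ because $\partial_s$ does not see the $J'/h$ variable. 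The commutator $[H,\Op_h(\tilde b_h)]$ is $O(h^2)$ since $\tilde b_h$ depends on $t$ only through the slow variable $ht$. Pairing the equation above for $v_h$ with $\Op_h(\tilde b_h) v_h$, subtracting the adjoint pairing, and using the hidden regularity of the Neumann trace (Proposition~\ref{p:regboundary}) together with the uniform bound $\|u_h\|_{L^2(\supp g)} = O(1)$, I obtain
\[
\Big\langle v_h,\big[-\tfrac{h^2}{2}\partial_s^2 - H,\Op_h(\tilde b_h)\big] v_h\Big\rangle = O(h^2) + \cB_h,
\]
where $\cB_h$ denotes the boundary contribution arising from the singular term $(\partial_n u_h)\otimes\delta_{\partial\ID}$.

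The remaining and most delicate step is to show $\cB_h = O(h^2)$. This is the exact analogue, in the pulled-back $(s,\theta,E,J',H)$-picture, of the computation in \cite{GerLeich93} that established \eqref{e:transport}: the boundary contribution is an integral on $\partial\ID$ of the reflection-jump of the symbol, which vanishes because $b$ satisfies (B). The second-microlocal cutoff $\rho(J'/Rh)$ is harmless since $J$ is conserved by the boundary reflection, hence $\rho$ itself descends to a well-defined function on $\W$. Condition (C) further ensures that all $s$-derivatives of $b\circ\Phi^{-1}$ share the same symmetry, which is precisely what is needed in order to push the \cite{GerLeich93} expansion to the extra order required by the $\scrV\scrU$-conjugation. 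Dividing the resulting identity by $h$ and letting first $h\to 0$ and then $R\to\infty$, in the two cases $\rho = \chi$ and $\rho = 1-\chi$ respectively, gives
\[
\langle \mu_{\alpha_0},\, E\partial_s b\rangle = 0 \quad\text{and}\quad\langle \mu^{\alpha_0},\, E\partial_s b\rangle = 0,
\]
which are the announced invariances. The main obstacle is indeed the careful control of $\cB_h$ despite the presence of the second-microlocal cutoff; this is where the strengthened symmetry axioms packaged into $\cS^\sigma$, and the choice of localizing only in $(J,\theta)$ rather than in $s$, do all the work.
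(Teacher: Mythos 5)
Your overall strategy---commute the test operator with the free evolution, use the extended equation \eqref{e:schrodelta} to convert that commutator into a boundary pairing against $(\partial_n u_h)\otimes\delta_{\partial\ID}$, and kill that pairing with the Dirichlet condition plus the symmetry (B)---is exactly the strategy of the paper. The interior part of your computation is fine: the commutator of $-\tfrac{h^2}{2}\partial_s^2$ with the standard quantization is exact, the cutoffs $\rho(J'/(Rh))$ and the fast variable $J'/h$ are invisible to $\partial_s$, and the commutator with $H$ is indeed negligible.

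The gap is precisely the step you flag as delicate and then dispose of by analogy: the claim that $\cB_h$ is negligible because ``the boundary contribution is an integral on $\partial\ID$ of the reflection-jump of the symbol, which vanishes by (B), exactly as in \cite{GerLeich93}.'' This cannot be read off in the pulled-back $(s,\theta,E,J')$ picture. In \cite{GerLeich93} the symbol is a function of $(z,\xi)$ and one works in coordinates in which $\partial\ID$ is a coordinate hypersurface; here the symbol is given in action-angle variables, in which the boundary is the \emph{momentum-dependent} set $\{s=\pm\cos\alpha(E,J)\}$, so there is no direct way to restrict $\Op_h(\tilde b_h)$ to $\partial\ID$ or to identify a ``jump''. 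To evaluate the pairing of $(h\partial_n u_h)\otimes\delta_{\partial\ID}$ against $\scrU^*\Op_h(\cdot)\scrU\,u_h$ one must conjugate back to polar coordinates and expand $\scrU^*\Op_h(P)\scrU$ by stationary phase into a tangential part plus a radial part $B(r,u,E,j)\circ hD_r$ (plus $O(h)$ corrections), where $B$ is proportional to the \emph{antisymmetric} part $P^\alpha$ of the symbol under the boundary reflection; this is the content of Appendix~\ref{coord-polar} and Proposition~\ref{p:summary}. Only then does condition (B) act: it gives $P^\alpha\rceil_{r=1}=0$, hence $B\rceil_{r=1}=0$, and the Dirichlet condition annihilates the remaining tangential terms. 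One also needs to replace the exactly conjugated operator $\cA_E(P)$ by the boundary-friendly $\cA_H(P)$ (Lemma~\ref{l:EH}), which rests on the hidden-regularity estimates of Appendix~\ref{s:osc}. None of this is ``packaged into $\cS^\sigma$''; it is the main technical work behind the proposition, and without it your argument does not close.
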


\begin{proof}
 We use as a ``black-box'' the technical calculations developed in Appendix~\ref{coord-polar}. The main point of these calculations is to understand how an operator of the form
 $$\scrU^*\Op_h\left(  P(s, \theta,E, J, t, hH)\right)\scrU $$ preserves or modifies the Dirichlet boundary condition, according to the properties of $P$ (the technical difficulty is that our new coordinates $(s, \theta,E, J)$, well-adapted to the dynamics, are not adapted to express the Dirichlet boundary condition).

In the proof of Proposition \ref{Lemma Inv} for ${\mu}_{\alpha_0}
$, we consider the function $P$
 \begin{equation}\label{e:bP}P(s, \theta,E, J, t, H)=b\left(s, \theta, E, J+\sin\alpha_0\sqrt{2H}, \frac{J+\sin\alpha_0\sqrt{2H}}h, H, -t\right)
  \chi\left(  \frac{ J+\sin\alpha_0\sqrt{2H}}{Rh} \right).
  \end{equation}
To prove the result for ${\mu}^{\alpha_0}   $, the argument is the
same with the function
 \begin{multline}\label{e:tildeP}
\tilde P(s, \theta,E, J, t, H)=  \\
b\left(s, \theta, E, J+\sin\alpha_0\sqrt{2H},
\frac{J+\sin\alpha_0\sqrt{2H}}h, H, -t\right)
 (1- \chi)\left(  \frac{ J+\sin\alpha_0\sqrt{2H}}{Rh} \right).
  \end{multline}

 \begin{enumerate}
 \item The operator $\scrU^*\Op_h\left(  P(s, \theta,E, J, t, hH)\right)\scrU$ is expressed as a pseudodifferential operator $\cA_E(P)$ on $\R^2$ (modulo a small remainder) in polar coordinates $$z=(-r\sin u,  r\cos u).$$ Note that the polar coordinates are the ones adapted to our boundary problem, since  the boundary is given by the equation $r=1$.

Thus we have
$$\lim \la u_h, \scrU^*\Op_h\left(  P(s, \theta,E, J, t, hH)\right)\scrU u_h\ra = \lim \la u_h, \cA_E(P)  u_h\ra.$$

 \item We then introduce a pseudodifferential operator $\cA_H(P)$, having the property that the symbols of $\cA_E(P)$ and $\cA_H(P)$ coincide on $\{|\xi|^2=2H\}$.
More precisely, we are able to prove (Lemma \ref{l:EH})
$$
  \lim \la u_h, \cA_E(P)  u_h\ra = \lim
\la u_h, \cA_H(P)  u_h\ra.
$$

\item The explicit expression of $\cA_H(P)$ reads
\begin{equation}\label{e:ABterms}A(r, u, \sqrt{2hD_t}, hD_u, t)  + B(r, u, \sqrt{2hD_t},h D_u, t)  \circ hD_r
\end{equation}
modulo terms of order $O(h)$, where $z=(-r\sin u,  r\cos u)$ is
the decomposition in polar coordinates. The functions $A, B, C, D$
are expressed explicitly in terms of $P$ in Proposition~\ref{p:summary}. If
$P$ satisfies the symmetry condition (B), then $B\equiv 0$ for
$r=1$.

 \item Finally, we show in Proposition \ref{p:commutH} that
 $$\lim_{h\To 0} \la u_h,\cA_H(E\, \partial_s P)u_h\ra=\lim_{h\To 0} \left\la u_h,\left[-\frac{ih\Delta}2, \cA_H( P)\right]u_h\right\ra$$
 where $\Delta$ is the laplacian on $\R^2$.
 On the other hand, if $P$ depends on $t$, we have
$$ [\partial_t, \cA_H( P)]=\cA_H( \partial_t P).$$

 \end{enumerate}

 The proof of Proposition \ref{Lemma Inv} now goes as follows
 (with $b$ and $P$ related by \eqref{e:bP}):
 \begin{align}
 \nonumber\la{\mu}_{\alpha_0}  ,   E\, \partial_s b \ra
& = \lim_{R\rightarrow\infty}%
\lim_{h\rightarrow0^{+}}   \left\langle
 w_{\alpha_0,h,R}   ,b\right\rangle\\
\nonumber&  =\lim_{R\rightarrow\infty}%
\lim_{h\rightarrow0^{+}} \\
& \quad \left\la \scrV\scrU u_h,  \Op_h\left(     E\, \partial_s P(s, \theta,E, J'-\sin\alpha_0\sqrt{2H}, H, -ht)\right) \scrV\scrU u_h\right\ra_{L^2(\R_s\times \T_\theta\times \R_H)} \\
\nonumber
&  =\lim_{R\rightarrow\infty}%
\lim_{h\rightarrow0^{+}}  \left\la \scrU u_h,  \Op_h\left(     E\,
\partial_s P(s, \theta,E, J,t, hH)\right) \scrU
u_h\right\ra_{L^2(\R_s\times \T_\theta\times \R_t)}
\\
\nonumber & =\lim_{R\rightarrow\infty}%
\lim_{h\rightarrow0^{+}} \left\la u_h,  \cA_H(E\, \partial_s  P)u_h\right\ra \\
\nonumber &   =  \lim_{R\rightarrow\infty}%
\lim_{h\rightarrow0^{+}}  \left\la  u_h, \left[-\frac{ih}2\Delta
  ,  \cA_H(   P)\right]u_h\right\ra\\
 \nonumber  &   =  \lim_{R\rightarrow\infty}%
\lim_{h\rightarrow0^{+}}  \left\la  u_h, \left[-\frac{ih}2\Delta
  +ihV-h\partial_t,  \cA_H(   P)\right]u_h\right\ra\\
 &   = \lim_{R\rightarrow\infty}%
\lim_{h\rightarrow0^{+}}  \left\la  \frac{ih}2 \frac{\partial
u_h}{\partial n}\otimes \delta_{\partial \ID},   \cA_H(  P)  u_h
\right\ra
   -\lim  \left\la u_h,   \cA_H( P)  \frac{ih}2 \frac{\partial u_h}{\partial n}\otimes \delta_{\partial \ID}\right\ra \label{e:lastline}
 \end{align}
The last line comes from the fact that $u_h$, extended to $\R^2$
by the value $0$ outside $\ID$, satisfies
\begin{equation}\left(-\frac{ih}2\Delta
  +ihV-h\partial_t\right)u_h= \frac{ih}2 \frac{\partial u_h}{\partial n}\otimes \delta_{\partial \ID}\label{e:schrodelta}
  \end{equation}
where $\Delta$ is the laplacian on $\R^2$.

We use now the explicit expression \eqref{e:ABterms} of
$\cA_H(P)$, modulo terms that vanish at the limit. Using the fact
that $u_h$ satisfies Dirichlet boundary conditions, and the fact
that $B$ vanishes for $r=1$ if $P$ satisfies the symmetry
condition (B), we see that the last line \eqref{e:lastline}
vanishes.

Note that only the limit $h\To 0$ was actually used, so that the
result holds even before taking the limit $R\To +\infty$.
\end{proof}

\begin{remark}\label{r:boundary}More generally, let $b(s, \theta, E, J, \eta, H, t)$ be a smooth function on $\R\times \R/2\pi\Z \times \R^5$ with bounded deratives, and compactly supported w.r.t.  $s,  E, J, H, t$.
Let ${\mathbf P}(s, \theta, E, J, t, H)= b(s, \theta, E, J
+\sin\alpha_0 \sqrt{2H}, \frac{J +\sin\alpha_0 \sqrt{2H}}h, H,
-t)$. Then, the same proof yields,without using the symmetry
condition {\rm (B)},  the formula
 \begin{multline} \label{e:boundaryterm}
 \lim_h  \left\la \scrV\scrU u_h,  \Op_h\left(     E\, \partial_s b(s, \theta,E, J',\frac{J'}h, H, ht)\right) \scrV\scrU u_h\right\ra_{L^2(\R_s\times \T_\theta\times \R_H)}\\
= \lim_h  \left\la \scrU u_h,  \Op_h\left(     E\, \partial_s
{\mathbf P}(s, \theta,E, J,t, hH)\right) \scrU
u_h\right\ra_{L^2(\R_s\times \T_\theta\times \R_t)}
  \\ =-\lim_{ h}  \left\la  h \frac{\partial u_h}{\partial n} ,  \mathbf{B}(1, u, \sqrt{2hD_t}, hD_u, t)   h\frac{\partial u_h}{\partial n} \right\ra_{L^2(\partial\ID\times \R)}
    \end{multline}
where $ \mathbf{B}$ is the function associated to ${\mathbf P}$ by
the formulas of Proposition~\ref{p:summary}. Again, if ${\mathbf
P}$ satisfies {\rm (B)}, the operator $\mathbf{B}(1, u,
\sqrt{2hD_t}, hD_u, t)$ vanishes.

This formula, relating the semiclassical measures of boundary data
to the semiclassical measures of interior data, is analogous to
formula \eqref{e:section} but is expressed in a different set of
coordinates.

Applying \eqref{e:boundaryterm} to $\frac{s}E{\mathbf P}$ instead
of  ${\mathbf P}$ (that is, $\frac{s}E b$ instead of $b$) has the
following consequence that will be used later
 \begin{multline} \label{e:boundaryterm2}
 \lim_h  \left\la \scrU u_h,  \Op_h\left( ({\mathbf P}+ s\partial_s {\mathbf P})(s, \theta,E, J,t, hH)\right) \scrU u_h\right\ra_{L^2(\R_s\times \T_\theta\times \R_t)}\\
 =-\lim_{ h}  \left\la  h \frac{\partial u_h}{\partial n} , (E^{-2} \mathbf{P})^\sigma(1, u, \sqrt{2hD_t}, hD_u, t)   h\frac{\partial u_h}{\partial n} \right\ra_{L^2(\partial\ID\times \R)}
 \end{multline}
 with the notation \eqref{e:sym}.
\end{remark}

The following result states that both $\mu^{\alpha_0}$ and
$\mu_{\alpha_0}$ have some extra regularity (for two different
reasons).

\begin{theorem}
\label{Thm Properties}
\begin{itemize}
\item[(i)]  The measure $\mu^{\alpha_0}$ satisfies the invariance property:%
\begin{equation}
\la{\mu}^{\alpha_0}  , \partial _{\theta} b \ra =0 , \qquad \text{for every $b$ in $\cS_{\alpha_0}^\sigma$.}
\label{e:2minv}
\end{equation}%

\item[(ii)] The distribution $\mu_{\alpha_0}$ is concentrated on $\{J' =0\}$ and its projection onto the variables $(s, \theta)$ is a nonnegative absolutely continuous measure.
\end{itemize}
 \end{theorem}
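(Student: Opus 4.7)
The plan is to apply Proposition \ref{Lemma Inv} to a carefully chosen symbol $c \in \cS^\sigma$ built from $b$. Given $b \in \cS_{\alpha_0}^\sigma$, condition (D) reads $\cos\alpha\,\partial_s b + (\alpha_0 - \alpha)\,\partial_\theta b = 0$, so on $\{J'\neq 0\}$ one has $\partial_\theta b = -\tfrac{\cos\alpha}{\alpha_0 - \alpha}\partial_s b$. The natural candidate $c := -\tfrac{\cos\alpha}{E(\alpha_0-\alpha)}\,b$ formally satisfies $E\partial_s c = \partial_\theta b$ (since the multiplier is $s$-independent), but has a non-integrable $1/J'$ singularity along $\cI_{\alpha_0}$. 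I would therefore regularize: pick $\psi \in C_c^\infty(\R)$ with $\psi \equiv 1$ near $0$ and set
\[
c_\delta := -\frac{\cos\alpha}{E(\alpha_0 - \alpha)}\Bigl(1 - \psi\bigl(\tfrac{\alpha_0 - \alpha}{\delta}\bigr)\Bigr)\, b .
\]
Since the multiplier depends only on $(E, J', H)$ and vanishes in a neighborhood of $\{J'=0\}$, the symbol $c_\delta$ is smooth and inherits (A), (B), (C), (E) from $b$, hence $c_\delta \in \cS^\sigma$. A direct computation using (D) yields $E\partial_s c_\delta = (1 - \psi((\alpha_0-\alpha)/\delta))\,\partial_\theta b$, so Proposition \ref{Lemma Inv} gives
\[
\la \mu^{\alpha_0},\, (1 - \psi((\alpha_0-\alpha)/\delta))\partial_\theta b\ra = 0.
\]
Passing to the limit $\delta \to 0$ by dominated convergence (the integrand is dominated by $|\partial_\theta b|$, which is bounded with compact support) produces $\la \mu^{\alpha_0},\,\mathds{1}_{\{J'\neq 0\}}\partial_\theta b\ra = 0$.

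The remaining step -- showing that $\mu^{\alpha_0}\rceil_{\{J'=0\}}$ also annihilates $\partial_\theta b$ -- is where I expect the main obstacle. Condition (D) at $J'=0$ forces $b\rceil_{J'=0}$ to be independent of $s$ (since $\cos\alpha_0\partial_s b\rceil_{J'=0}=0$), but does not immediately constrain $\partial_\theta b\rceil_{J'=0}$. The piece of $\mu^{\alpha_0}$ living on $\{J'=0\}$ corresponds (via Remark \ref{r:mms}) to sequences with $J'\to 0$ strictly slower than $h$, and is concentrated on the sphere-at-infinity $\{|\eta|=\infty\}$ of the 2nd-microlocal phase space. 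The plan to handle this piece is to perform a secondary regularization exploiting the homogeneity (E) of $b$ in $\eta$ (multiplying $c_\delta$ further by an $\eta$-cutoff $\kappa_\Lambda(\eta)$ and sending $\Lambda \to \infty$ after $\delta \to 0$) and to combine it with the $\phi^\tau_{\alpha_0}$-invariance of $\mu\rceil_{\cI_{\alpha_0}}$ provided by Lemma \ref{l:modes}, reducing the missing contribution to a pairing that vanishes.

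\textbf{Part (ii).} The concentration of $\mu_{\alpha_0}$ on $\{J'=0\}$ is immediate from \eqref{subL}: the cutoff $\chi(J'/(Rh))$ is supported in $|J'| \leq CRh$, which shrinks to $\{J'=0\}$ in the inner limit $h \to 0$, so any weak-$\ast$ cluster point is carried by this set. For the absolute continuity of the projection to $(s,\theta)$, I would test against $a = a(s,\theta) \in C_c(\R^2)$ with $a\geq 0$. Up to an $O(h)$ pseudodifferential error, $\Op_h(\chi(J'/(Rh))\chi_0(\theta) a(s,\theta))$ acts on $v_h := \scrV\scrU u_h$ as multiplication by $\chi_0(\theta) a(s,\theta)$ composed with the Fourier multiplier $\chi(D_\theta/R)$. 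Symmetrizing by writing $\chi = (\chi^{1/2})^2$ and commuting $a$ past $\chi^{1/2}(D_\theta/R)$ (which produces an $O(R^{-1})$ error for smooth $a$), one arrives at
\[
\la w_{\alpha_0, h, R}, a\ra = \int a(s,\theta)\, G_{h, R}(s,\theta)\, ds\, d\theta + o_{h\to 0}(1) + O(R^{-1}),
\]
where $G_{h, R}(s,\theta) := \int_\R |\chi^{1/2}(D_\theta/R) v_h(s,\theta,H)|^2 \, dH \geq 0$ is a density uniformly bounded in $L^1(ds\, d\theta)$ by $\|v_h\|_{L^2}^2 \leq C$. To upgrade this limiting $L^1$ bound into genuine absolute continuity (i.e. to rule out singular concentrations in the weak-$\ast$ limit, uniformly in $R$), the plan is to invoke the $1$-dimensional Schr\"odinger-type equation satisfied by $v_h$ in the $s$-variable after angular Fourier decomposition together with standard dispersive/Strichartz estimates in one space dimension, which upgrade the $L^1$ bound to a uniform $L^{q_0}$ bound for some $q_0 > 1$; weak-$\ast$ passage to the limit then produces an $L^1$ density for the projection of $\mu_{\alpha_0}$ onto $(s,\theta)$.
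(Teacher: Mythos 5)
Your fixed-scale regularization has a genuine gap, and it sits exactly where you suspect. Sending $\delta\to 0$ by dominated convergence only yields $\la\mu^{\alpha_0},\mathds{1}_{\{J'\neq 0\}}\partial_\theta b\ra=0$, and the restriction $\mu^{\alpha_0}\rceil_{\{J'=0\}}$ is not a negligible remainder: by Remark~\ref{r:mms} it is precisely $m^{\alpha_0}$, the piece that contributes to $\mu\rceil_{\cI_{\alpha_0}}$ and whose rotation invariance is the whole point of \eqref{e:2minv} (it is what forces $m^{\alpha_0}$ to be a combination of the $\lambda_{E,J}$ in Theorem~\ref{t:precise}~(ii)). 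Your sketched fix (an $\eta$-cutoff plus Lemma~\ref{l:modes}) does not close this: Lemma~\ref{l:modes} gives $\phi^\tau_{\alpha_0}$-invariance, not $\partial_\theta$-invariance, and homogeneity in $\eta$ says nothing about the $J'$-marginal. The paper's proof avoids the issue by never regularizing at a fixed scale: the cutoff $(1-\chi)(J'/(Rh))$ is already built into $w^{\alpha_0}_{h,R}$, so on the support of the symbol one has $|\cos\alpha/(\alpha_0-\alpha)|\leq C/(hR)$; the symbol $-\tfrac{\cos\alpha}{E(\alpha_0-\alpha)}\tilde P$ is admissible (of size $1/(hR)$), the commutator with the Schr\"odinger operator reproduces $\partial_\theta\tilde P$ up to errors $O(h)\times O(1/(hR))=O(R^{-1})$, the boundary terms vanish by (B) and the Dirichlet condition, and the $O(R^{-1})$ errors disappear in the second limit $R\to\infty$. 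Because the excision scale is $Rh$ rather than a fixed $\delta$, the invariance survives for the mass accumulating on $\{J'=0\}$ from scales $h\ll|J'|\ll 1$.

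\textbf{Part (ii).} Your argument for concentration on $\{J'=0\}$ is fine. The absolute continuity argument is not. An $L^1$ bound on the densities $G_{h,R}$ does not pass to the weak-$\ast$ limit, as you note, and the proposed upgrade via one-dimensional Strichartz estimates is not available: after conjugation by $\scrV\scrU$ the relevant evolution is a Schr\"odinger equation in the angle variable $\theta$ (this is the content of Proposition~\ref{p:invop} and the propagators $U_{\alpha_0,\omega}$), not a free 1D equation in $s$ with $\theta$ as a parameter, and no uniform-in-$h$ $L^{q_0}$ bound with $q_0>1$ is claimed or known here (for eigenfunction sequences the limit densities barely fail to be in $L^2$). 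The mechanism in the paper is entirely different: one introduces the operator-valued measure $\rho_{\alpha_0}$ of Proposition~\ref{p:weakstarlimit}, writes $\int b\,d\mu_{\alpha_0}=\Tr\int m_{\la b\ra_{\alpha_0}}\rho_{\alpha_0}$ after using the $\phi^\tau_{\alpha_0}$-invariance (Proposition~\ref{Lemma Inv}) and the fact that $\mu$ does not charge $S$ to replace $b$ by its orbit average, and then observes that this trace formula extends to $b\in L^\infty$ and that $m_{\la b\ra_{\alpha_0}}=0$ on $L^2(0,2\pi)$ whenever $b$ vanishes Lebesgue-a.e. The absolute continuity is thus a consequence of the trace-class operator structure at the limit of the uncertainty principle, not of a dispersive estimate; without constructing $\rho_{\alpha_0}$ your proof cannot conclude.
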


Section~\ref{s:strucprop} is devoted to the study of the properties of $\mu^{\alpha_0}$ and gives the proofs of Proposition~\ref{thm 1st2micro} (i) and Theorem~\ref{Thm Properties} (i).
The study of the structure of $\mu_{\alpha_0}$ is performed in Section~\ref{s:sy} using the notion of
second-microlocal measures. This structure will imply \eqref{e:structnualpha} in Theorem \ref{t:precise} (iii). In particular, we prove at the end of Section~\ref{s:sy} that it yields Theorem~\ref{Thm Properties}~(ii).

\subsection{Structure and propagation of ${\mu}^{\alpha_0}$}
\label{s:strucprop}
In this section, we prove Proposition \ref{thm 1st2micro}~(i) and the invariance property given by Theorem \ref{Thm Properties}~(i).

The positivity of ${\mu}^{\alpha_0}$ can be deduced following
the lines of \cite{FermanianGerardCroisements} Section~2.1, or those of
the proof of Theorem 1 in \cite{GerardMDM91}; or also Corollary 27
in \cite{AnantharamanMaciaTore}. The argument will not be
reproduced here. Given $b\in\mathcal{S}$ there exists $R_{0}>0$
and $b_{{\rm{hom}}}\in C_{c}^{\infty}\left(
 \IR^4\times \{-1, +1\}\times \R^2\right)  $
such that
\[
b\left(s, \theta, E, J', \eta, H, t   \right)
=b_{{\rm{hom}}}\left( s, \theta, E, J',
\frac{\eta}{\vert\eta\vert}, H, t \right)  ,\quad\text{for
}\left\vert \eta\right\vert \geq R_{0}.
\]
Clearly, for $R$ large enough, the value $\left\langle
w_{h,R}^{\alpha_0 }   ,b\right\rangle $ only depends on
$b_{{\rm{hom}}}$. Therefore, the limiting distribution
${\mu}^{\alpha_0}  $ can be viewed as an element of the dual space
of $C_{c}^{\infty }\left( \IR^4\times \{-1, +1\}\times \R^2\right)
$.
Its positivity implies that it is a measure, which proves Proposition
\ref{thm 1st2micro}~(i).

\bigskip
We now assume that $b\in \cS^\sigma_{\alpha_0}$ and prove the
invariance property Theorem \ref{Thm Properties} (i).

Let $b\in  \cS^\sigma_{\alpha_0}$, and define $\tilde P$ as in
formula \eqref{e:tildeP}.
 Because of property (D) in the definition of the class $\mathcal{S}^\sigma_{\alpha_0}$, we have:%
\[ \partial_\theta \tilde P\left(s, \theta, \sqrt{2H}, J, t, H \right) = -\frac{\cos\alpha}{\alpha_0-\alpha}\partial_s \tilde P\left(s, \theta,  \sqrt{2H}, J, H, t \right)
\]
where $\alpha= -\arcsin\left(\frac{J}{ \sqrt{2H}}\right) $. The
crucial point in what follows is that
$\left|\frac{\cos\alpha}{\alpha_0-\alpha}\right|\leq \frac{C}{hR}$
on the support of $\tilde P\left(s, \theta, \sqrt{2H}, J, t, H
\right)$.

Recall that by definition
$$
 \la{\mu}^{\alpha_0}  ,   \partial_\theta b \ra= \lim_{R\rightarrow\infty}%
\lim_{h\rightarrow0^{+}}   \left\langle
 w_{h,R}^{\alpha_0}   ,\partial_\theta b\right\rangle .$$
Let us first fix $R$ and study the limit $h\To 0$. Arguing as in
the proof of Proposition \ref{Lemma Inv}, we have
\begin{align}\nonumber
 \lim_{h\rightarrow0^{+}}   \left\langle
 w_{h,R}^{\alpha_0}   ,\partial_\theta b\right\rangle
&=\lim  \left\la \scrU u_h,  \Op_h\left(   \partial_\theta \tilde P(s, \theta,E, J, t,hH) \right) \scrU u_h\right\ra_{L^2(\R_s\times \T_\theta\times \R_H)} \\
\nonumber &= \lim  \left\la u_h,  \cA_E\left( \partial_\theta \tilde P\right)u_h\right\ra  \\
\nonumber &= \lim  \left\la u_h,  \cA_H\left( \partial_\theta \tilde P\right)u_h\right\ra\\
\nonumber &= \lim  \left\la u_h,  \cA_H\left( -\frac{\cos\alpha}{(\alpha_0-\alpha)}   \partial_s \tilde P\right)u_h\right\ra\\
  \nonumber &=  \lim  \left\la  u_h, \left[-\frac{ih}2\Delta
  +ihV-h\partial_t,  \cA_H\left( -\frac{\cos\alpha}{E(\alpha_0-\alpha)}    \tilde P\right)\right]u_h\right\ra +O(R^{-1})\\
\label{e:lastline2}  &= \lim  \left\la  \frac{ih}2 \frac{\partial
u_h}{\partial n}\otimes \delta_{\partial \ID},  \cA_H\left(
-\frac{\cos\alpha}{E(\alpha_0-\alpha)}   \tilde P\right)  u_h
\right\ra \\ \nonumber
  & \qquad -\lim  \left\la u_h,   \cA_H\left( -\frac{\cos\alpha}{E(\alpha_0-\alpha)}   \tilde P\right) \frac{ih}2 \frac{\partial u_h}{\partial n}\otimes \delta_{\partial \ID}\right\ra +O(R^{-1})
 \end{align}
 where we used again \eqref{e:schrodelta}.

 But $\cA_H\left( -\frac{\cos\alpha}{E(\alpha_0-\alpha)}    \tilde P\right)$ equals (modulo terms which only add an error $O(R^{-1})$ to the whole calculation)
 \begin{multline*}-\tilde A(r, u, \sqrt{2hD_t}, hD_u, t)
 \frac{\cos\alpha(hD_u, hD_t)}{\sqrt{2hD_t}(\alpha_0-\alpha(hD_u, hD_t))}\\- \tilde B(r, u, \sqrt{2hD_t}, hD_u, t)  \frac{\cos\alpha(hD_u, hD_t)}{\sqrt{2hD_t}(\alpha_0-\alpha(hD_u, hD_t))}\circ hD_r
  \end{multline*}
 where $z=(-r\sin u, r\cos u)$ is the decomposition in polar coordinates, and $\tilde A, \tilde B$ are the functions associated to $\tilde P$ by the formulas of Proposition~\ref{p:summary}.

 If $\tilde P$ satisfies (B) then $\tilde B\equiv 0$ for $r=1$.
 Since $u_h$ satisfies Dirichlet boundary conditions, we see that the last terms in \eqref{e:lastline2} vanish.

 To conclude the proof, we take $R\To +\infty$ after taking $h\To 0$, so that the terms estimated as $O(R^{-1})$ vanish.

 \subsection{Second microlocal structure of ${\mu}_{\alpha_0}$\label{s:sy}}
If $\cH$ is a Hilbert space, we shall denote by $\mathcal{L}\left(  \cH\right)  $, $\mathcal{K}\left(
\cH\right)  $ and $\mathcal{L}^{1}\left( \cH\right)  $ the spaces
of bounded, compact and trace class operators on $\cH$. It is well
known that $\mathcal{L}^{1}\left( \cH\right)  $ is the dual of
$\mathcal{K}\left( \cH\right)  $. A measure on a polish space $T$,
taking values in $\mathcal{L}^{1}\left(  H\right)  $, is defined
as a bounded linear functional $\rho$ from $C_{c}\left(  T\right)
$ to $\mathcal{L}^{1}\left( H\right)  $; $\rho$ is said to be
nonnegative if, for every nonnegative $b\in C_{c}\left(  T\right)  $,
$\rho\left(  b\right)  $ is a nonnegative hermitian operator. The set
of such measures is denoted by $\mathcal{M}_{+}\left(
T;\mathcal{L}^{1}\left(  H\right)  \right)  $; they can be
identified in a natural way to nonnegative linear functionals on
$C_{c}\left(  T;\mathcal{K} \left(  H\right)  \right)  $.
Background and further details on operator-valued measures may be
found for instance in \cite{GerardMDM91}.

For each $\omega\in \R/2\pi\Z$, let us define $\cH_\omega$, the
space of functions $f$ on $\R$ satisfying
$f(\theta+2\pi)=f(\theta)e^{i\omega}$ and that are
square-integrable on $(0, 2\pi)$.

We shall denote by $\mathcal{K}^{2\pi}  $ the space of operators
on $L^2(\R)$ whose kernel $K$ satisfies $K(\theta+2\pi ,
\theta'+2\pi )=K(\theta , \theta')$ and that define compact
operators on each $\cH_\omega$. Each Hilbert space $\cH_\omega$ is
isometric to $L^2(0, 2\pi)$ (just by restricting functions to $(0,
2\pi)$), and in this identification the kernel of $K$ acting on
$\cH_\omega$ is given by \begin{equation}
\label{e:period}K_\omega(\theta, \theta'):=\mathds{1}_{(0,
2\pi)}(\theta)\mathds{1}_{(0,
2\pi)}(\theta')\sum_{n\in\Z}K(\theta, \theta'+2\pi
n)e^{in\omega}.\end{equation} The idea of the Floquet-Bloch theory
is that it is completely equivalent to know $K(\theta , \theta')$
and to know $K_\omega(\theta, \theta')$ for almost all $\omega$,
by decomposing
$$L^2(\R)=\int_{\oplus} \cH_\omega d\omega.$$
 Besides, $K$ is a nonnegative (resp. bounded) operator if and only if $K_\omega$ is nonnegative (resp. bounded) for a.e. $\omega$.

An example of an operator in $\mathcal{K}^{2\pi}  $ is
\begin{equation}\label{e:Kb}K_{b, h, R} (s, E, H, t)=b (s, \theta, E, hD_\theta, D_\theta, H, t)\chi(D_\theta/R)
\end{equation}
for $b\in \cS$ and fixed $R, h$, $ t, H, s, E$. Note that, as
$h\To 0$, we have $K_{b, h, R} (s, E, H, t)= K_{b, 0, R} (s, E, H,
t)+O_R(h)$.

If $b$ satisfies the symmetry condition (B), note that the
operator $K_{b, 0, R} (s, E, H, t)$ has the property
\begin{equation}K(\cos\alpha,E, H, t  )= R_{ \pi+2\alpha}^{-1}\circ K(- \cos\alpha, E, H, t)\circ R_{ \pi+2\alpha}\label{e:tangentialoperator}\end{equation}
where $R$ is a translation operator on $L^2\left(
\IR_{\theta}\right)$: $R_{\alpha}f(\theta)=f(\theta-\alpha)$ and
where
$\alpha=\arcsin\left(\frac{\sin\alpha_0\sqrt{2H}}{E}\right).$ In
particular,
\begin{equation}K(\cos\alpha_0,\sqrt{2H}, H, t  )= R_{ \pi+2\alpha_0}^{-1}\circ K(- \cos\alpha_0, \sqrt{2H}, H, t)\circ R_{ \pi+2\alpha_0}\label{e:tangentialoperator0}\end{equation}

\begin{remark} The fact that the orbits of the billiard flow are periodic on $\cI_{\alpha_0}$ ($\alpha_0\in \pi\IQ$) is reflected in the fact that the function
$s\mapsto K(s,\sqrt{2H}, H, t  )$ is periodic, if $K$ satisfies
\eqref{e:tangentialoperator0}.
\end{remark}

\bigskip

For $K\in C_{c}^{\infty}\left(\R/2\pi\Z\times
\R_s\times\R_E\times\R_H\times\R_t ;  \mathcal{K} \left(
L^2(0, 2\pi)\right) \right)$, let us define:%
\begin{multline}
\label{e:nh}\left\langle n_{h}^{\alpha_0} ,K\right\rangle =(2\pi
h)^{-2}\int_{\omega= 0}^{2\pi}\sum_{H, \sqrt{2H}\sin\alpha_0/h
\equiv \omega (2\pi)}
\frac{h}{\sin\alpha_0}\sqrt{\frac{H}2}\int_{s, s', E, H', t}\\
\left\langle \chi_0 \scrV\scrU u_{h}(s', H'),  K\left(\omega, s,
E, H', ht \right) {\chi_0} \scrV\scrU u_{h}(s, H)\right\rangle
_{L^{2}\left(
 0, 2\pi \right)  }\\e^{iE(s'-s)/h} e^{it(H'-H)/h}  dsdEds' dH' dt
\end{multline}
where $\chi_0$ is $\mathds{1}_{(0, 2\pi)}$ as in Remark
\ref{r:trunc}. This is also
$$\la {\chi_0} \scrV\scrU u_{h}, \cK {\chi_0} \scrV\scrU u_{h}\ra_{L^2 (\R_s\times \R_H, L^2(0, 2\pi))}$$
where $\cK$ is the pseudodifferential operator with
operator-valued symbol:
\begin{equation}\label{e:K}\int_{\omega= 0}^{2\pi}\sum_{H, \sqrt{2H}\sin\alpha_0/h \equiv \omega (2\pi)} \frac{h}{\sin\alpha_0}\sqrt{\frac{H}2}    K\left(\omega, s, E, H', ht \right).
\end{equation}

\begin{remark}As noted earlier, it is equivalent (by the relation \eqref{e:period}) to consider a family $K(\omega)$ of kernels on $(0, 2\pi)^2$ and a kernel $K$ on $\R^2$ satisfying $K(\theta, \theta')=K(\theta+2\pi, \theta'+2\pi)$. With this identification in mind, formula \eqref{e:nh} amounts to
\begin{multline}\label{e:nh1}
(2\pi h)^{-2} \int_{s, s', E,H, H', t}  \left\langle \chi_0
\scrV\scrU u_{h}(s', H'),  K\left( s, E, H', ht \right) \scrV\scrU
u_{h}(s, H)\right\rangle _{L^{2}\left(
 \R \right)  }\\e^{iE(s'-s)/h} e^{it(H'-H)/h}  dsdEds' dH dH' dt\\
 =\left\langle \chi_0 \scrV\scrU u_{h},
 K\left( s, h D_s, H, h^2D_t \right) \scrV\scrU u_{h}\right\rangle_{L^2 (\R_s\times \R_H, L^2( \R_\theta))}
 \end{multline}

The motivation for rewriting \eqref{e:nh1} in the apparently more
complicated form \eqref{e:nh} is that it will be more convenient
to use the compact operators $K(\omega)$ on each $\cH_\omega$ than
the non-compact operator $K$ on $L^2(\R)$.
\end{remark}

The relevance of definition \eqref{e:nh} for us is that we have
the relation
\begin{eqnarray}\label{e:relevance}\la w_{\alpha_0, h, R}, b\ra &=& \left\langle n_{h}^{\alpha_0}
,K_{b, h, R}\right\rangle
\nonumber \\
&= & \left\langle n_{h}^{\alpha_0}
,K_{b, 0, R}\right\rangle +O_R(h)
\end{eqnarray}
where $K_{b, h, R}$ was defined in \eqref{e:Kb}.

\begin{proposition}
\label{p:weakstarlimit} Suppose $\left(  u^0_{h}\right)  $ is
bounded in $L^{2}\left(  \ID\right)  $. Then, modulo taking
subsequences, the following convergence takes place:
\begin{equation}
\label{e:rholambda} \lim_{h\rightarrow0^{+}}\left\langle
n_{h}^{\alpha_0} ,K\right\rangle=\int_0^{2\pi}\int_{
\R_s\times\R_E\times\R_H\times\R_t }   \Tr \left\{ K\left(\omega,
s, E, H, t\right) {\rho}_{\alpha_0}\left(d\omega,  ds, dE, dH,
dt\right) \right\} ,
\end{equation}
for every $K\in C_{c}^{\infty}\left(\R/2\pi\Z \times
\R_s\times\R_E\times\R_H\times\R_t ;  \mathcal{K} \left( L^2(0,
2\pi)\right) \right)$. In other words, $ {\rho}_{\alpha_0} $ is
the limit of $n_{h}^{\alpha_0}  $ in the weak-$\ast$ topology of
$$ \cD^{\prime }\left( \R/2\pi\Z \times \R_s\times\R_E\times
\R_H\times \R_t, \mathcal{L}^{1}\left(
 L^2(0, 2\pi) \right)  \right)   .$$

In fact, ${\rho}_{\alpha_0}$ is a nonnegative, $\mathcal{L}^{1}\left(
 L^2(0, 2\pi) \right)$-valued measures on $ \R/2\pi\Z\times \R_s\times\R_E\times \R_H\times \R_t$.

In addition, ${\rho}_{\alpha_0}$ is supported in $\{s\in
[-\cos\alpha_0, \cos\alpha_0], E=\sqrt{2H}\}$.

\end{proposition}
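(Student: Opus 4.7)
The plan is to view $n_h^{\alpha_0}$, via the reformulation \eqref{e:nh1}, as the standard pairing of $\chi_0\scrV\scrU u_h$ with an operator-valued semiclassical pseudodifferential operator acting on $L^2(\R_s\times\R_H; L^2(\R_\theta))$, where the kernel on each fibre $\cH_\omega$ is reconstructed from a global kernel via \eqref{e:period}. As a first step, I would establish a uniform bound on $\langle n_h^{\alpha_0}, K\rangle$ in terms of $\sup_{(\omega,s,E,H,t)}\|K(\omega,s,E,H,t)\|_{\mathcal{K}(L^2(0,2\pi))}$. This follows from the Calder\'on-Vaillancourt theorem for operator-valued symbols (cf.~\cite{GerardMDM91}) together with the fact that $\scrV\scrU$ is an isometry (Lemma \ref{p:FIO} and \eqref{VV*=I}). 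Banach-Alaoglu then produces a weak-$\ast$ convergent subsequence with limit $\rho_{\alpha_0}$, a priori a continuous linear functional on $C_c\bigl(\R/2\pi\Z\times\R_s\times\R_E\times\R_H\times\R_t;\mathcal{K}(L^2(0,2\pi))\bigr)$.

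The second step is to prove positivity. I would invoke the operator-valued analogue of the sharp G\r{a}rding inequality: if $K(\omega,s,E,H,t)\geq 0$ on $L^2(0,2\pi)$ pointwise, then the associated pseudodifferential quantization satisfies $\langle v,\cK v\rangle\geq -Ch\|v\|^2$, and hence $\langle n_h^{\alpha_0},K\rangle\geq -Ch$, so that $\langle\rho_{\alpha_0},K\rangle\geq 0$. This is proved exactly as in \cite[Section 2.1]{FermanianGerardCroisements} or \cite[Corollary 27]{AnantharamanMaciaTore}, and is the same argument alluded to at the beginning of Section~\ref{s:strucprop} for $\mu^{\alpha_0}$. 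The duality $\mathcal{K}(L^2(0,2\pi))^\ast=\mathcal{L}^1(L^2(0,2\pi))$, combined with positivity and the uniform bound, upgrades $\rho_{\alpha_0}$ to a nonnegative $\mathcal{L}^1$-valued Radon measure.

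For the support statement, I would proceed in two substeps. To see that $\rho_{\alpha_0}$ is carried by $\{E=\sqrt{2H}\}$, I would follow the strategy of Proposition~\ref{p:regt}: since $u_h$ solves \eqref{e:S}, Lemma~\ref{p:FIO}(ii) turns $-h^2\Delta/2$ into $-h^2\partial_s^2/2$ after conjugation by $\scrU$, and testing $n_h^{\alpha_0}$ against $\bigl(E^2/2-H\bigr)K$ yields zero in the limit, up to boundary contributions that vanish as in Proposition~\ref{Lemma Inv} provided $K$ can be arranged to satisfy the symmetry condition (B); this last point is achieved by symmetrizing the test operator. To localize the $s$-support in $[-\cos\alpha_0,\cos\alpha_0]$, I would use that $\scrU u_h$ is microlocalized in $\Phi^{-1}(\overline{\ID}\times\R^2)=\{(J/E)^2+s^2\leq 1\}$, then restrict to $\{E=\sqrt{2H}\}$ at the limit $J'=0$, which corresponds to $J/E=-\sin\alpha_0$ and therefore $s^2\leq 1-\sin^2\alpha_0=\cos^2\alpha_0$.

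The main obstacle will be the rigorous use of the operator-valued G\r{a}rding inequality, since the symbols take values in compact operators on the infinite-dimensional space $L^2(0,2\pi)$ and since the specific $\omega$-summation/integration structure encoded in \eqref{e:K} requires checking that the quantization agrees, up to $O(h)$, with a genuine operator-valued Weyl quantization to which the classical G\r{a}rding machinery applies. Once this identification is established, both the positivity step and the support localization reduce to routine applications of operator-valued symbolic calculus.
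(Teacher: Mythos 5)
Your proposal is correct and follows essentially the same route as the paper: uniform boundedness of the maps $K\mapsto\langle n_h^{\alpha_0},K\rangle$ via the Calder\'on--Vaillancourt theorem, a diagonal extraction over a countable dense set of test operators, and the standard positivity argument for operator-valued symbols (which the paper reads off directly from the form \eqref{e:nh1}, citing the same references you do). The support statement is not proved inside the paper's proof body, but your two substeps match how it is justified elsewhere in the paper --- the localization on $\{E=\sqrt{2H}\}$ via the equation and the hidden-regularity estimates \eqref{e:final}--\eqref{e:final2}, and the $s$-localization from $\supp u_h\subset\ovl{\ID}$, i.e.\ $(J/E)^2+s^2\leq 1$ evaluated at $J/E=-\sin\alpha_0$.
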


\begin{proof}
Note that ${\chi_0} \scrV\scrU u_{h}(s, H) $ is bounded in $L^2
(\R_s\times \R_H, L^2(0, 2\pi)) $. The Calder\'{o}n-Vaillancourt
theorem~\cite{CV:71} gives that the operators $\cK$ with symbols of the form
\eqref{e:K} are uniformly bounded with respect to $h$. Therefore,
the linear map
\begin{equation*}
L_h: K\mapsto \int_{\mathbb{R}}\la n_h^{\alpha_0},K  \ra
\end{equation*}%
is uniformly bounded as ~$h\To 0$. As a consequence, for any~$K$, up to
extraction of a
subsequence, it has a limit~$l(K)$. \\
Considering a countable dense subset of~$%
C_{c}^{\infty}\left(\R/2\pi\Z \times
\R_s\times\R_E\times\R_H\times\R_t ;  \mathcal{K} \left( L^2(0,
2\pi)\right) \right) $, and using a diagonal
extraction process, one finds a sequence~$\left( h_{n}\right) $ tending to~$%
0 $ as~$n$ goes to~$+\infty $ such that for any ~$K\in
C_{c}^{\infty}\left(\R/2\pi\Z \times
\R_s\times\R_E\times\R_H\times\R_t ;  \mathcal{K} \left( L^2(0,
2\pi)\right) \right)$, the sequence~$L_{h_{n}}(K)$ has a limit
as~$n$ goes
to~$+\infty $.\\
 The limit is a linear form on $C_{c}^{\infty}\left(\R/2\pi\Z \times \R_s\times\R_E\times\R_H\times\R_t ;  \mathcal{K} \left(
L^2(0, 2\pi)\right) \right)$, characterized by an element
$\rho_{\alpha_0}$ of the dual space ~$ \cD^{\prime }\left(
\R/2\pi\Z \times \R_s\times\R_E\times \R_H\times \R_t,
\mathcal{L}^{1}\left(
 L^2(0, 2\pi) \right)  \right)   .$

The positivity of the limit is standard. Note that it is
immediately seen in the expression~\eqref{e:nh1}.
\end{proof}

Comparing with \eqref{e:relevance}, we obtain
\begin{corollary}For every $b\in\cS$,
 \begin{multline*}
\int b(s, \theta,E, J, \eta, H, t)\mu_{\alpha_0}(ds, d\theta, dJ, d\eta, dE, dH, dt)\\
=\Tr_{L ^{2}(0, 2\pi)}\int K_{b, 0, \infty}(s, E, H, t)_{\omega}\,
\,\rho_{\alpha_0 } (d\omega,d s, dE, dH, dt)
\end{multline*}
\end{corollary}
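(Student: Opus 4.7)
The plan is to combine the identity \eqref{e:relevance}, which relates the pairing $\langle w_{\alpha_0,h,R}, b\rangle$ to the operator-valued pairing $\langle n_h^{\alpha_0}, K_{b,h,R}\rangle$, with the weak-$\ast$ convergence of $n_h^{\alpha_0}$ to $\rho_{\alpha_0}$ established in Proposition \ref{p:weakstarlimit}. Starting from the definition \eqref{doublelim} of $\mu_{\alpha_0}$ as a double limit $\lim_{R\to\infty}\lim_{h\to 0^+}$, the goal is to exchange both limits with the trace-integration pairing against $\rho_{\alpha_0}$.

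The first step, taking $h\to 0$ for fixed $R$, is the easy one. By \eqref{e:relevance}, $\langle w_{\alpha_0,h,R}, b\rangle = \langle n_h^{\alpha_0}, K_{b,0,R}\rangle + O_R(h)$, where the $O_R(h)$ comes from Taylor-expanding $b$ in its fourth argument $J'=hD_\theta$ on the range $|D_\theta|\lesssim R$ singled out by $\chi(D_\theta/R)$. For fixed $R$, the operator-valued symbol
\[
(s,E,H,t)\longmapsto K_{b,0,R}(s,E,H,t) = b(s,\theta,E,0,D_\theta,H,t)\,\chi(D_\theta/R)
\]
lies in $C_c^\infty(\R/2\pi\Z\times\R^4;\mathcal{K}(L^2(0,2\pi)))$: the compact support in $(s,E,H,t)$ comes from $b\in\cS$, and the Fourier-multiplier cutoff $\chi(D_\theta/R)$ renders the operator smoothing on each $\cH_\omega$, hence compact. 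Proposition \ref{p:weakstarlimit} then applies and gives
\[
\lim_{h\to 0^+}\langle w_{\alpha_0,h,R}, b\rangle
= \int \Tr_{L^2(0,2\pi)}\bigl\{K_{b,0,R}(s,E,H,t)_\omega\,\rho_{\alpha_0}(d\omega,ds,dE,dH,dt)\bigr\}.
\]

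The main obstacle is the subsequent limit $R\to\infty$, since $K_{b,0,\infty}(s,E,H,t)=b(s,\theta,E,0,D_\theta,H,t)$ is only bounded, not compact ($b$ is merely $0$-homogeneous in $\eta$ at infinity). The argument proceeds via strong operator convergence $K_{b,0,R}\to K_{b,0,\infty}$, coming from $\chi(\cdot/R)\to 1$ pointwise, together with a uniform Calder\'on--Vaillancourt bound $\|K_{b,0,R}(s,E,H,t)\|_{\mathcal{L}(L^2(0,2\pi))}\leq C$ depending only on $b$. Since $\rho_{\alpha_0}$ is a nonnegative $\mathcal{L}^1(L^2(0,2\pi))$-valued measure whose total trace mass is finite on the compact support of $b$ (by the last assertion of Proposition \ref{p:weakstarlimit}), the trace-duality bound $|\Tr(K\sigma)|\leq \|K\|_{\mathcal{L}}\,\|\sigma\|_{\mathcal{L}^1}$ provides a scalar integrable dominant, and strong convergence of uniformly bounded operators against a fixed trace-class operator yields pointwise convergence of the scalar integrand $(\omega,s,E,H,t)\mapsto\Tr\{K_{b,0,R}(s,E,H,t)_\omega\,\sigma_{\alpha_0}\}$ (where $\sigma_{\alpha_0}$ is a Radon--Nikodym density of $\rho_{\alpha_0}$ with respect to a scalar dominating measure). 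Lebesgue dominated convergence then delivers the desired identity; no further use of the PDE or of the fine structure of $\mu_{\alpha_0}$ is needed, the statement being essentially a repackaging of the weak-$\ast$ limit against an enlarged class of operator-valued test symbols.
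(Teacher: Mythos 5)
Your proof is correct and follows essentially the same route as the paper, which simply states the corollary by ``comparing with \eqref{e:relevance}'' the definition \eqref{doublelim} of $\mu_{\alpha_0}$ and the weak-$\ast$ convergence of Proposition \ref{p:weakstarlimit}. The only step the paper leaves implicit is the passage $R\to\infty$ on the right-hand side, and your argument — strong convergence $\chi(D_\theta/R)\to I$ with a uniform operator bound, trace duality against the $\mathcal{L}^1$-valued density, and dominated convergence over the compactly supported scalar measure — is a valid way to justify it.
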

Remember that $K_{b, 0, R}(s, E, H, t)=b(s, \theta, 0, D_\theta,
H, t)\chi(D_\theta/R)$, so that $K_{b, 0, \infty}(s, E, H, t)=b(s,
\theta, 0, D_\theta, H, t)$.

\begin{corollary}
If $b$ does not depend on $\eta$
then the above identity can be rewritten as:%
\begin{multline*}
\int b(s, \theta,E, J, H, t)\mu_{\alpha_0}(ds, d\theta, dJ, d\eta, dE, dH, dt)\\
=\Tr_{L ^{2}(0, 2\pi)}\int  m_b(s, E ,H, t)\, \,\rho_{\alpha_0 }
(d\omega,d s, dE, dH, dt)
\end{multline*}
 where $m_b(s, E, H, t)$ is the multiplication operator by $b(s, \theta, E, 0, H, t)$ acting on $L ^{2}(0, 2\pi)$.
\end{corollary}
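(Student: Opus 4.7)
The plan is to deduce this as an immediate specialization of the preceding corollary. Starting from the identity
\[
\int b(s, \theta, E, J, \eta, H, t)\, \mu_{\alpha_0}(ds, d\theta, dJ, d\eta, dE, dH, dt) = \Tr_{L^2(0,2\pi)} \int K_{b, 0, \infty}(s, E, H, t)_{\omega}\, \rho_{\alpha_0}(d\omega, ds, dE, dH, dt),
\]
I would examine the operator $K_{b, 0, \infty}(s, E, H, t) = b(s, \theta, E, 0, D_\theta, H, t)$ under the hypothesis that $b$ does not depend on $\eta$. Since $\eta$ is precisely the variable that gets quantized into $D_\theta$, its absence means the functional calculus in $D_\theta$ trivializes, and the operator collapses to pointwise multiplication by $\theta \mapsto b(s, \theta, E, 0, H, t)$ acting on $L^2(\R)$.

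Next, I would invoke the identification $\cH_\omega \simeq L^2(0,2\pi)$ via restriction to a fundamental period, as set up in \eqref{e:period}. Because condition (A) on $\cS$ forces $b$ to be $2\pi$-periodic in $\theta$, multiplication by this function commutes with the translation $f(\theta)\mapsto f(\theta+2\pi)$ and therefore preserves each fibre $\cH_\omega$; its restriction to that fibre is independent of the Floquet exponent $\omega$ and equal to multiplication by $b(s,\cdot,E,0,H,t)\rceil_{(0,2\pi)}$, which is exactly the operator $m_b(s,E,H,t)$. Substituting this into the trace formula yields the claimed identity.

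The argument is a pure unraveling of definitions. The only point requiring a small remark is that $\Tr_{L^2(0,2\pi)}(m_b(s,E,H,t)\,\rho_{\alpha_0}(d\omega,\ldots))$ is well-defined even though $m_b$ is merely bounded (with $\|m_b\|\leq\|b\|_\infty$) and not compact: this is standard, since $\rho_{\alpha_0}$ takes values in $\mathcal{L}^1(L^2(0,2\pi))$ and the product of a bounded operator with a trace-class one is trace class, with $|\Tr(AB)|\leq \|A\|\,\|B\|_1$. No substantive obstacle is anticipated here; the genuine work lies in the preceding corollary and in Proposition~\ref{p:weakstarlimit}.
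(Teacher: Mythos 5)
Your proposal is correct and matches what the paper does: the corollary is a pure unwinding of the definition of $K_{b,0,\infty}$, which for $\eta$-independent $b$ degenerates to the multiplication operator $m_b$, together with the identification of $\cH_\omega$ with $L^2(0,2\pi)$ already set up in \eqref{e:period}. Your closing remark on pairing the trace-class-valued measure $\rho_{\alpha_0}$ with the merely bounded operator $m_b$ is a worthwhile precision that the paper leaves implicit.
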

Note that $\int b(s, \theta,E, J, H, t)\mu_{\alpha_0}(ds, d\theta, dJ, d\eta, dE, dH, dt)\geq 0$ if $b$ does not depend on $\eta$ and $b\geq 0$. Thus the projection on $\mu_{\alpha_0}$ on the variables $(s, \theta,E, J, H, t)$ defines a nonnegative measure.

We finish this section by explaining why this implies that the projection of $\mu_{\alpha_0}$ on the variables $(s, \theta)$
is absolutely continuous.  If $b\in\mathcal{S}^\sigma$ does not depend  on $\eta$, Proposition \ref{Lemma Inv} implies that
\begin{multline*}
\int b(s, \theta,E, J, H, t)\mu_{\alpha_0}(ds, d\theta, dJ, d\eta, dE, dH, dt)\\
= \int \la b\ra_{\alpha_0}( \theta,E, J, H, t)\mu_{\alpha_0}(ds, d\theta, dJ, d\eta, dE, dH, dt).
\end{multline*}
We know from Section  \ref{s:trace} that $\mu=\mu_{sc}$ does not charge the set $S$. Since $\mu_{\alpha_0}\leq \mu$ by \eqref{decomposition}, the measure $\mu_{\alpha_0}$ does not charge the set $\{s=\pm \cos\alpha_0\}$, and the previous equality actually holds for all $b\in\mathcal{S}$. If $b$ does not depend on $(E, J, H, t)$, we get the formula
\begin{equation*}
\int b(s, \theta)\mu_{\alpha_0}(ds, d\theta, dJ, d\eta, dE, dH, dt)
=\Tr_{L ^{2}(0, 2\pi)}\int  m_{\la b\ra_{\alpha_0}}\, \,\rho_{\alpha_0 }
(d\omega,d s, dE, dH, dt).
\end{equation*}
This formula, defined a priori for continuous $b$, extends to $b\in L^\infty$. If $b$ vanishes for Lebesgue-almost every $(s, \theta)$,
the multiplication operator $m_{\la b\ra_{\alpha_0}}$ vanishes on $L ^{2}(0, 2\pi)$, and
$$\int b(s, \theta)\mu_{\alpha_0}(ds, d\theta, dJ, d\eta, dE, dH, dt)=0,$$ which proves the absolute continuity.

\subsection{Propagation law for $\rho_{\alpha_0}$\label{s:propagation}}

We now show that the operator-valued measure $\rho_{\alpha_0}$
constructed in the previous section possesses some invariance
properties. Below, the notation $\la V\ra_{\alpha_0}$ stands short
for the function $\la V\ra_{\alpha_0}\circ\Phi (s, \theta, E,
-E\sin\alpha_0, t)$, a function that actually does not depend on
$s$ and is $2\pi$-periodic in $\theta$.

\begin{proposition}\label{p:invop}
(i) If $K$ satisfies \eqref{e:tangentialoperator0}, we have
\begin{equation}\int    \Tr\, E\,\partial_s K\left(\omega, s, E, H, t\right)
{\rho}_{\alpha_0}\left(d\omega,  ds, dE, dH,
dt\right)=0\label{e:invarianceop}\end{equation}

(ii) If in addition $K(s, \sqrt{2H}, H, t)$ does not depend on
$s$, we have
\begin{multline}\int    \Tr\left( - \cos^2\alpha_0\,\partial_t K+i\left[-\frac{\partial_\theta^2}2 +  \cos^2\alpha_0\la V\ra_{\alpha_0}, K\right]_\omega\right)
\left(\omega, s, E, H, t\right) \\
{\rho}_{\alpha_0}\left(d\omega,
ds, dE, dH, dt\right)  =0\label {e:propop}\end{multline}
 where $\left[-\frac{\partial_\theta^2}2 +  \cos^2\alpha_0\la V\ra_{\alpha_0}, K\right]_\omega$ means that we are considering $\partial_\theta^2$
 acting on $\cH_\omega$ (in other words, $L^2(0, 2\pi)$ with Floquet-periodic boundary condition ($f(\theta+2\pi)=f(\theta)e^{i\omega}$).
 \end{proposition}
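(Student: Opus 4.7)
The plan for both parts is to carry out the commutator argument of Proposition~\ref{Lemma Inv} for operator-valued symbols, testing identities against the quadratic form $\la n_h^{\alpha_0},K\ra$ from~\eqref{e:nh1} and passing to the limit $h\to 0$. Starting from the standard Weyl relation
\[
\Op_h(E\partial_s K)=\tfrac{i}{h}\bigl[-h^2\partial_s^2/2,\Op_h(K)\bigr]+O(h),
\]
and using Lemma~\ref{p:FIO}(ii) together with the fact that both $\scrV$ and the cut-off $\chi_0(\theta)$ commute with $-h^2\partial_s^2$, the commutator on the right pulls back to $L^2(\mathbb{R}^2)$ as a commutator with $-h^2\Delta/2$ acting on $u_h$. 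Equation~\eqref{e:schrodelta} then replaces $(-h^2\Delta/2)u_h$ by the sum of a time-derivative term, a potential term, and the boundary distribution $(\partial_n u_h)\otimes \delta_{\partial\ID}$.

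For (i), once the factor $i/h$ is absorbed, the time-derivative contribution collects into a total $\partial_t$ derivative that is killed by the $t$-integration, the potential contribution is $O(h)$ and vanishes in the limit, and only the boundary term survives. By Remark~\ref{r:boundary} and the formulas of Proposition~\ref{p:summary}, this boundary term is controlled by an operator $\mathbf{B}(1,u,\sqrt{2hD_t},hD_u,t)$ that vanishes identically on $r=1$ for any symbol satisfying the boundary symmetry condition (B). The assumption~\eqref{e:tangentialoperator0} on $K$ is precisely the operator-valued formulation of (B) under the conjugation by $\scrV\scrU$, so the boundary term is zero and (i) follows.

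For (ii), the additional hypothesis that $K(s,\sqrt{2H},H,t)$ is $s$-independent on the support $\{E=\sqrt{2H}\}$ of $\rho_{\alpha_0}$ forces $E\partial_s K$ to vanish there, so the leading-order cancellation used in (i) is insufficient and one must retain the next order in the asymptotic expansion of the same commutator. Three contributions then survive. The time-derivative piece, after an integration by parts in $t$, gives $-\cos^2\alpha_0\,\partial_t K$, the factor $\cos^2\alpha_0$ recording the reparametrization of the orbits of $\phi^\tau_{\alpha_0}$ relative to the arclength along billiard trajectories. The potential term, combined with the $s$-independence of $K$ and Lemma~\ref{l:modes} (which averages $V$ along $\phi^\tau_{\alpha_0}$-orbits, since $K$ is automatically $\phi^\tau_{\alpha_0}$-invariant on $\{E=\sqrt{2H}\}$), contributes $i\cos^2\alpha_0[\la V\ra_{\alpha_0},K]$. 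Finally, a subprincipal term coming from the combined action of $\scrU$, the phase twist $e^{i\sqrt{2H}\sin\alpha_0\theta/h}$ in $\scrV$, and the Dirichlet boundary condition, produces $-\tfrac{i}{2}[\partial_\theta^2,K]_\omega$ on the Floquet fibre $\cH_\omega$ with $\omega\equiv\sqrt{2H}\sin\alpha_0/h\bmod 2\pi$. Summing the three contributions and noting that the total $\partial_t$ derivative vanishes upon $t$-integration yields~\eqref{e:propop}.

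The main technical obstacle is the third contribution. Since the classical Hamiltonian $E^2/2$ is $J$-independent, the effective transverse Laplacian $-\partial_\theta^2/2$ cannot arise from a Taylor expansion of the symbol of $-h^2\Delta/2$ in $J$ around $J=-\sqrt{2H}\sin\alpha_0$, as it would in the flat-torus analysis of~\cite{AnantharamanMaciaTore}; it must instead be extracted from a careful second-order analysis of the FIO conjugation $\scrV\scrU(-h^2\Delta/2)\scrU^*\scrV^*$ at the second-microlocal scale $J'\sim h$, relying on the boundary calculus developed in Appendix~\ref{coord-polar}. Identifying the resulting operator, restricted to the Floquet fibre of parameter $\omega$, as precisely $-\partial_\theta^2/2$ is also what brings both the Floquet variable $\omega$ and the effective Hamiltonian $-\partial_\theta^2/2+\cos^2\alpha_0\la V\ra_{\alpha_0}$ on $\cH_\omega$ into the statement of~\eqref{e:propop}.
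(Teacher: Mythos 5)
Your treatment of part (i) follows the paper's route: reduce to the commutator argument of Proposition~\ref{Lemma Inv}, use the extended equation \eqref{e:schrodelta}, and kill the boundary term because \eqref{e:tangentialoperator0} is the operator-valued form of the symmetry condition (B). That part is fine (the paper adds one preparatory step you omit, namely a density argument reducing $K$ to a pseudodifferential operator $b_0(s,\theta,E,D_\theta,H,t)$ whose symbol is then extended off $\{J'=0\}$ so as to satisfy (B)).

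For part (ii) there is a genuine gap in the mechanism you describe. You claim the three surviving contributions arise by ``retaining the next order in the asymptotic expansion of the same commutator'', i.e.\ of $[-h^2\partial_s^2/2,\Op_h(K)]$, and later that $-\tfrac12\partial_\theta^2$ is extracted from a second-order analysis of $\scrV\scrU(-h^2\Delta/2)\scrU^*\scrV^*$. Neither can work as stated: by Lemma~\ref{p:FIO}(ii) that conjugation is \emph{exactly} $-h^2\partial_s^2/2$, and the subprincipal symbol of its commutator with $\Op_h(K)$ is $-\tfrac{ih}{2}\partial_s^2$ of the symbol, which produces neither $\partial_\theta^2$ nor $\partial_t$. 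The actual engine of the paper's proof runs in the opposite direction: one writes $i[-\partial_\theta^2/2,K]$ symbolically as $\eta\,\partial_\theta b-\tfrac{i}{2}\partial_\theta^2 b$ via \eqref{e:weylcomm}, extends the symbol to be $\phi^\tau_{\alpha_0}$-invariant (condition (D)) — possible precisely because $K(s,\sqrt{2H},H,t)$ is $s$-independent — and then uses the Taylor expansion \eqref{e:dl}, $\frac{-\eta\cos\alpha}{\sqrt{2H}(\alpha-\alpha_0)}\sim\frac{\cos^2\alpha_0}{h}$, to convert $\eta\,\partial_\theta$ into $\frac{\cos^2\alpha_0}{h}\,E\,\partial_s$. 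It is this conversion that (a) produces the factor $\cos^2\alpha_0$ in front of $\partial_t$ and $V$, and (b) creates the $1/h$ amplification under which the subprincipal boundary symbol $\mathbf D$ of $\cA_H(\mathbf P)$ (Proposition~\ref{p:summary}) contributes at order one; via Remark~\ref{r:boundary} that boundary term returns to the interior as $+\tfrac{i}{2}\partial_\theta^2 b$ and cancels the $-\tfrac{i}{2}\partial_\theta^2 b$ left over from the symbolic expansion. Without the step ``invariance (D) plus \eqref{e:dl}'' your three contributions are asserted rather than derived, and the plan of expanding the $s$-commutator to next order would fail if executed literally. Your closing observation — that $-\partial_\theta^2/2$ cannot come from a Hessian of the Hamiltonian in $J$ as on the torus and must come from the boundary calculus of Appendix~\ref{coord-polar} — is correct and is indeed where the real work lies.
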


The proof of this key proposition is postponed to the end of this Section. Let us first draw some of its consequences in view of Theorem~\ref{t:precise}.

\begin{remark}
\label{rhobar} Proposition~\ref{p:invop} (ii) implies the
following. Take $K=a(\omega, E, H, t) Id_{L^2(0,2\pi)}$ with $a$ a
scalar continous function, then

\begin{equation*}  \Tr\left(\int \partial_t a
\left(\omega,E, H, t\right) {\rho}_{\alpha_0} \left(d\omega,  ds,
dE, dH, dt\right)  \right) =0 .
\end{equation*}
Therefore, the image of ${\rho}_{\alpha_0}$ by the projection on
$\R_s$, $$\overline{\rho}_{\alpha_0}(d\omega ,d E, dH,d t) :=\int
{\rho}_{\alpha_0}(d\omega,  ds,d E, dH, dt)$$ is such that
$\Tr(\overline{\rho}_{\alpha_0})$ does not depend on~$t$.
\end{remark}

\begin{remark}
\label{radnikoPG}
The Radon-Nikodym theorem~\cite[Appendix]{GerardMDM91} implies that the operator valued measure $\overline{\rho}_{\alpha_0}$ 
 can also be written as $\overline{\rho}_{\alpha_0} = \sigma_{\alpha_0} \ell_{\alpha_0}$ where $\ell_{\alpha_0} = \Tr(\overline{\rho}_{\alpha_0})$ is a nonnegative scalar measure on $\R/2\pi\Z\times  \R_E \times \R_H$, and
$$
\sigma_{\alpha_0}  : (\R/2\pi\Z)_\omega \times  \R_E \times \R_H
\times\R_t \to \mathcal{L}^1_+ \big(L^2(0,2\pi)\big) ,
$$
is an integrable function with respect to $\ell_{\alpha_0}$,
taking values in the set of nonnegative trace-class operators on
$L^2(0,2\pi)$. Note that $\Tr(\sigma_{\alpha_0}) = 1$.
\end{remark}

\begin{corollary}
\label{corsigmaell}
Let $\overline{\rho}_{\alpha_0}$ as in Remark~\ref{rhobar} 
and let ${\ell}_{\alpha_0}$ and ${\sigma}_{\alpha_0}$ as in
Remark~\ref{radnikoPG}. Then for $\ell_{\alpha_0}$-almost every
$(\omega, E,H)$, we have
$$-
\cos^2\alpha_0\partial_t
{\sigma}_{\alpha_0}+i\left[-\frac{\partial_\theta^2}2 +
\cos^2\alpha_0\la V\ra_{\alpha_0}, {\sigma}_{\alpha_0}
\right]_\omega = 0
$$
in $\mathcal{D}'\left(\R_t ; \mathcal{L}^1_+ \big(L^2(0,2\pi)\big)
\right)$.

Therefore, for $\ell_{\alpha_0}$-almost every $(\omega, E,H)$,
$\sigma_{\alpha_0}$ coincides with a continuous function in
$$C^0\left(\R_t ; \mathcal{L}^1_+ \big(L^2(0,2\pi)\big) \right)$$
and
$$
\sigma_{\alpha_0}(\omega, E,H , t) = U_{\alpha_0 , \omega}(t)
\sigma_{\alpha_0}(\omega, E,H , 0)U_{\alpha_0 , \omega}^*(t) .
$$
\end{corollary}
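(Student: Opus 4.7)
The plan is to dualize the invariance identity of Proposition~\ref{p:invop}(ii) to extract a pointwise Heisenberg equation for $\sigma_{\alpha_0}$, and then invoke uniqueness for that equation to identify $\sigma_{\alpha_0}$ with the conjugation by $U_{\alpha_0,\omega}(t)$.

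First, I would test \eqref{e:propop} against kernels of the separable form $K(\omega,s,E,H,t) = g(t)\varphi(\omega,E,H)M$, with $g\in C_c^\infty(\R_t)$, $\varphi\in C_c^\infty(\R/2\pi\Z\times\R_E\times\R_H)$ a scalar cutoff, and $M$ a compact operator on $L^2(0,2\pi)$ commuting with $R_{\pi+2\alpha_0}$. Being $s$-independent, $K$ satisfies the hypothesis of Proposition~\ref{p:invop}(ii), while the reflection relation \eqref{e:tangentialoperator0} reduces to $[M,R_{\pi+2\alpha_0}]=0$. Since the $s$-variable has been completely integrated out, the identity \eqref{e:propop} against $\rho_{\alpha_0}$ becomes
\begin{equation*}
\int \Tr\!\Big({-}\cos^2\alpha_0\,\partial_t K + i\bigl[P_\omega, K\bigr]_\omega\Big)\,\overline{\rho}_{\alpha_0}(d\omega,dE,dH,dt) = 0,
\end{equation*}
where $P_\omega := -\tfrac12\partial_\theta^2 + \cos^2\alpha_0\,\la V\ra_{\alpha_0}$.

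Next, I substitute the Radon--Nikodym decomposition of Remark~\ref{radnikoPG}, which by Remark~\ref{rhobar} reads $\overline{\rho}_{\alpha_0}(d\omega,dE,dH,dt) = \sigma_{\alpha_0}(\omega,E,H,t)\,\ell_{\alpha_0}(d\omega,dE,dH)\,dt$ with $\ell_{\alpha_0}$ independent of $t$. Using cyclicity of the trace on $L^2(0,2\pi)$ (which produces $\Tr(\sigma[P_\omega,K]_\omega) = -\Tr([P_\omega,\sigma]_\omega K)$) and integrating by parts in $t$ against the smooth cutoff $g$, the identity transforms into
\begin{equation*}
\int\!\int_{\R_t}\Tr\!\Big(K\cdot\bigl({-}\cos^2\alpha_0\,\partial_t\sigma_{\alpha_0} + i[P_\omega,\sigma_{\alpha_0}]_\omega\bigr)\Big)\,dt\,\ell_{\alpha_0}(d\omega,dE,dH) = 0.
\end{equation*}
As $\varphi$, $g$, and $M$ range freely (with $M$ in the commutant of $R_{\pi+2\alpha_0}$), and since $\sigma_{\alpha_0}$ itself lies in that commutant $\ell_{\alpha_0}$-a.e.\ (by the symmetry that $\rho_{\alpha_0}$ inherits via \eqref{e:tangentialoperator0}), a density argument yields
\begin{equation*}
-\cos^2\alpha_0\,\partial_t\sigma_{\alpha_0} + i\bigl[P_\omega,\sigma_{\alpha_0}\bigr]_\omega = 0 \quad\text{in }\mathcal{D}'\!\bigl(\R_t;\mathcal{L}^1_+(L^2(0,2\pi))\bigr),
\end{equation*}
for $\ell_{\alpha_0}$-a.e.\ $(\omega,E,H)$.

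Finally, this is precisely the Heisenberg equation for the one-parameter unitary group $U_{\alpha_0,\omega}(t) = \exp(itP_\omega/\cos^2\alpha_0)$, which is exactly the propagator introduced in Section~\ref{s:mainthsc}. Standard existence and uniqueness for Heisenberg equations on trace-class operators then produce a unique continuous representative $\sigma_{\alpha_0}(\omega,E,H,\cdot)\in C^0(\R_t;\mathcal{L}^1_+)$, necessarily equal to $U_{\alpha_0,\omega}(t)\sigma_{\alpha_0}(\omega,E,H,0)U_{\alpha_0,\omega}^*(t)$. The main subtle point to verify carefully will be the density assertion in the previous step: the admissible test operators $M$ are constrained to the commutant of the rational rotation $R_{\pi+2\alpha_0}$, so one must check this class separates elements of $\mathcal{L}^1_+$ having the same commutation property. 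This is ensured because $P_\omega$ itself commutes with $R_{\pi+2\alpha_0}$ (the averaged potential $\la V\ra_{\alpha_0}$ inherits this symmetry from the $\phi^\tau_{\alpha_0}$-averaging), so the Heisenberg flow preserves the commutant and the entire dynamics takes place inside that invariant $C^*$-subalgebra, within which $M$-duality is complete.
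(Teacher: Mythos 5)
Your argument is correct and follows essentially the same route as the paper: substitute the Radon--Nikodym decomposition $\overline{\rho}_{\alpha_0}=\sigma_{\alpha_0}\,\ell_{\alpha_0}$ (with $\ell_{\alpha_0}$ independent of $t$, by Remark~\ref{rhobar}) into \eqref{e:propop} for $s$-independent $K$, move $\partial_t$ and the commutator onto $\sigma_{\alpha_0}$ by integration by parts in $t$ and cyclicity of the trace, and then solve the resulting Heisenberg equation with the unitary propagator $U_{\alpha_0,\omega}$. Your additional care about the test operators being constrained to the commutant of $R_{\pi+2\alpha_0}$ addresses a point the paper's proof passes over silently, and is resolved as you indicate since the dynamics preserves that commutant.
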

where $U_{\alpha_0,
\omega}(t)$ is the unitary propagator of the equation
\[
-\cos^2\alpha_0 D_t v(t,\theta)+\left(-\frac{1}{2}\partial_\theta^2+
\cos^2\alpha_0\la
V\ra_{\alpha_0}\circ\Phi\right) v(t,
\theta)=0
\]

\begin{proof}
We first rewrite~\eqref{e:propop} for $s$-independent operators
$K$ as
\begin{equation*}
\int    \Tr \left\{\left( - \cos^2\alpha_0\,\partial_t
K+i\left[-\frac{\partial_\theta^2}2 +  \cos^2\alpha_0\la
V\ra_{\alpha_0}, K\right]_\omega\right) \sigma_{\alpha_0} \right\}
\ell_{\alpha_0} \left(d\omega,  dE, dH\right) dt =0 .
\end{equation*}
Therefore, we have
$$
\int \Tr \left\{ K \left( -\cos^2\alpha_0\,\partial_t
\sigma_{\alpha_0} +i\left[-\frac{\partial_\theta^2}2 +
\cos^2\alpha_0\la V\ra_{\alpha_0}, \sigma_{\alpha_0}
\right]_\omega\right) \right\} \ell_{\alpha_0} \left(d\omega,  dE,
dH\right) dt =0,
$$
which concludes the proof of Corollary~\ref{corsigmaell}.

\end{proof}

To conclude this section, let us now prove its main result.

\begin{proof}[Proof of Propostion~\ref{p:invop}]
As was already mentioned, it is equivalent to consider a family of
kernels depending on $\omega$, $K(\omega, s, E, H, t) (\theta,
\theta')$ defined for $(\theta, \theta')\in (0, 2\pi)^2$, and a
kernel $K( s, E, H, t) (\theta, \theta')$ defined for $(\theta,
\theta')\in \R^2$ and satisfying $K( s, E, H, t) (\theta,
\theta')=K( s, E, H, t) (\theta+2\pi, \theta'+2\pi)$. The link
between both representations is the formula
$$K(\omega, s, E, H, t) (\theta, \theta')=\sum_{n\in\Z}K( s, E, H, t) (\theta, \theta'+2n\pi)e^{in\omega}.$$

By a density argument, it is enough to treat the case where $K(s,
E, H, t)$ is smooth in $(s, E, H, t)$ and is a pseudodifferential
operator on $L^2(\R)$. By this, we mean that there is a $b_0(s,
\theta, E, \eta, H, t)\in C_c^\infty(\R\times
\R/2\pi\Z\times\R^4)$ such that $K(s, E, H, t)=b_0(s, \theta,  E,
D_\theta, H, t)$.
As $\rho_{\alpha_0}$ is supported by $\{E =\sqrt{2H}\}$, we may further assume that $K$ satisfies \eqref{e:tangentialoperator} instead of \eqref{e:tangentialoperator0}.

If $K$ satisfies \eqref{e:tangentialoperator},
then we have $ b_0 \left(\cos\alpha,\theta,E, \eta, H, t\right)= b_0
\left(-\cos\alpha,\theta + \pi+2\alpha , E, \eta, H, t\right)$ for
$\alpha=\arcsin\left(\frac{\sqrt{2H}\sin\alpha_0}{E}\right)$. We
can extend $b_0$ to a function $b(s, \theta,E,  J', \eta, H,
t)\in C_c^\infty(\R\times \R/2\pi\Z\times\R^5)$ such that, for
$J'=0$, we have $b(s,  \theta,E,0, \eta, H, t)= b_0(s,  \theta,
E,\eta, H, t)$, and such that $b$ satisfies the symmetry condition
(B) with $\sin\alpha=-\frac{J'-\sqrt{2H}\sin\alpha_0}{E}$. We are
now back to our previous notation. The proof  Proposition
\ref{p:invop} (i) goes exactly along the lines  of the proof of
Proposition \ref{Lemma Inv} (see Remark \ref{r:boundary}).

\bigskip
Let us now focus on the proof of \eqref{e:propop}.

If $K(s, \sqrt{2H}, H, t)$ does not depend on $s$, then $b_0(s,
\sqrt{2H}, \theta, \eta, H, t)$ does not depend on $s$, and we can
impose that the function $b$ constructed above satisfy equation
(D).

Letting $\eta=\frac{J'}{h}$, we note that, for $\eta$ in the
(compact) support of $b(s, \sqrt{2H}, \theta, J', \eta, H, t)$, we
have
$$\alpha-\alpha_0\sim \frac{-h\eta}{\sqrt{2H}\cos\alpha_0}(1+O(h))$$
so that
\begin{equation}\label{e:dl}\frac{-\eta\cos\alpha}{\sqrt{2H}(\alpha-\alpha_0)}\sim \frac{\cos^2\alpha_0}h (1+O(h)).\end{equation}

We set
\begin{multline*}
\mathcal{Q}_0 := \int    \Tr\left( - \cos^2\alpha_0\,\partial_t
K+i\left[-\frac{\partial_\theta^2}2 +  \cos^2\alpha_0\la
V\ra_{\alpha_0}, K\right]_\omega\right) \left(\omega, s, E, H,
t\right)\\
{\rho}_{\alpha_0}\left(d\omega,  ds, dE, dH, dt\right)  ,
\end{multline*}
so that proving \eqref{e:propop} amounts to showing that $\mathcal{Q}_0 = 0$.

First note that
\begin{align}
\mathcal{Q}_0
& =\int    \Tr\left( - \cos^2\alpha_0\,\partial_t
K+i\left[-\frac{\partial_\theta^2}2 +  \cos^2\alpha_0\la
V\ra_{\alpha_0}, K\right]_\omega\right) \left(\omega, s,
\sqrt{2H}, H, t\right) \nonumber \\
 & \qquad \qquad \qquad \qquad\qquad \qquad\qquad \qquad \qquad \qquad \qquad \qquad \qquad \qquad
 {\rho}_{\alpha_0}\left(d\omega,  ds, dE,
dH, dt\right)
 \nonumber  \\
 &= \int    \Tr\left( - \cos^2\alpha_0\,\partial_t K+i\left[-\frac{\partial_\theta^2}2 +  \cos^2\alpha_0  V , K\right]_\omega\right)
\left(\omega, s, \sqrt{2H}, H, t\right)
\nonumber \\
& \qquad \qquad \qquad \qquad\qquad \qquad\qquad \qquad \qquad \qquad \qquad \qquad \qquad \qquad
{\rho}_{\alpha_0}\left(d\omega,  ds, dE, dH, dt\right)
\label{e:moyenn}
  \end{align}
since $\rho_{\alpha_0}$ is carried by $E=\sqrt{2H}$. With a slight
abuse of notation we denoted by $ V = V(s, E, t)$ the operator of
multiplication by $V\circ \Phi (s, \theta, E, -\sin\alpha_0 E, t)$
acting on $L^2(0, 2\pi)$. Note that it does not depend on
$\omega$. It satisfies the condition \eqref{e:tangentialoperator0}
since the function $V\circ\Phi$ satisfies the symmetry condition
(B) (since $V$ is only a function of $z$ in the old coordinates).
In \eqref{e:moyenn} we used the fact that $K(s,
\sqrt{2H}, H, t)$ does not depend on $s$, and the result of
Proposition \ref{p:invop} (i), to replace $\la V\ra_{\alpha_0}$ by
$V$.

Now, by definition of $\rho_{\alpha_0}$, we have
\begin{align*}
 \mathcal{Q}_0 & =\lim \left\la n_h^{\alpha_0}, - \cos^2\alpha_0\,\partial_t K+i\left[-\frac{\partial_\theta^2}2 +  \cos^2\alpha_0  V , K\right]_\omega\right\ra \nonumber\\
& = \lim \bigg\langle \chi_0 \scrV\scrU u_{h}, \nonumber\\
& \quad
 \left(- \cos^2\alpha_0\,\partial_t K+i\left[-\frac{\partial_\theta^2}2 +  \cos^2\alpha_0  V , K\right]\right)\left( s, h D_s, H, h^2D_t \right) \scrV\scrU u_{h}\bigg\rangle_{L^2 (\R_s\times \R_H, L^2( \R_\theta))}
 \end{align*}
Using the fact that $K(s, E, H, t)=b_0(s, \theta, E, D_\theta, H,
t)$ and the commutator calculus rule \eqref{e:weylcomm} for the
standard quantization, we obtain
\begin{align*}
 \mathcal{Q}_0 & =
 \lim_{h\rightarrow0^{+}}
i\left\langle \chi_0 \scrV\scrU u_{h},
 \left( \left[ \cos^2\alpha_0  V , K\right]\right)\left( s, h D_s, H, h^2D_t \right) \scrV\scrU u_{h}\right\rangle_{L^2 (\R_s\times \R_H, L^2( \R_\theta))} \nonumber\\
& \quad +
 \left\la \chi_0 \scrV\scrU u_h,  \Op_h\left((  \eta \partial_\theta- i\frac{\partial_\theta^2}2 -\cos^2\alpha_0\partial_t) b(s, \theta,E, J', \frac{ J'}{h}, H, ht) \right)   \scrV\scrU u_h\right\ra_{L^2(\R_s\times \R_\theta\times \R_H)}
\end{align*}
using the notation $\eta=\frac{J'}h=\frac{J+\sin\alpha_0\sqrt{2H}}{h}$.

We now set
\begin{multline*}
\mathcal{Q}_1 :=\lim_{h\rightarrow0^{+}}    \left\la \chi_0 \scrV\scrU u_h,  \Op_h\left( (  \eta \partial_\theta -\cos^2\alpha_0\partial_t) b(s, \theta,E, J', \frac{ J'}{h}, H, ht) \right) \scrV\scrU u_h\right\ra_{L^2(\R_s\times \R_\theta\times \R_H)}  \\
+i\left\langle \chi_0 \scrV\scrU u_{h},
 \left( \left[ \cos^2\alpha_0  V , K\right]\right)\left( s, h D_s, H, h^2D_t \right) \scrV\scrU u_{h}\right\rangle_{L^2 (\R_s\times \R_H, L^2( \R_\theta))} ,
\end{multline*}
so that we have
\begin{align}
 \mathcal{Q}_0 & = \mathcal{Q}_1 +
 \lim_{h\rightarrow0^{+}}
 \left\la \chi_0 \scrV\scrU u_h,  \Op_h\left(- i\frac{\partial_\theta^2}2  b(s, \theta,E, J', \frac{ J'}{h}, H, ht) \right)   \scrV\scrU u_h\right\ra_{L^2(\R_s\times \R_\theta\times \R_H)}
\label{e:rho}
\end{align}

Let us for the moment focus on the term $\mathcal{Q}_1$, involving only
derivatives of order $1$ of $b$.
As in Remark \ref{r:boundary}, we let ${\mathbf
P}(s, \theta, E, J, t, H)= b(s, \theta, E, J +\sin\alpha_0
\sqrt{2H}, \frac{J +\sin\alpha_0 \sqrt{2H}}h, H, -t)$. Since $b$
is compactly supported in the fifth variable, this is also, modulo
$O(h)$,
$${\mathbf P}(s, \theta, E, J, t, H)= b(s, \theta, E, 0, \frac{J +\sin\alpha_0 \sqrt{2H}}h, H, -t).$$

Still using the notation $\eta=\frac{J'}h=\frac{J+\sin\alpha_0\sqrt{2H}}{h}$, we have
\begin{align*}
\mathcal{Q}_1 &=\lim
 \left\la \scrU u_h,  \Op_h\left( (  \eta \partial_\theta -\cos^2\alpha_0\partial_t) {\mathbf P}(s, \theta, E, J, t, hH) \right) \scrU u_h\right\ra_{L^2(\R_s\times \T_\theta\times \R_t)}
  \\
& \quad \qquad +i\left\langle \chi_0 \scrV\scrU u_{h},
 \left( \left[ \cos^2\alpha_0  V , K\right]\right)\left( s, h D_s, H, h^2D_t \right) \scrV\scrU u_{h}\right\rangle_{L^2 (\R_s\times \R_H, L^2( \R_\theta))} \\
 & = \lim  \left\la u_h,  \cA_H\left( (  \eta \partial_\theta
-\cos^2\alpha_0\partial_t)
{\mathbf P}\right)u_h\right\ra_{L^2(\R^2\times\R^2\times \R_t)}  \\
& \quad \qquad  +i \left\la u_h, \left[ \cos^2\alpha_0  V , \cA_H\left( {\mathbf P}\right)\right]u_h\right\ra_{L^2(\R^2\times\R^2\times \R_t)}  \\
& = \lim  \left\la u_h,  \cA_H\left(
-\frac{\eta\cos\alpha}{E(\alpha_0-\alpha)}   (E\,
\partial_s-h\partial_t) {\mathbf P}\right)u_h\right\ra
+i \left\la u_h, \left[ \cos^2\alpha_0  V , \cA_H\left( {\mathbf
P}\right)\right]u_h\right\ra .
\end{align*}
Using that $b$ (and thus also ${\mathbf P}$) satisfies equation (D), together with \eqref{e:dl}, we obtain
\begin{equation}
\label{e:order1}
 \mathcal{Q}_1 =\lim \frac{\cos^2\alpha_0}h \left\la u_h,  \cA_H\left(   (E\, \partial_s-h\partial_t) {\mathbf P}\right)u_h\right\ra + i\left\la u_h, \left[ \cos^2\alpha_0  V , \cA_H\left( {\mathbf P}\right)\right]u_h\right\ra
 \end{equation}

 Finally, we use again the Schr\"odinger equation \eqref{e:schrodelta} satisfied by $u_h$ extended to $\R^2$, and rewrite the last line as
 \begin{align}\label{e:ouf}
  \mathcal{Q}_1
  & = \lim \frac{\cos^2\alpha_0}h \left\la  u_h, \left[-\frac{ih}2\Delta +ihV
  -h\partial_t,  \cA_H\left(    {\mathbf P}\right)\right]u_h\right\ra \nonumber \\
 &=\lim - \frac{\cos^2\alpha_0}h \left\la  \frac{ih}2 \frac{\partial u_h}{\partial n}\otimes \delta_{\partial \ID},  \cA_H\left(   {\mathbf P}\right)  u_h \right\ra +\lim - \frac{\cos^2\alpha_0}h \left\la u_h,   \cA_H\left(   {\mathbf P}\right) \frac{ih}2 \frac{\partial u_h}{\partial n}\otimes \delta_{\partial \ID}\right\ra .
 \end{align}

Here we need the knowledge of $\cA_H\left( {\mathbf P}\right)$
modulo $O(h^2)$ (because of the factor $\frac{\cos^2\alpha_0}h$
that appears in the previous expression). Our calculations of Proposition~\ref{p:summary} give us the expression
 \begin{multline*}\cA_H\left( {\mathbf P}\right)= {\mathbf A}(r, u, \sqrt{2hD_t}, hD_u, t)+{\mathbf B}(r, u, \sqrt{2hD_t}, hD_u, t)\circ hD_r\\ +ih{\mathbf C}(r, u, \sqrt{2hD_t}, hD_u, t)+ih{\mathbf D}(r, u, \sqrt{2hD_t}, hD_u, t)\circ hD_r
  \end{multline*}
 if $z=(-r\sin u, r\cos u)$ is the decomposition in polar coordinates and ${\mathbf{A, B, C, D}}$ are the functions associated to ${\mathbf P}$ by the formulas of Proposition~\ref{p:summary}.

 The terms ${\mathbf{A, C}}$ give a vanishing contribution in formula \eqref{e:ouf} because they are radial operators and $u_h$ satisfies a Dirichlet boundary condition. The term
 ${\mathbf B}$ gives a vanishing condition if $b$ (and hence ${\mathbf P}$) satisfy the symmetry condition (B): in that case we have ${\mathbf B}(1, u, \sqrt{2hD_t}, hD_u, t)=0$.
 So there just remains to look at the term ${\mathbf D}(1, u, \sqrt{2hD_t}, hD_u, t)$.

Look at formula \eqref{e:D} defining the function ${\mathbf D}$.
Remember that ${\mathbf P}(s, \theta, \sqrt{2H}, J, t, H)$ is
supported where $J+\sqrt{2H}\sin\alpha_0= O(h)$, so that we have
$\partial_s {\mathbf P}=O(h)$; also note that, on the set
$\{J=-\sin\alpha_0 E\}$, the boundary equation $r=1$ amounts to
$s=\pm\cos\alpha_0$, $\cos\theta_1(r, J, E)= \pm\cos\alpha_0$, so
that $s\cos\theta_1(r, J, E)= \cos^2\alpha_0$ in formulas
\eqref{e:D} and the following lines. We see that the function
${\mathbf D}(1, u,\sqrt{2H}, J, t)$ coincides, modulo $O(h)$, with
$\frac{1}{2H\cos^2\alpha_0} {\mathbf P}^\sigma (1, u, \sqrt{2H},
J, t)$, so that
\begin{align*}
\mathcal{Q}_1 & = - \lim  \left\la  \frac{h}2 \frac{\partial u_h}{\partial n}\otimes \delta_{\partial \ID}, {\mathbf D}(1, u, \sqrt{2hD_t}, hD_u, t)\circ hD_r u_h \right\ra \\
 & \quad  + h\lim  \left\la u_h, {\mathbf D}(1, u, \sqrt{2hD_t}, hD_u, t)\circ hD_r \frac{h}2 \frac{\partial u_h}{\partial n}\otimes \delta_{\partial \ID}\right\ra\\
 &  = - \frac{1}{2\cos^2\alpha_0}
 \lim  \left\la  h \frac{\partial u_h}{\partial n} , (E^{-2} \partial_2^2 {\mathbf P}^\sigma)(1, u, \sqrt{2hD_t},  -\sqrt{2hD_t}\alpha_0, ht, hD_t)h\frac{\partial u_h}{\partial n} \right\ra  .
\end{align*}
Hence, we obtain
\begin{align*}
\mathcal{Q}_1 & =
- \lim  \frac{\cos^2\alpha_0}h  ih  \left\la  \frac{h}2 \frac{\partial u_h}{\partial n}\otimes \delta_{\partial \ID}, {\mathbf D}(1, u, \sqrt{2hD_t}, hD_u, t)\circ hD_r u_h \right\ra  \\
& = -\frac{ i}2
 \lim  \left\la  h \frac{\partial u_h}{\partial n} , (E^{-2} \partial_2^2 {\mathbf P})(1, u, \sqrt{2hD_t},  -\sqrt{2hD_t}\alpha_0, ht, hD_t)h\frac{\partial u_h}{\partial n} \right\ra   .
 \end{align*}
Using Remark \ref{r:boundary}, this limit expressed in terms of
boundary data can also be expressed in terms of the interior, and
we see that it equals
 \begin{align*}
\mathcal{Q}_1 &  = \frac{ i}2\lim  \left\la   \scrU u_h,  \Op_h\left( \partial_2^2 {\mathbf P}(s, \theta,E, J', \frac{ J'}{h}, H, ht) \right)  \scrU u_h\right\ra_{L^2(\R_s\times \T_\theta\times \R_t)}\\
& =\frac{ i}2 \lim  \left\la   \scrV\scrU u_h,  \Op_h\left(
\partial_2^2 b(s, \theta,E, J', \frac{ J'}{h}, H, ht) \right)
\scrV\scrU u_h\right\ra_{L^2(\R_s\times \T_\theta\times \R_H)} .
 \end{align*}

Finally coming back to \eqref{e:rho}, this yields $\mathcal{Q}_0 = 0$, that is, identity \eqref{e:propop}. This concludes the proof of Proposition~\ref{p:invop}.
\end{proof}

\section{End of the semiclassical construction: proof of Theorem~\ref{t:precise}\label{s:pause}}
In this section, we first prove Proposition~\ref{p:barmu}, and then conclude the proof of Theorem~\ref{t:precise}.

\subsection{A proof of Proposition~\ref{p:barmu}}
\label{s:proofbarmu} For $a$ a smooth compactly supported function
on $\R^4$, we show that
\begin{equation*}
\lim_{h\To 0} \la u_h, \Op_h(\partial_t a(|\xi|^2,
x\xi_y-y\xi_x, t, hH)u_h\ra_{L^2(\R^2\times \R)} =0  .
\end{equation*}
This limit is the same as
$$\lim_{h\To 0} \la u_h, \Op_h(\partial_t a(2H, x\xi_y-y\xi_x, t, hH)u_h\ra_{L^2(\R^2\times \R)}$$
which is
\begin{equation*}\lim_{h\to 0}\la u_h, [\partial_t, \Op_h( a(2H, x\xi_y-y\xi_x, t, hH)]u_h\ra_{L^2(\R^2\times \R)}=
\lim_{h\to 0}\la u_h, [\partial_t, a(2hD_t, h D_u, t, h^2
D_t)]u_h\ra
\end{equation*}
where $z=(-r\sin u, r\cos u)$ is the decomposition of $z=(x, y)$
into polar coordinates. Because of the equation satisfied by
$u_h$, this is also (with $\Delta_D$ the Dirichlet laplacian)
\begin{equation*}
\lim_{h\To 0}\left\la u_h, \left[-i\frac{\Delta_D}{2}+iV, a(2hD_t,
h D_u, t, h^2 D_t)\right]u_h\right\ra
\end{equation*}
{Note that $a(2hD_t, h D_u, t, h^2 D_t)$ actually defines an operator on $L^2(\ID)$ as it is tangential to $\d \ID$.}
This limit vanishes, because $\Delta_D$ commutes with
$a(2hD_t, h D_u, t, h^2 D_t)$ and because
$$ \left[V, a(2hD_t, h D_u, t, h^2 D_t)\right]=O(h).$$
This concludes the proof of Proposition~\ref{p:barmu}.

\subsection{End of the proof of Theorem~\ref{t:precise}}

We are now in a position to prove Theorem \ref{t:precise}.

The measure $\ell_{\alpha_0}$ and the function $\sigma_{\alpha_0}$
of Theorem \ref{t:precise} (iii) are the ones appearing in Remark
\ref{radnikoPG}. The object called $\nu_{Leb}$ in Theorem
\ref{t:precise} (ii) is defined as
$$\nu_{Leb}=\mu_{sc}\rceil_{\alpha\not\in \pi\IQ}+\sum_{\alpha_0\in \pi\IQ} m^{\alpha_0}$$
where $m^{\alpha_0}$ was defined in Remark \ref{r:mms}. For
$\alpha\not\in \pi \IQ$, we must have
$$\mu_{sc}\rceil_{\cI_\alpha}(t)=\int c_1(t, E, J) \lambda_{E, J} \, d\nu_1(E, J)$$
for some nonnegative measure $\nu_1$ (carried by $\{J=-\sin\alpha
E\}$) and some measurable function $c_1(t, E, J)$. But, because
the image of $\mu_{sc}$ under the map $M: (z, \xi)\mapsto (E, J)$
does not depend on $t$ (see below), the function $c_1(t, E, J)$
actually does not depend on $t$.

The two invariance properties Proposition \ref{Lemma Inv}
(invariance w.r.t. $s$) and Theorem \ref{Thm Properties} (i)
(invariance w.r.t. $\theta$) also imply that $m^{\alpha_0}$ is of
the form
$$m^{\alpha_0}(t)=\int c_2(t, E, J) \lambda_{E, J} \, d\nu_2(E, J)$$
for some nonnegative measure $\nu_2$ (carried by $\{J=-\sin\alpha_0
E\}$).

\bigskip
 We now prove that the function $c_2(t, E, J)$ actually does not depend on $t$.
 For this, we remark that the same proof as that of Proposition~\ref{p:barmu} above applies if we replace
$$\Op_h\left(\partial_t a(|\xi|^2, x\xi_y-y\xi_x, t, hH)\right)$$ in the
first line by
$$\scrU^* \Op_h\left(\partial_t a(E^2, J, t, hH)(1-\chi)\left(\frac{J+\sin\alpha_0\sqrt{2H}}{hR}\right)  \right)\scrU$$
in the limits $h\To 0$ followed by $R\To +\infty$.

Using the notation of Remark \ref{r:mms}, this shows that the
image of $m^{\alpha_0}$ under the map $M$ is independent of $t$.
Since we already know that $m^{\alpha_0}(t)$ is of the form $\int
c_2(t, E, J) \lambda_{E, J} d\nu_2(E, J)$ for some nonnegative
measure $\nu_2$, we conclude that $c_2(t, E, J)$ actually does not
depend on $t$.

The proof of Theorem \ref{t:precise} is now complete.

\section{The microlocal construction: sketch of the proof of Theorem \ref{t:preciseml}\label{s:H0}}
Herein we use the definitions and notation introduced in Sections~\ref{s:versus} and~\ref{s:mlst}.

Let $(u_n^0)$ be a sequence of initial data, normalized in $L^2$,
and as above denote $u_n(z, t)=U_V(t)u_n^0(z)$ or in short
$u_n=U_V u_n^0$.

\subsection{Structure of $\mu_c$\label{s:muc}}

Let $a\in\cS_0$. Recall that $\la\mu_c, a\ra$ is defined as the
limit (after extraction of subsequences) as $n\To +\infty$
followed by $R\To +\infty$ of
\begin{equation}\label{e:wc}\left\langle W_{c, n, R}   ,a\right\rangle :=
 \left\la u_n,  \Op_1\left( \chi\left(  \frac{|\xi|^2+ |H|}{R^2}\right)  a( z, \xi, t, H  )\right)u_n\right\ra_{L^2(\R^2 \times \IR)}  .
\end{equation}
On the support of  $\chi\left(  \frac{|\xi|^2+ |H|}{R^2}\right)
a( z, \xi, t, H  )$ note that $|H|\leq K R^2$ if $\chi$ is
supported in $[-K, K]$.

Let $g$ be a smooth compactly supported function on $\R$, taking
the value $1$ on $[-K, K]$. For $R>0$ define the operator $P_R :
L^2(\ID)\mapsto L^2(\ID)$ by
$$P_R u= g\left(\frac{D_t}{R^2}\right) U_V(t) u\rceil_{t=0}.$$
By the results of Appendix~\ref{s:osc} (in particular Remark
\ref{r:osc}), we know that $P_R$ has the following properties:
\begin{itemize}
\item For fixed $R$, $P_R$ is compact, and we have $\norm{\nabla
P_R u}_{L^2(\ID)}\leq C R\norm{u}_{L^2(\ID)}$ for some constant
$C$ (that depends on the function $g$, but not on $R$). \item For
any $u$, we have $P_R u\To u$ in $L^2(\ID)$ as $R\To +\infty$.
\item $\la\mu_c, a\ra$ is the limit as $n\To +\infty$ followed by
$R\To +\infty$ of
\begin{equation} \label{e:wcp}
 \left\la u_n,  \Op_1\left( \chi\left(  \frac{|\xi|^2+ |H|}{R^2}\right)  a( z, \xi, t, H  )\right)U_VP_R u_n^0\right\ra_{L^2(\R^2 \times \IR)}  .
\end{equation}
\end{itemize}

Let $K_t(z, z')$ be the kernel of $U_V(t)$. Let $B_R$ be the
bounded operator on $L^2(\ID)$ with kernel
{
\begin{multline}B_R(z_1, z_2)=\frac{1}{(2\pi)^{3}}\int \overline{K_t(z, z_1) }\mathds{1}_{\ID}(z)a(z, \xi, t, H)\chi\left(  \frac{|\xi|^2+ |H|}{R^2}\right) \\
e^{i\xi(z-z')}e^{iH(t-t')} \mathds{1}_{\ID}(z')K_{t'}(z', z_2) dz dz' dt dt' dH d\xi\label{e:BR}
\end{multline}
so that \eqref{e:wc} is $\la u_n^0, B_R u_n^0\ra_{L^2(\ID)}$ and
\eqref{e:wcp} is $\la u_n^0, B_R  P_Ru_n^0\ra_{L^2(\ID)}$.
}

Call $\rho_0$ a weak-$\ast$ limit of the sequence of trace class
operators $|u_n^0\ra\la u_n^0|$ on $L^2(\ID)$. Then for fixed $R$
$\left\langle W_{c, n, R}   ,a\right\rangle $ converges to
$\Tr_{L^2(\ID)}(B_R P_R\rho_0)$. Letting now $R\To +\infty$ we
find the expression
\begin{equation}\la\mu_c, a\ra=\Tr_{L^2(\ID)}(B_\infty \rho_0)\label{e:Binf}\end{equation}
where $B_\infty$ is defined as in \eqref{e:BR} with $\chi\left(
\frac{|\xi|^2+ |H|}{\infty}\right)=1$. The expression of
$B_\infty$ is simpler when $a$ does not depend on $H$, in this
case we have
{
$$B_\infty =\int  U_V(t)^{*} \mathds{1}_{\ID}a(z, D_z, t) \mathds{1}_{\ID}U_V(t)dt.$$

In particular if $a$ does not depend on $H$ we have
$$\la\mu_c, a\ra=\Tr_{L^2(\ID)}\left(\int  U_V(t)^{*} \mathds{1}_{\ID}a(z, D_z, t) \mathds{1}_{\ID} U_V(t) \rho_0 dt\right)$$
and more generally we write (in a somewhat fuzzy notation)
$$\la\mu_c, a\ra=\Tr_{L^2(\ID)}\left(\int  U_V(t)^{*} \mathds{1}_{\ID}a(x, D_z, t, D_t)\mathds{1}_{\ID} U_V(t)\rho_0 dt \right)$$ to mean the well-defined expression \eqref{e:Binf}.
}
\subsection{Structure of $\mu^\infty$ and proof of Theorem \ref{t:preciseml}}

Let $\eta=\frac{\xi}{\sqrt{2H}}$. On the support of $\mu^\infty$,
$\eta$ has norm $1$. To any pair $(z, \eta)\in \ID\times \IS^1$ we
now associate $j=x\eta_y- y\eta_x$ and $\alpha=-\arcsin j$ which
is the angle that the billiard ray issued from $(z, \eta)$ makes
with the inner normal when it bounces on the boundary of the disk.
Exactly as in
Lemma \ref{Lemma decomposition} we decompose $\mu^\infty$ as a sum of nonnegative measures:%
\begin{equation}
\mu^\infty=\mu^\infty\rceil_{ \alpha\not\in \pi\IQ}+\sum_{r\in
\IQ\cap[-1/2, 1/2]}\mu^\infty\rceil_{ \alpha=r\pi}.
\label{dec2}%
\end{equation}

The invariance \eqref{e:transport2} implies that
$\mu^\infty\rceil_{ \alpha \not\in \pi\IQ}$ is of the form
$\int_{E>0, |J|\leq E, \alpha \not\in \pi\IQ} \lambda_{E, J}
d\bar\mu^\infty(E, J)$ The fact that $\bar\mu^\infty$ does not
depend on $t$ is the microlocal version of Proposition \ref{p:barmu}.

 We now fix $r_0\in \IQ\cap(-1/2, 1/2)$, write $\alpha_0 = r_0\pi$ and wish to study $\mu^\infty\rceil_{ \alpha=\alpha_0}$. We define
\begin{align*}
&\left\langle \mu_{\alpha_0}^{\infty}   ,a\right\rangle \\
&:=\lim_{R'}\lim_{R\rightarrow\infty} \lim_{n\rightarrow +\infty} \\
& \left\la u_n,  \Op_1\left( \left(  1-\chi\left(  \frac{|\xi|^2+ H}{R^2 }\right)\right)  a(z, \xi, t, H )\chi\left(\frac{J+\sin\alpha_0\sqrt{2H}}{R'}\right)\right) u_n\right\ra_{L^2(\R^2 \times \IR)}
\\
&= \lim_{R'}\lim_R\lim_n  \\
&\left\la \scrU  u_n,  \Op_1\left(  \left(  1-\chi\left(  \frac{H }{R^2}\right)\right)a\circ\Phi(s, \theta, \sqrt{2H}, J, t, H )\chi\left(\frac{J+\sin\alpha_0\sqrt{2H}}{R'}\right)\right) \scrU  u_n\right\ra_{L^2(\R\times \IT\times \IR)}
\end{align*}
and%
\begin{align*}
&\left\langle \mu^{\infty, \alpha_0}
,a\right\rangle\\
&  :=
\lim_{R'}\lim_{R\rightarrow\infty} \lim_{n\rightarrow +\infty} \\
& \left\la u_n,  \Op_1\left( \left(
1-\chi\left(  \frac{|\xi|^2+ H}{R^2 }\right)\right)  a(z, \xi, t,
H
)\left(1-\chi\left(\frac{J+\sin\alpha_0\sqrt{2H}}{R'}\right)\right)\right)
u_n\right\ra_{L^2(\R^2 \times \IR)}
  \\
  &=\lim_{R'}\lim_R\lim_n   \\
  & \left\la \scrU  u_n,  \Op_1\left(  \left(  1-\chi\left(  \frac{H }{R^2}\right)\right)a\circ\Phi(s, \theta, \sqrt{2H}, J, t, H )\left(1-\chi\left(\frac{J+\sin\alpha_0\sqrt{2H}}{R'}\right)\right)\right) \scrU  u_n\right\ra .
\end{align*}

The following theorem is proven in essentially the same way as in
the semiclassical case:
\begin{theorem}
\label{th:structure2micromicro} (i)$\mu^{\infty, \alpha_0} $ is
invariant under rotations (that is, under the flow of $P_1$). It
is a multiple of the Lebesgue measure
 $\lambda_{(1, -\sin \alpha_0)}$. That is, $\mu^{\infty, \alpha_0}= c(\alpha_0) \lambda_{(1, -\sin \alpha_0)}$ with $c(\alpha_0)\geq 0$ independent of $t$.

(ii) for every $\alpha_0\in \pi\IQ\cap(-\pi/2, \pi/2)$, we can
build from the sequence of initial conditions $(u_n)$ a
nonnegative measure $\sigma_{\alpha_0}(d\omega,  ds, dE,  dH, dt)$
(carried by $\{H=E^2/2\}$) on $\R/2\pi\Z\times \R_s\times
\IS^2_{E, H}\times\R_t$, taking values in the trace-class
operators on $L ^{2}(0, 2\pi)$, so that $\mu_{\alpha_0}^\infty$ is
the measure carried by the set $\{j=-\sin\alpha_0
\}\cap\{H=E^2/2\}$ such that
\begin{multline*}
\int a_{{\rm{hom}}}(z, \xi, t)\mu_{\alpha_0}^\infty(dz, d\xi, dt,
dH) \\
=\Tr_{L ^{2}(0, 2\pi)}\left( \int  m_{ a_{{\rm{hom}}} \circ
\Phi }(s, \cdot, 1, -\sin\alpha_0 , t) \,\sigma_{\alpha_0 }
(d\omega, d s, dE, dH, dt)\,\right).
\end{multline*}

 If in addition $a$ is symmetric w.r.t. the boundary, we have
\begin{multline*}
\int a_{{\rm{hom}}}(z, \xi)\mu_{\alpha_0}^\infty(dz, d\xi, t, dH)\\
=\Tr_{L ^{2}(0, 2\pi)}\left( \int {U_{{\alpha_0,
\omega}}(t)}^{\ast} m_{\left\langle
a_{{\rm{hom}}}\right\rangle_{\alpha_0}\circ \Phi }(\cdot, 1,
-\sin\alpha_0 )\,U_{ {\alpha_0, \omega}}(t)\,\sigma_{\alpha_0 }
(d\omega,  d s, dE, dH, 0)\,\right)  .
\end{multline*}

\end{theorem}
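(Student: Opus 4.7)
The plan is to mimic the semiclassical arguments of Sections \ref{s:second}--\ref{s:pause}, with the simplification that we now work on the cosphere bundle (so the homogeneity of degree zero in $(\xi,H)$ of the class $\cS_0$ replaces the role of the rescaling parameter $h$), and with the second-microlocalization scale $R'$ playing the role that $R$ played in the semiclassical definition of $w_{\alpha_0,h,R}$ and $w_{h,R}^{\alpha_0}$. First I would transport everything to the action-angle side via the FIO $\scrU$; since $\scrU$ is unitary and intertwines $-\Delta$ with $-\partial_s^2$ (Lemma \ref{p:FIO}), and since $\mu^\infty$ is already supported on $\{E=\sqrt{2H}\}\cap\{(z,\xi)\in\ovl{\ID}\times\R^2\}$ and invariant under the billiard flow, the analysis localizes on the slice $\cI_{\alpha_0}$. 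The decomposition~\eqref{dec2} and the Lemma \ref{Lemma decomposition} reduce the problem to understanding $\mu^{\infty}\rceil_{\alpha=\alpha_0}$, which splits naturally into $\mu^{\infty,\alpha_0}+\mu^\infty_{\alpha_0}$ according as one tests against symbols supported where $|J+\sin\alpha_0\sqrt{2H}|\geq R'$ or $\leq R'$.

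For part (i), the key step is the homogeneous analogue of Theorem \ref{Thm Properties}~(i). Let $a\in\cS_0$ satisfy condition~(D) along $\cI_{\alpha_0}$, i.e.\ $[(\alpha_0-\alpha)\partial_\theta+\cos\alpha\,\partial_s](a\circ\Phi)=0$. Testing $\mu^{\infty,\alpha_0}$ against $\partial_\theta a$ and arguing as in the calculation leading to~\eqref{e:lastline2}, one rewrites $\partial_\theta a\circ\Phi = -\frac{\cos\alpha}{\alpha_0-\alpha}\partial_s(a\circ\Phi)$, divides by $E(\alpha_0-\alpha)$, and uses the commutator identity $\cA_H(E\,\partial_s P)=[-\tfrac{i}{2}\Delta,\cA_H(P)]$ modulo terms that vanish in the limit. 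The crucial point is that on the support of $1-\chi((J+\sin\alpha_0\sqrt{2H})/R')$, the factor $|\cos\alpha/(\alpha_0-\alpha)|$ is $O(1/R')$, so after taking $n\to\infty$ and then $R'\to\infty$ one gets $\la\mu^{\infty,\alpha_0},\partial_\theta a\ra=0$, the boundary contributions vanishing because condition~(B) on $a$ forces the $\mathbf B$-term in the expansion of Proposition~\ref{p:summary} to die on $\{r=1\}$. Combined with the already known billiard invariance \eqref{e:transport2}, and since on the torus $T_{(1,-\sin\alpha_0)}$ the billiard and rotation flows are two independent commuting $\R$-actions (they span the tangent space to the torus because $\alpha_0\not=\pm\pi/2$), unique ergodicity of the resulting $\R^2$-action forces $\mu^{\infty,\alpha_0}=c(\alpha_0)\,\lambda_{(1,-\sin\alpha_0)}$. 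The $t$-independence of $c(\alpha_0)$ follows from the microlocal analogue of Proposition~\ref{p:barmu} (Remark~\ref{r:barmicro aussi}), proved by the same commutator computation as in Section~\ref{s:proofbarmu}.

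For part (ii), I would construct $\sigma_{\alpha_0}$ exactly as in Section~\ref{s:sy}, with the trace-class-valued bracket $n^{\alpha_0}_h$ of~\eqref{e:nh} replaced by its homogeneous counterpart: given $K\in C_c^\infty(\R/2\pi\Z\times\R_s\times\IS^2_{E,H}\times\R_t;\mathcal K(L^2(0,2\pi)))$, define $\la n^{\alpha_0}_{\infty},K\ra$ by inserting $(1-\chi(H/R^2))$ and $\chi((J+\sin\alpha_0\sqrt{2H})/R')$ into~\eqref{e:nh1}, and taking the iterated limit $n\to\infty$, $R\to\infty$, $R'\to\infty$. Positivity and a Calder\'on--Vaillancourt estimate give weak-$*$ compactness, furnishing the $\cL^1_+(L^2(0,2\pi))$-valued measure $\rho_{\alpha_0}$ and, after Radon--Nikodym disintegration as in Remark~\ref{radnikoPG}, the measure $\ell_{\alpha_0}$ and the density $\sigma_{\alpha_0}$. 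The trace formula for $\int a_{{\rm hom}}\,d\mu^\infty_{\alpha_0}$ follows from the analogue of~\eqref{e:relevance}. The propagation law is the homogeneous reformulation of Proposition~\ref{p:invop}~(ii): reproducing the chain of equalities~\eqref{e:order1}--\eqref{e:ouf} verbatim (with the $1/h$ prefactor replaced by the large parameter $\sqrt{2H}$ on the support of the relevant symbols) and using that $\cos\alpha/(\alpha_0-\alpha)\sim\cos^2\alpha_0\cdot\sqrt{2H}/(-\eta)$ to the order needed, one obtains the Heisenberg-type equation satisfied by $\sigma_{\alpha_0}$, and hence $\sigma_{\alpha_0}(\omega,E,H,t)=U_{\alpha_0,\omega}(t)\sigma_{\alpha_0}(\omega,E,H,0)U^*_{\alpha_0,\omega}(t)$. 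Averaging $a_{\rm hom}$ along the flow $\phi^\tau_{\alpha_0}$ (Lemma~\ref{l:modes}) then converts $m_{a_{\rm hom}\circ\Phi}$ into $m_{\la a_{\rm hom}\ra_{\alpha_0}\circ\Phi}$ inside the trace, yielding the second displayed formula.

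The hard part, as in the semiclassical case, is the verification that the two boundary terms arising from~\eqref{e:schrodelta} in the propagation computation cancel: one must expand $\cA_H(\mathbf P)$ up to order $h^2$ using Proposition~\ref{p:summary}, identify the only surviving contribution as the $\mathbf D$-term at $r=1$, and then match $\mathbf D(1,u,\ldots)$ to $(2H\cos^2\alpha_0)^{-1}\mathbf P^\sigma(1,u,\ldots)$ on the support of the symbol (where $s\cos\theta_1(r,J,E)=\cos^2\alpha_0$). This calculation is what made Proposition~\ref{p:invop}~(ii) delicate and is the only place where the adaptation to the microlocal framework requires care; however, because the symbols in $\cS_0$ are homogeneous of degree $0$, all terms in the asymptotic expansion of $\cA_H$ carry the correct homogeneity, and the cancellation proceeds identically once one interprets the relevant large parameter correctly. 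All other steps are formal replicas of the semiclassical argument.
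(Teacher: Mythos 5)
Your proposal is correct and follows exactly the route the paper intends: the paper gives no separate proof of this theorem, stating only that it "is proven in essentially the same way as in the semiclassical case," and your sketch is precisely that adaptation (homogeneous second microlocalization at scale $R'$ in place of $Rh$, the two invariances from Proposition \ref{Lemma Inv} and the analogue of Theorem \ref{Thm Properties}~(i) forcing the Lebesgue structure in (i), the operator-valued measure construction of Section \ref{s:sy} and the propagation law of Proposition \ref{p:invop}~(ii) for (ii)). Your identification of the boundary-term cancellation via the $\mathbf{D}$-term of Proposition \ref{p:summary} as the only delicate point is also the right diagnosis.
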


The decomposition forlmula of Theorem \ref{t:preciseml}~(i) now holds with
\begin{itemize}
\item the distribution $\mu_c\in \cS_0'$ described in Section~\ref{s:muc};
\item the measure $\mu_{Leb}$ given by
$$\mu_{Leb}=\mu^\infty\rceil_{ \alpha\not\in \pi\IQ} + \sum_{\alpha_0\in \pi\IQ\cap(-\pi/2, \pi/2)}\mu^{\infty, \alpha_0};$$
\item for $\alpha_0\in  \pi\IQ\cap(-\pi/2, \pi/2)$ the measure $\mu_{\alpha_0}$ given by
 $$\mu_{\alpha_0}(t)=\mu_{\alpha_0}^\infty(t);$$
\item for $\alpha_0=\pm \pi/2$ the measure $\mu_{\alpha_0}$ given by
$$\mu_{\alpha_0}=\mu^\infty\rceil_{ \alpha=\alpha_0}.$$
\end{itemize}
Theorem~\ref{th:structure2micromicro} then implies Theorem \ref{t:preciseml}.

\section{Proof of Theorems~\ref{t:obs-i} and~\ref{t:obs-b}: Observability inequalities\label{s:obs}}
In this section, we prove Theorems~\ref{t:obs-i} and~\ref{t:obs-b}
using the microlocal version of our results. We could have chosen
to do it with semiclassical measures as well. However, since there
is no natural frequency-scale, it would have required to perform a
dyadic decomposition in frequency (see for instance~\cite{Leb:92,
AnantharamanMaciaTore}). Note that the idea of proving
observability inequalities using microlocal defect measures is due
to Lebeau~\cite{Leb:96}.

\subsection{Unique continuation for microlocal measures}
\label{sec:mdm=0}
The goal of this section is to prove a unique continuation result for microlocal measures $\mu_{ml}$ associated to solutions of the Schr\"odinger equation~\eqref{e:S}. According to Theorem~\ref{t:preciseml}, such a measure decomposes as
$$\mu_{ml}=\mu^\infty+\mu_c ,$$
that we shall study independently.

In order to state the result for $\mu^\infty$, we introduce the following notation.
For $z \in \partial \ID$, we define
$$
S^+_z =\{\xi \in \R^2, \xi\cdot z > 0\}, \qquad \overline{S}^+_z =\{\xi \in \R^2, \xi\cdot z \geq 0\}.
$$
The set $S^+$ defined in Section~\ref{s:billiard} is $S^+ =\bigcup_{z \in \partial \ID} S^+_z$ and
$$
 \bigcup_{z \in \partial \ID} \overline{S}^+_z = \left\{ \Phi\left(  \left(1-(J/E)^2\right)^\frac12,\theta , E, J \right),
E>0, |J|\leq E , \theta \in \IR/2\pi\Z \right\} .
$$

The following two lemmas are respectively useful for the proof of internal and boundary observability.
\begin{lemma}
\label{l:muinfty=0} Fix $T>0$.
Take $b \in \mathcal{S}^0$ independent of $(t,H)$ and assume that
\begin{equation}
\label{hypS0+}
\text{ there exists } z_0 \in \partial \ID \text{ such that } b>0 \text{ in a neighbourhood of } \overline{S}^+_{z_0}.
\end{equation}

Then $\int_0^T \int_{\R^2 \times \IS^2_{H, \xi}}b^2_{\rm{hom}}(z,
\xi )\mu^\infty(dz, dH, d\xi,t)dt = 0 $ implies $\mu^\infty =0$ on
$\R_t \times \R^2_z \times \IS^2_{H, \xi}$.
\end{lemma}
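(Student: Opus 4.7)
The plan is to exploit the decomposition $\mu^\infty = \mu_{Leb} + \sum_{\alpha_0 \in \pi\IQ \cap [-\pi/2, \pi/2]} \mu_{\alpha_0}$ from Theorem~\ref{t:preciseml} and to show that each nonnegative summand must vanish separately. Set $\Omega := \{b^2_{{\rm{hom}}} > 0\}$, an open neighbourhood of $\overline{S}^+_{z_0}$ in $\R^2 \times \IS^2_{\xi,H}$. Since $\mu^\infty \geq 0$, the hypothesis implies $\mu^\infty\rceil_{[0,T] \times \Omega} = 0$, and the same holds for each nonnegative summand. A parametrization of the outward vectors at $z_0=(\cos\phi_0,\sin\phi_0)$ as $\xi=E(\cos\psi,\sin\psi)$ with $\psi-\phi_0\in[-\pi/2,\pi/2]$ gives $z_0\cdot\xi^\perp=E\sin(\psi-\phi_0)$ and $\xi\cdot z_0=E\cos(\psi-\phi_0)$; in particular $\overline{S}^+_{z_0}$ contains exactly one vector of every class $(E,J)$ with $|J|\leq E$, and, for each $\alpha_0\in[-\pi/2,\pi/2]$, exactly one outward/tangent vector of angle $\alpha_0$.

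For the absolutely continuous piece $\mu_{Leb}=\int_{|J|\leq E}\lambda_{E,J}\,d\mu'(E,J)\,dt$, the preceding observation shows that $T_{(E,J)}\cap\Omega$ is a nonempty open subset of $T_{(E,J)}$ for every $|J|\leq E$, so $\lambda_{E,J}(\Omega)>0$ by full support of the invariant Lebesgue measure on $T_{(E,J)}$. The vanishing of $\mu_{Leb}$ on $[0,T]\times\Omega$ thus forces $\mu'=0$, whence $\mu_{Leb}=0$. For the tangential pieces $\mu_{\pm\pi/2}$, which are $t$-independent, rotation-invariant and carried on $\cI_{\pm\pi/2}$ by Theorem~\ref{t:preciseml}~(iv), vanishing on the open set of tangent vectors at $z_0$ contained in $\Omega$, together with the fact that its orbit under the rotation flow covers $\cI_{\pm\pi/2}$, yields $\mu_{\pm\pi/2}=0$.

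The hard case is $\mu_{\alpha_0}$ for $\alpha_0\in\pi\IQ\cap(-\pi/2,\pi/2)$. Applying Theorem~\ref{t:preciseml}~(iii) to the symmetric symbol $b^2$, using the propagation law $\sigma_{\alpha_0}(\omega,E,H,t)=U_{\alpha_0,\omega}(t)\,\sigma_{\alpha_0}(\omega,E,H,0)\,U_{\alpha_0,\omega}(t)^*$ and the cyclicity of the trace, the condition $\int_0^T\int b^2\,d\mu_{\alpha_0}\,dt=0$ becomes
\[
\int \Tr_{L^2(0,2\pi)}\!\left(A_{\omega,E,H}\,\sigma_{\alpha_0}(\omega,E,H,0)\right)\,\ell_{\alpha_0}(d\omega,dE,dH)=0,
\]
where $A_{\omega,E,H}:=\int_0^T U_{\alpha_0,\omega}(t)^*\,m^{\alpha_0}_{\la b^2\ra_{\alpha_0}}\,U_{\alpha_0,\omega}(t)\,dt\geq 0$ is a nonnegative self-adjoint operator on $L^2(0,2\pi)$. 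A preliminary step is to observe that the orbital average $\theta\mapsto \la b^2\ra_{\alpha_0}(\theta)$ is strictly positive on a nonempty open subset $O\subset\R/2\pi\Z$: the periodic orbit of $\phi^\tau_{\alpha_0}$ through the outgoing point of $\overline{S}^+_{z_0}\cap\cI_{\alpha_0}$ visits $\Omega$, so the time-average is positive at the corresponding value of $\theta$ and, by continuity, on a neighbourhood. Consequently $m^{\alpha_0}_{\la b^2\ra_{\alpha_0}}\geq c\,\mathds{1}_O$ as multiplication operators for some $c>0$.

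The main step will be to invoke the classical observability estimate for the one-dimensional Schr\"odinger equation with smooth bounded potential $\cos^2\alpha_0\la V\ra_{\alpha_0}$ on $\cH_\omega$, reducing the Floquet condition to standard $2\pi$-periodic one by the gauge transformation $v\mapsto e^{-i\omega\theta/(2\pi)}v$: for any $T>0$ and any nonempty open set $O$, there exists $C_{\omega,E,H}>0$ such that $A_{\omega,E,H}\geq C_{\omega,E,H}\,I$, a consequence of the Haraux--Jaffard--Komornik theorem, stable under bounded smooth perturbations. Combined with $\sigma_{\alpha_0}(\omega,E,H,0)\geq 0$ in trace class, the inequality $\Tr(A_{\omega,E,H}\,\sigma_{\alpha_0}(0))\geq C_{\omega,E,H}\Tr(\sigma_{\alpha_0}(0))$ together with the nonnegativity of the $\ell_{\alpha_0}$-integrand forces $\sigma_{\alpha_0}(\omega,E,H,0)=0$ for $\ell_{\alpha_0}$-almost every $(\omega,E,H)$, hence $\mu_{\alpha_0}=0$. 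The main obstacle will be establishing the one-dimensional observability estimate in the $\cH_\omega$ setting with a time-dependent potential and ensuring enough measurability in $(\omega,E,H)$ for the operator-valued trace manipulations to make rigorous sense.
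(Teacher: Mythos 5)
Your decomposition strategy and the treatment of $\mu_{Leb}$ and $\mu_{\pm\pi/2}$ coincide with the paper's proof: positivity of each summand, the observation that $\overline{S}^+_{z_0}$ meets every torus $T_{(E,J)}$ in a set of positive $\lambda_{E,J}$-measure, and rotation invariance for the tangential pieces. The divergence is in the rational-angle case. The paper also reduces to
$\int \Tr\bigl(A_{\omega,E,H}\,\sigma_{\alpha_0}(\cdot,0)\bigr)\,d\ell_{\alpha_0}=0$
with $A_{\omega,E,H}=\int_0^T U_{\alpha_0,\omega}(t)^* m^{\alpha_0}_{\la b^2\ra_{\alpha_0}}U_{\alpha_0,\omega}(t)\,dt\geq 0$, but then it only needs the \emph{qualitative} statement $\ker A_{\omega,E,H}=\{0\}$: diagonalizing $\sigma_{\alpha_0}(\cdot,0)=\sum_k\lambda_k|\varphi_k\ra\la\varphi_k|$, the vanishing of the trace forces $\int_0^T\int_O|U_{\alpha_0,\omega}(t)\varphi_k|^2=0$ for each $\lambda_k>0$, and one-dimensional unique continuation (Laurent, Appendix B, valid for bounded time-dependent potentials) gives $\varphi_k=0$. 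You instead invoke the \emph{quantitative} observability inequality $A_{\omega,E,H}\geq C\,I$. That is strictly stronger than what the trace identity requires ($\Tr(A\sigma)=0$ with $A,\sigma\geq 0$ already yields $\mathrm{Ran}\,\sigma\subset\ker A$), and your justification of it is the weak point: "Haraux--Jaffard--Komornik, stable under bounded smooth perturbations" is fine for a time-independent potential, but here $\la V\ra_{\alpha_0}$ may depend on $t$, and the standard route to observability with a time-dependent potential is compactness--uniqueness, which itself rests on exactly the unique continuation property you would be trying to bypass. So your argument is circular-adjacent at that point rather than wrong; replacing the quantitative estimate by the direct unique-continuation step on the eigenfunctions of $\sigma_{\alpha_0}(\cdot,0)$ closes the gap and also disposes of the measurability worry you raise, since no uniform constant $C_{\omega,E,H}$ is then needed.
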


\begin{lemma}
\label{l:muinfty=0bo}
Take any nonempty set $\Gamma \subset \d \ID$ and $T>0$.
Then $\mu_{ml}^\d = 0$ on $T^* ((0,T)\times \Gamma)$
implies $\mu^\infty =0$ on $\R_t \times \R^2_z \times \IS^2_{H, \xi}$.
\end{lemma}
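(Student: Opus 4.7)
The plan is to use the decomposition $\mu^\infty=\mu_{Leb}+\sum_{\alpha_0\in\pi\IQ\cap[-\pi/2,\pi/2]}\mu_{\alpha_0}$ from Theorem~\ref{t:preciseml} and to show that each nonnegative summand vanishes, treating the rotation-invariant pieces and the second-microlocal pieces separately. The first step is to translate the boundary hypothesis into interior information: combining $\mu^\d_{ml}=0$ on $T^*((0,T)\times\Gamma)$ with~\eqref{e:mudmuS} shows that the boundary section $\mu^S_{ml}$ vanishes above $\Gamma\cap S^+$ for $t\in(0,T)$, while~\eqref{e:mudmuint} gives the corresponding statement at tangent vectors above $\Gamma$. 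Since each summand $\mu_{Leb},\mu_{\alpha_0}$ is nonnegative and billiard-invariant, it carries its own section measure, and by positivity of $\mu^S_{ml}=\mu^S_{Leb}+\sum\mu^S_{\alpha_0}$ each component vanishes individually on this boundary set.

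The summands $\mu_{Leb}$ and $\mu_{\pm\pi/2}$ are disposed of by symmetry. Both are time-independent (Theorem~\ref{t:preciseml}~(ii),(iv)); $\mu_{Leb}$ is a superposition of the rotation-invariant Haar measures $\lambda_{E,J}$, and $\mu_{\pm\pi/2}$ is rotation-invariant and supported on vectors tangent to $\partial\ID$. Their respective sections being rotation-invariant and $t$-independent, vanishing on the open arc $\Gamma$ for $t\in(0,T)$ forces them to vanish everywhere: $\mu^S_{Leb}=0$ on all of $S^+$, hence $\mu_{Leb}=0$ on the interior tori, while its tangent piece $|J|=E$ is killed by~\eqref{e:mudmuint}; similarly $\mu_{\pm\pi/2}=0$ globally.

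The main work concerns $\mu_{\alpha_0}$ for $\alpha_0\in\pi\IQ\cap(-\pi/2,\pi/2)$. In the action-angle parametrization of Section~\ref{s:coord}, $\cI_{\alpha_0}\cap\partial\ID$ corresponds to $s=\pm\cos\alpha_0$ with $\theta\in\R/2\pi\Z$ labelling the boundary point; let $\tilde\Gamma\subset\R/2\pi\Z$ be the open arc whose image in $\partial\ID$ lies in $\Gamma$. The structure formula~\eqref{e:structnualpha} identifies the diagonal of the kernel of $\sigma_{\alpha_0}(\omega,E,H,t)$ with the boundary density of $\mu_{\alpha_0}$, so that vanishing of $\mu^S_{\alpha_0}$ above $\Gamma$ for $t\in(0,T)$ is equivalent to
\[
\Tr_{L^2(0,2\pi)}\bigl(m_{\mathds{1}_{\tilde\Gamma}}\,\sigma_{\alpha_0}(\omega,E,H,t)\bigr)=0,\qquad t\in(0,T),\ \ell_{\alpha_0}\text{-a.e.\ }(\omega,E,H).
\]
Writing $\sigma_{\alpha_0}(0)=\sum_i\lambda_i|e_i\rangle\langle e_i|$ with $\lambda_i>0$, positivity combined with the propagation law~\eqref{e:propagsigmaell} forces $(U_{\alpha_0,\omega}(t)e_i)(\theta)=0$ for a.e.\ $\theta\in\tilde\Gamma$ and every $t\in(0,T)$.

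The main obstacle is the final step: unique continuation for the one-dimensional Schr\"odinger equation $-\cos^2\alpha_0\,D_t e+(-\tfrac12\partial_\theta^2+\cos^2\alpha_0\langle V\rangle_{\alpha_0})e=0$ on $\R/2\pi\Z$ with Floquet boundary condition. For $\langle V\rangle_{\alpha_0}$ time-independent, one can expand $e$ along the Sturm--Liouville eigenbasis $\{\phi_n\}$: each $\phi_n$ solves a second-order linear ODE with smooth coefficients and hence cannot vanish on an open arc without being identically zero, and a Bohr almost-periodic argument evaluated at a fixed interior $\theta_0\in\tilde\Gamma$ forces every Fourier coefficient $\langle e,\phi_n\rangle$ to vanish, so $e\equiv 0$; in the time-dependent case, the conclusion follows from a standard Carleman estimate for one-dimensional Schr\"odinger operators. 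Consequently $\sigma_{\alpha_0}\equiv 0$, hence $\mu_{\alpha_0}=0$, and combining with the previous steps we conclude $\mu^\infty=0$.
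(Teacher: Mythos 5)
Your proof takes a genuinely different route from the paper's. The paper disposes of this lemma in a few lines by reduction: it uses \eqref{e:mudmuint} and \eqref{e:mudmuS} to show that $\mu^\infty$ vanishes on a neighbourhood of $\overline{S}^+_{z_0}$ for $z_0\in\Gamma$, and then invokes Lemma~\ref{l:muinfty=0}, where all the structural work (decomposition, rotation invariance, propagation of $\sigma_{\alpha_0}$, one-dimensional unique continuation) is already carried out against an interior cut-off $b$. You instead rerun that structural argument directly on the boundary data, component by component, via the section measures of Section~\ref{s:projbou}. This is legitimate: each summand of $\mu^\infty$ is nonnegative, invariant and does not charge $S$, so the sections are additive and positivity lets you kill each one separately over $\Gamma$; the tangential pieces are correctly handled by \eqref{e:mudmuint} together with rotation invariance; and your identification of the boundary density of $\mu_{\alpha_0}$ with the diagonal of the kernel of $\sigma_{\alpha_0}$ is exactly what \eqref{e:structnualpha} yields when tested against approximations of the indicator of the (invariant, open) set of orbits meeting the boundary over $\Gamma$. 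The net effect is a self-contained proof that duplicates most of the proof of Lemma~\ref{l:muinfty=0}; the paper's reduction buys brevity and avoids running the unique-continuation step twice.

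There is, however, one step that fails as stated: the ``Bohr almost-periodic argument'' in the time-independent case. A Bohr almost-periodic function can vanish on an interval of positive length without vanishing identically --- any continuous $2\pi$-periodic function supported, within each period, in a proper subinterval is a counterexample --- so the vanishing of $t\mapsto\sum_n c_n\phi_n(\theta_0)e^{-i\lambda_n t/\cos^2\alpha_0}$ on $(0,T)$ does not by itself force $c_n\phi_n(\theta_0)=0$. What rescues the argument is the sparsity of the spectrum: the eigenvalues of $-\tfrac12\partial_\theta^2+\cos^2\alpha_0\langle V\rangle_{\alpha_0}$ on $\cH_\omega$ grow quadratically, so the frequency set has zero upper Beurling density and an Ingham--Beurling--Kahane inequality gives $\int_0^T|F(t)|^2\,dt\geq c(T)\sum_n|c_n\phi_n(\theta_0)|^2$ for every $T>0$; only then does Sturm--Liouville (ODE) unique continuation on $\tilde\Gamma$ finish the job. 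Alternatively --- and this is what the paper does, and what you need in any case when $V$ depends on $t$, since then $\langle V\rangle_{\alpha_0}$ is time-dependent --- invoke directly the one-dimensional unique continuation result of \cite[Appendix~B]{Laurent}, which covers both cases at once. With that repair your argument is complete.
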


The proof of these lemmas relies on the properties of $\mu^\infty$ together with a unique continuation result for the one dimensional Schr\"odinger flows $U_{\alpha_0 , \omega}(t)$ on $L^2(0,2\pi)$ from any nonempty open set $(0,T) \times \Omega$, where $\Omega \subset (0,2\pi)$. Such unique continuation property holds as soon as $\la V \ra_{\alpha_0} \in L^{\infty}((0,T)\times (0,2\pi))$, for instance as a consequence of \cite[Appendix~B]{Laurent} (see also the references therein).

Concerning $\mu_c$ we have the following result.

\begin{lemma}
\label{l:muc=0} Let $\Omega \subset \ID$ be a nonempty open set.
Assume that the unique continuation property~\eqref{UCP} holds.
Then we have
$$
\la \mu_c , \mathds{1}_{(0,T)\times \Omega} \ra =0\Longrightarrow \mu_c = 0.
$$
\end{lemma}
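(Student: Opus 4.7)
The plan is to exploit the explicit formula for $\mu_c$ from Theorem~\ref{t:muc} together with the spectral decomposition of the nonnegative trace-class operator $\rho_0$. Write
$$\rho_0 = \sum_k \lambda_k |\phi_k\ra\la\phi_k|, \qquad \lambda_k \geq 0, \quad \sum_k \lambda_k < \infty,$$
with $\{\phi_k\}_k$ an orthonormal family in $L^2(\ID)$.

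First I would unfold the definition of $\mu_c$ on symbols $a(z,\xi,t,H)=\varphi(z,t)$ that depend only on $(z,t)$: in that case $\Op_1(a)$ is the multiplication operator by $\varphi(\cdot,t)$, and since $\mathds{1}_\ID U_V(t)\phi_k = U_V(t)\phi_k$, the trace in~\eqref{e:defmuc} becomes
\begin{equation*}
\Tr_{L^2(\ID)}\left\{U_V(t)^{-1}\mathds{1}_\ID\,\varphi(\cdot,t)\,\mathds{1}_\ID\,U_V(t)\rho_0\right\}
= \sum_k \lambda_k \int_\ID \varphi(z,t)\,|U_V(t)\phi_k(z)|^2\,dz.
\end{equation*}
Thus the projection of $\mu_c$ onto $\ID\times\R_t$ is the nonnegative Radon measure with density $\sum_k \lambda_k |U_V(t)\phi_k(z)|^2$ (which matches the absolute continuity statement of Theorem~\ref{t:muc}). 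By the inner regularity of Radon measures, the pairing $\la\mu_c,\mathds{1}_{(0,T)\times\Omega}\ra$ is well defined and coincides with $\sum_k \lambda_k \int_0^T\!\!\int_\Omega |U_V(t)\phi_k(z)|^2\,dz\,dt$.

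Next, the hypothesis $\la \mu_c, \mathds{1}_{(0,T)\times\Omega}\ra = 0$ combined with the nonnegativity of every summand forces, for each $k$,
$$\lambda_k \int_0^T\!\!\int_\Omega |U_V(t)\phi_k(z)|^2\,dz\,dt = 0.$$
For every index $k$ with $\lambda_k>0$, this gives $U_V(t)\phi_k\rceil_{(0,T)\times\Omega}=0$. Applying the unique continuation property~\eqref{UCP} to the datum $\phi_k\in L^2(\ID)$ yields $\phi_k=0$, contradicting $\|\phi_k\|_{L^2(\ID)}=1$. Hence $\lambda_k=0$ for all $k$, so $\rho_0=0$, and the defining formula~\eqref{e:defmuc} then gives $\la\mu_c,a\ra=0$ for every $a\in\cS_0$, i.e.\ $\mu_c=0$.

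The argument is essentially algebraic once the projection of $\mu_c$ is identified; the only slightly delicate point is justifying that pairing $\mu_c$ with the rough indicator $\mathds{1}_{(0,T)\times\Omega}$ produces exactly the expected integral. This is where the absolute continuity and $t$-continuity parts of Theorem~\ref{t:muc} are used, which reduces the pairing to an unambiguous Lebesgue integral and legitimizes the term-by-term non-negativity argument.
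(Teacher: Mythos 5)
Your proof is correct and follows essentially the same route as the paper: both use the trace formula of Theorem~\ref{t:muc}, decompose $\rho_0$ into its eigenfunctions, deduce from nonnegativity that each eigenfunction of positive eigenvalue vanishes on $(0,T)\times\Omega$, and conclude via \eqref{UCP} that $\rho_0=0$. Your additional care in justifying the pairing with the indicator function via the absolute continuity of the projection of $\mu_c$ is a welcome elaboration of a point the paper leaves implicit, but it does not change the argument.
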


The unique continuation property \eqref{UCP} is for instance known to hold (in any time $T>0$ and for any nonempty open set $\Omega$) if $V$ is analytic in $(t,z)$ as a consequence of the Holmgren theorem (as stated by H\" ormander~\cite[Theorem~5.3.1]{Hormander:LPDO}). If $V = V(z)$ is smooth and does not depend on $t$, it is proved in the next section. Note that this last result can be extended to the case where $V$ is continuous outside a set of zero measure zero, see~\cite{AnantharamanMaciaTore}.

\begin{remark}
Note that the analogues of the unique continuation results of Lemmas~\ref{l:muinfty=0},~\ref{l:muinfty=0bo} and~\ref{l:muc=0} also hold for semiclassical measures. We chose not to state them here for the sake of brevity.
\end{remark}

 \begin{proof}[Proof of Lemma~\ref{l:muinfty=0}]
We decompose $\mu^\infty$ as in Theorem~\ref{t:preciseml}
$$\mu^\infty(t,\cdot)=\mu_{Leb}+\sum_{\alpha_0\in \pi\IQ\cap[-\pi/2, \pi/2]}\mu_{ \alpha_0}(t,\cdot).$$
As every term in this sum is a non-negative measure, the assumption on $\mu^\infty$ implies
\begin{align}
&\int_0^T \int_{\R^2 \times \IS^2}b^2_{\rm{hom}}(z, \xi )\mu_{Leb}(dz, dH, d\xi,t)dt = 0 , \label{muleb0}\\
&\int_0^T \int_{\R^2 \times \IS^2}b^2_{\rm{hom}}(z, \xi )\mu_{ \alpha_0}(dz, dH, d\xi,t)dt = 0,\label{mualph0}
\end{align}
for all $\alpha_0\in \pi\IQ\cap[-\pi/2, \pi/2]$.

Still according to Theorem~\ref{t:preciseml}, $\mu_{Leb}$ is of the form $\int_{E>0, |J|\leq E} \lambda_{E, J}
d\mu'(E, J)$ for some nonnegative measure $\mu'$ on $\R P^1$. Together with~\eqref{muleb0}, this reads
$$
0 = \int_{E>0, |J|\leq E} \int_{T(E,J)}b^2_{\rm{hom}} \circ \Phi (s, \theta, E, J) \lambda_{E, J}(ds, d\theta)
\mu'(dE, dJ) .
$$
Recall (see Section~\ref{s:billiard}) that $\lambda_{E, J}(ds, d\theta)  = c(E,J)ds  d\theta$ where $c(E,J) = \left(\int_{T(E,J)}ds d\theta \right)^{-1}>0$, so that we have
$$
0 = \int_{E>0, |J|\leq E} \left(\int_{T(E,J)}b^2_{\rm{hom}} \circ \Phi (s, \theta, E, J) ds d\theta\right)
c(E,J)
\mu'(dE, dJ) .
$$
Now, for any $(E,J)$ such that $E>0, |J|\leq E$, there exists $\theta \in \IS^1$ (depending only on $J/E$), such that
$$
\Phi\left( \left(1-(J/E)^2\right)^\frac12,\theta , E, J \right) \in \overline{S}_{z_0}^+ .
$$
Assumption~\eqref{hypS0+} then implies that $\int_{T(E,J)}b^2_{\rm{hom}} \circ \Phi (s, \theta, E, J) ds d\theta >0$ for any $(E,J)$. As a consequence, $\mu_{Leb}$ vanishes identically.

\medskip
Let us now consider $\alpha_0 = \pm \pi/2$. The rotation invariance given by Theorem~\ref{t:preciseml} together with Assumption~\eqref{hypS0+} imply that $\mu_{\pm\pi/2}$ vanish.

\medskip
Let us now consider $\alpha_0\in \pi\IQ\cap(-\pi/2, \pi/2)$. The measure $\mu_{\alpha_0}$ is supported by $\mathcal{I}_{\alpha_0}$ and invariant by the billiard flow, so that
$$
\int_0^T \int_{\R^2 \times \IS^2}b^2_{\rm{hom}}\,\mu_{ \alpha_0}(dz, dH, d\xi,t)dt =
\int_0^T \int_{\R^2 \times \IS^2}\la b^2_{\rm{hom}}\ra_{\alpha_0}\, \mu_{ \alpha_0}(dz, dH, d\xi,t)dt
$$
Using Theorem~\ref{t:preciseml} with~\eqref{mualph0}, we obtain
\begin{align*}
0 = \int \Tr_{L ^{2}(0, 2\pi)}\left( B_{\alpha_0}\,\sigma_{\alpha_0
}\,\right) d \ell_{\alpha_0} dt
, \quad \text{with }B_{\alpha_0} := m_{ \la b^2_{\rm{hom}}\ra_{\alpha_0}}^{\alpha_0} .
\end{align*}
According to Corollary~\ref{corsigmaell}, this yields
\begin{align*}
0= \int \Tr_{L ^{2}(0, 2\pi)}\left( B_{\alpha_0}U_{\alpha_0 , \omega}(t) \sigma_{\alpha_0}(\omega, E,H , 0)U_{\alpha_0 , \omega}^*(t)\right) d \ell_{\alpha_0} dt .
\end{align*}
Since the integrand is non-negative, we have for $\ell_{\alpha_0}$-almost every $(\omega, E,H)$,
\begin{align}
\label{inttrace}
0 = \int_0^T \Tr_{L ^{2}(0, 2\pi)}\left( B_{\alpha_0}U_{\alpha_0 , \omega}(t) \sigma_{\alpha_0}(\omega, E,H , 0)U_{\alpha_0 , \omega}^*(t)\right) dt .
\end{align}
For $\ell_{\alpha_0}$-almost every $(\omega, E,H)$, $\sigma_{\alpha_0}(\omega, E,H , 0)$ is a non-negative trace-class operator. We can decompose it as a sum of of orthogonal projectors on its eigenfunctions:
$$
\sigma_{\alpha_0}(\cdot , 0) = \sum_{k \in \N} \lambda_k |\varphi_k \ra \la \varphi_k| , \quad \text{with} \quad \lambda_k \geq 0, \quad \sum_{k \in \N} \lambda_k = 1 , \quad \la \varphi_k| \varphi_j \ra_{L^2(0,2\pi)} = \delta_{kj} .
$$
Note that $\lambda_k, \varphi_k$ depend on $(\omega, E,H)$. Now Equation~\eqref{inttrace} is equivalent to
having, for all $k \in \N$, such that $\lambda_k>0$,
\begin{align}
\label{eqbeforUC}
0= \int_0^T \int_0^{2\pi}  \la b^2_{\rm{hom}}\ra_{\alpha_0} \circ \Phi(s, \theta,  E, - E \sin\alpha_0) \left|U_{\alpha_0 , \omega}(t) √ä\varphi_k\right|^2(\theta)d\theta dt .
\end{align}
As above, there exists $\theta \in \IS^1$ depending only on $\alpha_0$, such that
$$
\Phi\left( \cos\alpha_0,\theta , E, - E\sin\alpha_0 \right) \in \overline{S}_{z_0}^+ .
$$
Hence, $\la b^2_{\rm{hom}}\ra_{\alpha_0}>0$ in a neighborhood of
this $\theta$. Then \eqref{eqbeforUC} implies that $U_{\alpha_0 ,
\omega}(t)\varphi_k$ vanishes in a nonempty open subset of
$(0,T)\times (0, 2\pi)$. One dimensional unique continuation (see
e.g. \cite[Appendix~B]{Laurent} and the references therein) then
implies that $\varphi_k=0$. Therefore, $\sigma_{\alpha_0}(\omega,
E,H , 0)$ vanishes $\ell_{\alpha_0}$-almost everywhere, which
yields $\mu_{\alpha_0}=0$.

This finally proves that $\mu^\infty = 0$ and concludes the proof of the lemma.

\end{proof}

\begin{proof}[Proof of Lemma~\ref{l:muinfty=0bo}]
Let us fix $z_0 \in \Gamma$ and prove that $\mu^\infty$ vanishes
in a neighborhood of $\overline{S}_{z_0}^+$. The result shall then
follow from Lemma~\ref{l:muinfty=0}. First, according
to~\eqref{e:mudmuint}, the assumption implies that
$\mu^\infty\rceil_{T^*((0,T) \times\Gamma)}$ vanishes. Second, as
a consequence of the assumption together with~\eqref{e:mudmuS}, we
have $\mu^S_{ml}\rceil_{(z,\xi) \in S^+, z \in \Gamma} = 0$.
Coming back to the definition of the measure $\mu^S_{ml}$ in
Section~\ref{s:projbou}, this implies that $\mu^\infty$ vanishes
on all trajectories of the billiard flow touching the boundary on
$\{(z,\xi) \in S^+, z \in \Gamma\}$. In particular, this yields
$\mu^\infty\rceil_{\overline{S_{z_0}^+}}$ and the result follows
from Lemma~\ref{l:muinfty=0}.
\end{proof}

\begin{proof}[Proof of Lemma~\ref{l:muc=0}]
 Theorem~\ref{t:preciseml} (and Theorem~\ref{t:muc}) together with $\la \mu_c , \mathds{1}_{(0,T)\times \Omega} \ra =0$ imply that
\begin{align*}
0
=  \Tr_{L ^{2}(\ID)} \left(
\int_0^T U_{V}(t)^* m_{\mathds{1}_{\Omega}}U_{V}(t)\rho_0 dt\right) ,
\end{align*}
where $m_{\mathds{1}_{\Omega}}$ is the multiplication operator in $L^2(\ID)$ by the function $\mathds{1}_{\Omega}$.
As in the proof of Lemma~\ref{l:muinfty=0}, this implies that for any eigenfunction $\varphi$ of $\rho_0$, we have
\begin{align*}
0 = \int_0^T \int_{\Omega} \left|U_{V}(t) \varphi \right|^2(z)dz dt .
\end{align*}
The unique continuation property~\eqref{UCP} then implies $\varphi =0$. This proves that $\rho_0=0$ which concludes the proof of the lemma.
\end{proof}

\subsection{Interior observability inequality: proof of Theorem~\ref{t:obs-i}}
\subsubsection{Unique continuation implies observability}
\label{sec:UCPO}
In this section, we prove the observability inequality~\eqref{e:oi} assuming that~\eqref{UCP} holds.
Instead of proving~\eqref{e:oi} for any open set $\Omega \subset \ID$ containing a neighbourhood in $\ID$ of a point of $\partial \ID$, we prove the equivalent statement: for any function $b \in C^0(\R^2)$ (also considered as a function in $C^0(\overline{\ID})$) which is positive on a nonempty open subset of $\partial \ID$, for any $T>0$, there exists $C>0$ such that the following inequality holds:
\begin{equation}
\left\Vert u^{0}\right\Vert _{L^{2}\left(  \ID\right)  }^{2}\leq
C\int_{0}^{T}\left\Vert b(z) U_{V}(t)  u^{0}\right\Vert _{L^{2}\left(
\ID\right)  }^{2}dt .
\label{e:oi:b}
\end{equation}
Note that under these conditions on $\Omega$ and $b$, inequalities \eqref{e:oi:relax} and \eqref{e:oi:b} are equivalent.

We proceed by contradiction and suppose that the observability
inequality~\eqref{e:oi:b} is not
satisfied. Thus, there exists a sequence $(u_n^0)_{n \in \N}$ in $L^2(\ID)$ such that
\begin{align}
  & \| u^{0}_n \| _{L^2( \ID) } = 1  , \label{eq: energy = 1} \\
  & \int_{0}^{T}\left\Vert b(z)  U_{V}(t)  u^0_n \right\Vert _{L^{2}\left(
\ID\right)  }^{2}dt \to 0 ,
 \quad n \to \infty ,\label{eq: converge zero on omega} 
\end{align}
We write $u_n(t) = U_{V}(t)  u^0_n$ the associated solution of \eqref{e:S}-\eqref{e:Dirichlet}. As in Section~\ref{sec:intro}, we extend $u_n$ to $\R^2$ by zero outside $\ID$ (and still use the notation $u_n$ for its extension).

After having extracted a subsequence, we associate to $(u_n)$ a microlocal measure
$$
\mu_{ml} = \mu^{\infty} + \mu_c
$$ as in  Theorem~\ref{t:preciseml}.
Equation~\eqref{eq: converge zero on omega} implies that
$$
\int_0^T \int_{\R^2 \times \IS^2}b^2(z)\mu^\infty(dz, dH, d\xi,t)dt = 0 , \qquad
\la \mu_c ,\mathds{1}_{(0,T)} \otimes b^2 \ra= 0 .
$$

Lemmas~\ref{l:muinfty=0} and~\ref{l:muc=0} imply that $\mu^\infty = 0$ and $\mu_c = 0$ respectively. However, equation~\eqref{eq: energy = 1} implies that
$$
\la \mu_{ml}, \mathds{1}_{(0,T)} \otimes 1 \ra= T.
$$

This yields a contradiction and concludes the proof. Note that \eqref{UCP} has only be used to apply Lemma~\ref{l:muc=0} in order to get rid of the term $\mu_c$.

\subsubsection{Observability for time independent potentials}
\label{s:obstimeindep}
 The structure of the proof in this setting is classical~\cite{BLR:92,Leb:92}. In a first step, we prove the following weakened observability inequality:
\begin{equation}
\left\Vert u^0 \right\Vert _{L^{2}\left(  \ID\right)  }^{2}\leq
C\int_{0}^{T}\left\Vert U_{V}(t)  u^0 \right\Vert _{L^{2}\left(
\Omega\right)  }^{2}dt + C \left\Vert u^0 \right\Vert _{H^{-1}\left(  \ID\right)  }^{2} , \label{e:oi:relax}%
\end{equation}
In a second step, we conclude the proof of Theorem \ref{t:obs-i} using a unique continuation property for eigenfunctions of the elliptic operator $-\Delta_D + V$.

The first step is similar to Section~\ref{sec:UCPO}. We consider a sequence of initial data $(u_n^0)$ contradicting~\eqref{e:oi:relax}. It satisfies~\eqref{eq: energy = 1}, \eqref{eq: converge zero on omega}, together with
\begin{align}
  \| u^0_n \| _{H^{-1}( \ID) } \to 0 ,
  \quad n \to \infty . \label{eq: norm -1 to zero}
\end{align}
As before, we consider the associated microlocal measure $\mu_{ml} = \mu^{\infty} + \mu_c$. Note now that~\eqref{eq: norm -1 to zero} implies that $\mu_c =0$. The rest of the proof is completely similar.

\medskip
We now prove that \eqref{e:oi:relax} implies the observability inequality~\eqref{e:oi}: this step is by now classical~\cite{BLR:92,Leb:92} but we include it for the sake of completeness. We proceed again by contradiction and suppose that the inequality
\begin{equation}
\label{eq:ioH-1}
\| u^0 \| _{H^{-1}( \ID) } \leq  C \int_{0}^{T}\left\Vert b(z)  U_{V}(t)  u^0 \right\Vert _{L^{2}\left(
\Omega \right)  }^{2}dt
\end{equation}
is not satisfied. Then, there exists a sequence $(u^0_n)_{n \in \N}$ in $L^2(\ID)$ such that
\begin{equation}
\label{equnicite}
   \| u^0_n \| _{H^{-1}( \ID) } = 1  , \qquad
  \int_{0}^{T}\left\Vert b(z)  U_{V}(t)  u^0_n \right\Vert _{L^{2}\left(
\ID\right)  }^{2}dt \to 0 ,
  \quad n \to \infty .
\end{equation}
Inequality~\eqref{e:oi:b} implies that $u^0_n$ is bounded in $L^2(\ID)$, so that, after having extracted a subsequence, we have $u^n_0 \rightharpoonup u^0$ in $L^2(\ID)$ and $u^0_n \to u^0$ in $H^{-1}(\ID)$. We deduce from~\eqref{equnicite} that
$$
   \| u^0 \| _{H^{-1}( \ID) } = 1  , \qquad
   U_{V}(t)  u^0 = 0 \text{ on }\{b^2>0\} \text{ for all } t \in (0,T) .
$$
The weak limit $u^0$ belongs to the set
$$
\mathcal{N} = \{f \in L^2(\ID) ,  U_{V}(t)  f = 0 \text{ on }\{b^2>0\} \text{ for all } t \in (0,T)\}.
$$
Then, by linearity, $\mathcal{N}$ is a closed vector subspace of $L^2(\ID)$. Inequality~\eqref{e:oi:relax} proves that $\mathcal{N}$ is finite dimensional and the time independence of $V$ implies that it is a subspace of $H^2(\ID)\cap H^1_0(\ID)$, stable by the action of the operator $-\Delta_D + V$. If not reduced to $\{0\}$, the space $\mathcal{N}$ hence contains an eigenfunction of $-\Delta_D + V$, vanishing on $\{b^2>0\}$. A classical uniqueness result for elliptic operators then implies that this does not occur. This yields $\mathcal{N}= \{0\}$ and thus $u^0=0$, which contradicts $\| u^0 \| _{H^{-1}( \ID) } = 1$.

\subsection{Boundary observability inequality: proof of Theorem~\ref{t:obs-b}}

We proceed as in the previous section: in a first step, we prove the following weakened observability inequality:
\begin{lemma}
\label{l:obs-brelax}
For all $T>0$, there exists $C>0$ such that for all $u^0 \in H^1_0(\ID)$, we have
\begin{equation}
\left\Vert u^0 \right\Vert _{H^{1}\left(  \ID\right)  }^{2}\leq
C\int_{0}^{T}\left\Vert \d_n( U_{V}(t)  u^0 ) \right\Vert _{L^{2}\left(
\Gamma\right)  }^{2}dt + C \left\Vert u^0 \right\Vert _{L^2\left(  \ID\right)  }^{2} , \label{e:oi:relaxb}%
\end{equation}
\end{lemma}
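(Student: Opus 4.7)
The plan is to proceed by contradiction, following the same scheme as Section \ref{s:obstimeindep}, but inserting a preliminary spectral low-frequency cut-off. In contrast to the internal case where $\|u_n^0\|_{H^{-1}}\to 0$ directly forces the natural $L^2$-normalization to converge weakly to $0$, here the assumption $\|u_n^0\|_{L^2}\to 0$ combined with $\|u_n^0\|_{H^1}=1$ does not automatically give this convergence, so the compact part $\mu_c$ of the associated microlocal measure cannot be killed via Theorem~\ref{t:muc} without additional work.

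I would start by assuming a contradicting sequence $(u_n^0)\subset H^1_0(\ID)$ with $\|u_n^0\|_{H^1}=1$, $\|u_n^0\|_{L^2}\to 0$, and $\int_0^T\|\d_n U_V(t)u_n^0\|_{L^2(\Gamma)}^2\,dt\to 0$. Let $\{\varphi_k\}_{k\geq 1}$ be an $L^2(\ID)$-orthonormal basis of Dirichlet eigenfunctions of $-\Delta_D$ with eigenvalues $\lambda_k^2\to +\infty$, and $\pi_N$ the orthogonal projection on $\Span\{\varphi_k : k\leq N\}$. I would then pick an integer sequence $N_n\to\infty$ so slowly that $\lambda_{N_n}\|u_n^0\|_{L^2}\to 0$ (for instance $\lambda_{N_n}\leq\|u_n^0\|_{L^2}^{-1/2}$), and set $w_n^0:=(1-\pi_{N_n})u_n^0\in H^1_0(\ID)$. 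The spectral bound $\|\pi_{N_n}u_n^0\|_{H^1}^2\leq (1+\lambda_{N_n}^2)\|u_n^0\|_{L^2}^2\to 0$ would then yield $\|w_n^0\|_{H^1}\to 1$, while the hidden regularity of Proposition~\ref{p:regboundary} applied to $\pi_{N_n}u_n^0$ gives $\int_0^T\|\d_n U_V w_n^0\|_{L^2(\Gamma)}^2\,dt\to 0$ as well.

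Next I would renormalize in $L^2$: since $w_n^0$ is orthogonal to $\{\varphi_k\}_{k\leq N_n}$, we have $\|w_n^0\|_{L^2}\leq\lambda_{N_n+1}^{-1}\|w_n^0\|_{H^1}\to 0$, so setting $\eta_n:=\|w_n^0\|_{L^2}\to 0$, $v_n^0:=w_n^0/\eta_n$ and $v_n(t):=U_V(t)v_n^0$ yields $\|v_n(t)\|_{L^2(\ID)}=1$ for every $t$. The key property gained is that $\la v_n^0,\varphi_k\ra=0$ for every fixed $k$ as soon as $N_n\geq k$, so by density of $\{\varphi_k\}$ in $L^2(\ID)$, $v_n^0\rightharpoonup 0$ weakly. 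After extracting a subsequence, I would associate to $(v_n)$ the microlocal measures $\mu_{ml}=\mu^\infty+\mu_c$ and the boundary measure $\mu_{ml}^\d$ of Sections~\ref{s:mlst} and~\ref{s:micromesboundary}; Theorem~\ref{t:muc} then forces $\rho_0=0$, hence $\mu_c=0$.

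To kill $\mu^\infty$, I would use $\eta_n\d_n v_n=\d_n U_V w_n^0$ together with the vanishing boundary integral to obtain $\int_0^T\|\eta_n\d_n v_n\|_{L^2(\Gamma)}^2\,dt\to 0$; since the operator $A(D_t)=\Op_1(\psi(H)/\sqrt{2H})$ entering the construction of $\mu_{ml}^\d$ behaves like $\eta_n$ at the dominant time-frequency scale $|H|\sim\eta_n^{-2}$ of $v_n$, this should translate into $\mu_{ml}^\d=0$ on $T^*((0,T)\times \Gamma)$, and Lemma~\ref{l:muinfty=0bo} would then give $\mu^\infty=0$. Thus $\mu_{ml}=0$, which contradicts the total-mass identity $\int_0^T\int\mu_{ml}(dz,d\xi,t,dH)\,dt=T$ coming from $\|v_n(t)\|_{L^2}=1$. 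The main obstacle will be justifying this last scaling step carefully, i.e.\ quantifying how the $L^2((0,T)\times\Gamma)$ smallness of $\eta_n\d_n v_n$ implies the vanishing of the microlocal defect measure $\mu_{ml}^\d$ on $T^*((0,T)\times\Gamma)$, which requires checking that the low time-frequency contribution of $\d_n v_n$ is negligible in this limit.
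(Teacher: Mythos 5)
Your reduction up to the construction of $v_n^0=w_n^0/\eta_n$ is sound: the spectral cut-off, the hidden-regularity estimate of Proposition~\ref{p:regboundary} applied to $\pi_{N_n}u_n^0$, and the weak convergence $v_n^0\rightharpoonup 0$ killing $\mu_c$ via Theorem~\ref{t:muc} all work. But the step you yourself flag as ``the main obstacle'' is a genuine gap, not a technicality. Passing from $\int_0^T\|\eta_n\,\partial_n v_n\|^2_{L^2(\Gamma)}dt\to0$ to $\mu_{ml}^\partial=0$ on $T^*((0,T)\times\Gamma)$ rests on the claim that $A(D_t)$, which multiplies time-frequency $H$ by $\psi(H)/\sqrt{2H}$, is comparable to multiplication by the scalar $\eta_n$ on the relevant part of $\partial_n v_n$. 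This would require $v_n^0$ to be $\eta_n$-oscillating \emph{from below}, i.e.\ to carry no $L^2$ mass at spatial frequencies $\ll\eta_n^{-1}$. Your projection only removes frequencies below $\lambda_{N_n+1}$, and since $\eta_n=\|w_n^0\|_{L^2}\leq\lambda_{N_n+1}^{-1}\|\nabla w_n^0\|$, one only knows $\lambda_{N_n+1}\lesssim\eta_n^{-1}$: the window $[\lambda_{N_n+1},\eta_n^{-1}]$ can be arbitrarily long, and nothing prevents almost all of the $L^2$ mass of $v_n^0$ from sitting there while a vanishing fraction at much higher frequency carries all the $H^1$ energy. On that intermediate band $\psi(H)/\sqrt{2H}\gg\eta_n$, and the hypothesis on $\Gamma$ only controls the \emph{sum} of the frequency bands with the single weight $\eta_n$; it gives no smallness for the individual bands reweighted by $1/\sqrt{2H}$. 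So $\mu_{ml}^\partial$ need not vanish on $T^*((0,T)\times\Gamma)$ in your normalization, and the appeal to Lemma~\ref{l:muinfty=0bo} is not justified.

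This is precisely the difficulty the paper's proof is built to avoid: instead of a scalar renormalization it conjugates by the time-Fourier multiplier $B(D_t)=\Op_1(\psi(H)\sqrt{2H})$, setting $w_n=B(D_t)\chi_T(t)u_n$. Since $B(D_t)$ acts only in $t$ it commutes with the normal trace, and $A(D_t)B(D_t)=\frac12\Op_1(\psi^2(H))$ is bounded, so $\|\partial_n(A(D_t)w_n)\|_{L^2((\varepsilon,T-\varepsilon)\times\Gamma)}\lesssim\|\chi_T\,\partial_n u_n\|_{L^2(\Gamma)}\to0$ band by band with no scale mismatch; meanwhile $\|w_n\|^2_{L^2}\approx\la D_t\chi_Tu_n,\chi_Tu_n\ra\approx\frac{T}{2}\|\nabla u_n^0\|^2$ (via the equation and a sharp G{\aa}rding argument, Lemma~\ref{l:Dtun}) keeps the limit measure from vanishing. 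To repair your argument you would have to replace the division by $\eta_n$ with such a frequency-dependent weight commuting with the trace, at which point you have essentially reconstructed the paper's proof.
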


With this lemma, we now conclude the proof of the observability inequality~\eqref{e:ob}. We proceed by contradiction and suppose that the inequality
\begin{equation}
\label{eq:ioH-2}
\| u^0 \| _{L^2( \ID) } \leq  \int_{0}^{T}\left\Vert \d_n( U_{V}(t)  u^0) \right\Vert _{L^{2}(\Gamma)  }^{2}dt
\end{equation}
is not satisfied. Then, there exists a sequence $(u_n^0)_{n \in \N}$ in $L^2(\ID)$ such that
\begin{equation}
\label{equniciteb}
   \| u_n^0 \| _{L^2( \ID) } = 1  , \qquad
\int_{0}^{T}\left\Vert \d_n( U_{V}(t)  u^0_n) \right\Vert _{L^{2}(\Gamma)  }^{2}dt \to 0 ,
  \quad n \to \infty .
\end{equation}
Then,~\eqref{e:oi:relaxb} implies that $u^0_n$ is bounded in $H^1(\ID)$, so that, after having extracted a subsequence, we have $u_n^0 \rightharpoonup u^0$ in $H^1_0(\ID)$ and $u_n^0 \to u^0$ in $L^2(\ID)$. We deduce from~\eqref{equniciteb} that
$$
   \| u^{0} \| _{L^2( \ID) } = 1  , \qquad
  \d_n( U_{V}(t)  u^{0} )= 0 \text{ on } \Gamma \text{ for all } t \in (0,T) .
$$
From here, we discuss the two cases with different uniqueness arguments. In the case $V(t,z) =V(z)$, the proof of $u=0$ follows exactly Section~\ref{s:obstimeindep} (using unique continuation from the boundary for elliptic operators).  The same conclusion holds if we assume~\eqref{UCPbord}. This contradicts $\| u^0 \| _{L^2( \ID) } = 1$, and proves~\eqref{eq:ioH-2}. Then,~\eqref{eq:ioH-2} and~\eqref{e:oi:relaxb} imply the sought observability inequality~\eqref{e:ob}.

\begin{proof}[Proof of Lemma~\ref{l:obs-brelax}]
Assume that~\eqref{e:oi:relaxb} is not satisfied. Then, there exists a sequence $u_n^0$ such that
\begin{align}
&\label{e:unbornH1} \| u_n^0 \| _{H^1( \ID)  }^{2} =  1 ,\\
& \label{e:unto0L2} \| u_n^0 \| _{L^2( \ID)  }^{2}  \to 0 , \\
&\label{e:dnunto0} \int_0^T\|  \d_n( u_n(t) ) \|_{L^2(\Gamma)}^2 dt \to 0 ,
\end{align}
where, as usual, $u_n(t) = U_V(t) u_n^0$. Let us now fix $\chi_T \in C^\infty(\R)$ such that $\chi_T = 1$ in a neighbourhood of $[0,T]$, $\psi \in C^\infty(\R)$, such that $\psi = 0$ on $(-\infty ,1]$ and $\psi = 1$ on $[2,+\infty)$ and set
 $$
w_n = B(D_t) \chi_T(t) u_n , \qquad B(D_t)= \Op_1(\psi(H)\sqrt{2H} ) .
$$
\begin{lemma}
\label{l:Dtun}
We set $A(D_t)= \Op_1\left(\frac{\psi(H)}{\sqrt{2H}}\right)$. For any $R>0$ and $\varepsilon >0$, we have
\begin{align}
& \label{e:Schrodbor} \left\|\left( D_t + \frac12 \Delta - V\right)w_n \right\|_{L^2((0,T) \times \ID)} \to 0 \\
& \label{e:regto0} \left\| A(D_t) w_n \right\|_{L^2((-R, R) \times \ID)} \to 0 , \\
& \label{e:wL2borinf} T/2 + o_{\varepsilon} (1) \leq\left\| w_n \right\|_{L^2((-\varepsilon, T+\varepsilon) \times \ID)}^2 \\
& \label{e:wL2borsup} \left\| w_n \right\|_{L^2((-R, R) \times \ID)}^2 \leq R + \varepsilon + o_{R,\varepsilon} (1),\\
& \label{e:trabor} \left\| \d_n( A(D_t)w_n ) \right\|_{L^2((-R,R) \times \d\ID)} \leq C , \\
& \label{e:wnormalL2bor} \|  \d_n( A(D_t)w_n ) \|_{L^2((\varepsilon, T-\varepsilon) \times\Gamma)} \to 0 .
\end{align}
\end{lemma}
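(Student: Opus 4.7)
\medskip
\noindent\textbf{Proof plan for Lemma~\ref{l:Dtun}.}
My strategy is to treat the six conclusions separately, exploiting two structural facts throughout: (i) the temporal Fourier multipliers $A(D_t)$ and $B(D_t)$ commute with $D_t$ and with the spatial Laplacian $\Delta_z$; and (ii) $A(D_t)\,B(D_t)=\psi(D_t)^2$ is a bounded Fourier multiplier of order zero on $L^2(\R)$. The computations combine these with the Schr\"odinger equation $(D_t+\tfrac12\Delta-V)u_n=0$ on $\ID$, with the asymptotic hypotheses \eqref{e:unbornH1}--\eqref{e:dnunto0}, and with the hidden regularity of Proposition~\ref{p:regboundary}.

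For \eqref{e:Schrodbor}, I would compute the commutator
$(D_t+\tfrac12\Delta-V)w_n= -iB(D_t)(\chi_T'\,u_n) - [V,B(D_t)\chi_T]u_n$,
since $B(D_t)$ commutes with $D_t+\tfrac12\Delta$. On $(0,T)$ the cutoff satisfies $\chi_T'\equiv 0$, so the first term there is a Schwartz-in-$t$ convolution acting on a sequence supported away from $(0,T)$, bounded in $L^2$, and going to $0$ in $L^2(\R\times\ID)$ by unitarity of $U_V(t)$ combined with $\|u_n^0\|_{L^2}\to 0$. The $[V,B(D_t)\chi_T]$ term, being a bounded operator on $L^2$ applied to $\chi_T u_n\to 0$ in $L^2(\R\times\ID)$, is small.

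Conclusion \eqref{e:regto0} follows immediately from $A(D_t)w_n=\psi(D_t)^2 \chi_T u_n$ and $\|\chi_T u_n\|_{L^2(\R\times\ID)}\leq C\|u_n^0\|_{L^2}\to 0$. For the norm bounds \eqref{e:wL2borinf}--\eqref{e:wL2borsup}, I would use that $B(D_t)$ applied to a Schr\"odinger solution behaves, modulo $\psi(H)$ cutoff and $L^2$-small lower-order errors, as the spatial operator $\sqrt{-\Delta+2V}$: Plancherel in $t$, combined with $D_t(\chi_T u_n)=\chi_T(-\tfrac12\Delta+V)u_n - i\chi_T'u_n$, rewrites $\|w_n\|_{L^2(I\times\ID)}^2$ as an integral of a quantity close to $\|u_n(t)\|_{H^1_0}^2$. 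Since $\|u_n^0\|_{H^1_0}^2=1$ and $\|u_n(t)\|_{H^1}\to 1$ uniformly on compact time intervals by standard energy estimates for $U_V(t)$ on $H^1_0$, integrating respectively over an interval where $\chi_T\equiv 1$ and over $\supp\chi_T$ yields the matching lower bound and upper bound, the $o_\varepsilon(1)$ and $o_{R,\varepsilon}(1)$ remainders absorbing the low-frequency cutoff $1-\psi^2$ (controlled by $\|u_n^0\|_{L^2}\to 0$) and the transition region of $\chi_T$. Conclusion \eqref{e:trabor} follows from Proposition~\ref{p:regboundary} applied to $A(D_t)w_n=\psi(D_t)^2\chi_T u_n$, which is bounded in $L^2_{\loc}(\R,H^1_0(\ID))$ because $u_n^0$ is bounded in $H^1_0$.

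The main obstacle is \eqref{e:wnormalL2bor}. Writing
$\partial_n(A(D_t)w_n) = \psi(D_t)^2\,\chi_T\,\partial_n u_n = \chi_T \partial_n u_n - (1-\psi^2)(D_t)(\chi_T \partial_n u_n)$,
the first piece restricted to $(\varepsilon,T-\varepsilon)\times\Gamma$ equals $\partial_n u_n\rceil_{(\varepsilon,T-\varepsilon)\times\Gamma}$, which tends to $0$ in $L^2$ by \eqref{e:dnunto0}. For the smoothing piece, I split $\chi_T\partial_n u_n$ into its $\mathds{1}_{(0,T)}$-part, which is $L^2$-small and remains so after applying the bounded operator $(1-\psi^2)(D_t)$, and its complementary part, supported in the transition region of $\chi_T$. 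The Schwartz convolution kernel of $(1-\psi^2)(D_t)$ gives only an $\varepsilon$-dependent (not $n$-dependent) pointwise decay for this second piece; to promote this to $n\to 0$, the idea is to view $A(D_t)w_n$ as an approximate Schr\"odinger solution which is $L^2$-small on a neighbourhood of $(\varepsilon,T-\varepsilon)\times\ID$ (by \eqref{e:regto0} and \eqref{e:Schrodbor}), and invoke the $L^2$-continuity of the Neumann-trace map from approximate solutions to their boundary traces (again Proposition~\ref{p:regboundary}), which turns interior $L^2$-smallness into boundary $L^2$-smallness of $\partial_n(A(D_t)w_n)$ on a slightly smaller subinterval. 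The technical core is thus the clean separation of the "observed" boundary data (small by hypothesis) from the residual contribution localized near the boundary of $\supp\chi_T$, and the use of the approximate-solution structure to transport smallness to the boundary.
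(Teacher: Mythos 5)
Your treatment of \eqref{e:Schrodbor}, \eqref{e:regto0}, \eqref{e:wL2borinf}, \eqref{e:wL2borsup} and \eqref{e:trabor} follows essentially the same route as the paper: commute the temporal multipliers with $D_t+\tfrac12\Delta$, use disjoint supports to absorb $B(D_t)\chi_T'$, use the equation to trade $D_t(\chi_T u_n)$ for $\chi_T(-\tfrac12\Delta+V)u_n$ and integrate by parts to produce $\tfrac12\|\nabla u_n\|_{L^2}^2$, and invoke Proposition~\ref{p:regboundary} for the trace bound. One detail you gloss over in \eqref{e:wL2borinf}: the error $\Op_1\big((1-\psi^2(H))H\big)$ is \emph{not} a bounded multiplier (the symbol is unbounded below), so it is not ``controlled by $\|u_n^0\|_{L^2}\to 0$'' by boundedness alone; the paper splits off the piece $\Op_1(\psi^2(-H)H)$, whose symbol is nonpositive, and controls it one-sidedly by the sharp G{\aa}rding inequality. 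This is a fixable omission.

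The real problem is your argument for \eqref{e:wnormalL2bor}. You correctly diagnose the obstruction --- after writing $\d_n(A(D_t)w_n)=\chi_T\d_n u_n-(1-\psi^2)(D_t)(\chi_T\d_n u_n)$, the nonlocal multiplier $(1-\psi^2)(D_t)$ transports the (merely bounded) trace data from the transition region of $\chi_T$ into $(\varepsilon,T-\varepsilon)$, and disjointness of supports only yields an $n$-independent bound, not smallness. But your proposed repair does not work: Proposition~\ref{p:regboundary} controls $\|\d_n v\|_{L^2(\Sigma)}$ by $\|\nabla v^0\|_{L^2}+\|f\|_{L^1(H^1)}$, i.e.\ by $H^1$-type quantities of the data and source, \emph{not} by interior $L^2$ norms. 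There is no estimate turning interior $L^2$-smallness of an (approximate) Schr\"odinger solution into $L^2$-smallness of its Neumann trace: taking $v_\lambda=\lambda^{-1/2}e^{i\lambda^2 t/2}\psi_\lambda$ with $\psi_\lambda$ an $L^2$-normalized Dirichlet eigenfunction of eigenvalue $\lambda^2$, one has $\|v_\lambda\|_{L^2((0,T)\times\ID)}=\lambda^{-1/2}\sqrt{T}\to 0$ while, by the Rellich identity, $\|\d_n v_\lambda\|_{L^2((0,T)\times\d\ID)}\sim\lambda^{1/2}\sqrt{T}\to\infty$. Moreover $A(D_t)w_n=\Op_1(\psi^2(H))\chi_T u_n$ is only \emph{bounded}, not small, in $L^2_{\loc}(\R;H^1_0(\ID))$ (its $H^1$ size is comparable to $\|u_n^0\|_{H^1}=1$), so the quantities that hidden regularity actually controls do not tend to zero. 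The step ``interior $L^2$-smallness $\Rightarrow$ boundary $L^2$-smallness of $\d_n$'' is therefore a genuine gap, and it is precisely the step carrying the content of \eqref{e:wnormalL2bor}. To close it you need an argument that specifically kills the low-time-frequency part of the trace (e.g.\ exploiting that $(1-\psi^2)(D_t)\chi_T u_n$ corresponds, through the equation, to spectrally truncated data whose $H^1$ norm \emph{is} $o(1)$ because $\|u_n^0\|_{L^2}\to 0$), rather than a trace estimate from $L^2$ in the interior.
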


Now, as $(w_n)$ forms a bounded sequence of $L^2_{\loc}(\R \times\ID)$, we associate to a subsequence a microlocal measure $\mu_{ml}=\mu^\infty+\mu_c$ as in Section~\ref{s:mlst}. {According to~\eqref{e:Schrodbor} and Remark~\ref{r:solo1} $\mu_{ml}$ satisfies the conclusions of Theorem~\ref{t:preciseml}.}
The measure $\mu_c$ vanishes as a consequence of~\eqref{e:regto0}. According to~\eqref{e:trabor}, the sequence $\d_n( A(D_t)w_n )$ is bounded in $L^2((0,T) \times\ID)$, so we may again extract another subsequence and associate a microlocal measure $\mu_{ml}^\d$ as in Section~\ref{s:micromesboundary}. According to~\eqref{e:wnormalL2bor},  $\mu_{ml}^\d = 0$ on $T^* ((\varepsilon,T-\varepsilon)\times \Gamma)$. As a consequence of Lemma~\eqref{l:muinfty=0bo},  $\mu^\infty $ vanishes identically on $\R_t \times \R^2_z \times \IS^2_{H, \xi}$.  Thus, $w_n$ converges to zero in  $L^2_{\loc}(\R \times\ID)$, which is contradiction with~\eqref{e:wL2borinf}. This concludes the proof of Lemma~\ref{l:obs-brelax}.
\end{proof}

\begin{proof}[Proof of Lemma~\ref{l:Dtun}]
Take $\tilde{\chi} \in C^\infty_c(\R)$ such that $\tilde{\chi}= 1$ on $(0,T)$ and $\chi_T=1$ on $\supp(\tilde{\chi} )$. Using that $\left( D_t + \frac12 \Delta - V\right)u_n =0$, we have
\begin{align*}
 \left\|\left( D_t + \frac12 \Delta - V\right)w_n \right\|_{L^2(0,T \times \ID)}
 & \leq  \left\| \tilde{\chi} \left( D_t + \frac12 \Delta - V\right)w_n \right\|_{L^2(\R \times \ID)} \\
  & \leq  \left\| \tilde{\chi} B( D_t ) \chi_T' u_n \right\|_{L^2(\R \times \ID)}  + \left\| \tilde{\chi} [V , B( D_t ) ]\chi_T u_n \right\|_{L^2(\R \times \ID)} \\
    & \leq  C \left\|  \chi_T' u_n \right\|_{L^2(\R \times \ID)}  + C\left\|\chi_T u_n \right\|_{L^2(\R \times \ID)}
     \leq C\left\| u_n^0 \right\|_{L^2( \ID)} \to 0 ,
\end{align*}
as $\tilde{\chi} =0$ on $\supp(\chi_T')$ and $[V , B( D_t ) ]$ is bounded on $L^2(\R \times \ID)$. This proves~\eqref{e:Schrodbor}.

Let us now take $\tilde{\chi} \in C^\infty_c(\R)$ such that $\tilde{\chi}= 1$ on $(-R,R)$. We have
\begin{align*}
\left\| A(D_t) w_n \right\|_{L^2((-R, R) \times \ID)} & \leq  \left\| \tilde{\chi} \Op_1(\psi^2(H)) \chi_T u_n \right\|_{L^2(\R \times \ID)} \\
& \leq  \left\| \chi_T u_n \right\|_{L^2(\R \times \ID)}\to 0 ,
\end{align*}
which proves~\eqref{e:regto0}.

Let us fix now $\check{\chi}\in C^\infty_c(-\varepsilon, T+\varepsilon) $ such that $\check{\chi} = 1$ in a neighbourhood of $[0,T]$, and compute 
\begin{align*}
\left\| w_n \right\|_{L^2((-\varepsilon, T+\varepsilon) \times \ID)}^2
&  \geq \left\| \check{\chi} w_n \right\|_{L^2(\R\times \ID)}^2
= \la B(D_t) \check{\chi}^2 B(D_t) \chi_T u_n , \chi_T u_n \ra_{L^2(\R\times \ID)} \nonumber  \\
& \geq  \la  \check{\chi}^2 \Op_1(\psi^2(H)) D_t (\chi_T u_n ), \chi_T u_n \ra_{L^2(\R\times \ID)} \nonumber \\
& \quad +\la [B(D_t) , \check{\chi}^2] B(D_t) \chi_T u_n , \chi_T u_n \ra_{L^2(\R\times \ID)} ,
\end{align*}
where $\left| \la [B(D_t) , \check{\chi}^2] B(D_t) \chi_T u_n , \chi_T u_n \ra_{L^2(\R\times \ID)} \right| \leq \left\| \chi_T u_n \right\|_{L^2(\R \times \ID)}\to 0$. As a consequence, we have
\begin{align}
\label{numerobis1}
\left\| w_n \right\|_{L^2((-\varepsilon, T+\varepsilon) \times \ID)}^2
&  \geq  \la  \check{\chi}^2 \Op_1(\psi^2(H)) D_t( \chi_T u_n ), \chi_T u_n \ra_{L^2(\R\times \ID)} + o(1) .
\end{align}
On the other hand, we have
\begin{align}
\label{numerobis2}
\la \check{\chi}^2 D_t( \chi_T u_n ), \chi_T u_n \ra_{L^2(\R\times \ID)}
& = \la \check{\chi}^2 \Op_1(\psi^2(H)) D_t( \chi_T u_n ), \chi_T u_n \ra_{L^2(\R\times \ID)} \nonumber \\
& \quad + \la \check{\chi}^2 \Op_1(\psi^2(-H)) D_t( \chi_T u_n ), \chi_T u_n \ra_{L^2(\R\times \ID)}  + o(1) ,
\end{align}
where
\begin{align*}
\la \check{\chi}^2 \Op_1(\psi^2(-H)) D_t( \chi_T u_n ), \chi_T u_n \ra_{L^2(\R\times \ID)}
= \la \check{\chi}^2 \Op_1(\psi^2(-H)H) ( \chi_T u_n ), \chi_T u_n \ra_{L^2(\R\times \ID)}
\end{align*}
Since $\check{\chi}^2 \psi^2(-H)H \leq 0$ the sharp G{\aa}rding inequality then provides
\begin{align*}
\la \check{\chi}^2 \Op_1(\psi^2(-H)) D_t( \chi_T u_n ), \chi_T u_n \ra_{L^2(\R\times \ID)}
\leq \| \chi_T u_n \|_{L^2(\R\times \ID)}^2  =o(1)
\end{align*}
This, combined with~\eqref{numerobis1} and~\eqref{numerobis2} now yields
\begin{align*}
\left\| w_n \right\|_{L^2((-\varepsilon, T+\varepsilon) \times \ID)}^2
&  \geq  \la  \check{\chi}^2  D_t( \chi_T u_n ), \chi_T u_n \ra_{L^2(\R\times \ID)} + o(1)\\
& \geq  \la  \check{\chi}^2 \chi_T  D_t u_n , \chi_T u_n \ra_{L^2(\R\times \ID)} + o(1)\\
& \geq  \la  \check{\chi}^2  \chi_T  (-\frac12 \Delta + V) u_n , \chi_T u_n \ra_{L^2(\R\times \ID)} + o(1)\\
& \geq  \frac12 \la  \check{\chi}^2 \chi_T \nabla u_n , \chi_T \nabla u_n \ra_{L^2(\R\times \ID)} + o(1)\\
& \geq  \frac{T}{2} \| \nabla u_n^0\|_{L^2(\ID)}^2 + o(1) =  \frac{T}{2} + o(1)\\
\end{align*}
This concludes the proof of ~\eqref{e:wL2borinf}. The proof of~\eqref{e:wL2borsup} follows the same arguments.

Finally, according to~\eqref{e:unbornH1} and the hidden regularity result of Proposition~\ref{p:regboundary} the sequence $\d_n( u_n )$ is bounded in $L^2((-R,R) \times\ID)$. Moreover, we have
\begin{align*}
\left\| \d_n( A(D_t)w_n ) \right\|_{L^2((-R,R) \times \d\ID)}
& \leq \left\| \tilde{\chi} \Op_1(\psi^2(H)) \chi_T \d_n(u_n ) \right\|_{L^2(\R\times \d\ID)} \\
& \leq \left\| \chi_T \d_n(u_n ) \right\|_{L^2(\R \times \d\ID)} \leq C.
\end{align*}
This proves \eqref{e:trabor}. The proof of~\eqref{e:wnormalL2bor} comes from a similar computation combined with~\eqref{e:dnunto0}.

\end{proof}

\appendix

 \section{From action-angle coordinates to polar coordinates}
 \label{coord-polar}
 Here we develop the (painful) calculations leading to the definitions of the operators $\cA_E(P)$ and $\cA_H(P)$ used as a black-box in the paper. The point is that our ``action-angle'' coordinates $(s, \theta, E, J)$, well adapted to integrate the dynamics of the billiard flow, are not so convenient to express the Dirichlet boundary condition ($v(z)=0$ for $|z|=1$). Actually the best coordinates in which to write the boundary condition are the polar coordinates (which below will be written as $(x=-r\sin u, y=r\cos u)$) since the boundary is simply expressed as the set $\{r=1\}$.

Let $P(s, \theta, E, J)$ be a function expressed in the new coordinates and let $\scrU $ be the Fourier integral operator defined in \eqref{e:U}. The technical calculations done below are aimed at understanding how  $\scrU^* \Op_h(P) \scrU$ acts in polar coordinates; in particular, under which conditions on the symbol $P$ the boundary condition is preserved by $\scrU^* \Op_h(P) \scrU$.

 For our purposes we need to understand the operator $\scrU^* \Op_h(P) \scrU$ modulo $O(h^2)$. Ideally we would like to separate it into a ``tangential part'' (involving only angular derivation $\frac{\partial}{\partial u}$) and a ``radial part'' involving the radial derivative $\frac{\partial}{\partial r}$ in a simple way. Below we calculate the action of the operator $\scrU^* \Op_h(P) \scrU$ on a plane wave $e_\xi(z):=e^{i\frac{(\xi_x x+\xi_y y)}h}$  (where we use $z=(x, y)$,  $\xi=(\xi_x, \xi_y)$ and $|\xi|^2= \xi_x^2 + \xi_y^2$) and apply the method of stationary phase. The length of the calculation comes from the fact that we explicitly need the term of order $h$ in the expansion.

  Let $P(s, \theta, E, J)$ be a smooth function (possibly depending on $h$), with support away from $\{E=0\}$ and inside $\{|J|<E\}$. We assume that it satisfies $\norm{\partial_s^{\alpha}\partial_\theta^{\beta}\partial_E^{\gamma}
  \partial_J^{\delta}P}_\infty \leq C_{\alpha, \beta, \gamma,\delta} h^{-\gamma-\delta}$ for all integers $\alpha, \beta, \gamma,\delta$. The function $P$ may also depend on the time variable $t$ and its dual $H$, but here we omit them from the notation since they are transparent in the calculation.
 Using the definition \eqref{e:U} and unfolding all the integrals, we write
 \begin{align*}& \scrU^* \Op_h(P) \scrU e_\xi (x, y)\\
&= (2\pi h)^{-5}\int P\left( s,\theta , E', j\right) e^{\frac{ij(\theta-\theta')}h}e^{\frac{iE'(s-s')}h}
 e^{-i\frac{S(x', y', \theta', s', E)}h}  e^{i\frac{(\xi_x x'+\xi_y y')}h}   e^{i\frac{S(x, y, \theta, s, E'')}h}\\ & a(E)a(E'')  d\theta ds dE''  dx' dy' dE d\theta'ds'dE' dj\\
 &= (2\pi h)^{-3}\int P\left( s,\theta , E', j\right)e^{\frac{ij(\theta-\theta_0)}h}e^{\frac{iE'(s-s')}h}
 e^{i\frac{ s' |\xi|}h}     e^{i\frac{S(x, y, \theta, s, E'')}h} \frac{a(|\xi|)}{|\xi|}a(E'')  d\theta ds dE''    ds'dE' dj\\
 &= (2\pi h)^{-2}\int P\left( s,\theta , |\xi|, j\right)e^{\frac{ij(\theta-\theta_0)}h}
 e^{i\frac{ s |\xi|}h}     e^{i\frac{S(x, y, \theta, s, E'')}h} \frac{a(|\xi|)}{|\xi|}a(E'')  d\theta  dE''    dsdj\\
  &= (2\pi h)^{-1}\int ( P\left( s(x, y, \theta),\theta , |\xi|, j\right)a(|\xi|)-ih\partial_s P\left( s(x, y, \theta),\theta , |\xi|, j\right)a'(|\xi|)
   ) \\&e^{\frac{ij(\theta-\theta_0)}h}
      e^{i\frac{-|\xi|\sin\theta x +|\xi|\cos\theta y}h}  \frac{a(|\xi|)}{|\xi|}d\theta    dj\qquad +\frac{O(h^2)}{\inf_{P(s, \theta, E, J)\not=0}|E|^2},
      \end{align*}
  where {$a(E) = \sqrt{E}$}, $(s_0, \theta_0, |\xi|, j_0) =\Phi^{-1}(x, y,\xi_x, \xi_y)$ and $s(x, y, \theta)=-x\sin\theta+y\cos\theta.$ By standard estimates on pseudodifferential operators, the remainder term will correspond to an estimate in the $L^2_{\comp}\To L^2_{\loc}$ norm of operators.

       Let $(x, y)=(-r\sin u, r\cos u)$, so that $r=\sqrt{x^2+y^2}$, $u=\arccos y/r$, and $s(x, y, \theta)=r\cos(\theta-u)$.
      We are left with
 \begin{multline*} (2\pi h)^{-1} \frac{a(|\xi|)}{|\xi|}\int ( P\left(r\cos(\theta-u), \theta,|\xi|, j\right)a(|\xi|)\\-ih \partial_s P\left(r\cos(\theta-u), \theta,|\xi|, j\right)a'(|\xi|))e^{\frac{ij(\theta-\theta_0)}h}
    e^{i\frac{|\xi|r\cos(\theta-u)}h}d\theta    dj .
     \end{multline*}

Now we apply stationary phase w.r.t. $\theta$ (while $j$ is kept fixed, since our symbols can be rapidly oscillating in $j$).
We start with the $P$ term. The $ih\partial_s P$-term can be treated exactly the same way. Fixing $j$ and looking at the $\theta$-integral, we let
\begin{eqnarray*}I&=&(2\pi h)^{-1/2}\int_{0}^{2\pi}  P\left(r\cos(\theta-u), \theta,|\xi|, j\right) e^{\frac{ij(\theta-\theta_0)}h}
    e^{i\frac{|\xi|r\cos(\theta-u)}h}d\theta \\
&=&(2\pi h)^{-1/2}\int_{u-\pi/2}^{u+\pi/2}\cdots +    (2\pi h)^{-1/2}\int_{u+\pi/2}^{u+3\pi/2}\cdots
    .
    \end{eqnarray*}

    The phase in $I$ has $2$ critical points $\theta=u+\theta_1,u+ \theta_2$, where $\theta_i$ are the solutions of $j- |\xi|r\sin\theta=0$. Since we are assuming that $P(s, \theta, E, j)$ is supported in $\{|j|< E\}$, these two solutions are distinct for $r$ close to $1$, and correspond to non-degenerate stationary points (in all that follows we consider that $r$ is close to $1$ since this calculation only serves to understand $\scrU^* \Op_h(P) \scrU$ near the boundary of the disk). We will denote by $\theta_1(r, E, j), \theta_2(r, E, j)$ the solutions of $j-Er\sin\theta=0$. To fix ideas, $\theta_1$ will be the one with $\cos\theta_1>0$ and $\theta_2$ the one with $\cos\theta_2<0$ (that is, $\theta_1\in (-\pi/2, \pi/2), \theta_2\in (\pi/2, 3\pi/2)$). Below, $E$ will always take the value $E=|\xi|$.

According to the method of stationary phase, the asymptotic expansion of the integral $I$ may be obtained, modulo $O(h^2)$, by replacing $P$ by its Taylor expansion at order $2$ at each critical point:
\begin{multline}\label{e:dl1}P(r\cos(\theta-u), \theta)\sim P(r\cos\theta_i, u+\theta_i)+(\theta-u-\theta_i)\frac{d}{d\theta}P(r\cos\theta_i, u+\theta_i)\\
+(\theta-u-\theta_i)^2\frac12\frac{d^2}{d\theta^2}P(r\cos\theta_i, u+\theta_i).
\end{multline}
(we momentarily drop the $j$ and $E$ variables from the notation since they are fixed in the upcoming calculation).

\begin{remark}\label{r:partialderiv}
Denoting by $\partial_1=\partial_s, \partial_2=\partial_\theta$ (to avoid possible confusion), we have
\begin{equation*}
\frac{d}{d\theta}P( r\cos\theta, u+\theta)=\partial_2 P ( r\cos\theta, u+\theta)-r\sin\theta\,\partial_1 P ( r\cos\theta, u+\theta) ,
\end{equation*}
and
\begin{multline*}
\frac{d^2}{d\theta^2}P( r\cos\theta, u+\theta)=\partial^2_2 P ( r\cos\theta, u+\theta)-r\cos\theta\,\partial_1 P ( r\cos\theta, u+\theta)\\-r\sin\theta\,\partial_2\partial_1 P ( r\cos\theta, u+\theta)+r^2\sin^2\theta\,\partial^2_1 P ( r\cos\theta, u+\theta) .
\end{multline*}
 \end{remark}

 To use integration by parts, it is convenient to rewrite \eqref{e:dl1} using $j-  Er\sin(\theta-u)$ (the derivative of the phase) instead of $(\theta-u-\theta_i)$ (where $\theta_i$ stands short for $\theta_i(r, E, j)$). Starting with
 \begin{eqnarray}j-  Er\sin(\theta-u)\sim Er \left[-\cos\theta_i(\theta-u-\theta_i)+\frac{\sin\theta_i}2 (\theta-u-\theta_i)^2\right],
 \end{eqnarray}
 equation \eqref{e:dl1} can be rewritten as
 \begin{multline}\label{e:dl2}P(r\cos(\theta-u), \theta)\sim
 P(r\cos\theta_i, u+\theta_i)-\frac{\left(j-  Er\sin(\theta-u)\right)}{Er\cos\theta_i}\frac{d}{d\theta}P(r\cos\theta_i, u+\theta_i)\\
+\frac{\left(j-  Er\sin(\theta-u)\right)^2}{(Er\cos\theta_i)^2}
 \left[\frac{\sin\theta_i}{2\cos\theta_i}\frac{d}{d\theta}P(r\cos\theta_i, u+\theta_i)+\frac12\frac{d^2}{d\theta^2}P(r\cos\theta_i, u+\theta_i)\right].
\end{multline}

\begin{remark} Important remark about symmetry. We keep denoting $\theta_i$ for $\theta_i(r, E, j)$. We first note that $\theta_2=\pi-\theta_1$, $\cos\theta_1=-\cos\theta_2, \sin\theta_1=\sin\theta_2$.

Moreover, if $P$ satisfies the symmetry condition (B), we have for $r=1$ (restoring in our notation the dependence of $P$ on the full set of variables)
$$P(\cos\theta_1, u+\theta_1,  E, j)=P(\cos\theta_2, u+\theta_2,  E, j).$$
And similarly for all partial derivatives of $P$ if we assume the stronger symmetry condition (C).
\end{remark}
Here we don't necessarily want to assume that $P$ is symmetric; but, motivated by the previous remark, we introduce the following notation:
\begin{eqnarray}  \label{e:sym}
  P^\sigma(r, \theta, E, j)
&:=&\frac{P(r\cos\theta_1, \theta+\theta_1, E, j)+ P(-r\cos\theta_1, \theta+\pi-\theta_1, E, j)}{2}\nonumber\\
&=& \frac{P(r\cos\theta_1, \theta+\theta_1, E, j)+ P(r\cos\theta_2, \theta+\theta_2, E, j)}{2}
\end{eqnarray}
\begin{eqnarray}\label{e:antisym}
  P^\alpha(r, \theta, E, j)
&:=&\frac{P(r\cos\theta_1, \theta+\theta_1, E, j)- P(-r\cos\theta_1, \theta+\pi-\theta_1, E, j)}{2} \nonumber \\
&=&\frac{P(r\cos\theta_1, \theta+\theta_1, E, j)- P(r\cos\theta_2, \theta+\theta_2, E, j)}{2}
\end{eqnarray}
for $\theta_1=\theta_1(r, E, j), \theta_2=\theta_2(r, E, j)$ defined previously,
so that
$$P(r\cos\theta_1, \theta+\theta_1, E, j)=  P^\sigma(r, \theta, E, j)+  P^\alpha(r, \theta, E, j),$$
$$P(r\cos\theta_2, \theta+\theta_2, E, j) =  P^\sigma(r, \theta, E, j)-  P^\alpha(r, \theta, E, j).$$

 Modulo $O(h)$ the asymptotic expansion of the integral $I$ looks as follows:

\begin{multline}(2\pi h)^{-1/2}\int_{u-\pi/2}^{u+\pi/2}P(r\cos\theta_1, u+\theta_1,  E, j) e^{\frac{ij(\theta-\theta_0)}h}
    e^{i\frac{Er\cos(\theta-u)}h}d\theta
\\ +    (2\pi h)^{-1/2}\int_{u+\pi/2}^{u+3\pi/2}P(r\cos\theta_2, u+\theta_2,  E, j) e^{\frac{ij(\theta-\theta_0)}h}
    e^{i\frac{Er\cos(\theta-u)}h}d\theta\\
   =( 2\pi h)^{-1/2}\int_0^{2\pi}   P^\sigma(r, u, E, j) e^{\frac{ij(\theta-\theta_0)}h}
    e^{i\frac{Er\cos(\theta-u)}h}d\theta\\
    +( 2\pi h)^{-1/2}\int_0^{2\pi} \frac{  P^\alpha(r, u, E, j)}{E\cos\theta_1(r, E, j)}\left(E\cos(\theta-u)-ih\frac{1}{2r\cos^2\theta_1(r, E, j)}\right) e^{\frac{ij(\theta-\theta_0)}h}
    e^{i\frac{Er\cos(\theta-u)}h}d\theta
  \\
   =( 2\pi h)^{-1/2}\int_0^{2\pi}   P^\sigma(r, u, E, j) e^{\frac{ij(u-\theta)}h}
    e^{i\frac{Er\cos(\theta-\theta_0)}h}d\theta\\
    +( 2\pi h)^{-1/2}\int_0^{2\pi} \frac{  P^\alpha(r, u, E, j)}{E\cos\theta_1(r, E, j)}\left(E\cos(\theta-\theta_0)-ih\frac{1}{2r\cos^2\theta_1(r, E, j)}\right) e^{\frac{ij(u-\theta)}h}
    e^{i\frac{Er\cos(\theta-\theta_0)}h}d\theta 
    \label{e:h0}\end{multline}
We note that $e^{i\frac{Er\cos(\theta-\theta_0)}h}=e^{i\frac{(\xi_xx'+\xi_y y')}h}$ if $(x', y')=(-r\sin \theta, r\cos \theta)$, and
$E\cos(\theta-\theta_0)
    e^{i\frac{Er\cos(\theta-\theta_0)}h} =hD_r     e^{i(\xi_xx'+\xi_y y')/h}$ where
 $D_r=\frac{1}i \partial_r$.

\bigskip

{\bf Other terms of order $h$.} Apart from the term of order $h$ arising in the last line of \eqref{e:h0}, other terms of order $h$ come from evaluation of the integrals
\begin{multline*}
(2\pi h)^{-1/2}\int_{u-\pi/2}^{u+\pi/2} \Bigg[ \frac{-\left(j-  Er\sin(\theta-u)\right)}{Er\cos\theta_i}\frac{d}{d\theta}P(u+\theta_1, r\cos\theta_1)\\
+\frac{\left(j-  Er\sin(\theta-u)\right)^2}{(Er\cos\theta_1)^2}
 \left(\frac{\sin\theta_1}{2\cos\theta_1}\frac{d}{d\theta}P(u+\theta_1, r\cos\theta_1)+\frac12\frac{d^2}{d\theta^2}P(u+\theta_1, r\cos\theta_1)\right)\Bigg] \\
 e^{\frac{ij(\theta-\theta_0)}h}
    e^{i\frac{Er\cos(\theta-u)}h}d\theta
\\ +    (2\pi h)^{-1/2}\int_{u+\pi/2}^{u+3\pi/2}
 \Bigg[\frac{-\left(j-  Er\sin(\theta-u)\right)}{Er\cos\theta_2}\frac{d}{d\theta}P(u+\theta_2, r\cos\theta_2)\\
+\frac{\left(j-  Er\sin(\theta-u)\right)^2}{(Er\cos\theta_2)^2}
 \left(\frac{\sin\theta_2}{2\cos\theta_2}\frac{d}{d\theta}P(u+\theta_2, r\cos\theta_2)+\frac12\frac{d^2}{d\theta^2}P(u+\theta_2, r\cos\theta_2)\right)\Bigg] \\
 e^{\frac{ij(\theta-\theta_0)}h}
    e^{i\frac{Er\cos(\theta-u)}h}d\theta
 \end{multline*}
 After integrations by parts w.r.t. $\theta$ (using the fact that $j-  Er\sin(\theta-u)$ is the derivative of the phase $j(\theta-\theta_0)+Er\cos(\theta-u)$), this becomes
 \begin{multline}
(2\pi h)^{-1/2} ih\int_{u-\pi/2}^{u+\pi/2}
\frac{1}{(Er\cos\theta_1)^2} Er\cos(\theta-u)
\\ \left[\frac{\sin\theta_1}{2\cos\theta_1}\frac{d}{d\theta}P(u+\theta_1, r\cos\theta_1)+\frac12\frac{d^2}{d\theta^2}P(u+\theta_1, r\cos\theta_1)\right]e^{\frac{ij(\theta-\theta_0)}h}
    e^{i\frac{Er\cos(\theta-u)}h}d\theta
\\ +    (2\pi h)^{-1/2}ih\int_{u+\pi/2}^{u+3\pi/2}
  \frac{1}{(Er\cos\theta_2)^2}Er\cos(\theta-u)
\\ \left[\frac{\sin\theta_2}{2\cos\theta_2}\frac{d}{d\theta}P(u+\theta_2, r\cos\theta_2)+\frac12\frac{d^2}{d\theta^2}P(u+\theta_2, r\cos\theta_2)\right]e^{\frac{ij(\theta-\theta_0)}h}
    e^{i\frac{Er\cos(\theta-u)}h}d\theta\\
 =   (2\pi h)^{-1/2} ih\int_{u-\pi/2}^{u+\pi/2}
\frac{1}{(Er\cos\theta_1)}  \\ \left[\frac{\sin\theta_1}{2\cos\theta_1}\frac{d}{d\theta}P(u+\theta_1, r\cos\theta_1)+\frac12\frac{d^2}{d\theta^2}P(u+\theta_1, r\cos\theta_1)\right]e^{\frac{ij(\theta-\theta_0)}h}
    e^{i\frac{Er\cos(\theta-u)}h}d\theta
\\ +    (2\pi h)^{-1/2}ih\int_{u+\pi/2}^{u+3\pi/2}
  \frac{1}{(Er\cos\theta_2)}
\\ \left[\frac{\sin\theta_2}{2\cos\theta_2}\frac{d}{d\theta}P(u+\theta_2, r\cos\theta_2)+\frac12\frac{d^2}{d\theta^2}P(u+\theta_2, r\cos\theta_2)\right]e^{\frac{ij(\theta-\theta_0)}h}
    e^{i\frac{Er\cos(\theta-u)}h}d\theta +O(h^2)
    \label{e:secondorder}
 \end{multline}
 Here $\frac{d}{d\theta}P(u+\theta_i, r\cos\theta_i)$ and $\frac{d^2}{d\theta^2}P(u+\theta_i, r\cos\theta_i)$ may be replaced by their expressions in terms of partial derivatives of $P$, as in Remark \ref{r:partialderiv}.

\bigskip
We summarize our calculations in the following proposition.

\begin{proposition}
\label{p:summary}
Modulo a term of order $\frac{O(h^2)}{\inf_{P(s, \theta, E, J)\not=0}|E|^2}$ in the $L^2_{\comp}\To L^2_{\loc}$-norm of operators, $\scrU^* \Op_h(P) \scrU $ acts as follows. For $\xi=(\xi_x, \xi_y)$, $E=|{\xi}|$ and $(x, y)=(-r\sin u, r\cos u)$, we have
\begin{multline*}
\scrU^* \Op_h(P) \scrU   e_\xi (x, y)= \frac1{2\pi h}\int A(r, u, E, j) e^{\frac{ij(u-\theta)}h}
     e_\xi (-r\sin\theta, r\cos\theta)d\theta dj \\+\frac1{2\pi h}\int B(r, u, E, j)  e^{\frac{ij(u-\theta)}h}
    hD_r e_\xi (-r\sin\theta, r\cos\theta)d\theta dj\\
    + \frac{ih}{2\pi h}\int C(r, u, E, j) e^{\frac{ij(u-\theta)}h}
     e_\xi (-r\sin\theta, r\cos\theta)d\theta dj \\+\frac{ih}{2\pi h}\int D(r, u, E, j)  e^{\frac{ij(u-\theta)}h}
    hD_r e_\xi (-r\sin\theta, r\cos\theta)d\theta dj
\end{multline*}
where
$$ A(r, u, E, j)= P^\sigma(r, u, E, j)$$
$$ B(r, u, E, j)= \frac{P^\alpha(r, u, E, j)}{E\cos\theta_1(r, E, j)}$$
$$ C(r, u, E, j)=- \frac{1}{2E}\partial_s P^\sigma(r, u, E, j) + c^\sigma(r, u, E, j) -\frac{1}{2r\cos^2\theta_1(r, E, j)}\frac{P^\alpha(r, u, E, j)}{E\cos\theta_1(r, E, j)}$$
\begin{equation}\label{e:D}D(r, u, E, j)=-\frac{1}{2E}\frac{\partial_s P^\alpha(r, u, E, j)}{E\cos\theta_1(r, E, j)}
+\frac{c^\alpha(r, u, E, j)}{E\cos\theta_1(r, E, j)}
\end{equation}
with the notation $P^\sigma, P^\alpha$ of \eqref{e:sym}, \eqref{e:antisym}, and
where, in addition
\begin{multline}c(s, \theta, E, j)
 =\frac{1}{(Es)}
 \Bigg[\frac{j}{2Es}
 \left(\partial_2 P ( s, \theta, E, j)-\frac{j}E \partial_1 P ( s, \theta, E, j)\right)
 \\+\frac12\left(\partial^2_2 P ( s, \theta, E, j)-s\partial_1 P ( s, \theta, E, j)-\frac{j}E\partial_2\partial_1 P ( s, \theta, E, j)+\frac{j^2}{E^2}\partial^2_1 P ( s, \theta, E, j)\right)\Bigg]
\end{multline}
is calculated so that $c(u+\theta_i, r\cos\theta_i, E, j) $ equals
$$ \frac{1}{(Er\cos\theta_i)}
 \left[\frac{\sin\theta_i}{2\cos\theta_i}\frac{d}{d\theta}P(u+\theta_i, r\cos\theta_i)+\frac12\frac{d^2}{d\theta^2}P(u+\theta_i, r\cos\theta_i)\right]$$
(the expression appearing in the last line of \eqref{e:secondorder}).

Note that $A, B, C, D$ are real-valued functions if $P$ is.

\end{proposition}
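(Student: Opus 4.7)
The plan is to carry out the stationary-phase computation sketched in the lines above the Proposition, keeping track of all contributions up to order $O(h^2)$ in the $L^2_{\comp}\to L^2_{\loc}$ operator norm, and then reading off the four coefficient functions $A, B, C, D$ by identifying the operator's action on a general plane wave $e_\xi(x,y) = e^{i(\xi_x x + \xi_y y)/h}$. Since pseudodifferential operators are determined (in the relevant sense) by their action on plane waves, this suffices.

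First I would unfold the definition \eqref{e:U} of $\scrU$, writing $\scrU^* \Op_h(P)\scrU e_\xi$ as a multiple oscillatory integral. The variables $E''$ and $E'$ integrate out exactly against the plane wave (as shown in the computation already begun above), collapsing to the constraint $E = |\xi|$ and leaving a two-dimensional integral over $(\theta, j)$. Passing to polar coordinates $(x,y) = (-r\sin u, r\cos u)$ converts the generating function into $S = Er\cos(\theta - u) - Es$, and the oscillatory phase in $\theta$ becomes $j(\theta - \theta_0) + Er\cos(\theta - u)$. This phase has exactly two non-degenerate critical points $u + \theta_1(r,E,j)$ and $u + \theta_2(r,E,j) = u + \pi - \theta_1$, where $j = Er\sin\theta_i$; non-degeneracy is guaranteed because $P$ is supported in $\{|j|<E\}$, keeping $|\cos\theta_i|$ bounded below.

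Next I would apply the method of stationary phase at each critical point, expanded to order $h$. The crucial technical step, carried out in \eqref{e:dl1}--\eqref{e:dl2}, is to rewrite the second-order Taylor expansion of $P(r\cos(\theta-u),\theta,E,j)$ around each $\theta_i$ using the derivative of the phase $j - Er\sin(\theta-u)$ rather than $(\theta-u-\theta_i)$; this converts the subleading Taylor terms into boundary-free integrations by parts, producing the coefficient $c(s,\theta,E,j)$ defined in the proposition. Summing the leading-order contributions over the two critical points and organizing them by the symmetry/antisymmetry of $P$ under the involution $(r\cos\theta_1, \theta + \theta_1) \leftrightarrow (r\cos\theta_2, \theta + \theta_2)$ produces the decomposition $P = P^\sigma + P^\alpha$ of \eqref{e:sym}--\eqref{e:antisym}, giving the $A$ and $B$ coefficients.

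At order $h$ there are three distinct contributions to be collected, as displayed in \eqref{e:h0}--\eqref{e:secondorder}: (a) the subleading stationary-phase correction for the antisymmetric part, which produces the $-1/(2r\cos^2\theta_1)$ term inside $C$; (b) the quadratic Taylor remainders carried by the $c$-function after integration by parts; (c) the correction $-ih\,\partial_s P\,a'(|\xi|)$ coming from the $\sqrt{E}$ normalization factor $a(E)$ in the definition of $\scrU$. Finally, using the identity $E\cos(\theta-\theta_0)\, e^{iEr\cos(\theta-\theta_0)/h} = hD_r\, e_\xi(-r\sin\theta, r\cos\theta)$, each contribution is split into an ``identity-type'' piece and an ``$hD_r$-type'' piece; separating by symmetric/antisymmetric parts places them into $A, C$ versus $B, D$ in the formulas of the proposition. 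The main obstacle is the careful bookkeeping of the order-$h$ contributions, in particular verifying that the $\sqrt{E}$-correction combines cleanly with the quadratic Taylor correction $c$ and with the subleading stationary-phase term so that the final formulas for $C$ and $D$ take the compact form stated; the fact that $A, B, C, D$ are real when $P$ is real then follows by noting that all cancellations preserve the reality of the integrand after the symmetric/antisymmetric reorganization.
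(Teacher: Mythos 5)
Your proposal reproduces exactly the computation the paper itself uses to establish Proposition \ref{p:summary}: unfolding the integrals defining $\scrU$, collapsing the $E'$, $E''$ integrations against the plane wave, applying stationary phase at the two non-degenerate critical points $u+\theta_1$, $u+\theta_2$, rewriting the Taylor remainder via the phase derivative $j-Er\sin(\theta-u)$ to produce $c$, and correctly identifying all three sources of the order-$h$ terms (the $a'(|\xi|)=\tfrac{1}{2\sqrt{E}}$ correction from the $\sqrt{E}$ normalization, the quadratic Taylor/integration-by-parts terms, and the subleading correction attached to the antisymmetric part) before sorting them into $A,B,C,D$ via the $P^\sigma/P^\alpha$ decomposition and the identity $E\cos(\theta-\theta_0)e^{iEr\cos(\theta-\theta_0)/h}=hD_r e_\xi$. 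This is essentially the same argument as in the paper, and it is correct.
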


\section{Commutators\label{s:commutator}}
In the following formal calculations, it will be convenient to introduce the following notation.

${\cA}_E(P)$ is the operator whose action on $e_\xi$ at $(-r\sin u, r\cos u)$ is defined by
\begin{multline*}
  \frac1{2\pi h}\int A(r, u, E, j) e^{\frac{ij(u-\theta)}h}
    e_\xi (-r\sin\theta, r\cos\theta)d\theta dj \\+\frac1{2\pi h}\int B(r, u, E, j)  e^{\frac{ij(u-\theta)}h}
     hD_r e_\xi (-r\sin\theta, r\cos\theta)d\theta dj\\
    + \frac{ih}{2\pi h}\int C(r, u, E, j) e^{\frac{ij(u-\theta)}h}
    e_\xi (-r\sin\theta, r\cos\theta)d\theta dj \\+\frac{ih}{2\pi h}\int D(r, u, E, j)  e^{\frac{ij(u-\theta)}h}
     hD_r e_\xi (-r\sin\theta, r\cos\theta)d\theta dj\\
    =: I_E(P)+II_E(P)+ih III_E(P)+ih IV_E(P)
\end{multline*}
where $E=|{\xi}|$. We have shown that $\cA_E(P)$ coincides with $\scrU^* \Op(P) \scrU$ modulo $\frac{O(h^2)}{\inf_{P(s, \theta, E, J)\not=0}|E|^2}$ in the $L^2_{\comp}\To L^2_{\loc}$-norm of operators.

We now define ${\cA}_H(P)$ as the operator whose action on $e_\xi$ at $(-r\sin u, r\cos u)$ is defined by
\begin{multline*}
  \frac1{2\pi h}\int A(r, u, \sqrt{2H}, j) e^{\frac{ij(u-\theta)}h}
    e_\xi (-r\sin\theta, r\cos\theta)d\theta dj \\+\frac1{2\pi h}\int B(r, u, \sqrt{2H}, j) e^{\frac{ij(u-\theta)}h}
    hD_r e_\xi (-r\sin\theta, r\cos\theta)d\theta dj\\
    + \frac{ih}{2\pi h}\int C(r, u, \sqrt{2H}, j) e^{\frac{ij(u-\theta)}h}
    e_\xi (-r\sin\theta, r\cos\theta)d\theta dj \\+\frac{ih}{2\pi h}\int D(r, u, \sqrt{2H}, j)  e^{\frac{ij(u-\theta)}h}
    hD_r e_\xi (-r\sin\theta, r\cos\theta)d\theta dj
    \\
    =: I_H(P)+II_H(P)+ih III_H(P)+ih IV_H(P)
\end{multline*}
In other words, in the definition of $\cA_E(P)$ we have replaced $E$
with $\sqrt{2H}$ in the symbols.
  For us, $\cA_H(P)$ is a very convenient operator since we have
  $$ I_H(P)=A(r, u, \sqrt{2hD_t}, hD_u), \qquad III_H(P)=C(r, u, \sqrt{2hD_t}, hD_u)$$
  (so that they do not involve any derivative w.r.t. $r$) and
  $$ II_H(P)=B(r, u, \sqrt{2hD_t}, hD_u)\circ hD_r, \quad IV_H(P)=D(r, u, \sqrt{2hD_t}, hD_u)\circ hD_r$$
  which is is only of degree $1$ w.r.t. $r$.

  We would like to use everywhere $\cA_H(P)$ instead of $\cA_E(P)$. This is possible thanks to the following lemma:
  \begin{lemma}\label{l:EH} If $u_h$ is a solution to the Schr\"odinger equation \eqref{e:S} satisfying in addition the assumptions of Remark \ref{r:H1}, then we have
 \begin{multline*}
  \la u_h, \cA_E(P)u_h\ra_{L^2(\R^2\times\R^2\times \R)}-\la u_h, \cA_H(P)u_h\ra_{L^2(\R^2\times\R^2\times \R)} \\
  = O\bigg(  h\norm{\partial_E P}_\infty (h\inf_{P(s, \theta, E, J)\not=0}|E|)^{-1/2-\eps}\bigg).
  \end{multline*}
  \end{lemma}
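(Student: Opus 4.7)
The strategy is to exploit the fact that $\cA_E(P)$ and $\cA_H(P)$ differ only through the substitution of $E=|\xi|$ by $\sqrt{2H}$ inside the four symbols $A,B,C,D$ of Proposition \ref{p:summary}, and that for solutions of the Schr\"odinger equation, the symbol $|\xi|^2 - 2H$ quantized at scale $h$ essentially vanishes on $u_h$ up to a contribution of order $h^2$ plus a boundary term controlled by hidden regularity.

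First I would decompose
\[
\cA_E(P) - \cA_H(P) = \big(I_E(P)-I_H(P)\big) + \big(II_E(P)-II_H(P)\big) + ih\big(III_E-III_H\big)(P) + ih\big(IV_E - IV_H\big)(P),
\]
and treat each symbol via a one-variable Taylor expansion in its $E$-slot, writing for example
\[
A(r,u,|\xi|,j) - A(r,u,\sqrt{2H},j) = \big(|\xi| - \sqrt{2H}\big)\, \widetilde{A}\big(r,u,|\xi|,\sqrt{2H},j\big),
\]
where $\widetilde A$ is a smooth bounded symbol whose sup-norm is controlled by $\|\partial_E A\|_\infty$ (and similarly for $B,C,D$). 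Inspection of the formulas in Proposition \ref{p:summary} shows that each of $\|\partial_E A\|_\infty$, $\|\partial_E B\|_\infty$, $\|\partial_E C\|_\infty$, $\|\partial_E D\|_\infty$ is dominated by $\|\partial_E P\|_\infty$ modulated by a power of $(\inf|E|)^{-1}$.

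The crucial algebraic step is then the factorization
\[
|\xi| - \sqrt{2H} \;=\; \frac{|\xi|^2 - 2H}{|\xi| + \sqrt{2H}},
\]
on the support of $P$, where $|\xi|+\sqrt{2H}$ is bounded below by $\inf_{\supp P}|E|$. I would quantize this factorization as an operator composition
\[
\Op_h\!\left(\frac{\widetilde A}{|\xi|+\sqrt{2H}}\right) \circ \Op_h\!\big(|\xi|^2 - 2H\big) \; + \; [\,\text{commutator of order } O(h)\,],
\]
and observe that on the extended solution $u_h$ we have, by \eqref{e:schrodelta},
\[
\Op_h(|\xi|^2 - 2H)\,u_h = (|hD_z|^2 - 2h^2 D_t)\,u_h = -2h^2 V\,u_h \;+\; ih^2\,\partial_n u_h\otimes\delta_{\partial\ID}.
\]
The interior term $-2h^2 V u_h$ yields an $O(h^2)$ contribution which, after dividing by the $L^2 \to L^2$ norm of $\Op_h\!\big(\widetilde A/(|\xi|+\sqrt{2H})\big)$ (bounded via Calder\'on--Vaillancourt by $\|\partial_E P\|_\infty/(\inf|E|)^2$), already produces a term of size $O(h^2 \|\partial_E P\|_\infty (\inf|E|)^{-2})$, which is better than claimed.

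The main obstacle, and source of the factor $(h\inf|E|)^{-1/2-\eps}$, is the boundary contribution $ih^2 \partial_n u_h\otimes \delta_{\partial\ID}$. By the hidden regularity of Proposition \ref{p:regboundary} (applicable under the assumptions of Remark \ref{r:H1}), the sequence $h\partial_n u_h$ is bounded in $L^2_{\loc}(\R\times\partial\ID)$, but $\partial_n u_h\otimes\delta_{\partial\ID}$ only lies in a space of type $H^{-1/2-\eps}$ on $\R^2\times\R$. Testing this boundary distribution against the operator $\Op_h\!\big(\widetilde A/(|\xi|+\sqrt{2H})\big)u_h$ therefore costs a factor $(h\inf|E|)^{-1/2-\eps}$, coming from Sobolev interpolation and the semiclassical rescaling. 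Combining this with the prefactor $h$ arising from the Taylor expansion plus the size of the boundary trace gives the announced bound. The $O(h)$ commutators and the $ih$-prefactor terms coming from $III$ and $IV$ are then easily absorbed, completing the proof.
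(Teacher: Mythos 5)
Your plan is essentially the argument the paper intends: Taylor-expand $A,B,C,D$ in the $E$-slot, factor $|\xi|-\sqrt{2H}=(|\xi|^2-2H)/(|\xi|+\sqrt{2H})$, use the extended equation \eqref{e:schrodelta} so that the interior contribution is $O(h^2)$, and absorb the boundary distribution $h^2\,\partial_n u_h\otimes\delta_{\partial\ID}$ at the cost of $h^{-s}$, $s=1/2+\eps$, exactly as in the derivation of \eqref{e:pdo}--\eqref{e:final2} in Appendix~\ref{s:osc}. The only point to watch is that $A,B,C,D$ depend on $E$ not just through the $E$-argument of $P$ but also through $\theta_1(r,E,j)$ and the explicit prefactors $1/(E\cos\theta_1)$, so strictly speaking $\partial_s P$, $\partial_\theta P$ and negative powers of $\inf|E|$ also enter; these terms are lower order in the intended application (where only $\partial_E P,\partial_J P$ are of size $h^{-1}$), which is why the stated bound keeps only $\norm{\partial_E P}_\infty$.
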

  For instance if $P$ has bounded derivatives and $\inf_{P(s, \theta, E, J)\not=0}|E|$ is bounded away from $0$ independently of $h$, the error above is $O(h^{1/2-\eps})$.

   The goal of this section is to calculate explicitly (in terms of $P$) the expression of the commutator $[\Delta, {\cA}_H(P)]$, where $\Delta$ is the laplacian on $\R^2$. This could, in principle, be done by brutal calculation, using the expression of the laplacian in polar coordinates ($\Delta_{r, u}=\frac{\partial^2}{\partial_r^2}+\frac{1}r \frac{\partial}{\partial r }+\frac1{r^2}\frac{\partial^2}{\partial_u^2}$). But this is too cumbersome and we try a less frontal approach. We want to use the fact that  $[\Delta, {\cA}_E(P)]$ is known (from the exact Egorov theorem, equation \eqref{e:Egorov} below) and to see how the calculus is modified when we replace ${\cA}_E(P)$ by ${\cA}_H(P)$.

   Recall from Lemma \ref{p:FIO} and formula \eqref{e:weylcomm} that we have the exact formula (without remainder term)

   \begin{equation}\label{e:Egorov}
\left[-\frac{ih\Delta}2, \scrU^* \Op_h(P) \scrU\right]= \scrU^*
\Op_h\left(E\partial_1 b-\frac{ih}2\partial_1^2 b\right)\scrU
 \end{equation}

    \subsection{Formal calculation of $[\Delta, \cA_E(P)]$}
    We use the expression of $\nabla$ in polar coordinates: $\nabla=( \partial_r ,  r^{-1}\partial_u)$ in the orthonormal frame $(e_r, e_u)$. We also use the formula $\Delta(fg)=f\Delta g +2\nabla f\cdot\nabla g+g\Delta f$.
    We obtain the following expression of $[\Delta, I_E(P)]$ applied to $e_\xi$ at $(x, y)=(-r\sin u, r\cos u)$:
    \begin{multline}(2\pi h)^{-1}\int_{u-\pi/2}^{u+\pi/2}\Delta_{r, u}A(r, u, E, j) e^{\frac{ij(\theta-\theta_0)}h}
    e^{i\frac{Er\cos(\theta-u)}h}d\theta dj\\
    +\frac{2i}{h}(2\pi h)^{-1}\int_{u-\pi/2}^{u+\pi/2} \partial_r A(r, u, E, j)E\cos(\theta-u) e^{\frac{ij(\theta-\theta_0)}h}
    e^{i\frac{Er\cos(\theta-u)}h}d\theta dj\\
   +\frac{2i}{h} (2\pi h)^{-1}\int_{u-\pi/2}^{u+\pi/2} r^{-2}\partial_u A(r, u, E, j)j e^{\frac{ij(\theta-\theta_0)}h}
    e^{i\frac{Er\cos(\theta-u)}h}d\theta dj
   \end{multline}
Note that the details of the calculations are actually not important, we only need to know ``what the calculations look like'' at a formal level (in particular, small errors of calculation are harmless).

Similarly, $[\Delta, II_E(P)]$ has the expression
 \begin{multline}(2\pi h)^{-1}\int_{u-\pi/2}^{u+\pi/2}\Delta_{r, u}B(r, u, E, j)E\cos(\theta-u)  e^{\frac{ij(\theta-\theta_0)}h}
    e^{i\frac{Er\cos(\theta-u)}h}d\theta dj\\
    +\frac{2i}{h}(2\pi h)^{-1}\int_{u-\pi/2}^{u+\pi/2} \partial_r B(r, u, E, j)(E\cos(\theta-u))^2 e^{\frac{ij(\theta-\theta_0)}h}
   e^{i\frac{Er\cos(\theta-u)}h}d\theta dj\\
   +\frac{2i}{h} (2\pi h)^{-1}\int_{u-\pi/2}^{u+\pi/2} r^{-2}\partial_u B(r, u, E, j)j E\cos(\theta-u)e^{\frac{ij(\theta-\theta_0)}h}
    e^{i\frac{Er\cos(\theta-u)}h}d\theta dj\\
    =(2\pi h)^{-1}\int_{u-\pi/2}^{u+\pi/2}\Delta_{r, u}B(r, u, E, j)E\cos(\theta-u)  e^{\frac{ij(\theta-\theta_0)}h}
    e^{i\frac{Er\cos(\theta-u)}h}d\theta dj\\
    +\frac{2i}{h}(2\pi h)^{-1}\int_{u-\pi/2}^{u+\pi/2} \partial_r B(r, u, E, j)[(E\cos(\theta_1))^2+ih\frac{\cos\theta_1}{(Er)^2}] e^{\frac{ij(\theta-\theta_0)}h}
   e^{i\frac{Er\cos(\theta-u)}h}d\theta dj\\
   +\frac{2i}{h} (2\pi h)^{-1}\int_{u-\pi/2}^{u+\pi/2} r^{-2}\partial_u B(r, u, E, j)j E\cos(\theta-u)e^{\frac{ij(\theta-\theta_0)}h}
    e^{i\frac{Er\cos(\theta-u)}h}d\theta dj +O(h^2)
    \end{multline}

   Similar calculations can be done for $[\Delta, III_E(P)]$ and $[\Delta, IV_E(P)]$. We do not need the explicit expressions, but need only to note that it gives a final expression of  $[-ih\Delta/2, \cA_E(P)]$ applied to $e_\xi$ at $(-r\sin u, r\cos u)$ in the form:
\begin{multline*}
  \frac1{2\pi h}\int K(r, u, E, j) e^{\frac{ij(u-\theta)}h}
    e^{i\frac{Er\cos(\theta-\theta_0)}h}d\theta dj \\+\frac1{2\pi h}\int L(r, u, E, j) E\cos(\theta-\theta_0)e^{\frac{ij(u-\theta)}h}
    e^{i\frac{Er\cos(\theta-\theta_0)}h}d\theta dj\\
    + \frac{ih}{2\pi h}\int M(r, u, E, j) e^{\frac{ij(u-\theta)}h}
    e^{i\frac{Er\cos(\theta-\theta_0)}h}d\theta dj \\+\frac{ih}{2\pi h}\int N(r, u, E, j) E\cos(\theta-\theta_0)e^{\frac{ij(u-\theta)}h}
    e^{i\frac{Er\cos(\theta-\theta_0)}h}d\theta dj\\
    + (2\pi h)^{-1}\int  \partial_r B(r, u, E, j)\left[(E\cos(\theta_1))^2+ih\frac{\cos\theta_1}{(Er)^2}\right] e^{\frac{ij(\theta-\theta_0)}h}
   e^{i\frac{Er\cos(\theta-u)}h}d\theta dj\\
   + (2\pi h)^{-1}\int ih \partial_r D(r, u, E, j)(E\cos(\theta_1))^2 e^{\frac{ij(\theta-\theta_0)}h}
   e^{i\frac{Er\cos(\theta-u)}h}d\theta dj
\end{multline*}
Note that the two last lines may obviously be incorporated into the previous terms; but we shall see later why it is convenient to keep them separate.

 The functions $K, L, M,N$ are partial differential operators applied to $A, B, C, D$, and could in principle be expressed explicitly in terms of $P$, but we actually do not need these expressions.

\subsection{Identification\label{s:identifications}}
We know from \eqref{e:Egorov} that $[-\frac{ih\Delta}2, \scrU^* \Op_h(P) \scrU]= \scrU^*
\Op_h(E\partial_1 P-\frac{ih}2\partial_1^2 P)\scrU= \cA_E(E\partial_1 P-\frac{ih}2\partial_1^2 P) +O(h^2)$.

Using the identification lemma \ref{l:identification} below, this leads directly to the identifications:
 $$ K(r, u, E, j) +\partial_r B(r, u, E, j)(E\cos(\theta_1))^2= A_{E\partial_1 P}$$
$$  L(r, u, E, j)=B_{E\partial_1 P}$$
$$ M(r, u, E, j)+  \partial_r B(r, u, E, j) \frac{\cos\theta_1}{(Er)^2}+  \partial_r D(r, u, E, j)(E\cos(\theta_1))^2= C_{E\partial_1 P}-\frac12
A_{\partial_1^2 P}$$
$$  N(r, u, E, j)=D_{E\partial_1 P}-\frac12 B_{\partial_1^2 P}$$
where $\theta_1=\theta_1(r, E, j)$ denotes as before the solution in $[-\pi/2, \pi/2)$ of $\sin\theta_1=j/Er$. On the right-hand sides, notation such as $A_{E\partial_1 P}$, $B_{E\partial_1 P}$ etc. means ``the functions $A, B$ etc. associated to $E\partial_1P$ by the formulas of Proposition~\ref{p:summary}''.

To justify these identifications we are using the following:

\begin{lemma} \label{l:identification}Let $A$ and $B$ be two {\em smooth} real-valued functions. Then the values of
\begin{multline}\label{e:identity}
  \frac1{2\pi h}\int A(r, u, E, j) e^{\frac{ij(u-\theta)}h}
    e^{i\frac{Er\cos(\theta-\theta_0)}h}d\theta dj \\+\frac1{2\pi h}\int B(r, u, E, j) \cos(\theta-\theta_0)e^{\frac{ij(u-\theta)}h}
    e^{i\frac{Er\cos(\theta-\theta_0)}h}d\theta dj
\end{multline}
for all $r, u, \theta_0, E$
determine $A$ and $B$ uniquely.
\end{lemma}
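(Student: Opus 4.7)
The natural approach is the method of stationary phase in the variables $(\theta,j)$, regarding $h$ as a small parameter to be sent to $0$. Writing the phase as
\[
\Phi(\theta,j) := j(u-\theta)+Er\cos(\theta-\theta_0),
\]
the critical point equations $\partial_\theta\Phi = -j-Er\sin(\theta-\theta_0)=0$ and $\partial_j\Phi = u-\theta=0$ produce a unique nondegenerate critical point $(\theta_c,j_c) = (u,\,-Er\sin(u-\theta_0))$, with Hessian
$\begin{pmatrix} -Er\cos(u-\theta_0) & -1 \\ -1 & 0\end{pmatrix}$
of determinant $-1$ and signature zero. Applying the standard two-dimensional stationary phase formula to each of the two oscillatory integrals in~\eqref{e:identity} would then give
\[
\text{\eqref{e:identity}} \;=\; e^{iEr\cos(u-\theta_0)/h}\Bigl[A(r,u,E,j_c)+B(r,u,E,j_c)\cos(u-\theta_0)\Bigr] \;+\; O(h), \qquad h\to 0.
\]
Knowledge of the left-hand side for all $(r,u,\theta_0,E)$ (and all sufficiently small $h$, as is the case in the application to Section~\ref{s:identifications}) therefore recovers the bracketed leading coefficient pointwise, via $\lim_{h\to 0}e^{-iEr\cos(u-\theta_0)/h}\times\text{\eqref{e:identity}}$.

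The next step is to separate $A$ from $B$. Fix $(r,u,E)$ with $r,E>0$, and fix $j_0$ with $|j_0|<Er$. The equation $-Er\sin\alpha = j_0$ admits exactly two solutions in the interval $\alpha\in(-\pi/2,3\pi/2)$, namely $\alpha_1\in(-\pi/2,\pi/2)$ and $\alpha_2 = \pi-\alpha_1$, and they satisfy $\cos\alpha_1 = -\cos\alpha_2 \neq 0$. Assuming \eqref{e:identity} vanishes identically, evaluating the resulting vanishing of the bracketed leading coefficient at $\theta_0 = u-\alpha_1$ and $\theta_0 = u-\alpha_2$ yields the linear system
\[
\begin{pmatrix} 1 & \cos\alpha_1 \\ 1 & -\cos\alpha_1 \end{pmatrix}\begin{pmatrix} A(r,u,E,j_0) \\ B(r,u,E,j_0) \end{pmatrix} \;=\; \begin{pmatrix} 0 \\ 0 \end{pmatrix}.
\]
Since the determinant $-2\cos\alpha_1$ does not vanish, one concludes $A(r,u,E,j_0) = B(r,u,E,j_0) = 0$ for every $(r,u,E,j_0)$ with $r,E>0$ and $|j_0|<Er$.

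The only subtlety is that stationary phase can only probe the region $\{|j|<Er\}$, since real critical points are constrained by $|j_c|\leq Er$. This is harmless for the intended applications in Section~\ref{s:identifications}: the functions $A,B$ to which the lemma is applied arise from Proposition~\ref{p:summary} and carry symbolic content only on $\{|j|<Er\}$, as their defining formulas involve $\theta_1(r,E,j) = \arcsin(j/(Er))$. Uniqueness on the complementary region would require a separate argument (or additional hypotheses such as support properties), but plays no role in what follows. A secondary but routine technical point is verifying that the remainder in stationary phase is genuinely $o(1)$ uniformly on compact subsets of $\{r,E>0\}$; this follows from the smoothness of $A,B$ together with the compact support in $j$ which they inherit from the symbols $P$ in the applications.
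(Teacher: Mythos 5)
Your computation of the joint stationary phase in $(\theta,j)$ is correct as far as it goes (single nondegenerate critical point, $|\det|=1$, signature $0$, leading term $A(j_c)+B(j_c)\cos(u-\theta_0)$), and your separation of $A$ from $B$ via the two preimages $\alpha_1$ and $\pi-\alpha_1$ of a given $j_0$ under $\alpha\mapsto -Er\sin\alpha$ is clean --- it even dispenses with the realness hypothesis. This is a genuinely different route from the paper's, which never performs stationary phase in the $j$ variable: there one first integrates \eqref{e:identity} against $e^{in\theta_0}d\theta_0$, which through the pseudodifferential structure in $(\theta,j)$ fixes $j=nh$ exactly, and only then applies one-dimensional stationary phase in $\theta$, where the amplitude $A(r,u,E,nh)$ is a constant; the two critical points $\theta-\theta_0=-\theta_1$ and $\pi+\theta_1$ produce contributions proportional to $A\cos\Psi$ and $iB\cos\theta_1\sin\Psi$, and realness of $A,B$ separates them through real and imaginary parts.

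The gap is in the step you dismiss as routine: stationary phase in the $j$ variable is not legitimate for the amplitudes to which the lemma is actually applied. Appendix~\ref{coord-polar} works under the standing assumption $\norm{\partial_J^{\delta}P}_\infty\leq C_\delta h^{-\delta}$, and in Section~\ref{s:identifications} the functions $A,B$ (hence $K,L,M,N$) are built from symbols of the form $b(\dots,J'/h,\dots)\,\chi(J'/(Rh))$, i.e.\ they oscillate and concentrate in $j$ at scale $h$. For such amplitudes the remainder in your two-dimensional expansion, controlled by $h$ times second derivatives of the amplitude at the critical point, is of size $O(h\cdot h^{-2})$ and swamps the leading term; moreover the limit $h\to 0$ you invoke to extract the leading coefficient is unavailable because $A$ and $B$ themselves depend on $h$. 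This is exactly why the paper warns, just before this lemma, that stationary phase is applied with respect to $\theta$ only, ``while $j$ is kept fixed, since our symbols can be rapidly oscillating in $j$''. Your argument does prove the lemma for $h$-independent smooth amplitudes compactly supported in $\{|j|<Er\}$, but not with the uniformity needed for the identifications of Section~\ref{s:identifications}; to repair it you must decouple the $j$-dependence of the amplitude from the oscillatory integral before doing any asymptotics, which is precisely what the Fourier decomposition in $\theta_0$ accomplishes.
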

\begin{proof}Integrating \eqref{e:identity} along $e^{in\theta_0}d\theta_0$ ($\theta_0\in[0, 2\pi]$, $n$ an arbitrary integer) yields the value
\begin{multline}
   \int A(r, u, E, nh) e^{{in(u-\theta)}}
    e^{i\frac{Er\cos(\theta-\theta_0)}h}d\theta \\+\int B(r, u, E, nh) \cos(\theta-\theta_0)e^{{in(u-\theta)}}
    e^{i\frac{Er\cos(\theta-\theta_0)}h}d\theta
\end{multline}
If we take $n=n(h)$ a family of even integers growing like $1/h$, application of the method of stationary phase yields that this is (up to $O(h)$)
\begin{multline}\label{e:AB}2e^{inu}(2\pi h)^{1/2}[\sin^{1/2}\theta_1\, A(r, u, E, hn(h)\cos(-n\theta_1+Erh^{-1}\cos\theta_1+\pi/4) \\ +iB(r, u, E, hn(h))\sin(-n\theta_1+Erh^{-1}\cos\theta_1+\pi/4)] \end{multline}
where $\theta_1$ is the solution in $[-\pi/2, \pi/2)$ of $\sin\theta_1=\frac{h n(h)}{Er}$. If $A$ and $B$ are continuous and real-valued
then \eqref{e:AB} suffices to determine $A$ and $B$.
\end{proof}

 \subsection{Formal calculation of $[\Delta, \cA_H(P)]$}
 We want to use the previous identities to find the formal expression of $[\Delta, \cA_H(P)]$. Remember that $\cA_H(P)$ is the operator we want to use in all our proofs, because it comes naturally into a ``tangential'' part and a ``radial'' part of degree $1$.

 If we compare the formal calculations leading to the expressions of $[\Delta, \cA_E(P)]$ and $[\Delta, \cA_H(P)]$, we see that they are identical and thus $[-ih\Delta/2, \cA_H(P)]$ applied to $e_\xi$ at $(-r\sin u, r\cos u)$ has the form
 \begin{multline*}
  \frac1{2\pi h}\int K(r, u, \sqrt{2H}, j) e^{\frac{ij(u-\theta)}h}
    e^{i\frac{Er\cos(\theta-\theta_0)}h}d\theta dj \\+\frac1{2\pi h}\int L(r, u, \sqrt{2H}, j) E\cos(\theta-\theta_0)e^{\frac{ij(u-\theta)}h}
    e^{i\frac{Er\cos(\theta-\theta_0)}h}d\theta dj\\
    + \frac{ih}{2\pi h}\int M(r, u, \sqrt{2H}, j) e^{\frac{ij(u-\theta)}h}
    e^{i\frac{Er\cos(\theta-\theta_0)}h}d\theta dj \\+\frac{ih}{2\pi h}\int N(r, u, \sqrt{2H}, j) E\cos(\theta-\theta_0)e^{\frac{ij(u-\theta)}h}
    e^{i\frac{Er\cos(\theta-\theta_0)}h}d\theta dj\\
    + (2\pi h)^{-1}\int_{u-\pi/2}^{u+\pi/2} \partial_r B(r, u, \sqrt{2H}, j)\left[(E\cos(\theta_1))^2+ih\frac{\cos\theta_1}{(Er)^2}\right] e^{\frac{ij(\theta-\theta_0)}h}
   e^{i\frac{Er\cos(\theta-u)}h}d\theta dj\\
   + (2\pi h)^{-1}\int_{u-\pi/2}^{u+\pi/2}ih \partial_r D(r, u, \sqrt{2H}, j)(E\cos(\theta_1))^2 e^{\frac{ij(\theta-\theta_0)}h}
   e^{i\frac{Er\cos(\theta-u)}h}d\theta dj
\end{multline*}
Note that $\theta_1=\theta_1(r, E, j)$ and that the symbol in the last two lines still depends on $E$ (this is why we treat it separately). Everywhere else in the symbol, $E$ has been replaced by $\sqrt{2H}$. Note also that $(E\cos(\theta_1))^2=E^2-\frac{j^2}{r^2}$.

From this and from the identifications of Section~\ref{s:identifications}, we deduce the final formula

\begin{proposition}\label{p:commutH} There exists a function $R(r, u, E, \sqrt{2H}, j)$ such that
\begin{multline}[-ih\Delta/2, \cA_H(P)]
=\cA_H(E\partial_1 P)-\frac{ih}2\cA_H(\partial_1^2 P)+O(h^2)\\
+ \partial_r B(r, u, \sqrt{2hD_t}, hD_u)\circ (-h^2\Delta-2hD_t) + ih R(r, u, \sqrt{-h^2\Delta},  \sqrt{2hD_t}, hD_u)\circ (-h^2\Delta-2hD_t)
\end{multline}
\end{proposition}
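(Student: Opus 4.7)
\medskip
\noindent\emph{Proof plan.} The starting point is the exact commutator identity \eqref{e:Egorov}, which (together with Proposition~\ref{p:summary} applied to the symbol $E\partial_1 P-\frac{ih}{2}\partial_1^2 P$) reads modulo $O(h^2)$
\[
[-\tfrac{ih}{2}\Delta,\cA_E(P)] = \cA_E(E\partial_1 P)-\tfrac{ih}{2}\cA_E(\partial_1^2 P)+O(h^2).
\]
I would like to transfer this identity from $\cA_E$ to $\cA_H$. First, I will compute $[-\tfrac{ih}{2}\Delta,\cA_H(P)]$ formally, applied to a plane wave $e_\xi$ (with $|\xi|=E$), by expressing $\Delta=\partial_r^2+r^{-1}\partial_r+r^{-2}\partial_u^2$ in polar coordinates and using the Leibniz rule. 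Since $\cA_H(P)$ differs from $\cA_E(P)$ only by substituting $E\mapsto\sqrt{2H}$ in the symbol, the two commutators admit formally the same decomposition — as a sum indexed by four kernels $K,L,M,N$ plus the terms coming from $\partial_r B$ and $\partial_r D$ — with the only difference being that in the $\cA_H$ case the kernels are evaluated at $\sqrt{2H}$ while the $E$-factors arising from differentiating the oscillating factor $e^{iEr\cos(\theta-u)/h}$ in $r$ remain $E$.

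Next, I will use the identifications of Section~\ref{s:identifications}: the kernels $K,L,M,N$ are determined from the knowledge of $[-\tfrac{ih}{2}\Delta,\cA_E(P)]$ via
\[
K+\partial_r B\cdot(E\cos\theta_1)^2=A_{E\partial_1 P},\quad L=B_{E\partial_1 P},\quad \text{etc.}
\]
Substituting $E\mapsto\sqrt{2H}$ in these identities and assembling things back, the difference
\[
[-\tfrac{ih}{2}\Delta,\cA_H(P)]-\cA_H(E\partial_1 P)+\tfrac{ih}{2}\cA_H(\partial_1^2 P)
\]
collapses to the contributions where the surviving $E$-factors from $(hD_r)e_\xi=E\cos(\theta-u)e_\xi$ disagree with the substituted $\sqrt{2H}$-factors in the kernels. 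Concretely, these are $\partial_r B(r,u,\sqrt{2H},j)$ multiplied by $\bigl[(E\cos\theta_1(r,E,j))^2-2H\cos^2\theta_1(r,\sqrt{2H},j)\bigr]$, and similarly for $\partial_r D$ (plus a lower-order $ih$-contribution involving $\partial_r B\cdot\cos\theta_1/(Er)^2$).

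The decisive simplification is the identity $E\sin\theta_1(r,E,j)=j/r$, which is \emph{independent of} $E$. It gives $E^2\cos^2\theta_1(r,E,j)=E^2-j^2/r^2$ and likewise $2H\cos^2\theta_1(r,\sqrt{2H},j)=2H-j^2/r^2$, so
\[
(E\cos\theta_1(r,E,j))^2-2H\cos^2\theta_1(r,\sqrt{2H},j)=E^2-2H.
\]
Acting on an arbitrary superposition of modes $e_\xi\otimes e^{-iHt/h}$, the factor $E^2-2H$ quantizes as $-h^2\Delta-2hD_t$. Similarly the second bracket $\cos\theta_1(r,E,j)/(Er)^2-\cos\theta_1(r,\sqrt{2H},j)/(\sqrt{2H}r)^2$ vanishes on $\{E^2=2H\}$, hence factors as $(E^2-2H)\,R(r,u,E,\sqrt{2H},j)$ for a smooth function $R$ (away from $E=0$, which is outside the support of the symbols). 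Absorbing all sub-leading terms into $R$ yields the announced expression
\[
[-\tfrac{ih}{2}\Delta,\cA_H(P)]=\cA_H(E\partial_1 P)-\tfrac{ih}{2}\cA_H(\partial_1^2 P)+\partial_r B\circ(-h^2\Delta-2hD_t)+ih\,R\circ(-h^2\Delta-2hD_t)+O(h^2).
\]

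The main obstacle is bookkeeping: the formal computation of $[\Delta,\cA_H(P)]$ produces many lower-order contributions (from the Leibniz rule applied to the four pieces $A,B,C,D$ and from the stationary-phase error terms of Proposition~\ref{p:summary}), and one must check carefully that, after substitution and regrouping, everything not of the announced form is either absorbed into $\cA_H(E\partial_1 P)$, into $-\tfrac{ih}{2}\cA_H(\partial_1^2 P)$, or into the two operator-valued tails times $(-h^2\Delta-2hD_t)$, with the rest truly $O(h^2)$ in $L^2_{\comp}\to L^2_{\loc}$ norm. The key algebraic input that makes this possible is the fact that $E\sin\theta_1=j/r$ is independent of $E$, so the mismatch between $E$-factors and $\sqrt{2H}$-factors is exactly $E^2-2H$ and nothing more.
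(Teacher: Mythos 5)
Your proposal is correct and follows essentially the same route as the paper: start from the exact Egorov identity for $\cA_E$, run the formal polar-coordinate computation of the commutator, use the identification lemma of Section~\ref{s:identifications} to pin down the kernels, and observe that the only discrepancy between the $\cA_E$ and $\cA_H$ versions is the mismatch $(E\cos\theta_1(r,E,j))^2-2H\cos^2\theta_1(r,\sqrt{2H},j)=E^2-2H$ (plus the analogous $ih$-order bracket, handled by the Taylor/division argument defining $R$), which quantizes to $-h^2\Delta-2hD_t$. The key algebraic input you isolate — that $E\sin\theta_1(r,E,j)=j/r$ is independent of $E$ — is exactly what the paper exploits via $(E\cos\theta_1)^2=E^2-j^2/r^2$.
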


\begin{proof}
Indeed, the identifications of Section~\ref{s:identifications} yield
\begin{multline}[-ih\Delta/2, \cA_H(P)]=
I_{\sqrt{2H}}(E\partial_1 P)+II_{\sqrt{2H}}( E\partial_1 P) \\ +
 ih\left(III_{\sqrt{2H}}(E\partial_1 P)-1/2
I_{\sqrt{2H}}(\partial_1^2 P)\right)\\
+ih\left( IV_{\sqrt{2H}}(E\partial_1 P) -1/2 II_{\sqrt{2H}}(\partial_1^2 P) \right)\\
+ \partial_r B(r, u, \sqrt{2hD_t}, hD_u)\circ (-h^2\Delta-2hD_t) + ih R(r, u, \sqrt{-h^2\Delta},  \sqrt{2hD_t}, hD_u)\circ (-h^2\Delta-2hD_t)
\end{multline}
where the function $R$ is defined by the identity $ R(r, u, E, \sqrt{2H}, j) (E^2-2H)$
is
\begin{multline*} R(r, u, E, \sqrt{2H}, j) (E^2-2H)=  \partial_r B(r, u, \sqrt{2H}, j)\left[ \frac{\cos\theta_1(r, E, j)}{(Er)^2}- \frac{\cos\theta_1(r, \sqrt{2H}, j)}{2Hr^2}\right]
\end{multline*}
We can apply a simple division lemma (actually the Taylor integral formula) to write
$$\frac{\cos\theta_1(r, E, j)}{(Er)^2}- \frac{\cos\theta_1(r, \sqrt{2H}, j)}{2Hr^2}= S(r, u, E, j, \sqrt{2H}) (E^2-2H),$$
and thus
$$R(r, u, E, \sqrt{2H}, j)=\partial_r B(r, u, \sqrt{2H}, j)S(r, u, E, j, \sqrt{2H}).$$

\end{proof}

\section{Regularity of boundary data and consequences\label{s:osc}}
 We recall the following classical ``hidden regularity'' of the boundary data of solutions of~\eqref{e:S}:

    \begin{proposition}
    \label{p:regboundary}
For every $T>0$ there exists a constant $C>0$ such that, for every $u^0 \in H_{0}^{1}\left(  \ID\right) $ and every $f \in L^1(0,T ;H^1_0(\ID))$, the solution $u \in C^0([0,T]; H^1_0(\ID))$ of
\begin{eqnarray*}
\frac{1}{i}\frac{\partial u}{\partial t}= \left(-\frac{1}{2}\Delta + V \right)u + f &, t\in\R, \quad z\in \ID,
\\
u\rceil_{\d \ID} = 0  && \\
u\rceil_{t=0} = u^0 &&
\end{eqnarray*}
satisfies
\begin{equation}
\left\Vert \partial_{n} u  \right\Vert_{L^{2}\left((0,T)  \times\partial\ID\right)  }\leq
C\left( \| \nabla u^0\|_{L^{2}\left(  \ID\right) }  + \| f\|_{L^1(0,T ;H^1(\ID))}  \right).\label{e:nder}%
\end{equation}
\end{proposition}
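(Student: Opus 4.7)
The plan is to apply the standard Lions-type multiplier method, taking advantage of the fact that on the disk the outward unit normal at a boundary point $z \in \d\ID$ is simply $z$ itself, so the natural multiplier is $z\cdot \nabla u$. First I would recall that, for $u^0 \in H^1_0(\ID)$ and $f \in L^1(0,T; H^1_0(\ID))$, the Schr\"odinger propagator generated by $-\tfrac12 \Delta_D + V$ yields $u \in C^0([0,T]; H^1_0(\ID))$ with $\|u\|_{L^\infty(0,T;H^1_0)} \leq C(\|u^0\|_{H^1} + \|f\|_{L^1(H^1)})$; this follows from the fact that $-\Delta_D$ is self-adjoint and the perturbation $V$ is smooth and bounded.

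Next I would perform a density argument: assume $u^0$ and $f$ are smooth enough (say $u^0 \in H^2 \cap H^1_0$ and $f \in C^1([0,T]; H^1_0)$) so that $u \in C^1([0,T]; H^1_0) \cap C^0([0,T]; H^2\cap H^1_0)$, in which case all manipulations below are justified; the estimate \eqref{e:nder} will then extend to the general case by density.

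The core computation consists in multiplying the equation by $\overline{z \cdot \nabla u}$, integrating over $\ID \times (0,T)$, and taking real parts. The time-derivative term gives, after integration by parts in $t$,
\begin{equation*}
\Re \int_0^T \int_{\ID} \frac{1}{i}\d_t u \cdot \overline{z\cdot \nabla u}\,dz\,dt
= \left[\Im \int_{\ID} u \cdot \overline{z\cdot \nabla u}\,dz\right]_0^T,
\end{equation*}
which is controlled by $\sup_{[0,T]}\|u\|_{H^1}^2 \lesssim \|u^0\|_{H^1}^2 + \|f\|_{L^1(H^1)}^2$. The Laplacian term is handled by Green's formula: using that $u = 0$ on $\d\ID$ (so $\nabla u = (\d_n u) n$ there and $z \cdot n = 1$),
\begin{equation*}
-\frac12 \Re \int_0^T\int_{\ID} \Delta u \cdot \overline{z\cdot\nabla u}\,dz\,dt
= \frac{1}{4}\int_0^T\int_{\d\ID} |\d_n u|^2\,d\sigma\,dt + R,
\end{equation*}
where $R$ collects interior terms coming from $\Re \int \nabla u \cdot \nabla(z\cdot \nabla u)$, all of which are quadratic in $\nabla u$ and hence bounded by $\|u\|_{L^\infty(H^1)}^2$. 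Finally the potential and source terms give
\begin{equation*}
\left|\Re\int_0^T\int_\ID (Vu + f) \overline{z\cdot \nabla u}\,dz\,dt\right| \leq C\bigl(\|u\|_{L^\infty(H^1)}^2 + \|f\|_{L^1(H^1)}\|u\|_{L^\infty(H^1)}\bigr).
\end{equation*}
Rearranging these identities yields exactly \eqref{e:nder}.

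The main obstacle, in my view, is the usual one in hidden-regularity proofs: justifying the integrations by parts rigorously at the natural regularity level, since the boundary trace $\d_n u$ is only in $L^2(\d \ID \times (0,T))$ a posteriori. This is why the density argument mentioned above is needed; once one works with a smooth approximation, the estimate is stable under passage to the limit precisely because the right-hand side only involves the $H^1$ norm of $u^0$ and the $L^1(H^1)$ norm of $f$. The presence of the smooth time-dependent potential $V$ is harmless: it contributes only to the lower-order interior term and can be absorbed.
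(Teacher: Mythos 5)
Your argument is correct and is precisely the standard hidden-regularity proof that the paper itself does not write out but delegates to Lebeau \cite[p.~284]{Leb:92} and G\'erard--Leichtnam \cite[Lemma~2.1]{GerLeich93}: the Rellich multiplier $z\cdot\nabla u$ (using that $n=z$ on $\partial\ID$), the $H^1_0$ energy estimate for the inhomogeneous flow to control the interior terms, and a density/regularization step to justify the integrations by parts. (With your conventions the Laplacian term actually produces $-\tfrac14\int_{\partial\ID}|\partial_n u|^2$ rather than $+\tfrac14$, but since the resulting identity is only used after moving all the controlled terms to one side and bounding them in absolute value, the sign is immaterial.)
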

We refer to \cite[p.~284]{Leb:92} or~\cite[Lemma~2.1]{GerLeich93} for a proof.

\begin{remark}
\label{r:energyestimates}
Because we have
$$\frac{d}{dt}\left\la \left(-\frac{\Delta}{2}+V\right)  U_V(t)u^0,  U_V(t)u^0\right\ra=\left\la \partial_t V \,\,U_V(t)u^0,  U_V(t)u^0\right\ra,$$ we see that there exists $C$ (depending on $T$, $\norm{V}_\infty$ and $\norm{  \partial_t V}_\infty$ such that
\begin{align}\label{nabla}
 T \left\Vert \nabla u^0 \right\Vert _{L^{2}\left(  \ID\right)}^2 &\leq C  \int_0^T \norm{ U_V(t)u^0}^2_{L^{2}\left(  \ID\right)    }dt
 +
\int_0^T \left\Vert \nabla U_V(t)u^0 \right\Vert^2 _{L^{2}\left(  \ID\right)  }dt \\
\label{nabla2}
\int_0^T \left\Vert \nabla U_V(t)u^0 \right\Vert^2 _{L^{2}\left(  \ID\right)  }dt
& \leq
T \left\Vert \nabla u^0 \right\Vert^2_{L^{2}\left(  \ID\right)}dt+C  \int_0^T \norm{ U_V(t)u^0}^2_{L^{2}\left(  \ID\right)    }dt
\end{align}
hence
\begin{equation*}
 T \left\Vert \nabla u^0\right\Vert _{L^{2}\left(  \ID\right)}^2 -CT   \norm{ u^0}^2_{L^{2}\left(  \ID\right)}  \leq
\int_0^T \left\Vert \nabla U_V(t) u^0\right\Vert^2 _{L^{2}\left(  \ID\right)  }dt \leq T \left\Vert \nabla u^0\right\Vert^2_{L^{2}\left(  \ID\right)}dt+ CT   \norm{ u^0}^2_{L^{2}\left(  \ID\right)  }
\end{equation*}
\end{remark}

 Proposition \ref{p:regboundary} has the following consequences, which are used everywhere in the paper.

 \medskip

Let $A$ be an operator.
For
$T>0$ let $\chi_T(t)$ be a smooth compactly supported function on $\R$ such that $|\chi_T(t)|\leq 1$, taking the value $0$ outside $(0, T)$. Obviously, for every $u^0\in H_{0}%
^{1}\left(  \ID\right)  $, for every $s>0$, we can write%
\begin{multline}
\left\la \chi_T U_V(t)u^0, A \, \chi_T\left( \partial_{n}\left( U_V(t)u^0\right) \otimes \delta_{\partial \ID}\right)\right\ra_{L^{2}\left(\ID\times \R \right)  }\\
 \leq
 \norm{A}_{  L^{2}\left(  \left(  0,T\right) , H^{-s}(\IR^2)\right)\To L^{2}\left(  \left(  0,T\right)  \times\IR^2\right)}
\, \norm{U_V(t) u^0}_{L^{2}\left(  \left(  0,T\right)  \times\ID\right)  } \,\, \norm{ \partial_{n}U_V(t)u^0 \otimes \delta_{\partial \ID}}_{  L^{2}\left(  \left(  0,T\right) , H^{-s}(\IR^2)\right) }.
\end{multline}

For any $s>1/2$, the standard trace estimates (see for instance \cite[Chapter~2, Section~4]{ChazarainPiriou}) imply that
$$\norm{ \partial_{n}U_V(t)u^0 \otimes \delta_{\partial \ID}}_{  L^{2}\left(  \left(  0,T\right) , H^{-s}(\IR^2)\right) }
\leq
\left\Vert \partial_{n}\left( U_V(t)u^0\right)  \right\Vert
_{L^{2}\left(  \left(  0,T\right)  \times\partial\ID\right)  }$$
and by Proposition \ref{p:regboundary} this is bounded by $C\left\Vert \nabla u^0\right\Vert _{L^{2}\left(  \ID\right)  }$.

Let now be $\eps>0$ (in the context of this paper, think of $\eps$ as being $h$ or $R^{-1}$).
In the previous inequality we take $A=\Op_1(a(z, \eps\xi, t, \eps^2 H))=a(z, \eps D_z, t, \eps^2 D_t)$ where $a$ is smooth, compactly supported in all variables, and is such that $\chi_T(t)\equiv 1$ on the support of $a$. We note then that $\chi_T \, A  =A$ and $A \,\chi_T =A+O(\eps^{-\infty})$. Using the fact that
$$\norm{A}_{  L^{2}\left(  \left(  0,T\right) , H^{-s}(\IR^2)\right)\To L^{2}\left(  \left(  0,T\right)  \times\IR^2\right)}\leq C(T, a)\eps^{-s} ,$$ we obtain
\begin{equation} \label{e:pdo}
\frac{\eps^2}2\left\la \chi_T U_V(t)u^0, A \left( \partial_{n}\left( U_V(t)u^0\right) \otimes \delta_{\partial \ID}\right)\right\ra_{L^{2}\left( \IR^2 \times \R \right)  }\\
 \leq
C(T, a)  \eps^{1-s}
 \norm{u^0}_{L^{2}\left(  \ID\right)  }\left\Vert \eps \nabla u^0\right\Vert _{L^{2}\left(  \ID\right)  }.
\end{equation}
If we extend  $U_V(t)u^0$ to take the value $0$ outside  $\ID$, we have already noted that
$$\frac{\eps^2}2 \partial_{n}\left( U_V(t)u^0\right) \otimes \delta_{\partial \ID}=\eps^2\left(-\frac{\Delta}2+ V+i \partial_t\right)U_V(t)u^0$$
and thus equation \eqref{e:pdo} (always for  $A=\Op_1(a(z, \eps\xi, t, \eps^2 H)$) may be rewritten as
\begin{multline} \label{e:pdobis}
 \left\la \chi_T U_V(t) u^0, A \left(-\frac{\eps^2\Delta}2+\eps^2 V+i\eps^2\partial_t\right)U_V(t)u^0\right\ra_{L^{2}\left(\IR^2 \times \R \right)  }\\
 \leq
C(T, a)  \eps^{1-s}
 \norm{u^0}_{L^{2}\left(  \ID\right)  }\left\Vert \eps \nabla u^0\right\Vert _{L^{2}\left(  \ID\right)  }.
\end{multline}
As a consequence
\begin{multline} \label{e:pdobisbis}
 \left\la \chi_T U_V(t) u^0, \Op_1\left(a(z, \eps\xi, t, \eps^2 H)\left(\frac{\eps^2|\xi|^2}2-\eps^2H\right)\right)U_V(t)u^0\right\ra_{L^{2}\left( \IR^2 \times \R \right)  }\\
 \leq
C(T, a)  \eps^{1-s}
 \norm{u^0}_{L^{2}\left(  \ID\right)  }\left\Vert \eps \nabla u^0\right\Vert _{L^{2}\left(  \ID\right)  } +\eps^2\left|\int \chi_T(t)\left\la V U_V(t)u^0, U_V(t)u^0\right\ra_{L^{2}\left(  \ID\right)  } dt\right|.
\end{multline}
Finally, if $a$ is supported away from $\{H=0\}$ this implies
\begin{multline} \label{e:pdobisbis0}
 \left\la \chi_T U_V(t) u^0, \Op_1\left(a(z, \eps\xi, t, \eps^2 H)\left(\frac{|\xi|^2}{2H}-1\right)\right)U_V(t)u^0\right\ra_{L^{2}\left(\IR^2 \times \R \right)  }\\
 \leq
C(T, a)  \eps^{1-s}
 \norm{u^0}_{L^{2}\left(  \ID\right)  }\left(\left\Vert \eps \nabla u^0\right\Vert _{L^{2}\left(  \ID\right)  } + \norm{u^0}_{L^{2}\left(  \ID\right)  }\right).
\end{multline}

Since $a$ is compactly supported with respect to $H$, let us now show that we can actually replace $\left\Vert \eps \nabla u^0\right\Vert _{L^{2}\left(  \ID\right)  }$ by $\norm{u^0}_{L^{2}\left(  \ID\right)  }$ in the last estimate (up to a constant depending on the support of $a$). The argument is easy if $V$ does not depend on time, but requires some care for time-dependent $V$. To see that, introduce a compactly supported function $g$ on $\R$ such that $a(z, \xi, t, H) g(H)=a(z, \xi, t, H)$ (i.e. $g=1$ in a neighborhood of $\supp(a)$).

Let \begin{equation} \label{e:defww0} w(t)= g(\eps^2 D_t)U_V(t)u^0
, \quad\text{and} \quad  w^0 = w\rceil_{t=0}.
\end{equation}
If $V$ does not depend on time, we have
$$w=U_V(t) g\left(-\frac{\eps^2\Delta_D}{2}+\eps^2  V\right) u^0,
\quad\text{and} \quad  w^0 =
g\left(-\frac{\eps^2\Delta_D}{2}+\eps^2  V\right) u^0$$
 so we can replace $u^0$ by $\tilde u^0= g\left(-\frac{\eps^2\Delta}{2}+\eps^2  V\right) u^0$ in the previous argument, and it is obvious that $\left\Vert \eps \nabla \tilde u^0\right\Vert _{L^{2}\left(  \ID\right)  }\leq C(g) \norm{u^0}_{L^2(\ID)}.$

For general $V$, we write $[ g(\eps^2 D_t), V]=
O(\eps^2)\norm{\partial_t V}_\infty$, where the estimate of the remainder holds in the operator norm from $L^\infty_{\comp}(\R_t,
L^2(\ID))$ to $L^\infty_{\loc}(\R_t,
L^2(\ID))$. We have
$$\frac{1}i\frac{\partial}{\partial t}w= \left(-\frac{\Delta}{2}+V\right)w +[ g(\eps^2 D_t), V]w.$$
Thus
\begin{align}
&  \left\la \chi_T U_V(t) u^0, \Op_1\left(a(z, \eps\xi, t, \eps^2 H)\left(\frac{\eps^2|\xi|^2}2-\eps^2H\right)\right)U_V(t)u^0\right\ra_{L^{2}\left(\IR^2 \times \R \right)  }\\
& \qquad = \left\la \chi_T U_V(t) u^0, \Op_1\left(a(z, \eps\xi, t, \eps^2 H)\left(\frac{\eps^2|\xi|^2}2-\eps^2H\right)\right) w \right\ra_{L^{2}\left( \IR^2 \times \R \right)  } \nonumber \\
& \qquad =
 \left\la \chi_T U_V(t) u^0, \Op_1\left(a(z, \eps\xi, t, \eps^2 H)\left(\frac{\eps^2|\xi|^2}2-\eps^2H\right)\right)U_V(t)w^0\right\ra_{L^{2}\left(\IR^2 \times \R\right)  }
 \nonumber \\
 & \qquad  \quad  + O_T(\eps^2)\norm{\partial_t V}_\infty \norm{u^0}_{L^2(\ID)}^2 ,\nonumber
\end{align}
where $w^0$ is defined in~\ref{e:defww0}.

We know from the bilinear version of inequality \eqref{e:pdobisbis} that
\begin{multline} \label{e:polarized}
 \left\la \chi_T U_V(t) u^0, \Op_1\left(a(x, \eps\xi, t, \eps^2 H)\left(\frac{\eps^2|\xi|^2}2-\eps^2H\right)\right)U_V(t)w^0\right\ra_{L^{2}\left(\IR^2 \times \R \right)  }\\
 \leq
C(T, a)  \eps^{1-s}
 \norm{u^0}_{L^{2}\left(  \ID\right)  }\left\Vert \eps \nabla w^0\right\Vert _{L^{2}\left(  \ID\right)  } +\eps^2\left|\int \chi_T(t)\left\la V U_V(t)u^0, U_V(t)w^0\right\ra_{L^{2}\left(  \ID\right)  }dt\right|\\
 \leq C(T, a)  \eps^{1-s}
 \norm{u^0}_{L^{2}\left(  \ID\right)  }\left\Vert \eps \nabla w^0\right\Vert _{L^{2}\left(  \ID\right)  } +\eps^2 C T \norm{u^0}^2_{L^2(\ID)}.
\end{multline}
Now we can use \eqref{nabla}-\eqref{nabla2} in the form
\begin{multline*}
 T \left\Vert \nabla w^0\right\Vert _{L^{2}\left(  \ID\right)}^2 \leq C T (1+\eps^2)    \norm{u^0}^2_{L^{2}\left(  \ID\right)    } +
\int_0^T \left\Vert \nabla w(t)\right\Vert^2 _{L^{2}\left(  \ID\right)  }\\
 \leq C T(1+\eps^2)    \norm{u^0}^2_{L^{2}\left(  \ID\right)    } +
2\int_0^T \left\la \left(-\frac{\Delta}{2}+V\right)  w(t),   w(t)\right\ra_{L^{2}\left(  \ID\right)  }dt\\
 \leq C T(1+\eps^2)    \norm{u^0}^2_{L^{2}\left(  \ID\right)    } +
2\left|\int_0^T \left\la D_t  w(t),   w(t)\right\ra_{L^{2}\left(  \ID\right)  }dt\right|\\
 \leq C T(1+\eps^2)    \norm{u^0}^2_{L^{2}\left(  \ID\right)    } +
2\left|\int_0^T  \left\la D_t  g(\eps^2 D_t) u(t),   g(\eps^2 D_t) u(t)\right\ra_{L^{2}\left(  \ID\right)  }dt\right|\\
 \leq C T(1+\eps^2)    \norm{u^0}^2_{L^{2}\left(  \ID\right)    } + C T \eps^{-2}\norm{u^0}^2_{L^{2}\left(  \ID\right)    }
\end{multline*}
This finally shows that the term $\left\Vert \eps \nabla w^0\right\Vert _{L^{2}\left(  \ID\right)  }$ on the right-hand side of \eqref{e:polarized} is bounded by $C T \norm{u^0}_{L^2(\ID)}$. Finally, what we have shown is that, for $a(x, \xi, t, H)$ compactly supported, and supported away from $H=0$, we have
\begin{multline} \label{e:final}
 \left\la \chi_T U_V(t) u^0, \Op_1\left(a(z, \eps\xi, t, \eps^2 H)\left(\frac{|\xi|^2}{2H}-1\right)\right)U_V(t)u^0\right\ra_{L^{2}(\IR^2 \times \R )  }\\
 \leq
C(T, a)  \eps^{1-s}
 \norm{u^0}^2_{L^{2}\left(  \ID\right)  }
\end{multline}
for all $u^0\in L^2(\ID)$ (that is to say, we do not need to assume a priori that $u^0$ is $\eps$-oscillating).

Similarly, if $a$ is supported away from $\xi=0$ and compactly supported, we have
\begin{multline} \label{e:final2}
 \left\la \chi_T U_V(t) u^0, \Op_1\left(a(z, \eps\xi, t, \eps^2 H)\left(\frac{2H}{|\xi|^2}-1\right)\right)U_V(t)u^0\right\ra_{L^{2}\left(\IR^2 \times \R \right)  }\\
 \leq
C(T, a)  \eps^{1-s}
 \norm{u^0}^2_{L^{2}\left(  \ID\right)  }
\end{multline}

\begin{remark}\label{r:osc}Let us summarize the properties of the function $w^0=g(\eps^2 D_t)U_V(t)u^0\rceil_{t=0}$.
\begin{itemize}
\item For fixed $\eps>0$, the operator $u^0\mapsto w^0$ is compact on $L^2(\ID)$ since we have proved $\norm{\eps\nabla w^0}_{L^2(\ID)}\leq C \norm{u^0}_{L^2(\ID)}.$
\item By continuity of $t\mapsto U_V(t) u^0$, we have $w^0\Lim_{L^2(\ID)} u^0$ as $\eps\To 0$.
\item For compactly supported $a$ satisfying $ga=a$ we have
\begin{multline*} \left\la U_V(t) w^0, \Op_1\left(a(x, \eps\xi, t, \eps^2 H) \right)U_V(t)w^0\right\ra_{L^{2}\left(\IR^2 \times \R \right)  }\\=  \left\la U_V(t) u^0, \Op_1\left(a(x, \eps\xi, t, \eps^2 H) \right)U_V(t)u^0\right\ra_{L^{2}\left(\IR^2 \times \R \right)  } +o(1)_{\eps} \norm{u^0}^2_{L^2(\ID)}\end{multline*}
\end{itemize}
\end{remark}

Finally, we prove by dyadic decomposition a statement similar to \eqref{e:final} and \eqref{e:final2} for homogeneous functions:
\begin{proposition} Let $a(z, \xi, t, H)$ be a smooth function, compactly supported in $(z, t)$, and with the following homogeneity property :
there exist $R_{0}>0$ such that
\[
a(z, \xi, t, H)   =a(z, \lambda \xi, t, \lambda^2 H) \text{,\quad for } |\xi|^2 +| H| >R_{0} \text{ and } \lambda\geq 1.%
\]
Fix $s\in (1/2, 1)$. If $a$ vanishes in a neighbourhood of the set $\{|\xi|^2= 2H\}$ then for $R$ large enough
\begin{multline} \label{e:finalhom}
 \left\la U_V(t) u^0, \Op_1\left(a(z, \xi, t,  H)\left(1-\chi\left(\frac{|\xi|^2 +| H|}{R^2}\right)\right)\right) U_V(t)u^0\right\ra_{L^{2}\left(\IR^2 \times \R \right)  }\\
 \leq
C(T, a)  R^{s-1}
 \norm{u^0}^2_{L^{2}\left(  \ID\right)  }.
\end{multline}
\end{proposition}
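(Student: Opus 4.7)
The plan is a standard Littlewood--Paley/dyadic decomposition in the $(\xi,H)$ variables, combined on each dyadic shell with the non-homogeneous estimates \eqref{e:final} and \eqref{e:final2}. Fix a nonnegative $\psi\in C_c^\infty(\R)$ supported in $[1/2,2]$ such that $1-\chi(r)=\sum_{k\geq 0}\psi(r/2^k)$ for $r\geq 1$ (for instance $\psi(r)=\chi(r)-\chi(r/2)$). Then for $R$ large enough so that $R^2/4>R_0$, we decompose
\[
\Bigl(1-\chi\Bigl(\tfrac{|\xi|^2+|H|}{R^2}\Bigr)\Bigr)a(z,\xi,t,H)
=\sum_{k\geq 0} a_k(z,\xi,t,H),\qquad
a_k:=a\cdot\psi_k,\quad \psi_k(\xi,H):=\psi\Bigl(\tfrac{|\xi|^2+|H|}{R^2 2^k}\Bigr).
\]
Each $a_k$ is compactly supported in $(\xi,H)$, the support contained in the annulus $\{|\xi|^2+|H|\in [R^2 2^{k-1}, R^2 2^{k+1}]\}$, which lies entirely in the homogeneous region $\{|\xi|^2+|H|>R_0\}$.

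Set $\eps_k=1/(R\,2^{k/2})$ and perform the symplectic rescaling $(\xi'',H'')=(\eps_k\xi,\eps_k^2 H)$. Using the homogeneity $a(z,\xi,t,H)=a(z,\lambda\xi,t,\lambda^2 H)$ with $\lambda=1/\eps_k=R\,2^{k/2}\geq 1$, we obtain
\[
a_k(z,\xi,t,H)=\widetilde a\bigl(z,\eps_k\xi,t,\eps_k^2 H\bigr),\qquad
\widetilde a(z,\xi'',t,H''):=a(z,\xi'',t,H'')\,\psi(|\xi''|^2+|H''|),
\]
and crucially $\widetilde a$ is the same compactly supported symbol for every $k$, with seminorms uniform in $k$. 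Since the set $\{|\xi|^2=2H\}$ is conic (invariant under $(\xi,H)\mapsto(\lambda\xi,\lambda^2 H)$), the assumption that $a$ vanishes near it transfers to $\widetilde a$: there exists $\delta>0$ such that $\supp\widetilde a\subset\{\bigl||\xi''|^2-2H''\bigr|\geq \delta\}$ intersected with the annulus $\{|\xi''|^2+|H''|\in [1/2,2]\}$.

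Next I split $\widetilde a=\widetilde a_1+\widetilde a_2$ using a partition of unity on the annulus, where $\widetilde a_1$ is supported in $\{|H''|\geq c\}$ and $\widetilde a_2$ in $\{|\xi''|^2\geq c\}$ for some $c>0$: this is possible because on the annulus, $||\xi''|^2-2H''|\geq\delta$ forces either $|H''|\geq\delta/10$ or $|\xi''|^2\geq\delta/2$. Then I write $\widetilde a_1=b_1\cdot\bigl(\tfrac{|\xi''|^2}{2H''}-1\bigr)$ and $\widetilde a_2=b_2\cdot\bigl(\tfrac{2H''}{|\xi''|^2}-1\bigr)$, where $b_1,b_2$ are smooth, compactly supported, and supported away from $\{H''=0\}$ and $\{\xi''=0\}$ respectively, with seminorms independent of $k$. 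Applying \eqref{e:final} to $b_1$ and \eqref{e:final2} to $b_2$ with $\eps=\eps_k$ yields
\[
\bigl|\langle U_V(t)u^0,\Op_1(\widetilde a(z,\eps_k\xi,t,\eps_k^2 H))U_V(t)u^0\rangle\bigr|
\leq C\,\eps_k^{1-s}\,\|u^0\|_{L^2(\ID)}^2
\]
with $C$ independent of $k$ (the $\chi_T$ in \eqref{e:final}-\eqref{e:final2} may be inserted at no cost since $a$ is compactly supported in $t$).

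Finally I sum the dyadic pieces: since $\eps_k^{1-s}=R^{s-1}\,2^{-k(1-s)/2}$ and $s<1$, the geometric series converges and gives
\[
\sum_{k\geq 0}\eps_k^{1-s}=R^{s-1}\sum_{k\geq 0}2^{-k(1-s)/2}=\frac{R^{s-1}}{1-2^{-(1-s)/2}},
\]
proving \eqref{e:finalhom}. The delicate point -- and the step I expect to be the main obstacle -- is ensuring that the constants in \eqref{e:final}-\eqref{e:final2} remain uniform in $k$; this is why the rescaling is chosen precisely so that $\widetilde a$ (and hence $b_1,b_2$) is a single compactly supported symbol independent of $k$. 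The restriction $s>1/2$ comes directly from the trace estimate in \eqref{e:final}, whereas the restriction $s<1$ is what makes the dyadic sum convergent.
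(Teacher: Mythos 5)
Your proof is correct and follows essentially the same route as the paper's: a dyadic decomposition of $\{|\xi|^2+|H|>R^2\}$, rescaling each shell to a fixed annulus via the homogeneity of $a$, splitting according to whether $|H|$ or $|\xi|^2$ dominates so as to factor out $\left(\frac{|\xi|^2}{2H}-1\right)$ or $\left(\frac{2H}{|\xi|^2}-1\right)$, and applying \eqref{e:final}--\eqref{e:final2} with $\eps=\eps_k$ before summing the geometric series (convergent because $s<1$). The only slip is cosmetic: for the telescoping identity $1-\chi(r)=\sum_{k\geq 0}\psi(r/2^k)$ you need $\psi(r)=\chi(r/2)-\chi(r)$ rather than $\chi(r)-\chi(r/2)$.
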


\begin{proof}
To see that, decompose
\begin{multline*}
a(z, \xi, t,  H)\left(1-\chi\left(\frac{|\xi|^2 +| H|}{R^2}\right)\right)=
\\ \sum_{k=0}^\infty
a(z, 2^{-k}R^{-1}\xi, t, 2^{-2k}R^{-2}H)\left(\chi\left(\frac{|\xi|^2 +| H|}{2^{2(k+1)}R^2}\right)-\chi\left(\frac{|\xi|^2 +| H|}{ 2^{2k}R^2}\right)\right).
\end{multline*}
For each $k$ in the sum above, decompose further
\begin{multline*}
\chi\left(\frac{|\xi|^2 +| H|}{2^{2(k+1)}R^2}\right)-\chi\left(\frac{|\xi|^2 +| H|}{ 2^{2k}R^2}\right)\\=
\left(\chi\left(\frac{|\xi|^2 +| H|}{2^{2(k+1)}R^2}\right)-\chi\left(\frac{|\xi|^2 +| H|}{ 2^{2k}R^2}\right)\right) \left(\chi\left(\frac{ H}{ 2^{2k-1}R^2}\right) + (1-\chi)\left(\frac{ H}{ 2^{2k-1}R^2}\right)   \right)
\end{multline*}
and note that we must have $|\xi|^2\geq 2^{2k-1}R^2$ or $H\geq 2^{2k-1}R^2$ on the support of this function.

If $a$ vanishes in a neighbourhood of the set $\{|\xi|^2= 2H\}$, we can write
$$ a(z, \xi, t,  H)=b(z, \xi, t,  H)\left(\frac{2H}{|\xi|^2}-1\right)$$
where $|\xi|^2\geq 2^{2k-1}R^2$
and
$$a(z, \xi, t,  H)= b(z, \xi, t,  H)\left(\frac{|\xi|^2}{2H}-1\right)$$
where $H\geq 2^{2k-1}R^2$. Applying \eqref{e:final} and \eqref{e:final2}
for each $k$ (with $\eps=2^{-k}R^{-1}$), we finally obtain
\begin{multline}
 \left\la U_V(t) u^0, \Op_1\left(a(z, \xi, t, H)\left(1-\chi\left(\frac{|\xi|^2 +| H|}{R^2}\right)\right)\right) U_V(t)u^0\right\ra_{L^{2}(\IR^2 \times \R )  }\\
 \leq
C(T, a) \sum_{k=0}^{+\infty} R^{s-1}2^{k(s-1)}
 \norm{u^0}^2_{L^{2}\left(  \ID\right)  },
\end{multline}
which proves the proposition.
\end{proof}

\bigskip
To conclude this section, we give a proof of Lemma~\ref{l:regutemps}.

\begin{proof}[Proof of Lemma \ref{l:regutemps}]
The operator $A(D_t)\varphi$ is bounded on $L^2(\R \times \ID)$, so
\begin{align}
\label{e:estimAphi}
\|\nabla A(D_t)\varphi u\|_{L^2(\R \times \ID)}^2 & =  \la - \Delta A(D_t)\varphi u , A(D_t)\varphi u \ra_{L^2(\R \times \ID)} \nonumber  \\
& = \la A(D_t)\varphi (- \Delta) u , A(D_t)\varphi u \ra_{L^2(\R \times \ID)}  \nonumber\\
& \quad +
\la [- \Delta , A(D_t)\varphi ] u , A(D_t)\varphi u \ra_{L^2(\R \times \ID)} .
\end{align}
One the one hand, we have
\begin{align*}
\left|\la [- \Delta , A(D_t)\varphi ] u , A(D_t)\varphi u \ra_{L^2(\R \times \ID)} \right|
&= \left| - \la 2 \nabla\varphi \cdot \nabla  u+ u \Delta \varphi , A(D_t)^2 \varphi u \ra_{L^2(\R \times \ID)} \right| \\
&\leq 2 \left| \la u , div \left\{ \nabla\varphi  \left( A(D_t)^2 \varphi u \right) \right\} \ra_{L^2(\R \times \ID)} \right|
+ C \|\tilde{\varphi} u\|^2_{L^2(\R \times \ID)} \\
&\leq 2 \left| \la u , \nabla\varphi  \cdot \nabla \left( A(D_t)^2 \varphi u \right)  \ra_{L^2(\R \times \ID)} \right|
+ C \|\tilde{\varphi} u\|^2_{L^2(\R \times \ID)} \\
&\leq
\varepsilon \|\nabla A(D_t)\varphi u\|_{L^2(\R \times \ID)}^2
+ C(1+\varepsilon^{-1}) \|\tilde{\varphi} u\|^2_{L^2(\R \times \ID)}
\end{align*}
for some $\tilde{\varphi}$ equal to one on the support of $\varphi$, for all $\varepsilon >0$.

On the other hand, since $u$ solves~\eqref{e:S}, we have
\begin{align*}
\left|\la A(D_t)\varphi (- \Delta) u , A(D_t)\varphi u \ra_{L^2(\R \times \ID)} \right|
& = \left|\la A(D_t)\varphi (2 D_t - V ) u , A(D_t)\varphi u \ra_{L^2(\R \times \ID)} \right| \\
& \leq \left|\la A(D_t)\varphi (2 D_t - V ) u , A(D_t)\varphi u \ra_{L^2(\R \times \ID)} \right| \\
& \leq 2 \left|\la A(D_t)^2 \varphi D_t u , \varphi u \ra_{L^2(\R \times \ID)} \right|  + C \|\tilde{\varphi} u\|^2_{L^2(\R \times \ID)} \\
& \leq 2 \left|\la A(D_t)^2  D_t \varphi u , \varphi u \ra_{L^2(\R \times \ID)} \right|  +  C \|\tilde{\varphi} u\|^2_{L^2(\R \times \ID)} \\
& \leq  C \|\tilde{\varphi} u\|^2_{L^2(\R \times \ID)} ,
\end{align*}
since $A(D_t)^2  D_t = 1/2 \Op_1(\psi^2(H))$ is bounded. Collecting these estimates in~\eqref{e:estimAphi}, recalling that $\|\tilde{\varphi} u\|^2_{L^2(\R \times \ID)}\leq C\|u^0\|^2_{L^2(\ID)}$, and taking $\varepsilon$ sufficiently small concludes the proof of Lemma~\ref{l:regutemps}.
\end{proof}

\section{Time regularity of Wigner measures}
\label{s:timeregwign}
In this section we present a proof of the following (general) result on time regularity of semiclassical measures associated to solutions of the Schr\"odinger equation~\eqref{e:S}.
Even if not stated here, its microlocal counterpart also holds.

\begin{proposition}\label{p:regt}
Let $\mu_{sc}$ be obtained as a limit (\ref{e:Wsc}). Then there
exists $\mu\in
L^{\infty}\left(  \mathbb{R}_{t};\mathcal{M}_{+}(T^{\ast}\mathbb{R}%
^{2})\right)  $ such that, for every $a\in C_{c}^{\infty}\left(
T^{\ast
}\mathbb{R}^{2}\times T^{\ast}\mathbb{R}\right)  $ we have:%
\[
\int_{T^{\ast}\mathbb{R}^{2}\times T^{\ast}\mathbb{R}}a\left(
z,\xi
,t,H\right)  \mu_{sc}\left(  dz,d\xi,dt,dH\right)  =\int_{\mathbb{R}}%
\int_{T^{\ast}\mathbb{R}^{2}}a\left(  z,\xi,t,\frac{\left\vert
\xi\right\vert ^{2}}{2}\right)  \mu\left(  t,dz,d\xi\right)  dt.
\]

\end{proposition}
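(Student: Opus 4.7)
The plan has two independent ingredients that we combine at the end: (a) the fact that $\mu_{sc}$ is carried by the characteristic manifold $\Sigma:=\{2H=|\xi|^2\}$, and (b) a direct construction of a time-parametrized family of measures $t\mapsto \mu(t)$ using Banach--Alaoglu, which is then identified with the disintegration of $\mu_{sc}$ via (a).

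\emph{Step 1 (support on $\Sigma$).} I would first establish that for every $a\in C_c^\infty(T^*\R^2\times T^*\R)$ vanishing in a neighbourhood of $\Sigma$, one has $\mu_{sc}(a)=0$. The idea is to write $a(z,\xi,t,H)=(2H-|\xi|^2)\tilde a(z,\xi,t,H)$ with $\tilde a\in C_c^\infty$, so that $\Op_1(a(z,h\xi,t,h^2H))=\Op_1(\tilde a(z,h\xi,t,h^2H))\circ(2h^2D_t-h^2|\xi|^2_{\mathrm{quant}})+O(h)$. Using the extended Schr\"odinger equation $\left(-\tfrac12 h^2\Delta +h^2V - h^2 D_t\right) u_h=\tfrac{h^2}2\partial_n u_h\otimes\delta_{\d\ID}$ (cf.\ \eqref{e:schrodelta}), this reduces to a pairing against the Dirichlet trace, which by the hidden-regularity bound \eqref{e:pdobisbis0} is $O(h^{1-s})\|u_h^0\|_{L^2}^2$ for any $s>1/2$. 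Passing to the limit $h\to0$ yields $\mu_{sc}(a)=0$, hence $\mathrm{supp}(\mu_{sc})\subset \Sigma$.

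\emph{Step 2 (uniform $L^\infty$ bound in $t$).} For $b\in C_c^\infty(T^*\R^2)$, define
\[
F_h(b,t):=\la u_h(t),\Op_h(b)u_h(t)\ra_{L^2(\R^2)}.
\]
By the Calder\'on--Vaillancourt theorem and $L^2$-conservation of $U_V(t)$, there exists a seminorm $N(b)$ (depending on finitely many derivatives of $b$) such that $|F_h(b,t)|\le N(b)\,\|u_h^0\|_{L^2(\ID)}^2$, uniformly in $h$ and $t$. Thus, letting $\la\mu_h(t),b\ra:=F_h(b,t)$, the family $\mu_h\in L^\infty\bigl(\R_t;\mathcal M(T^*\R^2)\bigr)$ is bounded. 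Since $L^\infty(\R_t;\mathcal M(T^*\R^2))$ is the topological dual of $L^1(\R_t;C_0(T^*\R^2))$, Banach--Alaoglu together with a diagonal extraction over a countable dense subset of $C_c^\infty(T^*\R^2)$ (extracting the same subsequence as the one defining $\mu_{sc}$) produces $\mu\in L^\infty(\R_t;\mathcal M(T^*\R^2))$ such that
\[
\int_\R \varphi(t)F_h(b,t)\,dt\;\Lim_{h\to0}\;\int_\R \varphi(t)\la\mu(t),b\ra\,dt,
\quad \varphi\in L^1(\R_t),\; b\in C_c^\infty(T^*\R^2).
\]
Positivity of $\mu(t)$ for a.e.\ $t$ is a consequence of the sharp G\aa rding inequality applied to $F_h(b,t)$ for $b\ge 0$.

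\emph{Step 3 (identification).} For test functions of the split form $a(z,\xi,t,H)=\varphi(t)b(z,\xi)$ (independent of $H$), the quantization reads $\Op_1(a(z,h\xi,t,h^2H))=\varphi(t)\,\Op_h(b)$, so
\[
W_h(a)=\int_\R \varphi(t)F_h(b,t)\,dt
\;\longrightarrow\;
\int_\R\varphi(t)\la\mu(t),b\ra\,dt=\mu_{sc}(a).
\]
For a general $a\in C_c^\infty(T^*\R^2\times T^*\R)$, a Taylor expansion in $H$ around $|\xi|^2/2$ gives
\[
a(z,\xi,t,H)=a\bigl(z,\xi,t,\tfrac{|\xi|^2}{2}\bigr)+\bigl(H-\tfrac{|\xi|^2}{2}\bigr)r(z,\xi,t,H),
\]
with $r\in C_c^\infty$. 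By Step 1, the second term contributes $0$ to $\mu_{sc}(a)$; the first term is of the split form above with $\tilde b(z,\xi):=a(z,\xi,t,|\xi|^2/2)$, which is compactly supported in $(z,\xi)$ because compact support in $H$ combined with $H=|\xi|^2/2$ forces $|\xi|$ bounded. Combining these observations gives
\[
\mu_{sc}(a)=\int_\R\int_{T^*\R^2}a\bigl(z,\xi,t,\tfrac{|\xi|^2}{2}\bigr)\mu(t,dz,d\xi)\,dt,
\]
which is the claim.

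\emph{Main obstacle.} The delicate point is Step 1: one cannot simply commute the quantization of $(2H-|\xi|^2)$ with the Schr\"odinger propagator and exploit its annihilation on solutions, because the Dirichlet boundary introduces the singular source $\tfrac{h^2}{2}\partial_n u_h\otimes\delta_{\d\ID}$. Controlling this boundary contribution in the pairing with $\Op_1(\tilde a(z,h\xi,t,h^2H))$ requires the hidden regularity of the Neumann trace from Proposition~\ref{p:regboundary} together with the pseudodifferential bounds \eqref{e:pdo}--\eqref{e:pdobisbis0} worked out in Appendix~\ref{s:osc}; the key quantitative statement is that the boundary pairing is $O(h^{1-s})$ for any $s>1/2$, which just suffices to vanish in the semiclassical limit.
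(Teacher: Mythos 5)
Your Steps 1 and 2 are sound and essentially coincide with what the paper does: Step 1 is the content of Appendix~\ref{s:osc} (estimates \eqref{e:final}--\eqref{e:final2}), and Step 2 is exactly the construction of the time-sliced limit $\tilde{\mu}_{sc}\in L^{\infty}(\R_t;\mathcal{M}_{+}(T^{*}\R^2))$ at the start of the paper's proof. The gap is in Step 3, in the asserted chain $W_h(a)\to\int\varphi(t)\la\mu(t),b\ra\,dt=\mu_{sc}(a)$ for $H$-independent symbols $a=\varphi(t)b(z,\xi)$. Such an $a$ is not compactly supported in $H$, so $\mu_{sc}(a)$ is not a priori the limit of $W_h(a)$: splitting $a=a\,\chi_R(H)+a\,(1-\chi_R(H))$ as in \eqref{e:mutild}, the first piece does converge to $\mu_{sc}(a\chi_R)$, but the tail $k_{h,R}(b)=\la\sigma_R(h^2D_t)u_h,\varphi\Op_h(b)\,\sigma_R(h^2D_t)u_h\ra$ need not vanish as $h\to0$; a nonzero limit corresponds to mass escaping to $|H|=\infty$ while $\xi$ stays bounded, a region invisible to the compactly supported test functions of Step 1. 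The same defect appears in your Taylor remainder: $r=(a-a(\cdot,|\xi|^2/2))/(H-|\xi|^2/2)$ is \emph{not} compactly supported in $H$ (since $a(\cdot,|\xi|^2/2)$ is $H$-independent), so Step 1 as stated does not show that it contributes zero. As written, your $\mu(t)$ (the limit of the time slices) could a priori be strictly larger than the density of the $(z,\xi,t)$-marginal of $\mu_{sc}$, and the final identity would fail.

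There are two ways to close this. The paper's way is to give up on equality: since $b\geq0$ implies $\liminf_h k_{h,R}(b)\geq0$, one only gets the domination \eqref{e:marginH}, namely that the $(z,\xi,t)$-marginal of $\mu_{sc}$ is bounded by $\tilde{\mu}_{sc}(t)\,dt$; this already forces the marginal to be of the form $\mu(t)\,dt$ with $\mu\in L^{\infty}(\R_t;\mathcal{M}_{+}(T^{*}\R^2))$ (with possibly $\mu(t)\leq\tilde{\mu}_{sc}(t)$), and the disintegration theorem combined with the support property on $\{2H=|\xi|^2\}$ finishes the proof without ever identifying $\mu(t)$ with the time-sliced limit. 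Alternatively, you could prove that the tail actually vanishes: on $\{|h^2H|\geq R,\ |h\xi|\leq M\}$ the symbol $h^2H-|h\xi|^2/2$ is elliptic and bounded below by $R-M^2/2+o(1)$, so the equation \eqref{e:schrodelta} together with the hidden regularity of Proposition~\ref{p:regboundary} gives $\lim_h k_{h,R}(b)=0$ once $R$ is large compared with the $\xi$-support of $b$. But this is an additional estimate ``at infinity in $H$'' that neither your Step 1 nor your Step 2 supplies, so the identification step is incomplete as it stands.
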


\begin{proof}
Let $u_{h}\left(  \cdot,t\right)  :=U_{V}\left(  t\right)
u_{h}^{0}$ and note
that the Wigner distributions:%
\[
\tilde{W}_{u_{h}}^{h}\left(  t\right)  :C_{c}^{\infty}\left(
T^{\ast }\mathbb{R}^{2}\right)  \ni l\longmapsto\left\langle
U_{V}\left(  t\right)
u_{h}^{0},\operatorname*{Op}\nolimits_{h}\left(  l\right)
U_{V}\left(
t\right)  u_{h}^{0}\right\rangle _{L^{2}(\IR^2)}\in\mathbb{C}%
\]
are uniformly bounded in $L^{\infty}\left(  \mathbb{R}_{t};\mathcal{D}%
^{\prime}(T^{\ast}\mathbb{R}^{2})\right)  $. Hence, possibly
after extracting a subsequence, we can assume that, for every
$b\in C_{c}^{\infty
}\left(  T^{\ast}\mathbb{R}^{2}\times\mathbb{R}\right)  $:%
\[
\lim_{h\rightarrow0^{+}}\int_{\mathbb{R}}\left\langle U_{V}\left(
t\right) u_{h}^{0},\operatorname*{Op}\nolimits_{h}\left(  b\left(
\cdot,t\right)
\right)  U_{V}\left(  t\right)  u_{h}^{0}\right\rangle _{L^{2}(\IR^2
)}=\int_{\mathbb{R}}\int_{T^{\ast}\mathbb{R}^{2}}b\left(
z,\xi,t\right) \tilde{\mu}_{sc}\left(  t,dz,d\xi\right)  dt
\]
and (using the sharp G{\aa}rding inequality) the limiting Wigner distribution is a nonnegative measure $\tilde{\mu}%
_{sc}\in L^{\infty}\left(  \mathbb{R}_{t};\mathcal{M}_{+}(T^{\ast}%
\mathbb{R}^{2})\right)  $. We next show that for any $b\in
C_{c}^{\infty }\left(
T^{\ast}\mathbb{R}^{2}\times\mathbb{R}\right)  $ with $b\geq0$ one
has:%
\begin{equation}
\int_{T^{\ast}\mathbb{R}^{2}\times T^{\ast}\mathbb{R}}b\left(
z,\xi,t\right)  \mu _{sc}\left(  dz,d\xi,dt,dH\right)
\leq\int_{\mathbb{R}}\int_{T^{\ast }\mathbb{R}^{2}}b\left(
z,\xi,t\right)  \tilde{\mu}_{sc}\left(
t,dz,d\xi\right)  dt.\label{e:marginH}%
\end{equation}
To see this, let $\chi\in C_{c}^{\infty}\left(  \mathbb{R}\right)
$ be a cut-off function satisfying $0\leq\chi\leq1$, strictly
positive in $\left( -3/2,3/2\right)  $, vanishing outside that
interval, and such that $\chi \rceil_{\left(  -1,1\right)  }\equiv1$.
Write, for $R>0$, $\chi_{R}:=\chi\left(
\cdot/R\right)  $ and $\sigma_{R}:=\sqrt{1-\chi_{R}}$. Then we have:%
\begin{equation}
\left\langle u_{h},\operatorname*{Op}\nolimits_{h}\left(  b\right)
\chi
_{R}\left(  h^{2}D_{t}\right)  u_{h}\right\rangle _{L^{2}(\IR^2%
\times\mathbb{R})}=\left\langle
u_{h},\operatorname*{Op}\nolimits_{h}\left(
b\right)  u_{h}\right\rangle _{L^{2}(\IR^2\times\mathbb{R})}%
+k_{h,R}\left(  b\right)  +O\left(  h\right)  ,\label{e:mutild}%
\end{equation}
where:%
\[
k_{h,R}\left(  b\right)  :=\left\langle \sigma_{R}\left(
h^{2}D_{t}\right) u_{h},\operatorname*{Op}\nolimits_{h}\left(
b\right)  \sigma_{R}\left( h^{2}D_{t}\right)  u_{h}\right\rangle
_{L^{2}(\IR^2\times\mathbb{R})}.
\]
Taking limits in (\ref{e:mutild}) as $h\rightarrow0^{+}$ we find that:%
\begin{multline}
\int_{T^{\ast}\mathbb{R}^{2}\times T^{\ast}\mathbb{R}}b\left(
z,\xi,t\right)  \chi
_{R}\left(  H\right)  \mu_{sc}\left(  dz,d\xi,dt,dH\right)  \\
=\int_{\mathbb{R}%
}\int_{T^{\ast}\mathbb{R}^{2}}b\left(  z,\xi,t\right)
\tilde{\mu}_{sc}\left( t,dz,d\xi\right)
dt+\lim_{h\rightarrow0^{+}}k_{h,R}\left(  b\right)
.\label{e:bmanip}%
\end{multline}
But clearly, as $b\geq0$, we always have
\[
\lim_{h\rightarrow0^{+}}k_{h,R}(b)=\lim_{h\rightarrow0^{+}}\int_{\mathbb{R}%
}\tilde{W}_{\sigma_{R}\left(  h^{2}D_{t}\right)  u_{h}}^{h}\left(
b\left( t,\cdot\right)  \right)  dt\geq0,
\]
for every $R>0$. Taking this into account and letting
$R\rightarrow\infty$ in (\ref{e:bmanip}) proves (\ref{e:marginH}).

Now, as a consequence of (\ref{e:marginH}) we have that the image
of $\mu _{sc}$ under the projection onto the $H$-component is of
the form $\mu\left(
t,\cdot\right)  dt$ for some $\mu\in L^{\infty}\left(  \mathbb{R}%
_{t};\mathcal{M}_{+}(T^{\ast}\mathbb{R}^{2})\right)  $. The
disintegration
theorem then ensures that $\mu_{sc}$ can be written as:%
\[
\mu_{sc}\left(  dz, d\xi, dt, dH\right)  =\mu_{x,\xi,t}\left(
dH\right) \mu\left( t,dz,d\xi\right)  dt.
\]
Since $\mu_{sc}\ $is supported on the characteristic set
$\left\vert \xi\right\vert ^{2}=2H$ we conclude that
$\mu_{x,\xi,t}\left(  dH\right) =\delta_{\left\vert \xi\right\vert
^{2}/2}\left(  dH\right)$ and the result follows.
\end{proof}

\bibliographystyle{alpha}
\bibliography{biblio}

\end{document}